\documentclass[11pt,a4paper]{amsart}

\usepackage[margin=1.25in]{geometry}
\usepackage{amsthm}
\usepackage{amssymb}
\usepackage{mathtools}
\mathtoolsset{showonlyrefs}
\usepackage[english]{babel}
\usepackage[utf8]{inputenc}
\usepackage[T1]{fontenc}
\usepackage{newpxtext}
\usepackage{newpxmath}
\usepackage{microtype}

\usepackage{xcolor}
\usepackage{hyperref}
\usepackage[shortlabels]{enumitem}

\theoremstyle{plain}
\newtheorem{theorem}{Theorem}[section]
\newtheorem{lemma}[theorem]{Lemma}
\newtheorem{proposition}[theorem]{Proposition}
\newtheorem{corollary}[theorem]{Corollary}

\theoremstyle{definition}
\newtheorem{definition}[theorem]{Definition}
\newtheorem{example}[theorem]{Example}
\newtheorem{remark}[theorem]{Remark}

\newcommand{\kk}{\Bbbk}
\newcommand{\id}{\operatorname{Id}}
\newcommand{\cop}{\operatorname{cop}}
\newcommand{\Hom}{\operatorname{Hom}}
\newcommand{\aug}[1]{\mathfrak{a}_{#1}}
\newcommand{\Gc}[1]{G_{\mathrm{c}}(#1)}
\newcommand{\conj}[2]{{}^{#1}\!#2}

\makeatletter
\newcommand{\condlabel}[2]{\item[\textnormal{(#1)}]\phantomsection\def\@currentlabel{#1}\label{#2}}
\newcommand{\condref}[1]{\textup{(\ref{#1})}}
\@namedef{subjclassname@2020}{\textup{2020} Mathematics Subject Classification}
\makeatother

\numberwithin{equation}{section}
\hypersetup{
  colorlinks = true,
  linkcolor  = blue,
  urlcolor   = blue,
  citecolor  = black
}

\title{Quotients of graded quasi-Hopf algebras}

\author{Fabio Calder\'{o}n}
\address{Escuela de Matem\'{a}ticas, Universidad Industrial de Santander, Bucaramanga, Colombia}
\email{facalmat@uis.edu.co}

\author{C\'{e}sar Galindo}
\address{Departamento de Matem\'{a}ticas, Universidad de los Andes, Bogot\'{a}, Colombia}
\email{cn.galindo1116@uniandes.edu.co}

\subjclass[2020]{16T05, 16T15, 16W50, 16S35}
\keywords{quasi-Hopf algebra, strong grading, quasi-Hopf ideal, cocentral map, crossed product}

\begin{document}

\begin{abstract}
We classify the quasi-Hopf ideals of quasi-Hopf algebras faithfully graded by arbitrary groups. Each ideal determines a normal subgroup, recording which degrees are identified in the quotient, together with an invariant ideal in the corresponding neutral component. We refine this description using an ideal of the original neutral component and a retraction of a naturally associated intermediate quotient. We prove that this intermediate quotient splits as the tensor product of its neutral component with the group algebra of the normal subgroup. For fixed subgroup and neutral-component data, the possible ideals, whenever they exist, form a torsor described by equivariant homomorphisms into central group-like elements. Our results require neither finite dimensionality nor bijectivity of the antipode. We apply them to cocentral abelian extensions with possibly infinite grading, finite-dimensional neutral component, and nontrivial reassociator.
\end{abstract}

\maketitle


\section{Introduction}\label{sec:Intro}

Let $H=\bigoplus_{g\in G}H_g$ be a faithfully $G$-graded quasi-Hopf algebra over a field $\kk$ (Definition~\ref{def:Grad}). By Proposition~\ref{prop:FaithStrong}(iii), its grading is strong in the sense of Definition~\ref{def:SG}; we freely use this equivalence throughout. We classify the quasi-Hopf ideals of $H$, and hence the corresponding quotient quasi-Hopf algebras, for arbitrary grading groups without assuming that $H$ is finite-dimensional or that its antipode is bijective.

Although the graded ideals of a strongly graded algebra are determined by their intersections with the neutral component~\cite{NVO04}, this correspondence does not extend to arbitrary ideals. Even if the algebra carries a (quasi-)Hopf structure, the same failure occurs for quasi-Hopf ideals. We show, however, that the quasi-Hopf structure yields a refined correspondence once a normal subgroup of $G$ is added to the data recorded by the neutral component. This phenomenon is already visible in the group algebra $\kk[G]$, whose canonical strong $G$-grading is $\kk[G]=\bigoplus_{g\in G}\kk\zeta_g$, where $\zeta_g$ denotes the basis element corresponding to $g$. Each normal subgroup $N\unlhd G$ gives a Hopf ideal $\ker(\kk[G]\to\kk[G/N])$, and these ideals are pairwise distinct although their intersections with the neutral component $\kk\zeta_e$ are all trivial. Consequently, the neutral-component data do not detect $N$. The subgroup $N$ is therefore a genuinely new datum, recording which degrees are identified in the quotient. Moreover, once this subgroup is known, the ideal becomes graded with respect to the induced $G/N$-grading and is recovered from its intersection with the neutral component of that grading.

For a general $H$, the normal subgroup attached to a quasi-Hopf ideal is detected by the grading map $\pi\colon H\to\kk[G]$, defined by $\pi(x_g)=\varepsilon(x_g)\zeta_g$ for $x_g\in H_g$ (Theorem~\ref{thm:Cocentral}). If $I\subseteq H$ is a quasi-Hopf ideal, then $\pi(I)$ is a Hopf ideal of $\kk[G]$. Writing $\aug{N}$ for the augmentation ideal of $\kk[N]$, there is therefore a unique normal subgroup $N_I\unlhd G$ such that $\pi(I)=\aug{N_I}\cdot\kk[G]=\ker(\kk[G]\to\kk[G/N_I])$ (Proposition~\ref{prop:AssocSubgrp}). More generally, for $N\unlhd G$ and $\bar g=gN\in G/N$, the $G/N$-grading of $H$ induced by the canonical projection $G\to G/N$ has components $H_{\bar g}=\bigoplus_{n\in N}H_{gn}$ and neutral component $H_N=\bigoplus_{n\in N}H_n$. Taking $N=N_I$, the quotient $H/I$ inherits a $G/N$-grading and the quotient map is graded. Since $I$ is graded and the induced $G/N$-grading is strong, Proposition~\ref{prop:Intersect}(i) shows that $K:=I\cap H_N$ determines each homogeneous component of $I$:
\[
I\cap H_{\bar g}=H_{\bar g}K=KH_{\bar g}
\qquad(\bar g\in G/N).
\]
Thus $I=\sum_{\bar g\in G/N}H_{\bar g}K$, and this equality shows that $(N,K)$ determines $I$.

We call a pair $(N,K)$ \emph{admissible} if $N\unlhd G$, $K$ is a $G/N$-invariant quasi-Hopf ideal of $H_N$, and $\aug{N}\subseteq\pi(K)$ (Definition~\ref{def:AdmPair}). Our first main result, Theorem~\ref{thm:MainA}, states that the assignments
\[
I\longmapsto (N_I,\,I\cap H_{N_I})
\qquad\text{and}\qquad
(N,K)\longmapsto K^{G/N}:=\sum_{\bar g\in G/N}H_{\bar g}K
\]
define a bijective correspondence between the set of quasi-Hopf ideals of $H$ and the set of admissible pairs for $H$. The case $N=\{e\}$ recovers the $G$-graded quasi-Hopf ideals (Corollary~\ref{cor:GradIdeals}), whereas $N=G$ corresponds to the ideals $I$ satisfying $\pi(I)=\aug{G}$, called augmented ideals below (Definition~\ref{def:AugIdeal}).

Given an admissible pair $(N,K)$, set $J:=K\cap H_e$. Then $J$ is a $G$-invariant quasi-Hopf ideal of $H_e$, and the ideal $J^N:=H_NJ=JH_N$ generated by $J$ in $H_N$ is contained in $K$. The quotient $H_N/J^N$, called the lifted quotient, is strongly $N$-graded, with neutral component $H_e/J$ and grading map $\pi_{N,J}\colon H_N/J^N\to\kk[N]$ (Corollary~\ref{cor:RelLift}(ii) and Lemma~\ref{lem:RedSetup}(ii)). Thus $L:=K/J^N$ is a quasi-Hopf ideal of the lifted quotient. The ideal $J$ records the relations visible in $H_e$, while $L$ records the remaining relations. Corollary~\ref{cor:RedClass} shows that this construction is reversible and parametrizes the quasi-Hopf ideals of $H$ by triples $(N,J,L)$, called reduced data, for which $L$ is $G/N$-invariant, $\pi_{N,J}(L)=\aug{N}$, and $L\cap(H_e/J)=0$.

For every reduced datum, the natural map $H_e/J\to(H_N/J^N)/L$ is injective. We call $L$ \emph{transversal} to $H_e/J$ if this map is an isomorphism, and the associated quasi-Hopf ideal of $H$ is then called \emph{split}. By Theorem~\ref{thm:AugTransv} and Corollary~\ref{cor:TransvEquiv}, a quasi-Hopf ideal of the lifted quotient is transversal precisely when it is augmented and intersects the neutral component trivially. Both conditions hold for reduced data; therefore every quasi-Hopf ideal of $H$ is split (Corollary~\ref{cor:SplitAll}).

A transversal ideal $L$ determines the unique quasi-Hopf algebra retraction $\tau\colon H_N/J^N\to H_e/J$ satisfying $\tau|_{H_e/J}=\id$ and $\ker\tau=L$ (Proposition~\ref{prop:TransvRetr}). Our second main result, Theorem~\ref{thm:MainB}, therefore classifies all quasi-Hopf ideals of $H$ by triples $(N,J,\tau)$ for which $\ker\tau$ is $G/N$-invariant.

The retraction has a strong structural consequence. Theorem~\ref{thm:Trivialization} shows that a retraction of a faithfully $\Gamma$-graded quasi-Hopf algebra onto its neutral component is equivalent to a multiplicative family of group-like homogeneous units that centralize the neutral component. Under these conditions, the algebra is the untwisted tensor product of its neutral component with $\kk[\Gamma]$. Consequently, the lifted quotient associated with every quasi-Hopf ideal is isomorphic to $(H_e/J)\otimes\kk[N]$ as an $N$-graded quasi-Hopf algebra (Corollary~\ref{cor:LiftTensor}).

This splitting also gives an intrinsic description of the ideals lying over a fixed pair $(N,J)$. Proposition~\ref{prop:Realized} shows that such a pair is realized by a quasi-Hopf ideal precisely when its lifted quotient admits a $G$-conjugation-equivariant group-like splitting. Once one such splitting is fixed, Theorem~\ref{thm:Char} identifies the ideals lying over $(N,J)$ with the $G$-equivariant group homomorphisms
\[
N\longrightarrow\Gc{H_e/J},
\]
where $\Gc{H_e/J}$ denotes the group of invertible central group-like elements of $H_e/J$. Thus, whenever this fibre is nonempty, it is a torsor under this group of homomorphisms. The retraction is intrinsic, whereas the homomorphism records it relative to the chosen splitting.

For applications, we also express these retractions in terms of a chosen crossed-product presentation. Under suitable hypotheses on the associated weak action and factor set, Proposition~\ref{prop:CohObstr} gives a class $[\sigma]\in H^2(N,\Gc{H_e/J})$ that measures the existence of algebra retractions with central group-like values relative to the chosen homogeneous units. Because this class depends on the presentation, it is not an intrinsic obstruction; the intrinsic obstruction is that the fibre over $(N,J)$ may be empty.

The examples further clarify the structure of the classification. Example~\ref{ex:SL2} gives a strongly graded Hopf algebra that is not a crossed product, whereas Example~\ref{ex:CrossedZSq} shows that, even for a crossed product, a nontrivial factor-set obstruction may make the fibre over a pair $(N,J)$ empty. Finally, Example~\ref{ex:SplitNotChar} shows that this cohomological obstruction depends on the chosen homogeneous section and is therefore not intrinsic to the ideal.

Beyond these structural examples, our main concrete application concerns the cocentral Masuoka-type extensions $(\kk F)^*\#_\sigma^\theta\kk[G]$, where $F$ is finite and $G$ is arbitrary. Masuoka's construction permits infinite grading groups~\cite[Rem.~2.14]{Mas02}; our results classify their quasi-Hopf ideals without restricting the associated normal subgroup of $G$ and allow a nontrivial reassociator.

A principal motivation for the present work was the parametrization of equivariant fusion subcategories by triples $(\mathcal E,N,\eta)$ established in previous work of the second author with Corey Jones~\cite[Thm.~4.8]{GJ25}. The triples $(N,J,\tau)$ introduced here may be viewed as algebraic counterparts of those categorical data, with $J$ encoding the neutral datum and $\tau$ playing the role of the equivariant trivialization. In the finite-dimensional semisimple setting, the two parametrizations are expected to agree under the appropriate duality. The present results are obtained by independent, purely algebraic methods and apply to arbitrary grading groups without requiring the algebras to be finite-dimensional or semisimple.

The paper is organized as follows. Section~\ref{sec:Prelim} fixes notation and reviews quasi-Hopf algebras and their gradings. Section~\ref{sec:Grad} establishes the correspondence between faithful $G$-gradings and $G$-grading morphisms to $\kk[G]$. It also studies induced $G/N$-gradings and their descent through quasi-Hopf quotients. Section~\ref{sec:Class} classifies quasi-Hopf ideals by admissible pairs. Section~\ref{sec:Explicit} introduces reduced data and transversal ideals, together with the associated retractions. It proves that every quasi-Hopf ideal is split and gives the resulting classification by retractions. Section~\ref{sec:CharForm} characterizes which pairs $(N,J)$ occur. For each such pair, it parametrizes the corresponding ideals by $G$-equivariant group homomorphisms. It then expresses this parametrization in a chosen crossed-product presentation. Section~\ref{sec:Apps} presents the applications and examples.


\section{Preliminaries and notation}\label{sec:Prelim}

In this section we recall the definitions and basic facts on quasi-Hopf algebras and on group-graded algebras that are used throughout the paper. For quasi-Hopf algebras we refer the reader to Drinfeld's original paper~\cite{D89} and to~\cite{BCPV19}, and for group-graded algebras to~\cite{NVO04}.

Fix a base field $\kk$ and a (not necessarily finite) group $G$, written multiplicatively with identity $e$. All structures and tensor products are over $\kk$. Algebras are unital, and algebra morphisms and anti-morphisms preserve units. All comultiplications are counital. We write $\{\zeta_g\}_{g\in G}$ for the standard basis of the group algebra $\kk[G]$, with $\zeta_g\zeta_h=\zeta_{gh}$.

\subsection{Quasi-Hopf algebras}\label{subsec:QH}

A \emph{quasi-bialgebra} is an algebra $H$ equipped with algebra morphisms $\Delta\colon H\to H\otimes H$ and $\varepsilon\colon H\to\kk$, and an invertible reassociator $\Phi\in H^{\otimes3}$, satisfying the following identities:
\begin{gather}
(1 \otimes \Phi)\,(\id \otimes \Delta \otimes \id)(\Phi)\,(\Phi \otimes 1)
=
(\id \otimes \id \otimes \Delta)(\Phi)\,(\Delta \otimes \id \otimes \id)(\Phi),
\label{eq:QH1}
\\
\Phi\,(\Delta \otimes \id)\left(\Delta(x)\right)
=
(\id \otimes \Delta)\left(\Delta(x)\right)\,\Phi,
\qquad \text{for all } x \in H,
\label{eq:QH2}
\\
(\id \otimes \varepsilon \otimes \id)(\Phi) = 1 \otimes 1,
\label{eq:QH3}
\\
(\id \otimes \varepsilon)\left(\Delta(x)\right) = x = (\varepsilon \otimes \id)\left(\Delta(x)\right),
\qquad \text{for all } x \in H.
\label{eq:QH4}
\end{gather}
Identity~\eqref{eq:QH1} is the \emph{pentagon axiom}, while~\eqref{eq:QH2} expresses the \emph{quasi-coassociativity} of $\Delta$. Identities~\eqref{eq:QH3}--\eqref{eq:QH4} are the \emph{counit axioms}. From these axioms one further deduces that
\begin{equation}\label{eq:QH5}
    (\varepsilon \otimes \id \otimes \id)(\Phi)=1\otimes 1 = (\id \otimes \id \otimes \varepsilon)(\Phi).
\end{equation}

We adopt \emph{primed Sweedler notation} for the comultiplication, writing $\Delta(x)=x'\otimes x''$ for all $x\in H$, with the understanding that $\Delta$ need not be coassociative. The reassociator $\Phi$ and its inverse $\Phi^{-1}$ are expressed in components as $\Phi=\Phi^{(1)}\otimes\Phi^{(2)}\otimes\Phi^{(3)}$ and $\Phi^{-1}=\Phi^{(-1)}\otimes\Phi^{(-2)}\otimes\Phi^{(-3)}$. As usual, a finite summation is implicitly understood and the summation sign is omitted; the same convention applies to all element-wise tensor decompositions below. Within a single expression, distinct occurrences of $\Phi$ or $\Phi^{-1}$ refer to distinct copies, each carrying its own suppressed summation index.

These axioms guarantee that the category of left $H$-modules carries a monoidal structure. The comultiplication determines the tensor product, and the base field $\kk$, regarded as a left $H$-module via the counit, is the monoidal unit. The action of $\Phi$ gives the associativity constraint.

A \emph{morphism of quasi-bialgebras} $f\colon H\to H'$ is an algebra morphism such that $(f\otimes f)\,\Delta_H=\Delta_{H'}\,f$, $\varepsilon_{H'}\,f=\varepsilon_H$, and $f^{\otimes 3}(\Phi_H)=\Phi_{H'}$.

\begin{definition}[Quasi-Hopf algebra]\label{def:QH}
A quasi-bialgebra $H$ is called a \emph{quasi-Hopf algebra} if it is equipped with an algebra anti-morphism $S \colon H \to H$, referred to as the \emph{antipode}, together with distinguished elements $\alpha, \beta \in H$ such that, for every $x \in H$, the following identities are satisfied:
\begin{gather}
S(x')\, \alpha\, x'' = \varepsilon(x)\,\alpha,
\qquad
x'\, \beta\, S(x'') = \varepsilon(x)\,\beta,
\label{eq:QH6}
\\
\Phi^{(1)}\, \beta\, S(\Phi^{(2)})\, \alpha\, \Phi^{(3)} = 1 = S(\Phi^{(-1)})\, \alpha\, \Phi^{(-2)}\, \beta\, S(\Phi^{(-3)}).
\label{eq:QH7}
\end{gather}

A \emph{morphism of quasi-Hopf algebras} $f\colon H\to H'$ is a quasi-bialgebra morphism satisfying $f\,S_H=S_{H'}\,f$, $f(\alpha_H)=\alpha_{H'}$, and $f(\beta_H)=\beta_{H'}$.
\end{definition}

A subalgebra $A \subseteq H$ is called a \emph{quasi-Hopf subalgebra} if $\Delta(A) \subseteq A \otimes A$, $S(A) \subseteq A$, $\alpha, \beta \in A$, and both $\Phi$ and $\Phi^{-1}$ lie in $A^{\otimes 3}$. With the restricted structure maps, a quasi-Hopf subalgebra is itself a quasi-Hopf algebra, and the inclusion $A\hookrightarrow H$ is a morphism of quasi-Hopf algebras.

Any bialgebra (respectively, Hopf algebra) can be viewed as a quasi-bialgebra (respectively, quasi-Hopf algebra) with trivial reassociator $\Phi = 1 \otimes 1 \otimes 1$ and, in the Hopf case, $\alpha = \beta = 1$. Note that Definition~\ref{def:QH} does not assume that the antipode is bijective, unlike the original definition in~\cite{D89}; in this we follow~\cite{BCPV19}. When $H$ is finite-dimensional or quasitriangular, the axioms themselves guarantee that the antipode is bijective (see~\cite{BC03,BN03}).

\begin{remark}\label{rem:Rescaling}
Applying $\varepsilon$ to~\eqref{eq:QH7} and using~\eqref{eq:QH3} and~\eqref{eq:QH5} yields $\varepsilon(\alpha)\,\varepsilon(\beta)=1$; in particular $\varepsilon(\alpha)\neq 0$, hence applying $\varepsilon$ to~\eqref{eq:QH6} gives $\varepsilon\circ S=\varepsilon$. Since replacing $(\alpha,\beta)$ by $(\lambda\alpha,\lambda^{-1}\beta)$ for $\lambda\in\kk^{\times}$ preserves~\eqref{eq:QH6}--\eqref{eq:QH7}, we may and do assume throughout the paper that $\varepsilon(\alpha) = \varepsilon(\beta) = 1$.
\end{remark}

In contrast to the Hopf algebra setting, the antipode of a quasi-Hopf algebra need not be a coalgebra anti-morphism. Nonetheless, there is a distinguished invertible element $\mathfrak{f}=\mathfrak{f}^{(1)}\otimes\mathfrak{f}^{(2)}\in H\otimes H$, called the \emph{Drinfeld twist}, which satisfies the normalization conditions $\varepsilon(\mathfrak{f}^{(1)})\,\mathfrak{f}^{(2)}=1=\varepsilon(\mathfrak{f}^{(2)})\,\mathfrak{f}^{(1)}$, thanks to the normalization $\varepsilon(\alpha)=\varepsilon(\beta)=1$ of Remark~\ref{rem:Rescaling}, and is such that, writing $\Delta^{\cop}(x):=x''\otimes x'$ for the opposite comultiplication,
\begin{equation}\label{eq:QH8}
\mathfrak{f}\,\Delta(S(x))\,\mathfrak{f}^{-1}
=
(S\otimes S)\left(\Delta^{\cop}(x)\right),
\qquad \text{for all } x\in H.
\end{equation}
Both $\mathfrak f$ and $\mathfrak f^{-1}$ are given by explicit formulas in terms of $\Phi^{\pm1}$, $\alpha$, $\beta$, $\Delta$, and $S$; see~\cite[(3.2.15)--(3.2.16)]{BCPV19}. We always mean by $\mathfrak f$ the element defined in this way.

\begin{definition}[Quasi-Hopf ideal]\label{def:QHIdeal}
A two-sided ideal $I$ of a quasi-Hopf algebra $H$ is called a \emph{quasi-Hopf ideal} if $\Delta(I)\subseteq I\otimes H+H\otimes I$, $\varepsilon(I)=0$, and $S(I)\subseteq I$.
\end{definition}

If $I$ is a quasi-Hopf ideal in $H$, then the quotient algebra $H/I$ admits a unique quasi-Hopf algebra structure for which the canonical projection $q \colon H \rightarrow H/I$ is a morphism of quasi-Hopf algebras. Concretely, the quasi-Hopf structure on $H/I$ is determined by the relations $\overline{\Delta}\,q=(q\otimes q)\,\Delta$, $\overline{\varepsilon}\,q=\varepsilon$, $\overline{S}\,q=q\,S$, $\overline{\Phi}=q^{\otimes 3}(\Phi)$, $\overline{\alpha}=q(\alpha)$, and $\overline{\beta}=q(\beta)$. The Drinfeld twist descends to $q^{\otimes2}(\mathfrak f)$ by its explicit formula. Moreover, the quotient has the usual universal property: every quasi-Hopf morphism $f\colon H\to H'$ with $I\subseteq\ker f$ factors uniquely through $q$. Conversely, kernels of quasi-Hopf morphisms are quasi-Hopf ideals, since $\ker(f\otimes f)=\ker f\otimes H+H\otimes\ker f$ and $f$ intertwines the counits and the antipodes. When $H$ is finite-dimensional, every quotient quasi-bialgebra of $H$ is automatically quasi-Hopf, of dimension dividing that of $H$~\cite{Sch05}. When $I$ is fixed, we write $\overline{x}=q(x)$.

\subsection{Graded quasi-Hopf algebras}\label{subsec:GradQH}

A \emph{$G$-graded algebra} is a $\kk$-algebra $A$ equipped with a decomposition into a direct sum of $\kk$-vector spaces $A=\bigoplus_{g\in G}A_g$ such that $A_gA_h\subseteq A_{gh}$ for all $g,h\in G$. The subspaces $A_g$ are called the \emph{homogeneous components} of $A$. In particular $1\in A_e$ and the neutral component $A_e$ is a subalgebra of $A$. The grading is said to be \emph{faithful} if $A_g\neq 0$ for every $g\in G$, and \emph{strong} if $A_gA_{g^{-1}}=A_e$ for every $g\in G$, which is equivalent to requiring $A_gA_h=A_{gh}$ for all $g,h\in G$. In particular, a nonzero strongly graded algebra is faithfully graded, since $1\in A_gA_{g^{-1}}$ implies $A_g\neq0$ for every $g\in G$. Writing $A_g^\times:=A_g\cap A^\times$ for the set of invertible elements of degree $g$, the inverse of an element of $A_g^\times$ lies in $A_{g^{-1}}$, as one sees by comparing homogeneous components in $uu^{-1}=1=u^{-1}u$. We say that $A$ is a \emph{$G$-crossed product} if $A_g^\times\neq\emptyset$ for every $g\in G$. In that case $A$ is strongly graded, since $A_e=A_eu_gu_g^{-1}\subseteq A_gA_{g^{-1}}\subseteq A_e$ for any $u_g\in A_g^\times$. Moreover, fixing a homogeneous unit $u_g\in A_g^\times$ for each $g\in G$, one has $A_g=A_eu_g$; hence every element of $A$ is uniquely a finite sum $\sum_{g\in G}b_gu_g$ with $b_g\in A_e$.

If $A=\bigoplus_{g\in G}A_g$ and $A'=\bigoplus_{g\in G}A'_g$ are two $G$-graded algebras, a morphism $f\colon A\to A'$ of algebras is called \emph{$G$-graded} if $f(A_g)\subseteq A'_g$ for all $g\in G$. An ideal $I$ of a $G$-graded algebra $A$ is called \emph{$G$-graded}, or simply \emph{graded}, if $I=\bigoplus_{g\in G}(I\cap A_g)$; equivalently, the homogeneous components of every element of $I$ lie in $I$.

Throughout, the decomposition of a $G$-graded algebra $A$ is implicitly denoted $A=\bigoplus_{g\in G} A_g$, and, for $g \in G$, the notation $x_g$ always denotes a homogeneous element of \emph{degree} $g$, that is, $x_g \in A_g$.

\begin{definition}[Graded quasi-Hopf algebra]\label{def:Grad}
A quasi-Hopf algebra $H$ is called \emph{$G$-graded} if its underlying algebra is $G$-graded, with decomposition $H=\bigoplus_{g\in G} H_g$, and in addition:
\begin{gather}
    \Delta(H_g)\subseteq H_g\otimes H_g,
    \qquad \text{for all } g\in G, \label{eq:Grad1}\\
    \Phi\in H_e^{\otimes 3},
    \qquad
    \alpha,\beta\in H_e,
    \qquad
    S(H_g)\subseteq H_{g^{-1}}, \qquad \text{for all } g\in G.\label{eq:Grad2}
\end{gather}

If $H=\bigoplus_{g\in G}H_g$ and $H'=\bigoplus_{g\in G}H'_g$ are two $G$-graded quasi-Hopf algebras, then a morphism of quasi-Hopf algebras $f\colon H\to H'$ is called \emph{$G$-graded} if it is $G$-graded as a morphism of algebras.
\end{definition}

The grading axioms have the following consequences; compare~\cite[\S\S3--4]{Ce02} for the Hopf case.

\begin{proposition}\label{prop:GradProp}
Let $H$ be a $G$-graded quasi-Hopf algebra. Then:
\begin{enumerate}
    \item For every $g\in G$ with $H_g\neq0$ there exists an element $t_g\in H_g$ such that $\varepsilon(t_g)=1$.
    \item The neutral component $H_e$ is a quasi-Hopf subalgebra of $H$.
\end{enumerate}
\end{proposition}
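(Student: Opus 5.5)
For (i), fix $g$ with $H_g\neq 0$. I would show that $\varepsilon$ does not vanish on $H_g$ and then rescale. Suppose instead that $\varepsilon(H_g)=0$. Take a nonzero $x\in H_g$ and use the counit axiom \eqref{eq:QH4}:
\[
x=(\varepsilon\otimes\id)(\Delta(x)).
\]
By \eqref{eq:Grad1}, $\Delta(x)\in H_g\otimes H_g$, so its first tensor factors lie in $H_g$. Applying $\varepsilon$ to them gives $0$, so $x=0$, a contradiction. Hence there is $y\in H_g$ with $\varepsilon(y)\neq0$, and $t_g:=\varepsilon(y)^{-1}y$ works. This part is routine.

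For (ii), I check the conditions in the definition of quasi-Hopf subalgebra in turn. First, $H_e$ is a subalgebra because $1\in H_e$ and $H_eH_e\subseteq H_e$. By \eqref{eq:Grad1} with $g=e$, we get $\Delta(H_e)\subseteq H_e\otimes H_e$. By \eqref{eq:Grad2}, $S(H_e)\subseteq H_{e^{-1}}=H_e$, $\alpha,\beta\in H_e$, and $\Phi\in H_e^{\otimes3}$.

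The only nontrivial point is $\Phi^{-1}\in H_e^{\otimes3}$. I would handle it by a grading argument on $H^{\otimes3}$.
\begin{itemize}
\item Give $H^{\otimes3}$ the $G^{3}$-grading with components $H_{g_1}\otimes H_{g_2}\otimes H_{g_3}$. This is an algebra grading, and $H_e^{\otimes3}$ is its neutral component.
\item Decompose $\Phi^{-1}=\sum_{\mathbf g}\Psi_{\mathbf g}$ into its homogeneous components.
\item Since $\Phi$ is homogeneous of degree $(e,e,e)$, we have $\Phi\Psi_{\mathbf g}\in H_{\mathbf g}^{\otimes}$, the component of degree $\mathbf g$. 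Comparing components in $\Phi\Phi^{-1}=1$ gives $\Phi\Psi_{\mathbf g}=0$ for $\mathbf g\neq(e,e,e)$, and $\Phi\Psi_{(e,e,e)}=1$.
\item Multiplying $\Phi\Psi_{\mathbf g}=0$ on the left by $\Phi^{-1}$ gives $\Psi_{\mathbf g}=0$ for $\mathbf g\neq(e,e,e)$. Hence $\Phi^{-1}=\Psi_{(e,e,e)}\in H_e^{\otimes3}$.
\end{itemize}
This is the same argument as the observation in \S\ref{subsec:GradQH} that the inverse of a homogeneous unit is homogeneous, applied to the unit $\Phi$ of degree $(e,e,e)$ in $H^{\otimes 3}$.

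Finally, since $\Phi^{\pm1}$, $\alpha$ and $\beta$ all lie in $H_e$ or $H_e^{\otimes 3}$, the axioms \eqref{eq:QH1}--\eqref{eq:QH7} hold in $H_e$ with the restricted structure. Indeed, each is an identity in $H$ or $H^{\otimes n}$ between elements of $H_e^{\otimes n}$, and $H_e^{\otimes n}\subseteq H^{\otimes n}$ injectively because we are working over a field. So $H_e$ is a quasi-Hopf subalgebra. The main (mild) obstacle is the inverse of $\Phi$, which the argument above settles.
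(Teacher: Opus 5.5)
Your proof is correct and follows the paper's argument: part (i) is the same counit-axiom contradiction, and part (ii) checks the same membership conditions. The only difference is that you justify $\Phi^{-1}\in H_e^{\otimes3}$ explicitly, by decomposing $\Phi^{-1}$ in the $G^3$-grading of $H^{\otimes3}$, whereas the paper simply asserts this step with ``hence also''.
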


\begin{proof}
(i) Choose a nonzero $x\in H_g$. If $\varepsilon$ vanished on $H_g$, then~\eqref{eq:Grad1} would give $(\varepsilon\otimes \id)\Delta(x)=0$, whereas the counit axiom~\eqref{eq:QH4} gives $(\varepsilon\otimes \id)\Delta(x)=x\neq 0$. Hence $\varepsilon(y)\neq 0$ for some $y\in H_g$, and $t_g:=\varepsilon(y)^{-1}y$ has the required property.

(ii) Since $H$ is $G$-graded as an algebra, $H_e$ is a subalgebra containing $1$, and~\eqref{eq:Grad1}--\eqref{eq:Grad2} give $\Delta(H_e)\subseteq H_e\otimes H_e$, $S(H_e)\subseteq H_e$, $\alpha,\beta\in H_e$, and $\Phi\in H_e^{\otimes3}$, hence also $\Phi^{-1}\in H_e^{\otimes3}$. Thus $H_e$ is a quasi-Hopf subalgebra.
\end{proof}

Conventions in the literature differ, even in the Hopf case: some authors require instead that $H$ be a graded coalgebra, that is, $\Delta(H_g)\subseteq\bigoplus_{hk=g}H_h\otimes H_k$ (see, e.g.,~\cite[\S10.5]{Mon93}). The two requirements are not comparable. The block $H_g\otimes H_g$ occurs in the second decomposition only for $g=e$; hence imposing both forces $H_g=0$ for every $g\neq e$. The same remark is made in~\cite[p.~359]{Ce02}. We adopt the diagonal condition~\eqref{eq:Grad1}, which is what makes the grading map of the next section land in $\kk[G]$ rather than in a larger coalgebra.

\begin{definition}[Strongly graded]\label{def:SG}
A $G$-graded quasi-Hopf algebra $H$ is called \emph{strongly graded} (respectively, a \emph{$G$-crossed product}) if its underlying $G$-graded algebra is strongly graded (respectively, a $G$-crossed product).
\end{definition}

The group algebra $\kk[G]$ with basis $\{\zeta_g\}_{g\in G}$ is a Hopf algebra with $\Delta(\zeta_g)=\zeta_g\otimes\zeta_g$, $\varepsilon(\zeta_g)=1$, and $S(\zeta_g)=\zeta_{g^{-1}}$. The decomposition $(\kk[G])_g:=\kk\,\zeta_g$ yields its canonical $G$-grading, under which $\kk[G]$ is a $G$-crossed product.

If $H$ is a quasi-Hopf algebra, then $H\otimes\kk[G]$ is a quasi-Hopf algebra with the componentwise product and
\[
\Delta(x\otimes\zeta_g)=\left(x'\otimes\zeta_g\right)\otimes\left(x''\otimes\zeta_g\right),
\qquad
\varepsilon(x\otimes\zeta_g)=\varepsilon(x),
\qquad
S(x\otimes\zeta_g)=S(x)\otimes\zeta_{g^{-1}},
\]
its reassociator and distinguished elements being the images of $\Phi$, $\alpha$ and $\beta$ under $x\mapsto x\otimes\zeta_e$. It is $G$-graded as a quasi-Hopf algebra by $\left(H\otimes\kk[G]\right)_g=H\otimes\kk\zeta_g$.

The next proposition shows that every faithful quasi-Hopf grading in our
sense is strong. Both the quasi-antipode axioms and the diagonal coproduct
condition are essential. Indeed, let $A=\kk[x]$ carry the
$\mathbb{Z}/2\mathbb{Z}$-grading
\[
A_0=\kk[x^2],
\qquad
A_1=x\kk[x^2].
\]
This grading is faithful but not strong, since
$A_1^2=x^2\kk[x^2]\neq A_0$. Declaring $x$ group-like makes $A$ a
bialgebra satisfying~\eqref{eq:Grad1}, but not a Hopf algebra. Declaring
$x$ primitive instead makes $A$ a Hopf algebra satisfying the
graded-coalgebra convention above, but not~\eqref{eq:Grad1}.

\begin{proposition}\label{prop:FaithStrong}
Let $H$ be a $G$-graded quasi-Hopf algebra. Then:
\begin{enumerate}
    \item $H_{g^{-1}}H_g=H_e=H_gH_{g^{-1}}$ for every $g\in G$ with $H_g\neq0$.

    \item The set of $g\in G$ with $H_g\neq0$ is a subgroup of $G$, and $H$ is strongly graded by it.

    \item If the grading is faithful, then it is strong.
\end{enumerate}
\end{proposition}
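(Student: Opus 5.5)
The heart of the matter is part (i): for $g$ with $H_g\neq0$ we must show $1\in H_gH_{g^{-1}}$ and $1\in H_{g^{-1}}H_g$. Both products are ideals of $H_e$: for instance $H_eH_gH_{g^{-1}}\subseteq H_gH_{g^{-1}}$. So containing $1$ forces each to equal $H_e$.

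To produce $1$, I would take an element $t_g\in H_g$ with $\varepsilon(t_g)=1$, which exists by Proposition~\ref{prop:GradProp}(i), and insert it into the first antipode identity in~\eqref{eq:QH6}. By~\eqref{eq:Grad1} we may write $\Delta(t_g)=t_g'\otimes t_g''$ with every tensorand in $H_g$. Then
\[
\alpha=\varepsilon(t_g)\alpha=S(t_g')\,\alpha\,t_g''\in H_{g^{-1}}H_eH_g\subseteq H_{g^{-1}}H_g ,
\]
using $S(H_g)\subseteq H_{g^{-1}}$ and $\alpha\in H_e$. Similarly the second identity in~\eqref{eq:QH6} gives $\beta=t_g'\,\beta\,S(t_g'')\in H_gH_{g^{-1}}$.

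These two memberships do not by themselves give $1$, since $\alpha$ and $\beta$ need not be invertible. This is the step I expect to be the main obstacle, and I would handle it with~\eqref{eq:QH7}. Since $\Phi\in H_e^{\otimes3}$, we have
\[
1=\Phi^{(1)}\,\beta\,S(\Phi^{(2)})\,\alpha\,\Phi^{(3)}\in H_e\,\beta\,H_e .
\]
Because $H_gH_{g^{-1}}$ is a two-sided ideal of $H_e$ containing $\beta$, this yields $1\in H_gH_{g^{-1}}$. Symmetrically, the second identity in~\eqref{eq:QH7} gives $1\in H_e\,\alpha\,H_e$, using $S(\Phi^{(-1)}),\Phi^{(-2)}\in H_e$ and $\Phi^{-1}\in H_e^{\otimes3}$. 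Hence $1\in H_{g^{-1}}H_g$, which proves (i).

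For (ii), let $\Gamma=\{g\in G: H_g\neq0\}$. We have $e\in\Gamma$ because $1\neq0$, assuming $H\neq0$, which holds since $\varepsilon(1)=1$.

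If $g\in\Gamma$, then by (i) $H_{g^{-1}}H_g=H_e\neq0$, so $H_{g^{-1}}\neq0$ and $g^{-1}\in\Gamma$.

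If $g,h\in\Gamma$, then by (i)
\[
H_gH_h\supseteq H_gH_eH_h=H_g(H_hH_{h^{-1}})H_h .
\]
Rather than chase this inclusion, it is cleaner to take $t_g,t_h$ with $\varepsilon(t_g)=\varepsilon(t_h)=1$. Since $\varepsilon$ is multiplicative, $\varepsilon(t_gt_h)=1$, so $t_gt_h\neq0$ lies in $H_{gh}$ and $gh\in\Gamma$.

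Thus $\Gamma$ is a subgroup. We have $H=\bigoplus_{g\in\Gamma}H_g$ as a $\Gamma$-graded quasi-Hopf algebra, since all the axioms~\eqref{eq:Grad1}--\eqref{eq:Grad2} restrict. By (i), $H_gH_{g^{-1}}=H_e$ for all $g\in\Gamma$, so this grading is strong.

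For (iii), faithfulness means $\Gamma=G$, so (iii) follows directly from (ii).
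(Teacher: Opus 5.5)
Your proof is correct and follows essentially the same route as the paper: a counit-one element $t_g\in H_g$ fed into \eqref{eq:QH6} places $\alpha$ (respectively $\beta$) in the relevant ideal of $H_e$, and \eqref{eq:QH7} then produces $1$. The only difference is that you get $H_gH_{g^{-1}}=H_e$ directly from the $\beta$-identity, whereas the paper uses only the $\alpha$-identity and obtains the second equality by rerunning the argument for $g^{-1}$, using that $S(t_g)\in H_{g^{-1}}$ has counit $1$. A small remark: the first identity of \eqref{eq:QH7} already lies in $H_e\,\alpha\,H_e$, so the detour through $\Phi^{-1}$ is not needed.
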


\begin{proof}
(i) Let $g\in G$ with $H_g\neq0$. By Proposition~\ref{prop:GradProp}(i) there is $t_g\in H_g$ with $\varepsilon(t_g)=1$. Set $T:=H_{g^{-1}}H_g$, a two-sided ideal of $H_e$ because $H$ is $G$-graded as an algebra. By~\eqref{eq:Grad1} we have $\Delta(t_g)\in H_g\otimes H_g$, and by~\eqref{eq:Grad2} we have $S(H_g)\subseteq H_{g^{-1}}$ and $\alpha\in H_e$. The first identity of~\eqref{eq:QH6} therefore gives
\[
\alpha=\varepsilon(t_g)\,\alpha=S(t_g')\,\alpha\,t_g''\in S(H_g)\,\alpha\,H_g\subseteq H_{g^{-1}}H_eH_g\subseteq T.
\]
Since $\Phi\in H_e^{\otimes3}$, $\beta\in H_e$ and $S(H_e)\subseteq H_e$ by~\eqref{eq:Grad2}, the first identity of~\eqref{eq:QH7} yields $1=\Phi^{(1)}\beta S(\Phi^{(2)})\alpha\Phi^{(3)}\in H_e\,\alpha\,H_e\subseteq T$. Hence $H_e=H_e\cdot1\subseteq T\subseteq H_e$, that is, $H_{g^{-1}}H_g=H_e$. Moreover $\varepsilon(S(t_g))=\varepsilon(t_g)=1$ by Remark~\ref{rem:Rescaling} and $S(t_g)\in H_{g^{-1}}$; hence $H_{g^{-1}}\neq0$, and the same argument applied to $g^{-1}$ gives $H_gH_{g^{-1}}=H_e$.

(ii) The neutral component is nonzero, since $1\in H_e$. If $H_g\neq0$, then $H_{g^{-1}}\neq0$ by the previous paragraph. If $H_g$ and $H_h$ are nonzero, then $t_gt_h\in H_{gh}$ has counit $1$, and therefore $H_{gh}\neq0$. Thus the support is a subgroup, over which the grading is strong by (i).

(iii) Immediate from (i).
\end{proof}

Consequently, for graded quasi-Hopf algebras, faithful and strong gradings are equivalent. We state subsequent hypotheses in terms of faithful gradings and freely use the equivalent strong-grading identities in the proofs.

For Hopf algebras, Proposition~\ref{prop:FaithStrong}(iii) also follows
from a standard Hopf--Galois argument. A surjective Hopf algebra morphism
$f\colon H\to\kk[G]$ makes $H$ a $\kk[G]$-comodule algebra with
coinvariants $H_e$. The associated canonical map
\[
H\otimes_{H_e}H\longrightarrow H\otimes\kk[G],
\qquad
x\otimes y\longmapsto xy'\otimes f(y''),
\]
is surjective~\cite[Ex.~3.6]{Sch90}, and for a $\kk[G]$-coaction the
canonical map is surjective precisely when the grading is strong
~\cite[Thm.~8.1.7]{Mon93}. This argument does not extend directly to
quasi-Hopf algebras with nontrivial reassociator, because it relies on
coassociativity of the coproduct and on the Hopf identity
$S(x')x''=\varepsilon(x)1$.

In finite dimension, a graded quasi-Hopf algebra is faithfully graded if and only if it is a crossed product.

\begin{proposition}\label{prop:FinDimCrossed}
Let $H$ be a finite-dimensional faithfully $G$-graded quasi-Hopf algebra. Then $G$ is finite and $H$ is a $G$-crossed product.
\end{proposition}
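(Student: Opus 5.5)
Finiteness of $G$ comes first. The homogeneous components $H_g$ form a direct sum, and faithfulness gives $H_g\neq0$ for every $g\in G$, so $|G|\le\dim H<\infty$.

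For the crossed-product property, fix $g\in G$. By Proposition~\ref{prop:FaithStrong}(iii) the grading is strong, so $H_gH_{g^{-1}}=H_e=H_{g^{-1}}H_g$. The plan is to regard $H_g$ as an $(H_e,H_e)$-bimodule and view it as an invertible bimodule. Multiplication $H_g\otimes_{H_e}H_{g^{-1}}\to H_e$ is surjective, and in a strongly graded ring it is an isomorphism. This is the standard fact from~\cite{NVO04} that $H_g$ is an invertible $H_e$-bimodule, with inverse $H_{g^{-1}}$. Hence $H_g$ is a finitely generated projective generator as a left $H_e$-module, and $H_e\cong\operatorname{End}_{H_e}(H_g)^{\mathrm{op}}$.

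The goal is to show $H_g\cong H_e$ as left $H_e$-modules. A generator $x$ of $H_g$ as a left $H_e$-module, that is, $H_g=H_ex$, would then be a unit. The argument runs as follows. The map $H_e\to H_g$, $a\mapsto ax$, is surjective; both spaces have the same dimension, so it is bijective. Then $x$ is not a left zero divisor in the sense that right multiplication by $x$ is injective on $H_e$. Using $H_{g^{-1}}H_g=H_e$, right multiplication by $x$ on all of $H$, which maps $H_h\to H_{hg}$, is injective on each component, hence injective. In finite dimension it is therefore bijective, and $x$ has a right inverse and so is a unit.

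It remains to obtain $H_g\cong H_e$ as left modules, and I expect this to be the main obstacle. For algebras in general, invertible bimodules need not be free of rank one, since the Picard group can be nontrivial. Here the quasi-Hopf structure must be used. My first attempt would use $\dim H_g=\dim H_e$. The element $t_g$ with $\varepsilon(t_g)=1$ from Proposition~\ref{prop:GradProp}(i) should allow one to construct an isomorphism $H\otimes_{H_e}H_g$-type comparison, or more concretely a map $H_e\to H_g$, $a\mapsto at_g$, whose injectivity follows from a Hopf-module argument. Applying $\varepsilon$-type maps together with~\eqref{eq:QH6}, we have $S(t_g')\alpha t_g''=\alpha$, which gives a left-module splitting. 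A cleaner route is to use that a finite-dimensional quasi-Hopf algebra $H$ is free over its quasi-Hopf subalgebra $H_e$. This is the Schauenburg/Nichols–Zoeller-type result underlying~\cite{Sch05}, and it yields $H\cong H_e^{\,n}$ as left $H_e$-modules. We also have $H=\bigoplus_g H_g$, and all the $H_g$ lie in the Picard group and are twisted by $g$-conjugation. Krull–Schmidt applied to the equality $\bigoplus_g H_g\cong H_e^{|G|}$ does not immediately force $H_g\cong H_e$. The fix is to note that $H\otimes_{H_e}-$ induces a twist: $H_g\otimes_{H_e}H\cong H$ as left $H_e$-modules after regrading. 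So $H_g^{\,|G|}\cong H_g\otimes_{H_e}H\cong H\cong H_e^{|G|}$, and Krull–Schmidt then gives $H_g\cong H_e$.

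Summarizing the order of steps: (1) $G$ is finite; (2) strongness and invertibility of the bimodule $H_g$; (3) freeness of $H$ over $H_e$ from the quasi-Hopf freeness theorem; (4) the Krull–Schmidt comparison $H_g^{|G|}\cong H_e^{|G|}$, giving a generator $x$ with $H_g=H_ex$; (5) the dimension argument showing that $x$ is a unit. Step (3) is the delicate one, and it is where I would rely on the cited finite-dimensional quasi-Hopf theory.
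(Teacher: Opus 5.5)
Your proposal is correct and takes essentially the paper's route: Schauenburg's freeness theorem \cite{Sch04} (applicable since finite-dimensional quasi-Hopf algebras have bijective antipode), then a Krull--Schmidt comparison giving $H_g\cong H_e$ as one-sided $H_e$-modules, then extraction of a homogeneous unit. Your cancellation $H_g^{\,n}\cong H_g\otimes_{H_e}H\cong H\cong H_e^{\,n}$ is a slightly tidier substitute for the paper's permutation-of-multiplicities argument. Two small fixes are needed. The exponent must be the rank $n=\dim H/\dim H_e$, not $|G|$, because $n=|G|$ is only known once $\dim H_g=\dim H_e$ has been proved. Also, injectivity of $y\mapsto yx$ on $H_h$ uses $H_hH_{h^{-1}}=H_e$ rather than $H_{g^{-1}}H_g=H_e$.
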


\begin{proof}
By Proposition~\ref{prop:FaithStrong}(iii), the grading is strong. Hence $1\in H_gH_{g^{-1}}$ gives $H_g\neq0$ for every $g\in G$. As the sum $H=\bigoplus_{g\in G}H_g$ is direct and $H$ is finite-dimensional, $|G|\leq\dim_\kk H$, and hence $G$ is finite. The neutral component $H_e$ is a quasi-Hopf subalgebra by Proposition~\ref{prop:GradProp}(ii); Schauenburg's theorem~\cite[Thm.~4.2]{Sch04} therefore shows that $H$ is free as a right $H_e$-module, say $H\cong H_e^{\oplus r}$, necessarily with $r=\dim H/\dim H_e\geq1$. The antipode convention of~\cite{Sch04} requires the antipodes to be bijective; they are, since $H$ and $H_e$ are finite-dimensional. Since $H_gH_e\subseteq H_g$, the grading is a decomposition of right $H_e$-modules. In particular each component $H_g$, being a direct summand of $H$ as a right $H_e$-module, is finitely generated projective.

Since the grading is strong, multiplication induces isomorphisms of $H_e$-bimodules $H_g\otimes_{H_e}H_h\to H_{gh}$, and each $H_g$ is an invertible $H_e$-bimodule~\cite[Cor.~3.1.2]{NVO04}. Hence $-\otimes_{H_e}H_g$ is an equivalence of the category of right $H_e$-modules and permutes the isomorphism classes of finitely generated indecomposable projective modules. By the Krull--Schmidt theorem, write $H_e=\bigoplus_i P_i^{\oplus m_i}$ as a right $H_e$-module, with the $P_i$ pairwise non-isomorphic and indecomposable. Every finitely generated indecomposable projective right $H_e$-module is isomorphic to some $P_i$, and we write $s_g$ for the permutation of the indices induced by $-\otimes_{H_e}H_g$, with $s_hs_g=s_{gh}$. The multiplicity of $P_j$ in $H_g\cong H_e\otimes_{H_e}H_g$ is $m_{s_g^{-1}(j)}$, and comparing multiplicities in $H=\bigoplus_{g\in G}H_g\cong H_e^{\oplus r}$ gives
\[
\sum_{g\in G}m_{s_g^{-1}(j)}=r\,m_j,
\qquad \text{for all } j.
\]
Since $s_g^{-1}=s_{g^{-1}}$, the left-hand side equals $\sum_{g\in G}m_{s_g(j)}$. Replacing $j$ by $s_h(j)$ and using $s_gs_h=s_{hg}$ reindexes this sum by $g\mapsto hg$, and therefore leaves it unchanged. As $r\geq1$, the function $j\mapsto m_j$ is therefore fixed by every $s_h$. Hence the multiplicities of $H_g$ and $H_e$ coincide, and $H_g\cong H_e$ as right $H_e$-modules for every $g\in G$.

Fix $g\in G$ and an isomorphism of right $H_e$-modules $H_e\to H_g$. Its value at $1$ is an element $w\in H_g$ such that $b\mapsto wb$ is a bijection from $H_e$ onto $H_g$. Since $H_gH_{g^{-1}}=H_e$ and $H_g=wH_e$, there exists $v\in H_{g^{-1}}$ with $wv=1$. Then $vw-1\in H_e$ and $w\left(vw-1\right)=\left(wv\right)w-w=0$; hence $vw=1$ because the map $b\mapsto wb$ is injective. Thus $w$ is a homogeneous unit of degree $g$, and $H$ is a $G$-crossed product.
\end{proof}

The quasi-Hopf hypothesis is essential. The algebra $M_3(\kk)$ carries a strong $\mathbb{Z}/2\mathbb{Z}$-grading that is not a crossed product~\cite[Ex.~1.3.5]{NVO04}; consequently, no purely ring-theoretic argument can establish the conclusion. For ordinary Hopf algebras the statement is due to~\cite[Thm.~2.2]{Sch92}, see also~\cite[Thm.~8.4.6]{Mon93}. We have not found the quasi-Hopf case in the literature. The finite-dimensional hypothesis is also essential: Subsection~\ref{subsec:SL2} below exhibits an infinite-dimensional faithfully graded Hopf algebra that is not a crossed product.


\section{Gradings and cocentral morphisms}\label{sec:Grad}

We identify faithful $G$-gradings with suitable cocentral morphisms to $\kk[G]$ and study the $G/N$-gradings induced by quotient maps $G\to G/N$, as well as their descent through quasi-Hopf quotients.

\subsection{The grading correspondence}\label{subsec:Cocentral}

Recall that a morphism of quasi-Hopf algebras $\pi\colon H\to H'$ is said to be \emph{cocentral} if it satisfies
\[
x'\otimes \pi(x'')=x''\otimes \pi(x'),
\qquad \text{for all } x\in H.
\]
\begin{definition}[Grading morphisms]\label{def:GradMor}
Let $\pi\colon H\to \kk[G]$ be a cocentral morphism of quasi-Hopf algebras. Consider the map $\rho_\pi:=(\pi\otimes \id)\Delta\colon H\to \kk[G]\otimes H$ and, for $g\in G$, the \emph{weight spaces} $H_g^\pi := \{\,x\in H\mid \rho_\pi(x)=\zeta_g\otimes x\,\}$.
\begin{enumerate}
    \item We call $\pi$ \emph{strict} if $\Psi_\pi:=(\pi\otimes \pi\otimes \id)(\Phi)=\zeta_e\otimes \zeta_e\otimes 1$.

    \item We say that $\pi$ is a \emph{$G$-grading morphism} if it is surjective and
    \[
    \Phi\in (H_e^\pi)^{\otimes 3},
    \qquad
    \alpha,\beta\in H_e^\pi,
    \qquad
    \mathfrak f\in (H_e^\pi)^{\otimes 2},
    \]
    where $\mathfrak{f}$ is the Drinfeld twist of $H$.
\end{enumerate}
\end{definition}

In contrast to the Hopf case, this last condition is not vacuous: it is used below to prove $S(H_g^\pi)\subseteq H_{g^{-1}}^\pi$, which does not follow from the other axioms.

Note that applying $\id\otimes\varepsilon$ to the identity $\rho_\pi(x)=\zeta_g\otimes x$ defining the weight space $H_g^\pi$, and using the counit axiom~\eqref{eq:QH4}, gives
\begin{equation}\label{eq:WeightId}
\pi(x)=\varepsilon(x)\,\zeta_g,
\qquad
\text{for all } x\in H_g^\pi.
\end{equation}
Every $G$-grading morphism is therefore strict: applying~\eqref{eq:WeightId} with $g=e$ to the first two legs of $\Phi\in (H_e^\pi)^{\otimes 3}$ and then~\eqref{eq:QH5} gives $\Psi_\pi=\zeta_e\otimes\zeta_e\otimes(\varepsilon\otimes\varepsilon\otimes\id)(\Phi) =\zeta_e\otimes\zeta_e\otimes 1$.

\begin{lemma}\label{lem:Coaction}
If $\pi\colon H\to \kk[G]$ is a strict cocentral morphism of quasi-Hopf algebras, then $\rho_\pi$ is a counital and coassociative left $\kk[G]$-coaction. It makes $H$ a left $\kk[G]$-comodule algebra.
\end{lemma}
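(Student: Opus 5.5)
Counitality is immediate. Since $\pi$ is a morphism of quasi-Hopf algebras, $\varepsilon_{\kk[G]}\pi=\varepsilon$, so $(\varepsilon\otimes\id)\rho_\pi(x)=\varepsilon(x')x''=x$ by~\eqref{eq:QH4}. The comodule-algebra property is equally quick: $\Delta$ and $\pi$ are algebra morphisms, hence so is $\rho_\pi=(\pi\otimes\id)\Delta$ into the tensor product algebra $\kk[G]\otimes H$. The only real content is coassociativity, namely
\[
(\id\otimes\rho_\pi)\rho_\pi=(\Delta_{\kk[G]}\otimes\id)\rho_\pi .
\]

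For coassociativity I would take quasi-coassociativity~\eqref{eq:QH2} and apply $\pi\otimes\pi\otimes\id$ to both sides. The right-hand side of the target is $(\pi\otimes\pi\otimes\id)(\Delta\otimes\id)\Delta(x)$, because $\pi$ intertwines comultiplications: $(\pi\otimes\pi)\Delta=\Delta_{\kk[G]}\pi$. The left-hand side is $(\pi\otimes\pi\otimes\id)(\id\otimes\Delta)\Delta(x)$. Since $\pi\otimes\pi\otimes\id$ is an algebra morphism, \eqref{eq:QH2} gives
\[
\Psi_\pi\,(\pi\otimes\pi\otimes\id)(\Delta\otimes\id)\Delta(x)=(\pi\otimes\pi\otimes\id)(\id\otimes\Delta)\Delta(x)\,\Psi_\pi .
\]
Strictness says $\Psi_\pi=\zeta_e\otimes\zeta_e\otimes1$, which is the unit of $\kk[G]\otimes\kk[G]\otimes H$. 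The reassociator factors therefore disappear, and the two sides of the target coincide.

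Cocentrality is not actually needed for these three properties; it matters later, for instance in the weight-space analysis. The step I expect to require the most care is therefore not a genuine obstacle but a matter of bookkeeping. One must check that the map $\pi\otimes\pi\otimes\id$ applied to $(\id\otimes\Delta)\Delta(x)$ really equals $(\id\otimes\rho_\pi)\rho_\pi(x)$, which unwinds to $\pi(x')\otimes\pi(x''{}')\otimes x''{}''$. One must also keep track of the order of the legs: the $\kk[G]$ factors occupy the first two positions on both sides, so no flip is required.
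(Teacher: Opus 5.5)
Your proposal is correct and follows essentially the same route as the paper. Counitality comes from $\varepsilon_{\kk[G]}\pi=\varepsilon$ and \eqref{eq:QH4}, the comodule-algebra property from $\rho_\pi$ being a composite of algebra morphisms, and coassociativity from applying $\pi\otimes\pi\otimes\id$ to \eqref{eq:QH2} and using strictness to remove $\Psi_\pi$. Your remark that cocentrality is not used is also consistent with the paper's proof.
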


\begin{proof}
Counitality follows from the counit axiom~\eqref{eq:QH4} and the identity $\varepsilon_{\kk[G]}\pi=\varepsilon_H$. For coassociativity, using quasi-coassociativity~\eqref{eq:QH2} we obtain, for $x\in H$,
\begin{align*}
(\Delta_{\kk[G]}\otimes \id)\rho_\pi(x)
&=(\pi\otimes \pi\otimes \id)(\Delta\otimes \id)\Delta(x) = \Psi_\pi^{-1}
(\pi\otimes \pi\otimes \id)(\id\otimes \Delta)\Delta(x)
\Psi_\pi \\
&=(\pi\otimes \pi\otimes \id)(\id\otimes \Delta)\Delta(x) = (\id\otimes \rho_\pi)\rho_\pi(x),
\end{align*}
where the third equality uses that $\pi$ is strict. Finally, $\rho_\pi$ is an algebra morphism, being a composite of algebra morphisms.
\end{proof}

This result explains the terminology in Definition~\ref{def:GradMor}(i): the coaction induced by a strict morphism is genuinely coassociative. For a general cocentral morphism, the same computation gives coassociativity only up to conjugation by $\Psi_\pi$, making $H$ a twisted comodule algebra in the sense of~\cite{BCM86,HN99}. Compare with~\cite[Rem.~2.1]{MN14}, where the analogous right coaction exhibits the grading of a cleft extension of a group algebra by a dual group algebra.

\begin{proposition}\label{prop:Cocentral1}
Let $H$ be a faithfully $G$-graded quasi-Hopf algebra. Then there exists a $G$-grading morphism $\pi\colon H\to \kk[G]$, given on homogeneous elements by
\begin{equation}\label{eq:GradMap}
\pi(x_g)=\varepsilon(x_g)\,\zeta_g,
\qquad
\text{for all } x_g\in H_g.
\end{equation}
The morphism $\pi$ is $G$-graded, and its weight spaces recover the given grading, i.e., $H_g^\pi=H_g$ for every $g\in G$.
\end{proposition}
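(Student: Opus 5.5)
Define $\pi$ by \eqref{eq:GradMap}, extended linearly using the direct sum decomposition $H=\bigoplus_g H_g$. The plan is to verify the quasi-Hopf morphism axioms first, then cocentrality, then identify the weight spaces, and finally deduce the remaining $G$-grading morphism conditions.

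\emph{Multiplicativity.} Take $x_g\in H_g$ and $y_h\in H_h$. Then $x_gy_h\in H_{gh}$ and $\varepsilon$ is multiplicative, so $\pi(x_gy_h)=\varepsilon(x_g)\varepsilon(y_h)\zeta_{gh}=\pi(x_g)\pi(y_h)$. Also $\pi(1)=\zeta_e$ because $1\in H_e$.

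\emph{Comultiplicativity.} By \eqref{eq:Grad1} we have $\Delta(x_g)\in H_g\otimes H_g$. Hence
\[
(\pi\otimes\pi)\Delta(x_g)=(\varepsilon\otimes\varepsilon)\Delta(x_g)\,\zeta_g\otimes\zeta_g=\varepsilon(x_g)\,\zeta_g\otimes\zeta_g=\Delta(\pi(x_g)).
\]
The counit condition $\varepsilon_{\kk[G]}\pi=\varepsilon$ is immediate from the definition.

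\emph{Antipode and structure elements.} Since $S(H_g)\subseteq H_{g^{-1}}$ and $\varepsilon S=\varepsilon$ by Remark~\ref{rem:Rescaling}, we get $\pi S(x_g)=\varepsilon(x_g)\zeta_{g^{-1}}=S(\pi(x_g))$. Since $\alpha,\beta\in H_e$ with $\varepsilon(\alpha)=\varepsilon(\beta)=1$, we get $\pi(\alpha)=\pi(\beta)=\zeta_e$, which are the distinguished elements of $\kk[G]$. Since $\Phi\in H_e^{\otimes3}$, \eqref{eq:QH5} gives $\pi^{\otimes3}(\Phi)=\zeta_e^{\otimes3}$.

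\emph{Surjectivity.} By faithfulness and Proposition~\ref{prop:GradProp}(i), each $H_g$ contains $t_g$ with $\pi(t_g)=\zeta_g$.

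\emph{Cocentrality and the coaction.} For homogeneous $x_g$, write $\Delta(x_g)=x_g'\otimes x_g''$ with both legs in $H_g$. Then
\[
x_g'\otimes\pi(x_g'')=x_g'\varepsilon(x_g'')\otimes\zeta_g=x_g\otimes\zeta_g,
\]
and symmetrically $x_g''\otimes\pi(x_g')=x_g\otimes\zeta_g$ by \eqref{eq:QH4}; linearity gives cocentrality. The same computation gives $\rho_\pi(x_g)=\zeta_g\otimes x_g$, so $H_g\subseteq H_g^\pi$.

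\emph{Weight spaces.} For the reverse inclusion, take $x\in H_g^\pi$ and decompose $x=\sum_h x_h$. Then $\rho_\pi(x)=\sum_h\zeta_h\otimes x_h$, and comparing with $\zeta_g\otimes x$ in $\kk[G]\otimes H$, using the basis $\{\zeta_h\}$, forces $x_h=0$ for $h\neq g$. So $H_g^\pi=H_g$, and $\pi$ is $G$-graded because $\pi(H_g)\subseteq\kk\zeta_g$.

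\emph{The remaining conditions.} Since $H_e^\pi=H_e$, the conditions $\Phi\in(H_e^\pi)^{\otimes3}$ and $\alpha,\beta\in H_e^\pi$ follow from \eqref{eq:Grad2}. The one step needing care is $\mathfrak f\in H_e^{\otimes2}$. I would use the explicit formula for $\mathfrak f$ in \cite[(3.2.15)]{BCPV19}: it is built from $\Phi^{\pm1}$, $\alpha$, $\beta$, $S$ and $\Delta$ applied to these elements. Each of them lies in $H_e$, and $H_e$ is a quasi-Hopf subalgebra by Proposition~\ref{prop:GradProp}(ii), closed under $\Delta$ and $S$. Therefore every leg of $\mathfrak f$ lies in $H_e$. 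This is the main obstacle only in the bookkeeping sense of checking the formula involves nothing else.
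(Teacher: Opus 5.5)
Your proposal is correct and follows essentially the same route as the paper's proof. Both arguments check the morphism axioms on homogeneous elements using \eqref{eq:Grad1}, \eqref{eq:Grad2} and $\varepsilon(\alpha)=\varepsilon(\beta)=1$, obtain surjectivity from Proposition~\ref{prop:GradProp}(i), derive cocentrality and $H_g^\pi=H_g$ from the counit axiom and the linear independence of the $\zeta_h$, and get $\mathfrak f\in H_e^{\otimes2}$ from its explicit formula; the only cosmetic difference is that you cite \eqref{eq:QH5} rather than \eqref{eq:QH3} for $\pi^{\otimes3}(\Phi)=\zeta_e\otimes\zeta_e\otimes\zeta_e$, which works equally well.
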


\begin{proof}
The assignment in~\eqref{eq:GradMap} extends uniquely to a degree-preserving $\kk$-linear map $\pi\colon H\to \kk[G]$ with $\pi(H_g)\subseteq\kk\,\zeta_g=(\kk[G])_g$. It remains to show below that $\pi$ is a morphism of quasi-Hopf algebras. Every component is nonzero by faithfulness. By Proposition~\ref{prop:GradProp}(i) there exists $t_g\in H_g$ with $\varepsilon(t_g)=1$ for each $g\in G$, whence $\pi(t_g)=\zeta_g$ and $\pi$ is surjective.

We check that $\pi$ is a morphism of quasi-Hopf algebras. If $x_g\in H_g$ and $y_h\in H_h$, then $x_gy_h\in H_{gh}$, and hence
\[
\pi(x_gy_h) =\varepsilon(x_gy_h)\,\zeta_{gh}  =\varepsilon(x_g)\varepsilon(y_h)\,\zeta_g\zeta_h  =\pi(x_g)\pi(y_h).
\]
The unit is preserved because $1\in H_e$. Compatibility with the comultiplication follows in the same way from $\Delta(x_g)\in H_g\otimes H_g$ and the counit axiom~\eqref{eq:QH4}, which give
\[
(\pi\otimes\pi)\Delta(x_g)=\varepsilon(x_g)\,\zeta_g\otimes\zeta_g=\Delta_{\kk[G]}(\pi(x_g)),
\]
and compatibility with the counit is immediate. Since $\Phi\in H_e^{\otimes3}$ and $\alpha,\beta\in H_e$ with $\varepsilon(\alpha)=\varepsilon(\beta)=1$, identity~\eqref{eq:QH3} gives $\pi^{\otimes3}(\Phi)=\zeta_e\otimes\zeta_e\otimes\zeta_e$ and $\pi(\alpha)=\zeta_e=\pi(\beta)$. Finally, $S(x_g)\in H_{g^{-1}}$ and $\varepsilon\circ S=\varepsilon$ give $\pi(S(x_g))=\varepsilon(x_g)\,\zeta_{g^{-1}}=S_{\kk[G]}(\pi(x_g))$. Thus $\pi$ is a morphism of quasi-Hopf algebras.

For $x_g\in H_g$, the counit axiom~\eqref{eq:QH4} gives
\[
x_g'\otimes \pi(x_g'') =x_g'\otimes \varepsilon(x_g'')\,\zeta_g  =x_g\otimes \zeta_g  =x_g''\otimes \varepsilon(x_g')\,\zeta_g  =x_g''\otimes \pi(x_g').
\]
Hence $\pi$ is cocentral.

It remains to verify the membership conditions of Definition~\ref{def:GradMor}(ii), since we have already proved that $\pi$ is surjective. The weight spaces of $\pi$ coincide with the original homogeneous components, which also proves the last assertion of the proposition. Indeed, if $x_g\in H_g$, then $(\pi\otimes\id)\Delta(x_g)=\varepsilon(x_g')\zeta_g\otimes x_g''=\zeta_g\otimes x_g$. Conversely, if $x\in H_g^\pi$ has homogeneous decomposition $x=\sum_h x_h$, then $\zeta_g\otimes x=(\pi\otimes\id)\Delta(x)=\sum_h\zeta_h\otimes x_h$, and linear independence of the $\zeta_h$ gives $x_h=0$ for $h\neq g$. Hence $H_g^\pi=H_g$ for all $g\in G$.

Since $H_g^\pi=H_g$, the conditions $\Phi\in (H_e^\pi)^{\otimes 3}$ and $\alpha,\beta\in H_e^\pi$ hold by~\eqref{eq:Grad2}. The Drinfeld twist $\mathfrak f^{\pm 1}$ is given by explicit formulas in terms of $\Phi^{\pm 1}$, $\alpha$, $\beta$, $\Delta$, and $S$. Since $\Phi^{\pm 1}\in H_e^{\otimes 3}$ (Proposition~\ref{prop:GradProp}(ii)) and all the operations involved preserve $H_e$, we conclude $\mathfrak f^{\pm 1}\in H_e^{\otimes 2}$. Thus $\pi$ is a $G$-grading morphism.
\end{proof}

Before proving the converse, recall the following fact (see, e.g.,~\cite[Ex.~1.6.7]{Mon93}). If $(V,\rho)$ is a left $\kk[G]$-comodule, then $V_g:=\{\,v\in V\mid \rho(v)=\zeta_g\otimes v\,\}$ defines a direct sum decomposition $V=\bigoplus_{g\in G}V_g$.

\begin{proposition}\label{prop:Cocentral2}
Let $H$ be a quasi-Hopf algebra and let $\pi\colon H\to \kk[G]$ be a $G$-grading morphism. For each $g\in G$, set
\begin{equation}\label{eq:Weights}
H_g:=H_g^\pi
=
\{\,x\in H\mid (\pi\otimes \id)\Delta(x)=\zeta_g\otimes x\,\}.
\end{equation}
Then the family $\{H_g\}_{g\in G}$ defines a faithful, and hence strong, $G$-grading on $H$ as a quasi-Hopf algebra.
\end{proposition}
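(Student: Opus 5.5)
By Lemma~\ref{lem:Coaction}, every $G$-grading morphism is strict, so $\rho_\pi$ is a counital, coassociative left $\kk[G]$-coaction and makes $H$ a left $\kk[G]$-comodule algebra. The recalled comodule fact therefore gives a vector space decomposition $H=\bigoplus_{g\in G}H_g$. Since $\rho_\pi$ is an algebra morphism, $x\in H_g$ and $y\in H_h$ give
\[
\rho_\pi(xy)=(\zeta_g\otimes x)(\zeta_h\otimes y)=\zeta_{gh}\otimes xy,
\]
so $H_gH_h\subseteq H_{gh}$. Also $1\in H_e$ because $\rho_\pi(1)=\zeta_e\otimes 1$. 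Thus $H$ is a $G$-graded algebra. The conditions $\Phi\in H_e^{\otimes3}$ and $\alpha,\beta\in H_e$ are part of the definition of a $G$-grading morphism. Faithfulness comes from surjectivity. For each $g$, pick $y$ with $\pi(y)=\zeta_g$ and decompose $y=\sum_h y_h$. By \eqref{eq:WeightId}, $\pi(y)=\sum_h\varepsilon(y_h)\zeta_h$, so $\varepsilon(y_g)=1$ and in particular $H_g\neq0$. Strongness then follows from Proposition~\ref{prop:FaithStrong}(iii), once we know the grading is a quasi-Hopf grading.

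For \eqref{eq:Grad1}, we show $\Delta(H_g)\subseteq H_g\otimes H_g$, using cocentrality together with coassociativity of $\rho_\pi$. Let $x\in H_g$ and apply $\pi\otimes\id\otimes\id$ to quasi-coassociativity \eqref{eq:QH2}. Strictness makes the $\Phi$-factors collapse in the first leg, giving
\[
(\rho_\pi\otimes\id)\Delta(x)=\bigl((\pi\otimes\id)\otimes\id\bigr)\bigl(\Phi(\Delta\otimes\id)\Delta(x)\Phi^{-1}\bigr).
\]
A cleaner route is this. Apply $\id\otimes\pi\otimes\id$ to $(\id\otimes\Delta)\Delta(x)=\Phi(\Delta\otimes\id)\Delta(x)\Phi^{-1}$, and use cocentrality to move $\pi$ onto the first leg. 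This shows that $(\id\otimes\rho_\pi)\Delta(x)$ equals $(\tau\otimes\id)(\rho_\pi\otimes\id)\Delta(x)$ up to the $\Phi$-conjugation, which strictness kills on the $\pi$-leg. Hence the second tensor factors of $\Delta(x)$ carry the same weight as the first. Combined with $(\rho_\pi\otimes\id)\Delta(x)=(\Delta_{\kk[G]}\otimes\id)$-type identities coming from $(\pi\otimes\id)\Delta=\rho_\pi$, this gives $\Delta(x)\in H_h\otimes H_k$ components with $h=k$. Then applying $\varepsilon$ to the first leg forces $h=g$. Concretely, I will write $\Delta(x)=\sum_{h,k}x_{h,k}$ with $x_{h,k}\in H_h\otimes H_k$. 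The identity $(\pi\otimes\id)\Delta(x)=\zeta_g\otimes x$ and cocentrality give $\sum \varepsilon\text{-terms}$ relating $h,k$ to $g$. I expect some bookkeeping here, using \eqref{eq:WeightId} on both legs.

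The main obstacle is $S(H_g)\subseteq H_{g^{-1}}$. This is exactly where the hypothesis $\mathfrak f\in H_e^{\otimes2}$ enters, as the text after Definition~\ref{def:GradMor} indicates. The plan is as follows. For $x\in H_g$, apply $\pi\otimes\id$ to \eqref{eq:QH8}, in the form $\Delta(S(x))=\mathfrak f^{-1}(S\otimes S)(\Delta^{\cop}(x))\mathfrak f$. By the previous step, $\Delta(x)\in H_g\otimes H_g$. Since $\mathfrak f^{\pm1}\in H_e^{\otimes2}$ and $\pi$ is multiplicative, \eqref{eq:WeightId} on $H_e$ gives $(\pi\otimes\id)(\mathfrak f^{\pm1})=\zeta_e\otimes(\text{normalized }\varepsilon\text{-slice})=\zeta_e\otimes1$ by the normalization of $\mathfrak f$. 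Hence
\[
\rho_\pi(S(x))=(\pi S\otimes S)(x''\otimes x')=(S_{\kk[G]}\pi\otimes S)(x''\otimes x')=\zeta_{g^{-1}}\otimes S(\varepsilon(x'')x')=\zeta_{g^{-1}}\otimes S(x),
\]
using $\pi S=S_{\kk[G]}\pi$, \eqref{eq:WeightId} on $x''\in H_g$, and \eqref{eq:QH4}. So $S(x)\in H_{g^{-1}}$, which completes the verification of \eqref{eq:Grad2}. Strongness then follows from Proposition~\ref{prop:FaithStrong}(iii).
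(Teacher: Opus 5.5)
The proposal has a genuine gap in the proof of $\Delta(H_g)\subseteq H_g\otimes H_g$. The rest of your plan matches the paper: the comodule decomposition, multiplicativity via $\rho_\pi$, faithfulness from surjectivity, and the antipode step via \eqref{eq:QH8}.

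Your ``cleaner route'' does correctly show that $\Delta(x)\in\bigoplus_h H_h\otimes H_h$. Applying $\id\otimes\pi\otimes\id$ to \eqref{eq:QH2} and using cocentrality identifies $(\id\otimes\rho_\pi)\Delta(x)$ with the flip of $(\rho_\pi\otimes\id)\Delta(x)$, and comparing components kills $x_{h,k}$ for $h\neq k$. One attribution should be corrected: the collapse $(\id\otimes\pi\otimes\id)(\Phi^{\pm1})=1\otimes\zeta_e\otimes1$ comes from $\Phi\in H_e^{\otimes3}$ and \eqref{eq:QH3}, not from strictness, which only controls $(\pi\otimes\pi\otimes\id)(\Phi)$.

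The final step, however, fails. Applying $\varepsilon$ (or $\pi$) to the first leg only tells you that the contractions $(\varepsilon\otimes\id)(x_{h,h})$ vanish for $h\neq g$. It does not tell you that the tensors $x_{h,h}$ themselves vanish. Your proposed bookkeeping with \eqref{eq:WeightId} amounts to applying $\pi$ or $\varepsilon$ to a single leg of $\Delta(x)$, and any such identity sees only these contractions.

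What is missing is a three-leg identity that fixes the weight of the entire first leg. Applying $\id\otimes\Delta$ to $\rho_\pi(x)=\zeta_g\otimes x$ gives $(\pi\otimes\id\otimes\id)(\id\otimes\Delta)\Delta(x)=\zeta_g\otimes\Delta(x)$. Then \eqref{eq:QH2}, together with $(\pi\otimes\id\otimes\id)(\Phi^{\pm1})=\zeta_e\otimes1\otimes1$ (from $\Phi\in H_e^{\otimes3}$ and \eqref{eq:QH5}), turns this into $(\rho_\pi\otimes\id)\Delta(x)=\zeta_g\otimes\Delta(x)$, that is, $\Delta(x)\in H_g\otimes H$.

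Your first display was already this computation. Since $\Phi(\Delta\otimes\id)\Delta(x)\Phi^{-1}=(\id\otimes\Delta)\Delta(x)$, its right-hand side equals $(\id\otimes\Delta)\rho_\pi(x)=\zeta_g\otimes\Delta(x)$. You abandoned it just before drawing the conclusion.

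Combined with your diagonal statement, this gives $\Delta(x)\in H_g\otimes H_g$. That is a small variant of the paper, which instead reruns the same argument on the right with $\rho'_\pi=(\id\otimes\pi)\Delta$ (the flip of $\rho_\pi$, by cocentrality). This gives $\Delta(x)\in H\otimes H_g$, and the paper then intersects with $H_g\otimes H$.

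A minor point in the antipode step: only $\mathfrak f\in(H_e^\pi)^{\otimes2}$ is assumed. You should obtain $(\pi\otimes\id)(\mathfrak f^{-1})=\zeta_e\otimes1$ by applying the algebra morphism $\pi\otimes\id$ to $\mathfrak f\mathfrak f^{-1}=1\otimes1$, rather than by asserting $\mathfrak f^{-1}\in H_e^{\otimes2}$.
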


\begin{proof}
By~\eqref{eq:Weights}, the identity~\eqref{eq:WeightId} reads $\pi(x)=\varepsilon(x)\,\zeta_g$ for all $x\in H_g$. Moreover, since every $G$-grading morphism is strict, Lemma~\ref{lem:Coaction} shows that $H$ is a left $\kk[G]$-comodule algebra, and the fact recalled above gives $H=\bigoplus_{g\in G}H_g$. Also, $1\in H_e$, since $\rho_\pi(1)=\pi(1)\otimes 1=\zeta_e\otimes 1$.

We prove that every component is nonzero. Since $\pi$ is surjective, for each $g\in G$ there exists $x\in H$ such that $\pi(x)=\zeta_g$. Write $x=\sum_h x_h$ with $x_h\in H_h$. By~\eqref{eq:WeightId}, $\pi(x_h)=\varepsilon(x_h)\,\zeta_h$, and therefore
\[
\zeta_g=\pi(x)=\sum_h \varepsilon(x_h)\zeta_h.
\]
By linear independence, $\varepsilon(x_g)=1$, hence $x_g\neq 0$. Therefore $H_g\neq 0$.

We check that the multiplication is compatible with the grading. If $x\in H_g$ and $y\in H_h$, then, since $\rho_\pi$ is an algebra morphism,
\[
(\pi\otimes\id)\Delta(xy)=(\pi\otimes\id)\Delta(x)(\pi\otimes\id)\Delta(y)=(\zeta_g\otimes x)(\zeta_h\otimes y)=\zeta_{gh}\otimes xy.
\]
Thus $xy\in H_{gh}$. We also prove that $\Delta(H_g)\subseteq H_g\otimes H_g$ for all $g\in G$. Let $x\in H_g$. Applying $\id\otimes \Delta$ to $(\pi\otimes \id)\Delta(x)=\zeta_g\otimes x$ gives
\begin{equation}\label{eq:ComultL}
(\pi\otimes \id\otimes \id)(\id\otimes \Delta)\Delta(x)
=
\zeta_g\otimes \Delta(x).
\end{equation}
Since $\Phi\in H_e^{\otimes 3}$, \eqref{eq:WeightId} and the first identity of~\eqref{eq:QH5} give $(\pi\otimes \id\otimes \id)(\Phi) = \zeta_e\otimes 1\otimes 1$. Because $\pi\otimes\id\otimes\id$ is an algebra morphism, also $(\pi\otimes \id\otimes \id)(\Phi^{-1})=\zeta_e\otimes 1\otimes 1$. Thus quasi-coassociativity~\eqref{eq:QH2} implies
\[
(\pi\otimes \id\otimes \id)(\id\otimes \Delta)\Delta(x)
=
(\pi\otimes \id\otimes \id)(\Delta\otimes \id)\Delta(x).
\]
Combining this with~\eqref{eq:ComultL}, we get $(\rho_\pi\otimes \id)\Delta(x)=\zeta_g\otimes \Delta(x)$. Hence $\Delta(x)\in H_g\otimes H$, since $H\otimes H=\bigoplus_{h}H_h\otimes H$ and an element $T\in H\otimes H$ lies in $H_h\otimes H$ if and only if $(\rho_\pi\otimes\id)(T)=\zeta_h\otimes T$.

Symmetrically, cocentrality gives $(\id\otimes\pi)\Delta(x)=x\otimes\zeta_g$, and identifies $\rho_\pi':=(\id\otimes\pi)\Delta$ with the composite of $\rho_\pi$ and the flip $\kk[G]\otimes H\to H\otimes\kk[G]$. Hence $y\in H$ lies in $H_h$ if and only if $\rho_\pi'(y)=y\otimes\zeta_h$. The argument above, applied on the right and using the second identity of~\eqref{eq:QH5}, then gives $(\id\otimes\rho_\pi')\Delta(x)=\Delta(x)\otimes\zeta_g$, whence $\Delta(x)\in H\otimes H_g$. Therefore $\Delta(x)\in (H_g\otimes H)\cap (H\otimes H_g)=H_g\otimes H_g$, as one sees by comparing components in the decomposition $H\otimes H=\bigoplus_{h,k}H_h\otimes H_k$, and hence $\Delta(H_g)\subseteq H_g\otimes H_g$.

We prove compatibility with the antipode. Since $\mathfrak f\in H_e^{\otimes 2}$, \eqref{eq:WeightId} and the normalization $\varepsilon(\mathfrak f^{(1)})\,\mathfrak f^{(2)}=1$ give $(\pi\otimes \id)(\mathfrak f)=\zeta_e\otimes 1$. Applying the algebra morphism $\pi\otimes\id$ to $\mathfrak f\,\mathfrak f^{-1}=1\otimes 1$ then gives $(\pi\otimes \id)(\mathfrak f^{-1})=\zeta_e\otimes 1$ as well. Let $x\in H_g$. Using the defining property~\eqref{eq:QH8} of the Drinfeld twist, we compute
\[
(\pi\otimes \id)\Delta(S(x)) =(\pi\otimes \id)\left( \mathfrak f^{-1} (S\otimes S)(\Delta^{\cop}(x)) \mathfrak f \right) =\pi(S(x''))\otimes S(x'),
\]
where the second equality uses $(\pi\otimes\id)(\mathfrak f^{\pm 1})=\zeta_e\otimes 1$ and the multiplicativity of $\pi\otimes\id$. Since $\pi$ intertwines antipodes and $\Delta(x)\in H_g\otimes H_g$,
\[
\pi(S(x'')) =S_{\kk[G]}(\pi(x'')) =\varepsilon(x'')\zeta_{g^{-1}}.
\]
Hence
\[
(\pi\otimes \id)\Delta(S(x)) =\varepsilon(x'')\zeta_{g^{-1}}\otimes S(x') =\zeta_{g^{-1}}\otimes S(x).
\]
Thus $S(x)\in H_{g^{-1}}$, and hence $S(H_g)\subseteq H_{g^{-1}}$.

Finally, the membership conditions of Definition~\ref{def:GradMor}(ii) give $\Phi\in H_e^{\otimes 3}$ and $\alpha,\beta\in H_e$, which are the remaining requirements of~\eqref{eq:Grad2}. Hence $\{H_g\}_{g\in G}$ is a faithful $G$-grading of the quasi-Hopf algebra $H$, every component being nonzero, and it is strong by Proposition~\ref{prop:FaithStrong}(iii).
\end{proof}

The two preceding propositions pass in both directions between faithful gradings and grading morphisms, by mutually inverse constructions.

\begin{theorem}\label{thm:Cocentral}
Let $H$ be a quasi-Hopf algebra. The assignments
\[
H=\bigoplus_{g\in G}H_g\longmapsto \pi
\qquad\text{and}\qquad
\pi\longmapsto H=\bigoplus_{g\in G}H_g^\pi
\]
define a bijective correspondence between the set of faithful $G$-gradings of $H$ as a quasi-Hopf algebra and the set of $G$-grading morphisms $H\to \kk[G]$, where $\pi$ is the morphism given by $\pi(x_g)=\varepsilon(x_g)\zeta_g$ for all $x_g\in H_g$, and $H=\bigoplus_{g\in G}H_g^\pi$ is the decomposition of $H$ into the weight spaces $H_g^\pi$ of $\pi$.
\end{theorem}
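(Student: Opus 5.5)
The plan is to derive the theorem from Propositions~\ref{prop:Cocentral1} and~\ref{prop:Cocentral2}. Between them they show that both assignments are well defined. Proposition~\ref{prop:Cocentral1} sends a faithful $G$-grading to a $G$-grading morphism $\pi$ given by~\eqref{eq:GradMap}. Proposition~\ref{prop:Cocentral2} sends a $G$-grading morphism $\pi$ to a faithful $G$-grading of $H$ as a quasi-Hopf algebra by its weight spaces. It remains to check that the two composites are identities.

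\emph{First composite.} Start from a faithful grading $H=\bigoplus_g H_g$, form $\pi$ by~\eqref{eq:GradMap}, and take its weight spaces. The last assertion of Proposition~\ref{prop:Cocentral1} gives $H_g^\pi=H_g$ for every $g$. So we recover the original grading.

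\emph{Second composite.} Start from a $G$-grading morphism $\pi$. By Proposition~\ref{prop:Cocentral2}, the weight spaces $H_g^\pi$ form a faithful grading. Let $\pi'$ be the morphism that Proposition~\ref{prop:Cocentral1} attaches to this grading, so $\pi'(x)=\varepsilon(x)\zeta_g$ for $x\in H_g^\pi$. Identity~\eqref{eq:WeightId} gives $\pi(x)=\varepsilon(x)\zeta_g$ for the same $x$. Hence $\pi$ and $\pi'$ agree on every weight space. Since $H=\bigoplus_g H_g^\pi$, linearity gives $\pi=\pi'$.

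No step is a genuine obstacle, since the substantive work, including the decomposition $H=\bigoplus_g H_g^\pi$ coming from coassociativity of $\rho_\pi$ (Lemma~\ref{lem:Coaction}), is already contained in the two propositions. The one point to state explicitly is that agreement on homogeneous components is enough to identify $\pi$ with $\pi'$, and this is exactly what the direct-sum decomposition of Proposition~\ref{prop:Cocentral2} provides.
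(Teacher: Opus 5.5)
Your proposal is correct and follows the paper's proof essentially verbatim. In both, well-definedness comes from Propositions~\ref{prop:Cocentral1} and~\ref{prop:Cocentral2}. One composite is the identity by the last assertion of Proposition~\ref{prop:Cocentral1}, and for the other, $\pi$ and the reconstructed morphism agree on each weight space by~\eqref{eq:WeightId}, hence everywhere because $H=\bigoplus_g H_g^\pi$.
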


\begin{proof}
By Propositions~\ref{prop:Cocentral1} and~\ref{prop:Cocentral2}, both assignments are well defined. It remains to check that they are mutually inverse.

Start with a $G$-grading morphism $\pi$ and let $H=\bigoplus_g H_g^\pi$ be the faithful grading provided by Proposition~\ref{prop:Cocentral2}. The morphism $\widehat \pi$ associated to this grading by Proposition~\ref{prop:Cocentral1} satisfies $\widehat \pi(x)=\varepsilon(x)\zeta_g$ for $x\in H_g^\pi$, as does $\pi$ by~\eqref{eq:WeightId}. Hence $\widehat \pi=\pi$.

Conversely, starting from a faithful $G$-grading, the composite is the identity by the last assertion of Proposition~\ref{prop:Cocentral1}.
\end{proof}

In the Hopf case this recovers the correspondence of~\cite[Lem.~1.3]{BNY16}. We call $\pi$ the \emph{grading map} of the grading.

\subsection{Gradings over quotient groups and descent}\label{subsec:QuotGrad}

We next describe the induced gradings over quotient groups and their descent through quasi-Hopf quotients.

\begin{proposition}\label{prop:Coarsening}
Let $H$ be a faithfully $G$-graded quasi-Hopf algebra with grading map $\pi\colon H\to \kk[G]$, let $N\unlhd G$, and let $p\colon \kk[G]\to \kk[G/N]$ be the canonical Hopf algebra projection, with $p(\zeta_h)=\zeta_{\bar h}$ for all $h\in G$. Then:
\begin{enumerate}
    \item The composite $p\pi\colon H\to \kk[G/N]$ is a $G/N$-grading morphism, and the induced $G/N$-grading of $H$ has homogeneous components
    \[
    H_{\bar g}
    =
    \bigoplus_{n\in N}H_{gn},
    \qquad
    \text{for all } \bar g=gN\in G/N.
    \]

    \item The induced $G/N$-grading is strong as well.
\end{enumerate}
\end{proposition}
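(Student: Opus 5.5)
Two routes are available: verify the axioms of Definition~\ref{def:Grad} for the coarser decomposition directly, or check that $p\pi$ is a $G/N$-grading morphism and apply Proposition~\ref{prop:Cocentral2} and Theorem~\ref{thm:Cocentral}. I will take the direct route, which is elementary, and read off the morphism statement from Proposition~\ref{prop:Cocentral1}.

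First define $H_{\bar g}:=\bigoplus_{n\in N}H_{gn}$. This depends only on the coset $gN$, and $H=\bigoplus_{\bar g\in G/N}H_{\bar g}$ because $G$ is the disjoint union of its cosets. Next check the axioms of Definition~\ref{def:Grad}.

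\begin{itemize}
\item \emph{Multiplication.} Normality of $N$ gives $H_{gn}H_{hm}\subseteq H_{gnhm}$ with $gnhm\in ghN$, so $H_{\bar g}H_{\bar h}\subseteq H_{\overline{gh}}$.
\item \emph{Comultiplication.} By \eqref{eq:Grad1}, $\Delta(H_{gn})\subseteq H_{gn}\otimes H_{gn}\subseteq H_{\bar g}\otimes H_{\bar g}$.
\item \emph{Reassociator and distinguished elements.} We have $\Phi\in H_e^{\otimes3}\subseteq H_N^{\otimes3}$ and $\alpha,\beta\in H_e\subseteq H_N$, where $H_N$ is the neutral component $H_{\bar e}$.
\item \emph{Antipode.} $S(H_{gn})\subseteq H_{n^{-1}g^{-1}}$, and $n^{-1}g^{-1}\in g^{-1}N$, so $S(H_{\bar g})\subseteq H_{\bar g^{-1}}$.
\end{itemize}

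Faithfulness holds because each $H_{\bar g}\supseteq H_g\neq0$. Hence this is a faithful $G/N$-grading of the quasi-Hopf algebra $H$.

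By Proposition~\ref{prop:Cocentral1}, the associated grading map sends $x\in H_{gn}$ to $\varepsilon(x)\zeta_{\bar g}$. This is exactly $p(\pi(x))=p(\varepsilon(x)\zeta_{gn})$. So $p\pi$ is the grading map of this grading, hence a $G/N$-grading morphism. Theorem~\ref{thm:Cocentral} then identifies its weight spaces with the $H_{\bar g}$, which proves (i).

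For (ii), apply Proposition~\ref{prop:FaithStrong}(iii) to the faithful $G/N$-grading. Alternatively, argue directly: $H_{\bar g}H_{\bar g^{-1}}\supseteq H_gH_{g^{-1}}=H_e\ni1$, and $H_{\bar g}H_{\bar g^{-1}}$ is an $H_N$-subbimodule of $H_N$ containing $1$, so it equals $H_N$.

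No step is a genuine obstacle. The only points needing care are using normality of $N$ for the multiplication and antipode conditions, and matching the weight spaces of $p\pi$ with the coarsened components.
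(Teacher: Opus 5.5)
Your proof is correct, but it runs the grading correspondence in the opposite direction from the paper. The paper starts from the morphism. It notes that $p\pi$ is a surjective cocentral quasi-Hopf morphism as a composite, and computes its weight spaces directly from $(p\pi\otimes\id)\Delta(x)=\sum_{h}\zeta_{\bar h}\otimes x_h$. It then checks the membership conditions of Definition~\ref{def:GradMor}(ii), including $\mathfrak f\in H_e^{\otimes2}$, using $H_e\subseteq H^{p\pi}_{\bar e}$. Only then does it obtain the $G/N$-grading from Theorem~\ref{thm:Cocentral}, that is, from Proposition~\ref{prop:Cocentral2}.

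You instead verify Definition~\ref{def:Grad} for the coarsened decomposition by hand, so normality of $N$ enters visibly in the product and antipode conditions. You then let Proposition~\ref{prop:Cocentral1} produce the grading morphism and identify it with $p\pi$ on each $H_{gn}$. Cocentrality, the Drinfeld-twist condition and the identification of weight spaces all come for free from that proposition.

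Your route is the more elementary one here, but it needs the decomposition in hand beforehand. The paper's morphism-first argument is the template reused in Proposition~\ref{prop:Descent}. There the quotient $H/I$ has no a priori decomposition, and its grading must be built from $\overline{\pi}$ via weight spaces.

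For (ii), your alternative direct argument is essentially the paper's. Your first option, Proposition~\ref{prop:FaithStrong}(iii), is shorter but invokes the quasi-antipode axioms again. The paper's argument shows directly, by ring theory alone, that any coarsening of a strong grading is strong.
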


\begin{proof}
(i) The composite $p\pi$ is a surjective morphism of quasi-Hopf algebras, since $\pi$ and $p$ are. Moreover, it inherits cocentrality from $\pi$, by applying $\id\otimes p$ to the cocentrality identity of $\pi$.

We compute the weight spaces of $p\pi$. Let $x=\sum_{h\in G}x_h$, with $x_h\in H_h$, be the homogeneous decomposition of $x\in H$. Using $H_h=H_h^{\pi}$, we obtain
\[
(p\pi\otimes\id)\Delta(x)=\sum_{h\in G}\zeta_{\bar h}\otimes x_h=\sum_{\bar a\in G/N}\zeta_{\bar a}\otimes\left(\sum_{h\in\bar a}x_h\right).
\]
Since the $\zeta_{\bar a}$ are linearly independent and $H=\bigoplus_{h\in G}H_h$, we conclude that $(p\pi\otimes\id)\Delta(x)=\zeta_{\bar g}\otimes x$ if and only if $x_h=0$ for all $h\notin\bar g$, that is, $H_{\bar g}^{p\pi}=\bigoplus_{n\in N}H_{gn}$ for all $\bar g=gN\in G/N$. Since $H_e\subseteq H_{\bar e}^{p\pi}$, the memberships $\Phi\in H_e^{\otimes 3}$ and $\alpha,\beta\in H_e$ of~\eqref{eq:Grad2}, together with $\mathfrak f\in H_e^{\otimes 2}$ (since $H_e^{\pi}=H_e$), yield the membership conditions of Definition~\ref{def:GradMor}(ii) for $p\pi$. Thus $p\pi$ is a $G/N$-grading morphism, and Theorem~\ref{thm:Cocentral} gives the $G/N$-grading with components $H_{\bar g}=H_{\bar g}^{p\pi}$.

(ii) By Proposition~\ref{prop:FaithStrong}(iii), the $G$-grading is strong. Fix $\bar g\in G/N$ and note that $1\in H_e=H_gH_{g^{-1}}\subseteq H_{\bar g}H_{\bar g^{-1}}$, by the inclusions $H_g\subseteq H_{\bar g}$ and $H_{g^{-1}}\subseteq H_{\overline{g^{-1}}}=H_{\bar g^{-1}}$ of part (i), which also give $H_{\bar g}H_{\bar g^{-1}}\subseteq H_{\bar e}$. For the reverse inclusion,
\[
H_{\bar e} = H_{\bar e}\,1 \subseteq H_{\bar e}H_{\bar g}H_{\bar g^{-1}} \subseteq H_{\bar g}H_{\bar g^{-1}}.
\]
Thus $H_{\bar g}H_{\bar g^{-1}}=H_{\bar e}$, and the induced $G/N$-grading is strong.
\end{proof}

A similar argument applies to quotients of $H$ itself.

\begin{proposition}\label{prop:Descent}
Let $H$ be a faithfully $G$-graded quasi-Hopf algebra with grading map $\pi\colon H\to \kk[G]$, let $I\subseteq H$ be a quasi-Hopf ideal, and let $q\colon H\to H/I$ be the quotient map. Suppose that there exist a Hopf algebra $Q$, a surjective morphism of Hopf algebras $p\colon \kk[G]\to Q$, and a morphism of quasi-Hopf algebras $\overline{\pi}\colon H/I\to Q$ such that $\overline{\pi}q=p\pi$. Then:
\begin{enumerate}
    \item $Q\cong\kk[\Gamma]$ as a Hopf algebra, where $\Gamma$ is the group of group-like elements of $Q$.

    \item Under the identification of (i), the morphisms $\overline{\pi}$ and $p\pi$ are $\Gamma$-grading morphisms. In particular, $\overline{\pi}$ induces a strong $\Gamma$-grading $H/I=\bigoplus_{a\in\Gamma}(H/I)_a$, and $p\pi$ induces a strong $\Gamma$-grading $H=\bigoplus_{a\in\Gamma}H_a^{p\pi}$.

    \item $q(H_a^{p\pi})\subseteq (H/I)_a$ for every $a\in \Gamma$. Hence $q$ is a $\Gamma$-graded morphism of quasi-Hopf algebras.
\end{enumerate}
\end{proposition}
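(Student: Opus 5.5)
For (i), I would use the standard fact that a quotient Hopf algebra of a group algebra is again a group algebra. Since $p$ is a surjective Hopf morphism, each $p(\zeta_g)$ is group-like in $Q$, and these elements span $Q$. Distinct group-like elements of any coalgebra are linearly independent. Hence the set $\Gamma:=\{p(\zeta_g)\}_{g\in G}$ of distinct images is a basis of $Q$ and is closed under multiplication and inverses.

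Any group-like element of $Q$ lies in the span of $\Gamma$, so it lies in $\Gamma$ by linear independence. Thus $\Gamma$ is the full group of group-likes, and $\kk[\Gamma]\to Q$, $\zeta_a\mapsto a$, is a Hopf isomorphism. Writing $M:=\{g\in G\mid p(\zeta_g)=1\}$, this is a normal subgroup with $\Gamma\cong G/M$, and under this identification $p$ is the canonical projection $\kk[G]\to\kk[G/M]$.

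For (ii), the identification $p=$ canonical projection lets me invoke Proposition~\ref{prop:Coarsening} with $N=M$. It gives directly that $p\pi$ is a $\Gamma$-grading morphism, with strong grading components $H_a^{p\pi}=\bigoplus_{g\in a}H_g$.

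For $\overline{\pi}$ I would check the conditions of Definition~\ref{def:GradMor} one by one.
\begin{itemize}
\item[$\bullet$] \emph{Surjectivity.} This follows from $\overline{\pi}q=p\pi$, since $p\pi$ is surjective and $q$ is onto.
\item[$\bullet$] \emph{Cocentrality.} Every $\bar x\in H/I$ is $q(x)$. Applying $q\otimes p$ to the cocentrality identity of $\pi$, and using $(q\otimes q)\Delta=\overline{\Delta}q$, gives the identity for $\overline{\pi}$.
\item[$\bullet$] \emph{Membership conditions.} The key observation is that $q$ maps weight spaces of $p\pi$ into weight spaces of $\overline{\pi}$. Indeed, for $x\in H_a^{p\pi}$,
\[
(\overline{\pi}\otimes\id)\overline{\Delta}(q(x))=(p\pi\otimes q)\Delta(x)=(\id\otimes q)(\zeta_a\otimes x)=\zeta_a\otimes q(x).
\]
This also proves (iii). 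Since $\overline{\Phi}=q^{\otimes3}(\Phi)$, $\overline\alpha=q(\alpha)$, $\overline\beta=q(\beta)$, and the twist of $H/I$ is $q^{\otimes2}(\mathfrak f)$, the memberships $\Phi\in(H_e^{p\pi})^{\otimes3}$, $\alpha,\beta\in H_e^{p\pi}$, $\mathfrak f\in(H_e^{p\pi})^{\otimes2}$ transfer to $H/I$. Here $H_e\subseteq H^{p\pi}_{\bar e}$, and $\mathfrak f\in H_e^{\otimes 2}$ by the proof of Proposition~\ref{prop:Cocentral1}.
\end{itemize}
Then Proposition~\ref{prop:Cocentral2} yields the strong $\Gamma$-grading of $H/I$ by the weight spaces of $\overline{\pi}$.

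For (iii), the displayed computation gives $q(H_a^{p\pi})\subseteq(H/I)_a$. So $q$ is a $\Gamma$-graded morphism of algebras, and it is already a quasi-Hopf morphism.

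The main obstacle I expect is part (i). The subtle point is that $Q$ need not be a priori cosemisimple, and $\Gamma$ must really be all group-likes and form a group. Linear independence of distinct group-likes, plus $p(\zeta_g)^{-1}=p(\zeta_{g^{-1}})$, should handle this cleanly. The remaining verifications are formal transports through $q$ and $p$.
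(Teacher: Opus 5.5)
Your proposal is correct and follows essentially the same route as the paper. Part (i) uses linear independence of the distinct group-likes $p(\zeta_g)$, and parts (ii)--(iii) transport cocentrality, the membership conditions and the weight-space identity through $\overline{\pi}q=p\pi$ and $\overline{\Delta}q=(q\otimes q)\Delta$. The differences are cosmetic: you get the statement for $p\pi$ from Proposition~\ref{prop:Coarsening} after identifying $p$ with $\kk[G]\to\kk[G/M]$, and you derive the memberships for $\overline{\pi}$ from $q(H_a^{p\pi})\subseteq(H/I)_a$, which also yields (iii). The paper instead checks $H_e\subseteq H^{p\pi}_{\bar e}$ and $q(H_e)\subseteq(H/I)_{\bar e}$ directly.
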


\begin{proof}
(i) The elements $p(\zeta_g)$, for $g\in G$, are group-like and span $Q$. Since group-like elements are linearly independent, the pairwise distinct ones among them form a basis of $Q$. Hence $Q=\kk[\Gamma]$, where $\Gamma:=\{p(\zeta_g)\mid g\in G\}$ is the image of $G$ under the group homomorphism $g\mapsto p(\zeta_g)$. Every group-like element of $Q$ lies in $\Gamma$, since for group-like $u=\sum_{a\in\Gamma}c_a\zeta_a$ comparison of $\Delta(u)$ with $u\otimes u$ forces all but one coefficient to vanish, and the counit forces the remaining coefficient to equal $1$. We henceforth regard $\overline{\pi}$ as a morphism $H/I\to \kk[\Gamma]$, and we write $\bar e$ for the neutral element of $\Gamma$, with $p(\zeta_e)=\zeta_{\bar e}$.

(ii) The composite $p\pi$ is surjective, since $\pi$ and $p$ are, and $\overline{\pi}$ is surjective because $\overline{\pi}q=p\pi$. Both are cocentral: applying $\id\otimes p$ to the cocentrality identity of $\pi$ gives it for $p\pi$, and, since every element of $H/I$ has the form $q(x)$, applying $q\otimes\id$ to the result, using $\overline{\Delta}q=(q\otimes q)\Delta$ and $\overline{\pi}q=p\pi$, gives it for $\overline{\pi}$. For the membership conditions, let $u\in H_e=H_e^{\pi}$. Then
\[
(\overline{\pi}\otimes \id)\overline{\Delta}(q(u)) =(\overline{\pi}q\otimes q)\Delta(u) =(p\pi\otimes q)\Delta(u) =(p\otimes q)(\zeta_e\otimes u) =\zeta_{\bar e}\otimes q(u),
\]
and hence $q(H_e)\subseteq (H/I)_{\bar e}^{\overline{\pi}}$. The same computation with $\id_H$ in place of $q$ and $p\pi$ in place of $\overline{\pi}$ gives $(p\pi\otimes\id)\Delta(u)=\zeta_{\bar e}\otimes u$; hence $H_e\subseteq H_{\bar e}^{p\pi}$. By~\eqref{eq:Grad2} and $\mathfrak f\in H_e^{\otimes 2}$ (since $H_e^{\pi}=H_e$), the elements $\Phi$, $\alpha$, $\beta$, and $\mathfrak f$ lie in the corresponding tensor powers of $H_e$. Hence $p\pi$ satisfies the membership conditions of Definition~\ref{def:GradMor}(ii). Their images $\overline{\Phi}=q^{\otimes 3}(\Phi)$, $\overline{\alpha}=q(\alpha)$, $\overline{\beta}=q(\beta)$, and $\overline{\mathfrak f}=q^{\otimes 2}(\mathfrak f)$ lie in the corresponding tensor powers of $q(H_e)\subseteq (H/I)_{\bar e}^{\overline{\pi}}$; therefore $\overline{\pi}$ satisfies them as well. Thus both are $\Gamma$-grading morphisms. Theorem~\ref{thm:Cocentral} gives faithful $\Gamma$-gradings $H/I=\bigoplus_{a\in\Gamma}(H/I)_a$ and $H=\bigoplus_{a\in\Gamma}H_a^{p\pi}$, which are strong by Proposition~\ref{prop:FaithStrong}(iii).

(iii) Let $x\in H_a^{p\pi}$ for $a\in\Gamma$, that is, $(p\pi\otimes \id)\Delta(x)=\zeta_a\otimes x$. Applying $\id\otimes q$ and using $\overline{\pi}q=p\pi$ together with $\overline{\Delta}q=(q\otimes q)\Delta$ gives $(\overline{\pi}\otimes \id)\overline{\Delta}(q(x))=\zeta_a\otimes q(x)$. Hence $q(H_a^{p\pi})\subseteq (H/I)_a$. Since $q$ is a morphism of quasi-Hopf algebras, it is a $\Gamma$-graded morphism.
\end{proof}

\section{General classification via admissible pairs}\label{sec:Class}

We now classify quasi-Hopf ideals by passing to an appropriate grading over a quotient of $G$ and reducing to its neutral component. Throughout this section, $H$ is a faithfully $G$-graded quasi-Hopf algebra with grading map $\pi\colon H\to \kk[G]$.

\subsection{The associated subgroup and reduction to the neutral component}\label{subsec:AssocSubgrp}

Since $\pi$ is a surjective morphism of quasi-Hopf algebras, it sends quasi-Hopf ideals of $H$ to Hopf ideals of $\kk[G]$. This allows us to describe the image under $\pi$ of a quasi-Hopf ideal by a normal subgroup of $G$.

Recall that the \emph{augmentation ideal} $\aug{\Gamma}$ of the group algebra $\kk[\Gamma]$ of a group $\Gamma$ is the kernel of its counit. It is spanned by the elements $\zeta_a-\zeta_e$ with $a\in\Gamma$ (see, e.g.,~\cite[\S I.1]{Pas79}).

\begin{proposition}\label{prop:AssocSubgrp}
Let $I \subseteq H$ be a quasi-Hopf ideal. Then:
\begin{enumerate}
    \item There exists a unique normal subgroup $N_I \unlhd G$ such that $\pi(I) = \aug{N_I} \cdot \kk[G]$.

    \item The map $\pi$ induces a unique surjective cocentral morphism of quasi-Hopf algebras $\overline{\pi}\colon H/I \to \kk[G/N_I]$ satisfying $\overline{\pi}q=p\pi$, where $q\colon H\to H/I$ and $p\colon \kk[G]\to \kk[G/N_I]$ denote the canonical projections.

\item The ideal $I$ is graded with respect to the induced $G/N_I$-grading of $H$, that is,
    \[
    I = \bigoplus_{\bar g \in G/N_I} (I \cap H_{\bar g}),
    \qquad
    \text{where } H_{\bar g} = \bigoplus_{n \in N_I} H_{gn}.
    \]
\end{enumerate}
\end{proposition}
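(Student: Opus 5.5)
For (i), I will use the standard fact that Hopf ideals of a group algebra correspond to normal subgroups. Since $\pi$ is a surjective morphism of quasi-Hopf algebras and $I$ is a quasi-Hopf ideal, $\pi(I)$ is a two-sided ideal of $\kk[G]$ with $\varepsilon(\pi(I))=\varepsilon(I)=0$, $S(\pi(I))=\pi(S(I))\subseteq\pi(I)$, and $\Delta(\pi(I))\subseteq\pi(I)\otimes\kk[G]+\kk[G]\otimes\pi(I)$. Thus $\pi(I)$ is a Hopf ideal of $\kk[G]$ and $Q:=\kk[G]/\pi(I)$ is a Hopf algebra quotient of $\kk[G]$. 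By the argument of Proposition~\ref{prop:Descent}(i), $Q=\kk[\Gamma]$, where $\Gamma$ is the image of $G$ under $g\mapsto \overline{\zeta_g}$. Set $N_I$ to be the kernel of $G\to\Gamma$, so $N_I=\{g\in G: \zeta_g-\zeta_e\in\pi(I)\}$, which is normal. Then $\aug{N_I}\kk[G]\subseteq\pi(I)$. Conversely, $\kk[G]\to\kk[G/N_I]$ factors through $Q\cong\kk[\Gamma]=\kk[G/N_I]$, so $\pi(I)$ lies in $\ker(\kk[G]\to\kk[G/N_I])=\aug{N_I}\kk[G]$. That last equality is standard: the kernel is spanned by $\zeta_g-\zeta_{gn}=\zeta_g(\zeta_e-\zeta_n)$. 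For uniqueness, $N$ is recovered from $\aug{N}\kk[G]$ as $\{g:\zeta_g-\zeta_e\in\aug{N}\kk[G]\}$, since $\zeta_g\mapsto\zeta_{gN}$ kills $\zeta_g-\zeta_e$ iff $g\in N$.

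For (ii), note that $p\pi(I)=p(\aug{N_I}\kk[G])=0$, so $p\pi$ is a quasi-Hopf morphism vanishing on $I$. By the universal property of the quotient it factors uniquely as $\overline{\pi}q$ with $\overline{\pi}$ a quasi-Hopf morphism. Surjectivity follows from surjectivity of $p\pi$. Cocentrality is obtained exactly as in Proposition~\ref{prop:Descent}(ii): apply $\id\otimes p$ and then $q\otimes\id$ to the cocentrality identity of $\pi$.

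For (iii), Proposition~\ref{prop:Descent} applies with $Q=\kk[G/N_I]$. The grading on $H$ induced by $p\pi$ is the one of Proposition~\ref{prop:Coarsening}, $H_{\bar g}=\bigoplus_{n}H_{gn}$, and $q$ is $G/N_I$-graded into the grading $(H/I)_{\bar g}$ defined by $\overline\pi$. Given $x\in I$ with decomposition $x=\sum_{\bar g}x_{\bar g}$, we get $0=q(x)=\sum_{\bar g}q(x_{\bar g})$ with $q(x_{\bar g})\in(H/I)_{\bar g}$. Since $H/I=\bigoplus(H/I)_{\bar g}$ is a direct sum, each $q(x_{\bar g})=0$, so $x_{\bar g}\in I$. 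Hence $I=\bigoplus(I\cap H_{\bar g})$.

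The main obstacle is the Hopf-ideal/normal-subgroup correspondence in (i), in particular checking $\pi(I)\supseteq\aug{N_I}\kk[G]$ and its reverse cleanly. The route through $Q\cong\kk[\Gamma]$ from Proposition~\ref{prop:Descent}(i) avoids any direct computation with coproducts of elements of $\pi(I)$.
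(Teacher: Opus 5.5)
Your proposal is correct and follows the paper's proof essentially step for step. You identify $\kk[G]/\pi(I)$ with $\kk[\Gamma]$ via the argument of Proposition~\ref{prop:Descent}(i), take $N_I$ as the kernel of $G\to\Gamma$, factor $p\pi$ through $q$ by the universal property, and deduce (iii) from the fact that the kernel of the $G/N_I$-graded morphism $q$ is graded. The only cosmetic difference is that your spanning elements $\zeta_g(\zeta_e-\zeta_n)$ lie a priori in $\kk[G]\,\aug{N_I}$, which equals $\aug{N_I}\cdot\kk[G]$ by normality, whereas the paper writes $\zeta_{ng}-\zeta_g=(\zeta_n-\zeta_e)\zeta_g$ directly.
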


\begin{proof}
(i) The image $\pi(I)$ is a Hopf ideal of $\kk[G]$, and it is proper, since $\varepsilon_{\kk[G]}(\pi(I))=\varepsilon(I)=0$ while $\varepsilon(\zeta_e)=1$. The quotient $Q:=\kk[G]/\pi(I)$ is therefore a nonzero Hopf algebra and, as in the proof of Proposition~\ref{prop:Descent}(i), $Q=\kk[\Gamma]$ for the group $\Gamma$ formed by the images of the elements $\zeta_g$. The map sending $g$ to the image of $\zeta_g$ is a surjective group homomorphism $G\to\Gamma$. Let $N_I$ denote its kernel. Then $\Gamma\cong G/N_I$, and under this identification the quotient map $\kk[G]\to Q$ becomes the canonical projection $p\colon \kk[G]\to \kk[G/N_I]$. Hence $\pi(I)=\ker(p)$. The kernel of $p$ is spanned by the elements $\zeta_{ng}-\zeta_g=(\zeta_n-\zeta_e)\zeta_g$ with $n\in N_I$ and $g\in G$, since an element of $\kk[G]$ lies in $\ker(p)$ if and only if its coefficients sum to zero over each coset $N_I g$. Hence $\pi(I)=\aug{N_I}\cdot\kk[G]$. This subgroup is unique because, for any normal subgroup $N\unlhd G$, the same computation gives $\aug{N}\cdot\kk[G]=\ker\left(\kk[G]\to \kk[G/N]\right)$; hence $N$ is recovered from the ideal $\aug{N}\cdot\kk[G]$ as $\{n\in G\mid \zeta_n-\zeta_e\in \aug{N}\cdot\kk[G]\}$.

(ii) Since $\pi(I)=\ker(p)$, the composite $p\pi$ is a morphism of quasi-Hopf algebras (Proposition~\ref{prop:Coarsening}(i)) vanishing on $I$. The universal property of quotient quasi-Hopf algebras therefore yields a unique morphism of quasi-Hopf algebras $\overline{\pi}\colon H/I\to\kk[G/N_I]$ satisfying $\overline{\pi}q=p\pi$. Since $\overline{\pi}q=p\pi$ is surjective, $\overline{\pi}$ is surjective as well. Cocentrality follows by applying $q\otimes\id$ to the cocentrality identity of $p\pi$, using $\overline{\Delta}q=(q\otimes q)\Delta$ and $\overline{\pi}q=p\pi$.

(iii) By Proposition~\ref{prop:Coarsening}(i), $H$ is $G/N_I$-graded via $p\pi\colon H \to \kk[G/N_I]$, with homogeneous components $H_{\bar g}=\bigoplus_{n\in N_I}H_{gn}$ equal to the weight spaces $H_{\bar g}^{p\pi}$. By part (ii) and Proposition~\ref{prop:Descent}(i)--(iii), applied with $Q=\kk[G/N_I]$, whose group of group-like elements is $G/N_I$, the quotient $H/I$ inherits a $G/N_I$-grading and the projection $q$ is a $G/N_I$-graded morphism. The kernel of a graded morphism is a graded ideal. Indeed, if $x=\sum_{\bar g}x_{\bar g}\in I$ with $x_{\bar g}\in H_{\bar g}$, then $0=q(x)=\sum_{\bar g}q(x_{\bar g})$ with $q(x_{\bar g})\in (H/I)_{\bar g}$. Hence $q(x_{\bar g})=0$ for every $\bar g$, that is, $x_{\bar g}\in I\cap H_{\bar g}$ for every $\bar g$. Therefore $I=\bigoplus_{\bar g\in G/N_I}(I\cap H_{\bar g})$, the sum being direct because $I\cap H_{\bar g}\subseteq H_{\bar g}$.
\end{proof}

We call $N_I$ the normal subgroup \emph{associated to} the quasi-Hopf ideal $I$. Explicitly,
\[
N_I=\{\,g\in G\mid \zeta_g-\zeta_e\in\pi(I)\,\},
\]
the usual subgroup determined by an ideal of a group algebra (see, e.g.,~\cite[\S II.1]{Pas79}).

For a subgroup $N\leq G$, we write
\[
H_N:=\bigoplus_{n\in N}H_n.
\]

\begin{remark}\label{rem:HNSubalg}
For every subgroup $N\leq G$, the subspace $H_N$ is a quasi-Hopf subalgebra of $H$. Indeed, $H_N$ is a subalgebra, since $H_nH_m\subseteq H_{nm}$ for all $n,m\in N$, and it contains $1\in H_e$. It satisfies $\Phi^{\pm 1}\in H_e^{\otimes 3}$ and $\alpha,\beta\in H_e$ (Proposition~\ref{prop:GradProp}(ii)), as well as $\Delta(H_n)\subseteq H_n\otimes H_n$ and $S(H_n)\subseteq H_{n^{-1}}$ for all $n\in N$. Moreover, $H_N$ is itself faithfully $N$-graded, and hence strongly $N$-graded by Proposition~\ref{prop:FaithStrong}(iii), with neutral component $H_e$. Its grading map is the corestriction of $\pi|_{H_N}$ to $\kk[N]$, by~\eqref{eq:GradMap}.
\end{remark}

Assume now that $N \unlhd G$. By Proposition~\ref{prop:Coarsening}(i)--(ii), the quotient group $G/N$ induces a strong $G/N$-grading on $H$, whose neutral component is precisely $H_N$:
\[
H=\bigoplus_{\bar g\in G/N}H_{\bar g},
\qquad
H_{\bar g}=\bigoplus_{n\in N}H_{gn},
\qquad
H_{\bar e}=H_N.
\]
\begin{definition}[Invariant ideal]\label{def:InvIdeal}
Let $A=\bigoplus_{a\in\Gamma}A_a$ be a strongly $\Gamma$-graded algebra, for an arbitrary group $\Gamma$. An ideal $J$ of the neutral component $A_e$ is called \emph{$\Gamma$-invariant} if
\[
A_a J=J A_a
\qquad
\text{for all }a\in\Gamma.
\]
\end{definition}

Note that if $A$ is a crossed product, with homogeneous unit $u_a\in A_a^\times$ for each $a\in\Gamma$, then $\Gamma$-invariance amounts to $u_aJu_a^{-1}=J$ for all $a\in\Gamma$.

\begin{proposition}\label{prop:Intersect}
Let $I$ be a two-sided ideal of $H$ and let $N\unlhd G$. Set $K:=I\cap H_N$. Then:
\begin{enumerate}
    \item For every coset $\bar g\in G/N$, we have $I\cap H_{\bar g}=H_{\bar g}K=K H_{\bar g}$.

    \item $K$ is a $G/N$-invariant ideal of $H_N$.
\end{enumerate}
\end{proposition}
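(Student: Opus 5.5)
The whole argument will run inside the induced $G/N$-grading of Proposition~\ref{prop:Coarsening}: $H=\bigoplus_{\bar g\in G/N}H_{\bar g}$ with $H_{\bar e}=H_N$, and this grading is strong by Proposition~\ref{prop:Coarsening}(ii). The only tool needed is the strong-grading identity $H_{\bar g}H_{\bar g^{-1}}=H_N=H_{\bar g^{-1}}H_{\bar g}$, together with $1\in H_N$ and the fact that $I$ is a two-sided ideal. We do not assume $I$ is graded, so the argument will deal only with homogeneous pieces $I\cap H_{\bar g}$ and never decompose elements of $I$.

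For (i), fix $\bar g$. One inclusion is immediate. Since $K\subseteq I$ and $I$ is an ideal, $H_{\bar g}K\subseteq I$. Since $K\subseteq H_N=H_{\bar e}$, we also have $H_{\bar g}K\subseteq H_{\bar g}$. Hence $H_{\bar g}K\subseteq I\cap H_{\bar g}$, and symmetrically $KH_{\bar g}\subseteq I\cap H_{\bar g}$. For the reverse inclusion, take $x\in I\cap H_{\bar g}$ and write
\[
x=1\cdot x\in H_{\bar g}H_{\bar g^{-1}}\,x .
\]
Each product $y\,z\,x$ with $y\in H_{\bar g}$ and $z\in H_{\bar g^{-1}}$ has $zx\in I\cap H_{\bar g^{-1}}H_{\bar g}\subseteq I\cap H_N=K$. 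It follows that $x\in H_{\bar g}K$. Writing instead $x=x\cdot 1\in x\,H_{\bar g^{-1}}H_{\bar g}$ gives $x\in KH_{\bar g}$ in the same way. Strictly, $1$ is a finite sum $\sum_i y_iz_i$ with $y_i\in H_{\bar g}$ and $z_i\in H_{\bar g^{-1}}$, so $x=\sum_i y_i(z_ix)$, a finite sum of elements of $H_{\bar g}K$, which is a subspace.

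For (ii), $K$ is a two-sided ideal of $H_N$ because it is the intersection of the ideal $I$ with the subalgebra $H_N$. $G/N$-invariance in the sense of Definition~\ref{def:InvIdeal} is the statement $H_{\bar g}K=KH_{\bar g}$ for all $\bar g$, and both sides equal $I\cap H_{\bar g}$ by (i).

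There is no real obstacle. The one point needing care is to use strongness of the coarser $G/N$-grading rather than of the original $G$-grading, which is exactly what Proposition~\ref{prop:Coarsening}(ii) provides.
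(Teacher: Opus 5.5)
Your proposal is correct and follows essentially the same argument as the paper. Both use the strong induced $G/N$-grading from Proposition~\ref{prop:Coarsening}(ii) to write $1$ as a finite sum of products in $H_{\bar g}H_{\bar g^{-1}}$, resp.\ $H_{\bar g^{-1}}H_{\bar g}$, which places each $x\in I\cap H_{\bar g}$ in $H_{\bar g}K$ and in $KH_{\bar g}$; part (ii) is then read off from (i).
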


\begin{proof}
(i) Fix $\bar g\in G/N$. The inclusions $H_{\bar g}K\subseteq I\cap H_{\bar g}$ and $K H_{\bar g}\subseteq I\cap H_{\bar g}$ are immediate, since $I$ is a two-sided ideal and $K=I\cap H_N\subseteq H_{\bar e}$. Indeed, $H_{\bar g}K\subseteq H_{\bar g}H_{\bar e}\subseteq H_{\bar g}$ and likewise for $KH_{\bar g}$.

Conversely, let $x\in I\cap H_{\bar g}$. Since the induced $G/N$-grading is strong (Proposition~\ref{prop:Coarsening}(ii)), we have $H_{\bar g}H_{\bar g^{-1}}=H_{\bar g^{-1}}H_{\bar g}=H_N$. Since $1\in H_e\subseteq H_N$, we may choose decompositions
\[
1=\sum_i a_i b_i=\sum_j c_j d_j,
\qquad
a_i,d_j\in H_{\bar g},
\quad
b_i,c_j\in H_{\bar g^{-1}}.
\]
Then $x=\sum_i a_i(b_i x)$, where each $b_i x$ lies in $H_{\bar g^{-1}}H_{\bar g}\subseteq H_N$ and, since $x\in I$, also in $I$. Thus $b_i x\in K$ and $x\in H_{\bar g}K$. Similarly, $x=\sum_j (x c_j)d_j$ with $x c_j\in H_{\bar g}H_{\bar g^{-1}}\cap I\subseteq K$, whence $x\in K H_{\bar g}$, proving (i).

(ii) Since $I$ is a two-sided ideal of $H$ and $H_N$ is a subalgebra, $K$ is an ideal of $H_N$. By part (i), $H_{\bar g}K=I\cap H_{\bar g}=KH_{\bar g}$ for every $\bar g\in G/N$. Thus $K$ is $G/N$-invariant.
\end{proof}

Applied to the original $G$-grading, the preceding result also describes $I$ through its intersection with the neutral component $H_e$.

\begin{corollary}\label{cor:InvNeutral}
Let $I\subseteq H$ be a quasi-Hopf ideal. Then $J:=I\cap H_e$ is a $G$-invariant quasi-Hopf ideal of $H_e$, and $I\cap H_g=H_gJ=JH_g$ for all $g\in G$. Moreover, $J=K\cap H_e$, where $N:=N_I$ is the associated subgroup and $K:=I\cap H_N$.
\end{corollary}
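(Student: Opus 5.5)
The plan has three steps: apply Proposition~\ref{prop:Intersect} with $N=\{e\}$, verify the quasi-Hopf ideal axioms for $J$ directly, and then compare $J$ with $K\cap H_e$.

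\textbf{Invariance.} For $N=\{e\}$, the induced $G/\{e\}$-grading is the original $G$-grading, and $H_{\{e\}}=H_e$. Proposition~\ref{prop:Intersect}(i) gives $I\cap H_g=H_gJ=JH_g$ for every $g\in G$. Proposition~\ref{prop:Intersect}(ii) shows that $J$ is a $G$-invariant ideal of $H_e$. Note that this step uses only that $I$ is a two-sided ideal.

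\textbf{Quasi-Hopf ideal axioms.} The counit condition is immediate, since $\varepsilon(J)\subseteq\varepsilon(I)=0$. For the antipode, take $x\in J$. Then $S(x)\in I$ because $S(I)\subseteq I$, and $S(x)\in H_e$ by~\eqref{eq:Grad2}, so $S(x)\in J$.

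The comultiplication is the only point needing care, namely showing $\Delta(J)\subseteq J\otimes H_e+H_e\otimes J$. For $x\in J$ we have $\Delta(x)\in H_e\otimes H_e$ by~\eqref{eq:Grad1}, and $\Delta(x)\in I\otimes H+H\otimes I$. The key is that $I$ is $G$-graded. This follows from Proposition~\ref{prop:Intersect}(i) for $N=\{e\}$ only after knowing $I=\bigoplus_g(I\cap H_g)$, which need not hold in general. So instead I use a projection argument. Let $P_e\colon H\to H_e$ denote the projection onto the neutral component along $\bigoplus_{g\neq e}H_g$.

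The claim is that $P_e(I)\subseteq J$. Although $I$ need not be graded, it is graded for the coarser $G/N_I$-grading by Proposition~\ref{prop:AssocSubgrp}(iii). Its component $I\cap H_{N_I}=K$ is an ideal of $H_N$ that is $N$-stable in some sense, but this does not immediately give $P_e(K)\subseteq K$, and that is the obstacle. To avoid it, I argue with linear algebra instead. Choose a complement $C$ with $H_e=J\oplus C$, and a complement $D$ of $H_e$ with $H=H_e\oplus D$. Since $\Delta(x)\in H_e\otimes H_e$, applying $(P_e\otimes P_e)$ leaves it unchanged. It then suffices that
\[
(I\otimes H+H\otimes I)\cap(H_e\otimes H_e)=(I\cap H_e)\otimes H_e+H_e\otimes(I\cap H_e).
\]
This is the standard identity $(I\otimes H+H\otimes I)=\ker(q\otimes q)$. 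The kernel of $q\otimes q$ restricted to $H_e\otimes H_e$ is $\ker(q|_{H_e}\otimes q|_{H_e})=J\otimes H_e+H_e\otimes J$, since $\ker(q|_{H_e})=J$ and the kernel of a tensor product of linear maps is computed factorwise. So the obstacle dissolves, and no gradedness of $I$ is needed.

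\textbf{Comparison with $K$.} Finally, since $H_e\subseteq H_N$, we get $K\cap H_e=I\cap H_N\cap H_e=I\cap H_e=J$.
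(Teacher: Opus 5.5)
Your proof is correct and, once the detour is removed, it is the paper's argument. The intersection formulas and $G$-invariance come from Proposition~\ref{prop:Intersect} with $N=\{e\}$. Your identification of $(I\otimes H+H\otimes I)\cap(H_e\otimes H_e)$ with $\ker(q|_{H_e}\otimes q|_{H_e})=J\otimes H_e+H_e\otimes J$ is just the paper's remark that $J$ is the kernel of the quasi-Hopf morphism $H_e\hookrightarrow H\xrightarrow{q}H/I$.

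Do delete the abandoned scaffolding. The intermediate claim $P_e(I)\subseteq J$ is not merely hard to prove but false: in $\kk[G]$ with $I=\aug{G}$ and $g\neq e$, the element $\zeta_g-\zeta_e\in I$ has $P_e$-component $-\zeta_e\notin J=0$. The complements $C$, $D$ and the map $P_e\otimes P_e$ are never used.
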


\begin{proof}
The inclusion $H_e\hookrightarrow H$ (Proposition~\ref{prop:GradProp}(ii)) followed by the quotient morphism $q\colon H\to H/I$ is a morphism of quasi-Hopf algebras. Its kernel is precisely $I\cap H_e=J$, which is therefore a quasi-Hopf ideal of $H_e$. Applying Proposition~\ref{prop:Intersect} with $N=\{e\}$, where the induced $G/\{e\}$-grading is the original $G$-grading and $H_{\{e\}}=H_e$, parts (i) and (ii) give $I\cap H_g=H_gJ=JH_g$ for all $g\in G$ and the $G$-invariance of $J$. Finally, since $H_e\subseteq H_N$, we have $J=I\cap H_e=(I\cap H_N)\cap H_e=K\cap H_e$.
\end{proof}

\subsection{The classification theorem}\label{subsec:Main}

Recall that $H$ is faithfully $G$-graded, with grading map $\pi$. By Propositions~\ref{prop:AssocSubgrp} and~\ref{prop:Intersect}, every quasi-Hopf ideal $I\subseteq H$ determines a normal subgroup $N_I\unlhd G$ together with a $G/N_I$-invariant ideal $I\cap H_{N_I}$ of $H_{N_I}$. The following definition axiomatizes the pairs that arise in this way.

\begin{definition}[Admissible pair]\label{def:AdmPair}
An \emph{admissible pair} for $H$ is a pair $(N,K)$ such that:
\begin{enumerate}
    \item $N\unlhd G$;

    \item $K$ is a quasi-Hopf ideal of the quasi-Hopf subalgebra $H_N$;

    \item $K$ is $G/N$-invariant, that is, $H_{\bar g}K=K H_{\bar g}$ for every $\bar g\in G/N$;

    \item the image of $K$ under the grading map contains the augmentation ideal of $\kk[N]$, that is, $\aug{N}\subseteq\pi(K)$.
\end{enumerate}
\end{definition}

\begin{remark}\label{rem:AugCond}
The reverse inclusion to (iv) follows from (ii): $K\subseteq H_N$ gives $\pi(K)\subseteq\kk[N]$, and $\varepsilon_{\kk[G]}(\pi(K))=\varepsilon(K)=0$, hence $\pi(K)\subseteq\aug{N}$. Therefore, in an admissible pair, the equality $\pi(K)=\aug{N}$ holds. Nevertheless, condition (iv) does not follow from (i)--(iii): for any normal subgroup $N\neq\{e\}$, the pair $(N,0)$ satisfies (i)--(iii) but not (iv), and the correspondence below would send every such pair to the zero ideal.
\end{remark}

The following theorem shows that these conditions characterize such pairs, and that the pair determines the ideal.

\begin{theorem}\label{thm:MainA}
Let $H$ be a faithfully $G$-graded quasi-Hopf algebra. The assignments
\[
I\longmapsto \left(N_I,\,I\cap H_{N_I}\right)
\qquad\text{and}\qquad
(N,K)\longmapsto K^{G/N}:=\sum_{\bar g\in G/N}H_{\bar g}K
\]
define a bijective correspondence between the set of quasi-Hopf ideals of $H$ and the set of admissible pairs for $H$.
\end{theorem}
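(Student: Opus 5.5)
Four tasks are needed: (a) for a quasi-Hopf ideal $I$, the pair $(N_I,I\cap H_{N_I})$ is admissible; (b) $K^{G/N}$ is a quasi-Hopf ideal of $H$; (c) $I=(I\cap H_{N_I})^{G/N_I}$; (d) starting from an admissible pair $(N,K)$ and $I:=K^{G/N}$, we recover $N_I=N$ and $I\cap H_N=K$.

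For (a), set $N:=N_I$ and $K:=I\cap H_N$. Then $K$ is the kernel of the quasi-Hopf morphism $H_N\hookrightarrow H\to H/I$ (Remark~\ref{rem:HNSubalg}), so it is a quasi-Hopf ideal of $H_N$, exactly as in Corollary~\ref{cor:InvNeutral}. Its $G/N$-invariance is Proposition~\ref{prop:Intersect}(ii). For condition (iv), take $n\in N$. By Proposition~\ref{prop:AssocSubgrp}(i) there is $x\in I$ with $\pi(x)=\zeta_n-\zeta_e$. By Proposition~\ref{prop:AssocSubgrp}(iii), the homogeneous $G/N$-component $x_{\bar e}$ of $x$ lies in $I\cap H_N=K$. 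Since $\pi$ is $G$-graded, $\pi(x_{\bar e})$ is the part of $\pi(x)$ supported on $N$, namely $\zeta_n-\zeta_e$. This gives $\aug{N}\subseteq\pi(K)$. Step (c) is immediate from Propositions~\ref{prop:AssocSubgrp}(iii) and~\ref{prop:Intersect}(i), which give $I=\bigoplus_{\bar g}(I\cap H_{\bar g})=\sum_{\bar g}H_{\bar g}K$.

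For (b), invariance gives $H_{\bar g}K=KH_{\bar g}$, so $I:=\sum_{\bar g}H_{\bar g}K$ is closed under multiplication on both sides: $H_{\bar h}H_{\bar g}K\subseteq H_{\bar h\bar g}K$ and $H_{\bar g}KH_{\bar h}\subseteq H_{\bar g}H_{\bar h}K$. Hence $I$ is a two-sided ideal. We also have $\varepsilon(I)\subseteq\varepsilon(H)\varepsilon(K)=0$. For the coproduct, write $\Delta(K)\subseteq K\otimes H_N+H_N\otimes K$ and use multiplicativity of $\Delta$ to get
\[
\Delta(H_{\bar g}K)\subseteq\Delta(H_{\bar g})\Delta(K)\subseteq (H\otimes H)(K\otimes H+H\otimes K)\subseteq I\otimes H+H\otimes I.
\]
For the antipode, $S$ is an anti-morphism, so $S(H_{\bar g}K)\subseteq S(K)S(H_{\bar g})\subseteq KH_{\bar g^{-1}}=H_{\bar g^{-1}}K\subseteq I$, using $S(H_{\bar g})\subseteq H_{\bar g^{-1}}$, which comes from \eqref{eq:Grad2}.

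Step (d) is the main obstacle, especially the equality $I\cap H_N=K$. The set $I$ is $G/N$-graded, with $I\cap H_{\bar g}=H_{\bar g}K\subseteq H_{\bar g}$, since the sum is direct over cosets. So its neutral part is $H_NK=K$, which gives $I\cap H_N=K$. For $N_I$, use that $\pi$ is multiplicative and graded to get
\[
\pi(I)=\sum_{\bar g}\pi(H_{\bar g})\pi(K)=\kk[G]\,\pi(K)=\kk[G]\,\aug{N},
\]
using surjectivity of $\pi$ on each coset component and Remark~\ref{rem:AugCond}. By the uniqueness in Proposition~\ref{prop:AssocSubgrp}(i), $N_I=N$. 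Together with (c), this shows the two assignments are mutually inverse.
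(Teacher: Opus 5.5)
Your proposal is correct and follows essentially the same route as the paper's proof. The steps match: the kernel argument for $K$, invariance via Proposition~\ref{prop:Intersect}(ii), extracting the $\bar e$-component for the augmentation condition, direct-sum comparison for $I\cap H_N=K$, and reconstruction via Propositions~\ref{prop:AssocSubgrp}(iii) and~\ref{prop:Intersect}(i). The only differences are minor. You bound the coproduct using $HK=I$ rather than $\Delta(H_{\bar g})\subseteq H_{\bar g}\otimes H_{\bar g}$, which is cosmetic. You also leave implicit that $\kk[G]\,\aug{N}=\aug{N}\cdot\kk[G]$ for normal $N$ (since $\zeta_g(\zeta_n-\zeta_e)=(\zeta_{gng^{-1}}-\zeta_e)\zeta_g$). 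That identity is needed to match the form in which Proposition~\ref{prop:AssocSubgrp}(i) states its uniqueness, and the paper checks it explicitly.
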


\begin{proof}
Let $I\subseteq H$ be a quasi-Hopf ideal, set $N=N_I$, and put $K=I\cap H_N$. We first show that $(N,K)$ is an admissible pair. The intersection $K$ is the kernel of the composite morphism of quasi-Hopf algebras $H_N\hookrightarrow H\xrightarrow{q}H/I$ (Remark~\ref{rem:HNSubalg}), hence $K$ is a quasi-Hopf ideal of $H_N$. By Proposition~\ref{prop:Intersect}(ii), $K$ is $G/N$-invariant.

It remains to verify the augmentation condition (iv). Let $n\in N$. Since $\zeta_n-\zeta_e\in \aug{N}\cdot\kk[G]=\pi(I)$ (Proposition~\ref{prop:AssocSubgrp}(i)), there exists $x\in I$ such that $\pi(x)=\zeta_n-\zeta_e$. Decompose $x=\sum_{\bar g\in G/N}x_{\bar g}$ with respect to the induced $G/N$-grading. By Proposition~\ref{prop:AssocSubgrp}(iii), each $x_{\bar g}$ lies in $I$. By~\eqref{eq:GradMap} and the description $H_{\bar g}=\bigoplus_{m\in N}H_{gm}$, each $\pi(x_{\bar g})$ lies in the span of $\{\zeta_h\mid h\in\bar g\}$. Since the cosets partition $G$, the condition $\pi(x)\in\kk[N]$ forces $\pi(x_{\bar g})=0$ for $\bar g\neq \bar e$. It follows that the neutral component $x_{\bar e}$ satisfies $\pi(x_{\bar e})=\zeta_n-\zeta_e$. But $x_{\bar e}\in I\cap H_N=K$. Therefore $\zeta_n-\zeta_e\in \pi(K)$ for every $n\in N$, and hence $\aug{N}\subseteq \pi(K)$.

Now let $(N,K)$ be an admissible pair and set $I:=K^{G/N}=\sum_{\bar g\in G/N}H_{\bar g}K$. Since $K$ is $G/N$-invariant, we also have $I=\sum_{\bar g\in G/N}K H_{\bar g}$. Since $H_{\bar h}H_{\bar g}\subseteq H_{\bar h\bar g}$, the first presentation gives $HI\subseteq I$ and the second gives $IH\subseteq I$. Hence $I$ is a two-sided ideal of $H$.

We prove that $I$ is a quasi-Hopf ideal. Since $K$ is a quasi-Hopf ideal of $H_N$, we have $\Delta(K)\subseteq K\otimes H_N+H_N\otimes K$, $S(K)\subseteq K$, and $\varepsilon(K)=0$. Let $a\in H_{\bar g}$ and $k\in K$. Since the induced $G/N$-grading makes $H$ a $G/N$-graded quasi-Hopf algebra (Proposition~\ref{prop:Coarsening}(i)), we have $\Delta(a)\in H_{\bar g}\otimes H_{\bar g}$, $S(a)\in H_{\bar g^{-1}}$, and $H_{\bar g}H_N\subseteq H_{\bar g}$. Thus
\[
\Delta(ak)=\Delta(a)\Delta(k)\in H_{\bar g}K\otimes H_{\bar g}+H_{\bar g}\otimes H_{\bar g}K\subseteq I\otimes H+H\otimes I.
\]
Also, $S(ak)=S(k)S(a)\in K H_{\bar g^{-1}}=H_{\bar g^{-1}}K\subseteq I$ and $\varepsilon(ak)=\varepsilon(a)\varepsilon(k)=0$. Therefore $I$ is a quasi-Hopf ideal of $H$.

We verify that the subgroup associated to $I$ is $N$. Indeed,
\[
\pi(I)
=
\sum_{\bar g\in G/N}\pi(H_{\bar g})\pi(K)
=
\aug{N}\cdot\kk[G],
\]
hence $N_I=N$ by Proposition~\ref{prop:AssocSubgrp}(i). Here the second equality uses $\pi(K)=\aug{N}$ (Remark~\ref{rem:AugCond}), the description $\pi(H_{\bar g})=\bigoplus_{n\in N}\kk\,\zeta_{gn}$ (by~\eqref{eq:GradMap} and Proposition~\ref{prop:GradProp}(i), every component being nonzero), and the identity $\kk[G]\cdot\aug{N}=\aug{N}\cdot\kk[G]$, valid for $N$ normal since $\zeta_g(\zeta_n-\zeta_e)=(\zeta_{gng^{-1}}-\zeta_e)\zeta_g$ and $n\mapsto gng^{-1}$ is a bijection of $N$.

Finally, the two assignments are mutually inverse. Starting from a quasi-Hopf ideal $I$, with $N=N_I$ as in the first part of the proof, Proposition~\ref{prop:AssocSubgrp}(iii) and Proposition~\ref{prop:Intersect}(i) give $I=\bigoplus_{\bar g\in G/N}(I\cap H_{\bar g})=\sum_{\bar g\in G/N}H_{\bar g}(I\cap H_N)$. Thus the reconstruction from $(N_I,I\cap H_N)$ returns $I$.

Conversely, starting from an admissible pair $(N,K)$ and forming $I=K^{G/N}$, the equality $N_I=N$ was proved above, and it remains to check that $I\cap H_N=K$. Since $H_{\bar g}K\subseteq H_{\bar g}$ for every $\bar g\in G/N$, comparison of the components in $H=\bigoplus_{\bar g\in G/N}H_{\bar g}$ gives $I\cap H_{\bar g}=H_{\bar g}K$. In particular $I\cap H_N=H_NK$, and $H_NK=K$ since $1\in H_N$ and $K$ is an ideal of $H_N$. Thus the recovered admissible pair is exactly $(N,K)$.
\end{proof}

\section{Split quotients}\label{sec:Explicit}

Throughout this section, $H$ is again a faithfully $G$-graded quasi-Hopf algebra with grading map $\pi\colon H\to\kk[G]$. We refine the admissible-pair classification by first passing to the neutral component and then isolating the ideals that admit a retraction.

\subsection{Lifting and reduced admissible data}\label{subsec:Lift}

The following construction lifts a $G$-invariant quasi-Hopf ideal of $H_e$ to $H$.

\begin{proposition}\label{prop:Lift}
Let $J\subseteq H_e$ be a $G$-invariant quasi-Hopf ideal of $H_e$, and set
\[
J^G:=HJ=\sum_{g\in G}H_gJ.
\]
Then:
\begin{enumerate}
    \item $J^G$ is a $G$-graded quasi-Hopf ideal of $H$, with components $J^G\cap H_g=H_gJ$ for all $g\in G$. Moreover, $J^G\cap H_e=J$.

    \item $H/J^G$ is a strongly $G$-graded quasi-Hopf algebra whose neutral component is canonically isomorphic, as a quasi-Hopf algebra, to $H_e/J$.
\end{enumerate}
\end{proposition}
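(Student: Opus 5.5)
The plan is to realize $J^G$ as the ideal attached by Theorem~\ref{thm:MainA} to the admissible pair $(\{e\},J)$, and then read off the graded structure of the quotient from Proposition~\ref{prop:Descent}. For $N=\{e\}$ the induced $G/\{e\}$-grading is the original one and $H_{\{e\}}=H_e$. We check the conditions of Definition~\ref{def:AdmPair} in turn. Condition (ii) is the hypothesis that $J$ is a quasi-Hopf ideal of $H_e$. Condition (iii) is $G$-invariance, $H_gJ=JH_g$. Condition (iv) holds trivially because $\aug{\{e\}}=0$. Theorem~\ref{thm:MainA} then says that $K^{G/\{e\}}=\sum_g H_gJ=J^G$ is a quasi-Hopf ideal of $H$ with $J^G\cap H_e=J$. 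Since $1\in H_e$ and $H_gH_e\subseteq H_g$, we also have $HJ=\sum_g H_gJ$.

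To see that $J^G$ is graded, note that each summand $H_gJ$ lies in $H_g$, so $J^G=\bigoplus_g H_gJ$. Comparing components then gives $J^G\cap H_g=H_gJ$; this is exactly the last paragraph of the proof of Theorem~\ref{thm:MainA}. This proves (i). Alternatively, one could verify directly, as in that proof, that $\Delta(aj)\in H_gJ\otimes H_g+H_g\otimes H_gJ$, that $S(aj)=S(j)S(a)\in JH_{g^{-1}}=H_{g^{-1}}J$, and that $\varepsilon(aj)=0$.

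For (ii), set $I=J^G$. Since $N_I=\{e\}$, Proposition~\ref{prop:AssocSubgrp}(ii) supplies $\overline{\pi}\colon H/I\to\kk[G]$ with $\overline{\pi}q=\pi$. Proposition~\ref{prop:Descent}, applied with $p=\id$, then gives a strong $G$-grading on $H/I$ for which $q$ is graded. We need to identify the components. Because $I$ is graded, $H/I=\bigoplus_g q(H_g)$ with $q(H_g)\cong H_g/H_gJ$. Since $q$ is graded, $q(H_g)\subseteq(H/I)_g$, and directness of both decompositions forces $(H/I)_g=q(H_g)$. In particular $(H/I)_e=q(H_e)\cong H_e/(I\cap H_e)=H_e/J$.

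It remains to check that this isomorphism respects the quasi-Hopf structure. The composite $H_e\hookrightarrow H\xrightarrow{q}H/I$ is a quasi-Hopf morphism by Proposition~\ref{prop:GradProp}(ii), its image is the quasi-Hopf subalgebra $(H/I)_e$, and its kernel is $J$. The universal property of the quotient $H_e/J$ then gives a bijective quasi-Hopf morphism $H_e/J\to(H/I)_e$. Its inverse is automatically compatible with $\Delta$, $S$, $\varepsilon$, $\Phi$, $\alpha$ and $\beta$, so it is an isomorphism of quasi-Hopf algebras. I expect the main point needing care to be the identification $(H/I)_e=q(H_e)$, that is, that the grading obtained from Proposition~\ref{prop:Descent} agrees with the naive quotient grading; the direct-sum comparison above handles it.
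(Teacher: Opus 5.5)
Your part (i) is the paper's argument: you check that $(\{e\},J)$ is admissible, apply Theorem~\ref{thm:MainA}, and use that $\sum_g H_gJ$ is direct inside $\bigoplus_g H_g$.

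For part (ii) your route is correct but different from the paper's. The paper transports the structure directly along the surjective quasi-Hopf morphism $q_J$. Since $J^G$ is graded, $H/J^G=\bigoplus_g q_J(H_g)$. The conditions $H_gH_h\subseteq H_{gh}$, \eqref{eq:Grad1} and \eqref{eq:Grad2} pass to the images. Strongness follows at once from $q_J(H_g)q_J(H_{g^{-1}})=q_J(H_e)$. No grading morphisms are involved.

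You instead factor $\pi$ through $H/J^G$ via Proposition~\ref{prop:AssocSubgrp}(ii), and then invoke Proposition~\ref{prop:Descent} with $p=\id$ to obtain a grading on the quotient. This forces the extra comparison $(H/J^G)_g=q(H_g)$, which you carry out correctly. It also uses $N_{J^G}=\{e\}$, which you assert without comment; it holds because $\pi(H_gJ)=\varepsilon(H_gJ)\zeta_g=0$, or from the bijection of Theorem~\ref{thm:MainA}.

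Your route has a small bonus. It shows the quotient grading is the one attached by Theorem~\ref{thm:Cocentral} to the descended map $\overline{\pi}$, the viewpoint the paper adopts later for $\pi_{N,J}$ in Lemma~\ref{lem:RedSetup}(ii). It also obtains strongness from faithfulness via Proposition~\ref{prop:FaithStrong}. The cost is that it brings in considerably heavier machinery for what the paper does in a few lines. The final identification $H_e/J\cong(H/J^G)_e$ is the same in both proofs.
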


\begin{proof}
(i) We show first that the pair $(\{e\},J)$ is an admissible pair for $H$. Indeed, $H_{\{e\}}=H_e$ and the induced $G/\{e\}$-grading is the original $G$-grading; therefore conditions (ii) and (iii) of Definition~\ref{def:AdmPair} are the hypotheses on $J$, condition (i) is immediate, and condition (iv) holds trivially, since $\aug{\{e\}}=0$. By Theorem~\ref{thm:MainA}, $J^{G/\{e\}}=\sum_{g\in G}H_gJ=J^G$ is a quasi-Hopf ideal of $H$. Since $H_gJ\subseteq H_g$ for all $g\in G$ and the decomposition $H=\bigoplus_{g\in G}H_g$ is direct, the sum $J^G=\sum_{g\in G}H_gJ$ is also direct and $J^G\cap H_g=H_gJ$. For the last assertion of (i), taking $g=e$ gives $J^G\cap H_e=H_eJ=J$, since $1\in H_e$ and $J$ is an ideal of $H_e$.

(ii) By Proposition~\ref{prop:FaithStrong}(iii), the grading of $H$ is strong. Let $q_J\colon H\to H/J^G$ be the quotient map. Since $J^G$ is graded, $H/J^G=\bigoplus_{g\in G}q_J(H_g)$. Since $q_J$ is a surjective morphism of quasi-Hopf algebras, it carries the algebra grading of $H$ and conditions~\eqref{eq:Grad1} and~\eqref{eq:Grad2} to the corresponding statements for the subspaces $q_J(H_g)$. Hence this decomposition makes $H/J^G$ a $G$-graded quasi-Hopf algebra, and the identities $q_J(H_g)q_J(H_{g^{-1}})=q_J(H_e)$ make the grading strong. Finally, the quasi-Hopf morphism $H_e\to q_J(H_e)$ has kernel $J^G\cap H_e=J$ by (i), and therefore induces $H_e/J\cong q_J(H_e)$.
\end{proof}

For a subgroup $N\leq G$, the same construction applies to the faithfully $N$-graded quasi-Hopf subalgebra $H_N$.

\begin{corollary}\label{cor:RelLift}
Let $J\subseteq H_e$ be a $G$-invariant quasi-Hopf ideal of $H_e$, let $N\leq G$ be a subgroup, and set $J^N:=H_NJ$. Then:
\begin{enumerate}
    \item $J^N=JH_N$ is a quasi-Hopf ideal of $H_N$ with $J^N\cap H_e=J$. Moreover, $J^N=J^G\cap H_N$.

    \item $H_N/J^N$ is a strongly $N$-graded quasi-Hopf algebra whose neutral component is canonically identified with $H_e/J$ via the image of $H_e$ in $H_N/J^N$.
\end{enumerate}
\end{corollary}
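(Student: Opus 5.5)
The plan is to apply Proposition~\ref{prop:Lift} to the faithfully $N$-graded quasi-Hopf algebra $H_N$ of Remark~\ref{rem:HNSubalg}, whose neutral component is $H_e$. The one hypothesis to check is that $J$ is $N$-invariant as an ideal of $H_e$ with respect to the $N$-grading of $H_N$. This means $H_nJ=JH_n$ for every $n\in N$, and it holds because $J$ is $G$-invariant and $N\subseteq G$. Proposition~\ref{prop:Lift}(i), applied to $H_N$, then gives three things. First, $J^N=H_NJ=\sum_{n\in N}H_nJ$ is an $N$-graded quasi-Hopf ideal of $H_N$. Second, its components are $H_nJ$. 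Third, $J^N\cap H_e=J$. Since $H_nJ=JH_n$ for each $n$, summing over $n\in N$ gives $J^N=JH_N$.

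Next I would show $J^N=J^G\cap H_N$. By Proposition~\ref{prop:Lift}(i) applied to $H$, the ideal $J^G$ is $G$-graded with $J^G\cap H_g=H_gJ$. Because $J^G$ is graded, intersecting with $H_N=\bigoplus_{n\in N}H_n$ picks out exactly the components indexed by $N$. Thus
\[
J^G\cap H_N=\bigoplus_{n\in N}(J^G\cap H_n)=\bigoplus_{n\in N}H_nJ=J^N.
\]
This proves (i).

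For (ii), Proposition~\ref{prop:Lift}(ii) applied to $H_N$ with its faithful $N$-grading shows that $H_N/J^N$ is a strongly $N$-graded quasi-Hopf algebra. It also shows that its neutral component is the image of $H_e$, identified with $H_e/J$ through the map induced by $H_e\to H_N/J^N$, whose kernel is $J^N\cap H_e=J$. The only step needing care is the invariance hypothesis, and that follows immediately from the definition.
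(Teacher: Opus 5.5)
Your proposal is correct and follows essentially the same route as the paper. You deduce $N$-invariance from $G$-invariance, apply Proposition~\ref{prop:Lift} to $H_N$, and obtain $J^N=J^G\cap H_N$ by intersecting the graded ideal $J^G=\bigoplus_{g\in G}H_gJ$ with $H_N$; this is exactly the paper's argument.
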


\begin{proof}
Since $J$ is $G$-invariant, it is in particular $N$-invariant; hence $H_NJ=JH_N$. Proposition~\ref{prop:Lift}, applied to the faithfully $N$-graded quasi-Hopf algebra $H_N$, then yields every remaining assertion except $J^N=J^G\cap H_N$. For the latter, Proposition~\ref{prop:Lift}(i) gives $J^G=\bigoplus_{g\in G}H_gJ$ with $H_gJ\subseteq H_g$. Intersecting this direct sum with $H_N$ gives $J^G\cap H_N=\bigoplus_{n\in N}H_nJ=H_NJ=J^N$.
\end{proof}

An admissible pair $(N,K)$ may be reduced further by passing to $J:=K\cap H_e$ and the quotient $H_N/J^N$. Note that $J^N=H_NJ\subseteq K$; hence the quotient $K/J^N$ makes sense. We refer to $H_N/J^N$ as the \emph{lifted quotient} of $H$ determined by $N$ and $J$.

\begin{lemma}\label{lem:RedSetup}
    Let $N\unlhd G$ and let $J\subseteq H_e$ be a $G$-invariant quasi-Hopf ideal of $H_e$, and write $q_J\colon H\to H/J^G$ for the canonical projection. Then:
\begin{enumerate}
    \item $H/J^G$ is strongly $G/N$-graded with components $q_J(H_{\bar g})$, $\bar g\in G/N$, and its neutral component $q_J(H_N)$ is canonically identified with $H_N/J^N$. Under this identification, $q_J|_{H_N}$ is the quotient map $H_N\to H_N/J^N$.

    \item The grading map of $H_N$ descends to a surjective morphism of quasi-Hopf algebras $\pi_{N,J}\colon H_N/J^N\to\kk[N]$, which is the grading map of $H_N/J^N$.
\end{enumerate}
\end{lemma}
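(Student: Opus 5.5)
For (i), I will combine Proposition~\ref{prop:Lift} with Proposition~\ref{prop:Coarsening} and Proposition~\ref{prop:Descent}. By Proposition~\ref{prop:Lift}, $J^G$ is a $G$-graded quasi-Hopf ideal with $J^G=\bigoplus_{g}H_gJ$, and $H/J^G$ is strongly $G$-graded with components $q_J(H_g)$. Apply Proposition~\ref{prop:Coarsening} to $H/J^G$ and the normal subgroup $N$. This gives a strong $G/N$-grading with components $\bigoplus_{n\in N}q_J(H_{gn})=q_J(H_{\bar g})$, and the neutral component is $q_J(H_N)$. Note that Proposition~\ref{prop:Coarsening} needs a faithful $G$-grading on $H/J^G$, and Proposition~\ref{prop:Lift}(ii) supplies it. The $G/N$-grading on $H/J^G$ could alternatively be obtained from Proposition~\ref{prop:Descent}, using $p\pi$ and the descended map, but the route above is more direct.

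For the identification, consider the restriction $q_J|_{H_N}\colon H_N\to q_J(H_N)$. It is a surjective morphism of quasi-Hopf algebras, since $H_N$ is a quasi-Hopf subalgebra by Remark~\ref{rem:HNSubalg}. Its kernel is $J^G\cap H_N$, which equals $J^N$ by Corollary~\ref{cor:RelLift}(i). Therefore $q_J|_{H_N}$ induces an isomorphism $H_N/J^N\cong q_J(H_N)$ of quasi-Hopf algebras. Under this isomorphism $q_J|_{H_N}$ is the quotient map, which proves the last assertion of (i).

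For (ii), Remark~\ref{rem:HNSubalg} shows that the grading map of $H_N$ is $\pi|_{H_N}\colon H_N\to\kk[N]$. This map vanishes on $J^N=H_NJ$. Indeed, it is multiplicative, and $\pi(J)=\varepsilon(J)\zeta_e=0$ because $J\subseteq H_e$ and $\varepsilon(J)=0$. By the universal property of quotient quasi-Hopf algebras, $\pi|_{H_N}$ factors uniquely as $\pi_{N,J}\colon H_N/J^N\to\kk[N]$. The factored map is surjective because $\pi|_{H_N}$ is.

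It remains to show that $\pi_{N,J}$ is the grading map of the strong $N$-grading from Corollary~\ref{cor:RelLift}(ii), whose components are the images $\overline{H_n}$. By \eqref{eq:GradMap}, it suffices to check that $\pi_{N,J}(\bar x)=\bar\varepsilon(\bar x)\zeta_n$ for $x\in H_n$. This holds because $\pi(x)=\varepsilon(x)\zeta_n$ and $\bar\varepsilon q=\varepsilon$. Theorem~\ref{thm:Cocentral} then identifies $\pi_{N,J}$ with the grading map.

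The main point requiring care is the kernel computation $J^G\cap H_N=J^N$, but this is already recorded in Corollary~\ref{cor:RelLift}(i). The rest is bookkeeping: I need to confirm that the components obtained by coarsening the quotient grading agree with the images of the coarsened components. This follows from $q_J$ being $G$-graded and linear.
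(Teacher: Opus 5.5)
Your proposal is correct and follows essentially the same route as the paper. Part (i) combines Propositions~\ref{prop:Lift} and~\ref{prop:Coarsening} with the kernel computation $J^G\cap H_N=J^N$ of Corollary~\ref{cor:RelLift}(i), and part (ii) descends the grading map of $H_N$ using $\varepsilon(J)=0$, then identifies it with the grading map of $H_N/J^N$ by comparing values on classes of homogeneous elements (the only detail left implicit, in both your argument and the paper's, is that $H/J^G\neq0$ because $\varepsilon(J^G)=0$, so its strong $G$-grading is faithful).
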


\begin{proof}
(i) By Propositions~\ref{prop:Lift}(i)--(ii) and~\ref{prop:Coarsening}(i)--(ii), $H/J^G$ is strongly $G/N$-graded with components $q_J(H_{\bar g})$. Its neutral component is $q_J(H_N)\cong H_N/(J^G\cap H_N)=H_N/J^N$, since $J^G\cap H_N=J^N$ by Corollary~\ref{cor:RelLift}(i), and under this identification $q_J|_{H_N}$ is the quotient map.

(ii) The grading map of $H_N$ (Remark~\ref{rem:HNSubalg}) vanishes on $J^N$, since $\varepsilon(J)=0$ gives $\pi(H_nJ)=\varepsilon(H_nJ)\,\zeta_n=0$ by~\eqref{eq:GradMap}, and therefore descends, by the universal property of quotients, to a surjective morphism of quasi-Hopf algebras $\pi_{N,J}\colon H_N/J^N\to\kk[N]$. It is the grading map of $H_N/J^N$, as both maps send the class of $x_n\in H_n$ to $\varepsilon(x_n)\,\zeta_n$.
\end{proof}

We use the identifications of Corollary~\ref{cor:RelLift}(ii) and Lemma~\ref{lem:RedSetup}(i) throughout.

\begin{definition}[Reduced admissible datum]\label{def:RedAdmDatum}
A \emph{reduced admissible datum} for $H$ is a triple $(N,J,L)$ such that:
\begin{enumerate}
    \item $N\unlhd G$;

    \item $J\subseteq H_e$ is a $G$-invariant quasi-Hopf ideal;

    \item $L\subseteq H_N/J^N$ is a quasi-Hopf ideal;

    \item $L$ has trivial intersection with the neutral component $H_e/J$ of $H_N/J^N$, that is, $L\cap (H_e/J)=0$;

    \item the image of $L$ under the grading map $\pi_{N,J}$ of $H_N/J^N$ contains the augmentation ideal of $\kk[N]$, that is, $\aug{N}\subseteq\pi_{N,J}(L)$;

    \item $L$ is $G/N$-invariant for the strong $G/N$-grading of $H/J^G$, that is, $q_J(H_{\bar g})L=Lq_J(H_{\bar g})$ for all $\bar g\in G/N$.
\end{enumerate}
\end{definition}

\begin{remark}\label{rem:RedViaPairs}
Condition (v) is the counterpart, for reduced data, of condition (iv) of Definition~\ref{def:AdmPair}, and Remark~\ref{rem:AugCond} applies here unchanged. Indeed, since $L$ is a quasi-Hopf ideal and $\pi_{N,J}$ is counital, the inclusion $\pi_{N,J}(L)\subseteq\aug{N}$ always holds. Hence condition (v) is equivalent to $\pi_{N,J}(L)=\aug{N}$. Conditions (iii) and (vi) make $(\{\bar e\},L)$ admissible for $H/J^G$, while (iii) and (v) make $(N,L)$ admissible for $H_N/J^N$. Condition (iv) ensures that the inverse image of $L$ in $H_N$ meets $H_e$ precisely in $J$.
\end{remark}

The next result shows that this repackaging loses no information.

\begin{proposition}\label{prop:RedData}
The assignments
\[
(N,K)\longmapsto \left(N,\,J,\,K/J^N\right)
\qquad\text{and}\qquad
(N,J,L)\longmapsto (N,K_L),
\]
where $J:=K\cap H_e$ in the first assignment and, in the second, $K_L\subseteq H_N$ denotes the inverse image of $L$ under the quotient map $H_N\to H_N/J^N$, define a bijective correspondence between the set of admissible pairs for $H$ and the set of reduced admissible data for $H$.
\end{proposition}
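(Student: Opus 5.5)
The plan is to check that each assignment lands in the claimed set, and then that the two composites are identities. Throughout, write $r\colon H_N\to H_N/J^N$ for the quotient map. By Lemma~\ref{lem:RedSetup}(i), $r=q_J|_{H_N}$, and the grading map of $H_N/J^N$ satisfies $\pi_{N,J}\,r=\pi|_{H_N}$ by Lemma~\ref{lem:RedSetup}(ii).

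\emph{From pairs to data.} Start from an admissible pair $(N,K)$ and set $J:=K\cap H_e$. First show that $J$ is a $G$-invariant quasi-Hopf ideal of $H_e$. The cleanest route is to pass through Theorem~\ref{thm:MainA}: the ideal $I:=K^{G/N}$ satisfies $I\cap H_N=K$, hence $I\cap H_e=J$, and Corollary~\ref{cor:InvNeutral} then gives exactly what is needed. Corollary~\ref{cor:RelLift} gives $J^N=H_NJ\subseteq K$, so $L:=K/J^N=r(K)$ is a quasi-Hopf ideal of $H_N/J^N$, being the image of a quasi-Hopf ideal under a surjective quasi-Hopf morphism. For condition (iv), let $\bar x\in L\cap r(H_e)$ and write $\bar x=r(k)=r(h)$ with $k\in K$ and $h\in H_e$. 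Then $h-k\in J^N\subseteq K$, so $h\in K\cap H_e=J$ and $\bar x=0$. Condition (v) follows from $\pi_{N,J}(L)=\pi(K)\supseteq\aug{N}$. For condition (vi), apply $q_J$ to $H_{\bar g}K=KH_{\bar g}$; since $q_J(K)=r(K)=L$, this gives $q_J(H_{\bar g})L=Lq_J(H_{\bar g})$.

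\emph{From data to pairs.} Start from a reduced datum $(N,J,L)$ and let $K_L:=r^{-1}(L)$. As the kernel of $H_N\to (H_N/J^N)/L$, it is a quasi-Hopf ideal of $H_N$. For (iv), $\pi(K_L)=\pi_{N,J}(r(K_L))=\pi_{N,J}(L)\supseteq\aug{N}$, using surjectivity of $r$. The invariance condition (iii) is where some care is needed, and I expect it to be the main obstacle, because $L$ lives in the quotient while invariance must be checked in $H$. I would use the preimage $I':=q_J^{-1}\bigl(\sum_{\bar g}q_J(H_{\bar g})L\bigr)$. By (vi) and Theorem~\ref{thm:MainA} applied to $H/J^G$ with its strong $G/N$-grading, the set $\sum_{\bar g}q_J(H_{\bar g})L$ is a two-sided ideal of $H/J^G$, so $I'$ is a two-sided ideal of $H$. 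Its intersection with $H_N$ is $r^{-1}(L)=K_L$, using the direct-sum decomposition and $J^G\cap H_N=J^N$. Proposition~\ref{prop:Intersect}(ii) then gives that $K_L$ is $G/N$-invariant. Alternatively, one can argue directly: $q_J^{-1}(q_J(H_{\bar g})L)=H_{\bar g}K_L+J^G\cap H_{\bar g}$, and $J^G\cap H_{\bar g}=H_{\bar g}J^N\subseteq H_{\bar g}K_L$. Either way the two sides of the invariance identity have the same preimage, which proves (iii).

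\emph{Mutual inverses.} Starting from $(N,K)$, we have $r^{-1}(K/J^N)=K$ because $J^N\subseteq K$. Starting from $(N,J,L)$, we need $K_L\cap H_e=J$. The inclusion $J\subseteq K_L\cap H_e$ is clear since $J\subseteq J^N$. Conversely, if $h\in K_L\cap H_e$, then $r(h)\in L\cap(H_e/J)=0$, so $h\in J^N\cap H_e=J$ by Corollary~\ref{cor:RelLift}(i). Finally $K_L/J^N=r(r^{-1}(L))=L$ by surjectivity of $r$, so both composites are the identity.
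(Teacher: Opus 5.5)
Your proposal is correct and follows the paper's overall plan. You check conditions (iv)--(vi) for $K/J^N$, the augmentation condition for $K_L$, the identity $K_L\cap H_e=J$, and the two composites essentially as the paper does. Using ``image of a quasi-Hopf ideal under a surjective morphism'' instead of ``kernel of $H_N/J^N\to H_N/K$'' makes no difference.

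The real difference is how you get the $G/N$-invariance of $K_L$.
\begin{itemize}
\item The paper argues element by element. By (vi), $q_J(H_{\bar g}K_L)=q_J(K_LH_{\bar g})$, and both subspaces lie in $H_{\bar g}$. So they differ by elements of $\ker(q_J)\cap H_{\bar g}=H_{\bar g}J=JH_{\bar g}$, which is contained in both.
\item Your main argument instead writes $K_L=I'\cap H_N$ for the two-sided ideal $I'=q_J^{-1}\bigl(\sum_{\bar g}q_J(H_{\bar g})L\bigr)$, and then quotes Proposition~\ref{prop:Intersect}(ii).
\end{itemize}

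Your argument is valid. The sum $\sum_{\bar g}q_J(H_{\bar g})L$ is a two-sided ideal by (vi) and the grading alone. Theorem~\ref{thm:MainA} is more than you need here, though it does apply, since $(\{\bar e\},L)$ is admissible for $H/J^G$ (cf.\ Remark~\ref{rem:RedViaPairs}). The degree-$\bar e$ part of that sum is $L$, so $I'\cap H_N=K_L$ via $J^G\cap H_N=J^N$.

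What your route buys is a more conceptual picture. Since $J^G\subseteq\sum_{\bar g}H_{\bar g}K_L$, you get $I'=\sum_{\bar g}H_{\bar g}K_L$. So you identify directly the ideal of $H$ that Corollary~\ref{cor:RedClass} attaches to the datum. Invariance then follows from the general fact that the intersection of any ideal with $H_N$ is invariant. The paper's route is more elementary and shows exactly where the $G$-invariance of $J$ is used.

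Your ``alternative'' argument is essentially the paper's, but as written it is imprecise in two places.
\begin{itemize}
\item The full preimage $q_J^{-1}\bigl(q_J(H_{\bar g})L\bigr)$ is $H_{\bar g}K_L+J^G$. Only its intersection with $H_{\bar g}$ equals $H_{\bar g}K_L+(J^G\cap H_{\bar g})$, and this uses that $J^G$ is graded.
\item For the right-hand side you also need $J^G\cap H_{\bar g}\subseteq K_LH_{\bar g}$, that is, $H_{\bar g}J=JH_{\bar g}$. This requires the $G$-invariance of $J$, and you only recorded the inclusion into $H_{\bar g}K_L$.
\end{itemize}
Since your main argument is complete, this does not affect correctness.
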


\begin{proof}
Let $(N,K)$ be an admissible pair and set $J:=K\cap H_e$. Applied to $I=K^{G/N}$, Theorem~\ref{thm:MainA} and Corollary~\ref{cor:InvNeutral} show that $J$ is a $G$-invariant quasi-Hopf ideal.

Since $J^N\subseteq K$, the canonical projection $H_N\to H_N/K$ factors through $H_N/J^N$, with quasi-Hopf kernel $L:=K/J^N$.

If the class of $x\in K$ belongs to $H_e/J$, write $x=b+y$ with $b\in H_e$ and $y\in J^N$. Then $b\in K\cap H_e=J$; hence $x\in J^N$ and $L\cap(H_e/J)=0$. Moreover, $L$ is the image of $K$ under $H_N\to H_N/J^N$ and $\pi_{N,J}$ is induced by the grading map of $H_N$ (Lemma~\ref{lem:RedSetup}(ii)). Hence $\pi_{N,J}(L)=\pi(K)\supseteq\aug{N}$, and applying $q_J$ to $H_{\bar g}K=KH_{\bar g}$ gives $q_J(H_{\bar g})L=Lq_J(H_{\bar g})$. Thus $(N,J,L)$ is reduced admissible.

Conversely, let $(N,J,L)$ be a reduced admissible datum, and let $K:=K_L\subseteq H_N$ be the inverse image of $L$ under the quotient map $H_N\to H_N/J^N$. Then $K$ is the kernel of the composite morphism of quasi-Hopf algebras $H_N\to H_N/J^N\to (H_N/J^N)/L$, hence a quasi-Hopf ideal of $H_N$ containing $J^N$. Moreover, $K\cap H_e=J$. Indeed, $J\subseteq J^N\subseteq K$ and $J\subseteq H_e$ give one inclusion. Conversely, if $x\in K\cap H_e$, then its class in $H_N/J^N$ lies in $L\cap (H_e/J)=0$, whence $x\in J^N\cap H_e=J$ by Corollary~\ref{cor:RelLift}(i). Since $H_N\to H_N/J^N$ is surjective and $K$ is the full inverse image of $L$, the image of $K$ in $H_N/J^N$ is $L$. Condition (v) gives $\aug{N}\subseteq\pi_{N,J}(L)=\pi(K)$, which is condition (iv) of Definition~\ref{def:AdmPair}. It remains to prove the $G/N$-invariance of $K$. We prove $H_{\bar g}K\subseteq KH_{\bar g}$ for every $\bar g\in G/N$. Applying $q_J$, which carries $K$ onto $L$, and using condition (vi) gives $q_J(H_{\bar g}K)=q_J(H_{\bar g})L=Lq_J(H_{\bar g})=q_J(KH_{\bar g})$. Both $H_{\bar g}K$ and $KH_{\bar g}$ are contained in $H_{\bar g}$, since $K\subseteq H_N$. Then every element of $H_{\bar g}K$ differs from an element of $KH_{\bar g}$ by an element of $\ker(q_J)\cap H_{\bar g}=J^G\cap H_{\bar g}$. Since $J^G\cap H_g=H_gJ$ for all $g\in G$ (Proposition~\ref{prop:Lift}(i)), summing over the coset gives $J^G\cap H_{\bar g}=H_{\bar g}J$, which equals $JH_{\bar g}$ by the $G$-invariance of $J$, and $J\subseteq K$ shows that this subspace is contained in $KH_{\bar g}$. Therefore $H_{\bar g}K\subseteq KH_{\bar g}$, and the opposite inclusion is proved in the same way. Thus $(N,K)$ is an admissible pair.

The two assignments are mutually inverse. Starting from $(N,K)$, the inverse image of $L=K/J^N$ in $H_N$ equals $K+J^N=K$, since $J^N\subseteq K$. Starting from $(N,J,L)$, the constructed pair satisfies $K\cap H_e=J$, as shown above, and the image of $K$ in $H_N/J^N$ is $L$ because the quotient map is surjective. Hence the associated reduced datum is again $(N,J,L)$.
\end{proof}

\begin{corollary}\label{cor:RedClass}
The assignments
\[
I\longmapsto \left(N,\,J,\,(I\cap H_N)/J^N\right)
\qquad\text{and}\qquad
(N,J,L)\longmapsto \sum_{\bar g\in G/N}H_{\bar g}K_L,
\]
where $N:=N_I$ and $J:=I\cap H_e$ in the first assignment, and, in the second, $K_L\subseteq H_N$ denotes the inverse image of $L$ under the quotient map $H_N\to H_N/J^N$, define a bijective correspondence between the set of quasi-Hopf ideals of $H$ and the set of reduced admissible data for $H$.
\end{corollary}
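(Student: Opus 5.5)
The plan is to compose the two bijections already established. Theorem~\ref{thm:MainA} gives mutually inverse maps between quasi-Hopf ideals and admissible pairs. Proposition~\ref{prop:RedData} gives mutually inverse maps between admissible pairs and reduced admissible data. Composing them yields a bijection between quasi-Hopf ideals of $H$ and reduced admissible data. What remains is to check that the composite maps agree with the formulas in the statement.

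For the forward direction, start with a quasi-Hopf ideal $I$. Theorem~\ref{thm:MainA} sends it to $(N_I,K)$ with $K=I\cap H_{N_I}$. Proposition~\ref{prop:RedData} then sends this pair to $(N_I,K\cap H_e,K/J^N)$. By Corollary~\ref{cor:InvNeutral}, $K\cap H_e=I\cap H_e$, which is the $J$ of the statement. Hence the third entry is $(I\cap H_N)/J^N$, exactly as claimed.

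For the reverse direction, start with a reduced datum $(N,J,L)$. Proposition~\ref{prop:RedData} sends it to the admissible pair $(N,K_L)$, with $K_L$ the full inverse image of $L$ in $H_N$. Theorem~\ref{thm:MainA} sends this pair to $K_L^{G/N}=\sum_{\bar g\in G/N}H_{\bar g}K_L$, which is the stated formula.

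Since both constituent correspondences are bijections with the indicated inverses, the two composite assignments are mutually inverse. There is no real obstacle here. The only point that needs care is the identification $K\cap H_e=I\cap H_e$, which makes the $J$ of the statement agree with the $J$ produced by Proposition~\ref{prop:RedData}, and this is supplied by Corollary~\ref{cor:InvNeutral}.
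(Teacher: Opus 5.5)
Your proposal is correct and matches the paper's proof: the statement is exactly the composite of the bijections of Theorem~\ref{thm:MainA} and Proposition~\ref{prop:RedData}, and the only check is that the middle component is $I\cap H_e$. You cite Corollary~\ref{cor:InvNeutral} for this; the paper simply notes $(I\cap H_N)\cap H_e=I\cap H_e$ because $H_e\subseteq H_N$, which is the same one-line fact.
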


\begin{proof}
This is the composite of the bijections of Theorem~\ref{thm:MainA} and Proposition~\ref{prop:RedData}. Since $H_e\subseteq H_N$, we have $(I\cap H_N)\cap H_e=I\cap H_e$; hence the middle component of the composite is the one displayed.
\end{proof}

\subsection{Transversal ideals and retractions}\label{subsec:Retr}

Fix $N\unlhd G$ and a $G$-invariant quasi-Hopf ideal $J\subseteq H_e$. Then $H_N/J^N$ is strongly $N$-graded with neutral component $H_e/J$ (Corollary~\ref{cor:RelLift}(ii)) and grading map $\pi_{N,J}$ (Lemma~\ref{lem:RedSetup}(ii)). Now, we single out the ideals of $H_N/J^N$ that complement $H_e/J$, and identify them with the retractions onto it.

\begin{definition}[Transversal ideal]\label{def:TransvIdeal}
A quasi-Hopf ideal $L\subseteq H_N/J^N$ is called \emph{transversal to $H_e/J$} if the natural map $H_e/J\to (H_N/J^N)/L$ is an isomorphism of quasi-Hopf algebras.
\end{definition}

\begin{lemma}\label{lem:TransvChar}
A quasi-Hopf ideal $L\subseteq H_N/J^N$ is transversal to $H_e/J$ if and only if $L\cap (H_e/J)=0$ and $H_N/J^N=(H_e/J)+L$.
\end{lemma}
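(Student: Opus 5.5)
The natural map in Definition~\ref{def:TransvIdeal} is the composite
\[
\iota\colon H_e/J\hookrightarrow H_N/J^N\xrightarrow{\;r\;}(H_N/J^N)/L ,
\]
where the first arrow is the canonical identification of Corollary~\ref{cor:RelLift}(ii) and $r$ is the quotient map. Both arrows are morphisms of quasi-Hopf algebras. The first is so because $H_e$ is a quasi-Hopf subalgebra of $H_N$ (Remark~\ref{rem:HNSubalg}), and the kernel of $H_e\to H_N/J^N$ is $J^N\cap H_e=J$. The second is so by the discussion following Definition~\ref{def:QHIdeal}. Hence $\iota$ is itself a morphism of quasi-Hopf algebras.

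The key observation is that a bijective morphism of quasi-Hopf algebras is an isomorphism of quasi-Hopf algebras. Its set-theoretic inverse $\iota^{-1}$ is an algebra map. It intertwines $\Delta$, $\varepsilon$ and $S$ because $\iota$ and $\iota\otimes\iota$ are bijective. It sends $\overline{\Phi},\overline{\alpha},\overline{\beta}$ to $\Phi,\alpha,\beta$ because $\iota^{\otimes3}$ is injective. Consequently, transversality of $L$ is equivalent to bijectivity of $\iota$, and the lemma reduces to linear algebra.

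For injectivity, note that $\ker\iota$ consists of those elements of $H_e/J$ whose image in $H_N/J^N$ lies in $L$. Thus $\ker\iota=L\cap(H_e/J)$ under the identification, so $\iota$ is injective exactly when $L\cap(H_e/J)=0$.

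For surjectivity, the image of $\iota$ is $r(H_e/J)$. This equals all of $(H_N/J^N)/L=r(H_N/J^N)$ if and only if every $x\in H_N/J^N$ can be written as $x=b+\ell$ with $b\in H_e/J$ and $\ell\in\ker r=L$, that is, $H_N/J^N=(H_e/J)+L$. Combining the two steps gives both implications at once.

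There is no real obstacle here. The only point needing care is that the notion of isomorphism in the definition is the quasi-Hopf one rather than the linear one, and the inverse-morphism observation above handles this.
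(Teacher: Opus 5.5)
Your proof is correct and follows the paper's argument: both write the natural map as a composite of quasi-Hopf morphisms with kernel $L\cap(H_e/J)$ and image $((H_e/J)+L)/L$. Your extra remark that a bijective quasi-Hopf morphism is automatically a quasi-Hopf isomorphism spells out a point the paper leaves implicit.
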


\begin{proof}
The natural map $H_e/J\to (H_N/J^N)/L$ is the composite of the inclusion $H_e/J\hookrightarrow H_N/J^N$ with the projection $H_N/J^N\to (H_N/J^N)/L$, hence a morphism of quasi-Hopf algebras with kernel $L\cap (H_e/J)$ and image $((H_e/J)+L)/L$. It is therefore injective if and only if $L\cap (H_e/J)=0$, and surjective if and only if $H_N/J^N=(H_e/J)+L$.
\end{proof}

The terminology is borrowed from the categorical setting, where Nikshych calls a fusion subcategory transversal to a connected algebra~\cite[Def.~3.1]{Nik19}. The analogy is with the characterization just proved. In both settings a distinguished subobject is required to map isomorphically onto the quotient, thereby allowing the quotient to be computed inside the subobject.

The next condition is weaker than transversality, and it can be stated for an arbitrary faithfully graded quasi-Hopf algebra, over an arbitrary group.

\begin{definition}[Augmented ideal]\label{def:AugIdeal}
A quasi-Hopf ideal $L$ of a quasi-Hopf algebra $A$ faithfully graded by an arbitrary group $\Gamma$ is called \emph{augmented} if its grading map sends $L$ onto $\aug{\Gamma}$.
\end{definition}

An augmented ideal is thus an ideal of $A$, not to be confused with the augmentation ideal $\aug{\Gamma}$ of $\kk[\Gamma]$ onto which it maps.

By Remark~\ref{rem:RedViaPairs}, condition (v) of Definition~\ref{def:RedAdmDatum} amounts to $L$ being augmented.

\begin{lemma}\label{lem:TransvAug}
If $L\subseteq H_N/J^N$ is transversal to $H_e/J$, then $L$ is augmented.
\end{lemma}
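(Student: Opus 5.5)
We must show that if $L$ is transversal, then $\pi_{N,J}(L)=\aug{N}$. By Remark~\ref{rem:RedViaPairs}, the inclusion $\pi_{N,J}(L)\subseteq\aug{N}$ always holds: $L$ is a quasi-Hopf ideal, so $\varepsilon(L)=0$, and $\pi_{N,J}$ is counital. So it suffices to show that $\zeta_n-\zeta_e\in\pi_{N,J}(L)$ for every $n\in N$, since these elements span $\aug{N}$.

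Fix $n\in N$. Since $H_N/J^N$ is faithfully $N$-graded, Proposition~\ref{prop:GradProp}(i) provides an element $t_n$ of degree $n$ in $H_N/J^N$ with $\varepsilon(t_n)=1$. Concretely, take the image of an element of $H_n$ with counit $1$, using that $\varepsilon(J)=0$. By Lemma~\ref{lem:TransvChar}, transversality gives $H_N/J^N=(H_e/J)+L$. Hence we can write $t_n=b+\ell$ with $b\in H_e/J$ and $\ell\in L$.

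Now apply the grading map $\pi_{N,J}$, which by Lemma~\ref{lem:RedSetup}(ii) sends a homogeneous element $x$ of degree $m$ to $\varepsilon(x)\zeta_m$. This gives $\pi_{N,J}(t_n)=\zeta_n$. Since $b$ lies in the neutral component, $\pi_{N,J}(b)=\varepsilon(b)\zeta_e$. Applying $\varepsilon$ to $t_n=b+\ell$ and using $\varepsilon(\ell)=0$ yields $\varepsilon(b)=1$. Therefore
\[
\pi_{N,J}(\ell)=\pi_{N,J}(t_n)-\pi_{N,J}(b)=\zeta_n-\zeta_e,
\]
so $\zeta_n-\zeta_e\in\pi_{N,J}(L)$. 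Since $n\in N$ was arbitrary, $\aug{N}\subseteq\pi_{N,J}(L)$, and $L$ is augmented.

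No step is a real obstacle. The only point needing care is that $\pi_{N,J}$ is evaluated through its formula on homogeneous elements, and $t_n$ is homogeneous of degree $n$ while $b$ is homogeneous of degree $e$. Both facts are guaranteed by the identification of the neutral component of $H_N/J^N$ with $H_e/J$ (Corollary~\ref{cor:RelLift}(ii)).
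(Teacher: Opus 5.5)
Your proposal is correct and follows the paper's argument. The counit gives $\pi_{N,J}(L)\subseteq\aug{N}$. For each $n\in N$, you split a degree-$n$ element of counit $1$ along $H_N/J^N=(H_e/J)+L$, and the $L$-part maps to $\zeta_n-\zeta_e$.
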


\begin{proof}
Since $L$ is a quasi-Hopf ideal, $\varepsilon(L)=0$. As $\pi_{N,J}$ is a morphism of quasi-Hopf algebras, it is counital; hence $\pi_{N,J}(L)\subseteq\aug{N}$. We prove the reverse inclusion. Let $n\in N$. The component of degree $n$ of $H_N/J^N$ is nonzero, since this grading is strong by Corollary~\ref{cor:RelLift}(ii). By Proposition~\ref{prop:GradProp}(i) there is a homogeneous element $a_n$ of degree $n$ with $\varepsilon(a_n)=1$. Since $L$ is transversal, $H_N/J^N=(H_e/J)+L$. Write $a_n=b_n+\ell_n$ with $b_n\in H_e/J$ and $\ell_n\in L$. Since $\varepsilon(\ell_n)=0$, applying the counit to $a_n=b_n+\ell_n$ gives $\varepsilon(b_n)=\varepsilon(a_n)=1$. By Lemma~\ref{lem:RedSetup}(ii), $\pi_{N,J}(a_n)=\zeta_n$ and $\pi_{N,J}(b_n)=\zeta_e$, hence $\pi_{N,J}(\ell_n)=\zeta_n-\zeta_e$. As $n\in N$ was arbitrary and these elements span $\aug{N}$, we obtain $\aug{N}\subseteq\pi_{N,J}(L)$, whence $\pi_{N,J}(L)=\aug{N}$.
\end{proof}

In the following, by a \emph{retraction} of a faithfully graded quasi-Hopf algebra onto its neutral component we mean a morphism of quasi-Hopf algebras restricting to the identity there, and we write $\tau\colon H_N/J^N\to H_e/J$ for the case at hand. To a transversal ideal $L$ we associate the composite $\tau_L\colon H_N/J^N\to (H_N/J^N)/L\xrightarrow{\ \sim\ }H_e/J$, whose second map is the inverse of the isomorphism $H_e/J\cong (H_N/J^N)/L$. The transversal ideals are precisely the kernels of such retractions.

\begin{proposition}\label{prop:TransvRetr}
The assignments
\[
\tau\longmapsto \ker(\tau)
\qquad\text{and}\qquad
L\longmapsto \tau_L
\]
define a bijective correspondence between the set of quasi-Hopf algebra retractions of $H_N/J^N$ onto $H_e/J$ and the set of quasi-Hopf ideals $L\subseteq H_N/J^N$ transversal to $H_e/J$.
\end{proposition}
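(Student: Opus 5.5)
The plan is to check that both assignments are well defined and then that they are mutually inverse. Write $A:=H_N/J^N$ and $B:=H_e/J\subseteq A$, the latter a quasi-Hopf subalgebra by Corollary~\ref{cor:RelLift}(ii).

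First, $\tau\mapsto\ker\tau$ is well defined. Let $\tau\colon A\to B$ be a retraction. Its kernel is a quasi-Hopf ideal, since kernels of quasi-Hopf morphisms are quasi-Hopf ideals, as recalled after Definition~\ref{def:QHIdeal}. We verify the two conditions of Lemma~\ref{lem:TransvChar}:
\begin{enumerate}
    \item If $b\in B\cap\ker\tau$, then $b=\tau(b)=0$, so $B\cap\ker\tau=0$.
    \item Every $x\in A$ decomposes as $x=\tau(x)+(x-\tau(x))$. Here $\tau(x)\in B$, and $\tau(x-\tau(x))=\tau(x)-\tau(x)=0$ because $\tau|_B=\id$. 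Hence $A=B+\ker\tau$.
\end{enumerate}
So $\ker\tau$ is transversal.

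Second, $L\mapsto\tau_L$ is well defined. Let $L$ be transversal, let $p_L\colon A\to A/L$ be the projection, and let $\iota_L\colon B\to A/L$ be the natural map, an isomorphism of quasi-Hopf algebras by Definition~\ref{def:TransvIdeal}. The inverse of a bijective quasi-Hopf morphism is again one: it is an algebra map, and it intertwines $\Delta$, $\varepsilon$, $S$, $\Phi$, $\alpha$, $\beta$ because these identities can be transported through the bijection. Hence $\tau_L=\iota_L^{-1}p_L$ is a morphism of quasi-Hopf algebras. For $b\in B$ we have $p_L(b)=\iota_L(b)$, so $\tau_L(b)=b$, and $\tau_L$ is a retraction.

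Third, the two assignments are mutually inverse.
\begin{enumerate}
    \item $\ker\tau_L=\ker p_L=L$, since $\iota_L^{-1}$ is injective.
    \item Given a retraction $\tau$, set $L=\ker\tau$. For $x\in A$ write $x=\tau(x)+\ell$ with $\ell\in L$, as in the first step. Then $\tau_L(x)=\iota_L^{-1}p_L(\tau(x))=\tau(x)$, because $\tau(x)\in B$. Hence $\tau_L=\tau$.
\end{enumerate}

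No step is a real obstacle. The only point needing a word is that the inverse of the isomorphism in Definition~\ref{def:TransvIdeal} is a quasi-Hopf morphism, which is the standard transport argument given above.
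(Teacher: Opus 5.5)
Your proposal is correct and follows the paper's proof essentially step for step. You show $\ker\tau$ is transversal via Lemma~\ref{lem:TransvChar} and the decomposition $x=\tau(x)+(x-\tau(x))$, build $\tau_L$ as the inverse of the natural isomorphism composed with the projection, and recover $\tau=\tau_{\ker\tau}$ from $H_N/J^N=(H_e/J)+\ker\tau$; your remark that the inverse of a bijective quasi-Hopf morphism is again a quasi-Hopf morphism is a detail the paper leaves implicit.
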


\begin{proof}
Let $\tau\colon H_N/J^N\to H_e/J$ be a quasi-Hopf algebra retraction and put $L:=\ker(\tau)$, a quasi-Hopf ideal of $H_N/J^N$. Since $\tau|_{H_e/J}=\id$, we have $L\cap (H_e/J)=0$. Moreover, every $a\in H_N/J^N$ decomposes as $a=\tau(a)+\left(a-\tau(a)\right)$ with $\tau(a)\in H_e/J$ and $a-\tau(a)\in L$, whence $H_N/J^N=(H_e/J)+L$. Thus $L$ is transversal to $H_e/J$ by Lemma~\ref{lem:TransvChar}.

Conversely, let $L\subseteq H_N/J^N$ be transversal to $H_e/J$ and let $q\colon H_N/J^N\to (H_N/J^N)/L$ be the quotient morphism. By transversality, $q|_{H_e/J}\colon H_e/J\to (H_N/J^N)/L$ is an isomorphism of quasi-Hopf algebras. Then $\tau_L:=(q|_{H_e/J})^{-1}q\colon H_N/J^N\to H_e/J$ is a quasi-Hopf algebra retraction with $\ker(\tau_L)=\ker(q)=L$. Finally, if $\tau$ is any quasi-Hopf algebra retraction with $\ker(\tau)=L$, then $\tau$ and $\tau_L$ agree on $H_e/J$, where both restrict to the identity, and on $L$, where both vanish. Since both maps are $\kk$-linear and $H_N/J^N=(H_e/J)+L$, they agree on all of $H_N/J^N$. Thus the two constructions are mutually inverse.
\end{proof}

Transversality is not easy to verify directly, since the spanning condition involves all of $H_N/J^N$. The results that follow show that augmentation and trivial intersection already force transversality. As in Definition~\ref{def:AugIdeal}, none of them is specific to $H_N/J^N$, and none assumes a crossed product. The first assumes neither faithfulness nor trivial intersection.

\begin{proposition}\label{prop:AugSurj}
Let $\Gamma$ be a group, let $A=\bigoplus_{g\in\Gamma}A_g$ be a $\Gamma$-graded quasi-Hopf algebra, and let $P_g\colon A\to A_g$ denote the projections attached to this decomposition. If $L\subseteq A$ is a quasi-Hopf ideal with $\varepsilon\left(P_h(L)\right)\neq0$ for some $h\in\Gamma$, then $P_g(L)=A_g$ for every $g\in\Gamma$.
\end{proposition}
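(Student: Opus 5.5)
The plan is to show first that $1\in P_e(L)$, and then to propagate this to every degree using only the module structure of the projections. No faithfulness or strongness is needed, since every step is degree bookkeeping combined with the quasi-antipode axioms.

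\emph{Step 1: behaviour of the projections.} Since $A_{g'}A_k\subseteq A_{g'k}$, for homogeneous $a\in A_{g'}$, $c\in A_{k'}$ and arbitrary $z\in A$ we have
\[
P_{g'kk'}(azc)=a\,P_k(z)\,c .
\]
Because $L$ is a two-sided ideal, this gives two inclusions:
\[
A_{g'}P_k(L)\subseteq P_{g'k}(L),
\qquad
P_k(L)A_{k'}\subseteq P_{kk'}(L).
\]
In particular $P_e(L)$ is a two-sided ideal of $A_e$. The diagonal condition~\eqref{eq:Grad1} gives $\Delta P_h=(P_h\otimes P_h)\Delta$, and~\eqref{eq:Grad2} gives $S P_h=P_{h^{-1}}S$. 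Since $S(L)\subseteq L$, we get $S(P_h(L))\subseteq P_{h^{-1}}(L)$.

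\emph{Step 2: $\alpha\in P_e(L)$.} This is the key computation. Take $x\in L$ with $y:=P_h(x)$ satisfying $\varepsilon(y)\neq0$. Write
\[
\Delta(x)=\sum_i l_i\otimes a_i+\sum_j b_j\otimes l'_j,
\qquad l_i,l'_j\in L .
\]
Then
\[
\Delta(y)=\sum_i P_h(l_i)\otimes P_h(a_i)+\sum_j P_h(b_j)\otimes P_h(l'_j).
\]
Substituting this into the first identity of~\eqref{eq:QH6} yields
\[
\varepsilon(y)\,\alpha=\sum_i S(P_h(l_i))\,\alpha P_h(a_i)+\sum_j S(P_h(b_j))\,\alpha\,P_h(l'_j).
\]
Consider a term of the first sum. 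Its left factor $S(P_h(l_i))$ lies in $P_{h^{-1}}(L)$, and its right factor $\alpha P_h(a_i)$ lies in $A_h$. By Step 1 the product lies in $P_e(L)$.

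Consider a term of the second sum. Its left factor $S(P_h(b_j))\alpha$ lies in $A_{h^{-1}}$, and its right factor $P_h(l'_j)$ lies in $P_h(L)$. Again by Step 1 the product lies in $P_e(L)$.

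Dividing by $\varepsilon(y)\neq0$ gives $\alpha\in P_e(L)$.

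\emph{Step 3: $1\in P_e(L)$.} Since $P_e(L)$ is an ideal of $A_e$, and $\Phi\in A_e^{\otimes3}$, $\beta\in A_e$, $S(A_e)\subseteq A_e$, the first identity of~\eqref{eq:QH7} gives
\[
1=\Phi^{(1)}\beta S(\Phi^{(2)})\,\alpha\,\Phi^{(3)}\in A_e\,\alpha\,A_e\subseteq P_e(L).
\]
Hence $P_e(L)=A_e$.

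\emph{Step 4: every degree.} For any $g\in\Gamma$, Step 1 gives
\[
A_g=A_g\cdot 1\subseteq A_gP_e(L)\subseteq P_g(L)\subseteq A_g .
\]
Therefore $P_g(L)=A_g$, as claimed.

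The main obstacle is Step 2. The naive attempt, applying~\eqref{eq:QH6} to $x$ itself, only returns $\varepsilon(x)\alpha$, which is $0$ because $\varepsilon(L)=0$. The remedy is to apply~\eqref{eq:QH6} to the single homogeneous component $y=P_h(x)$, which is not in $L$, and to commute the projection past $\Delta$ using the diagonal condition. After that one must track degrees carefully so that each term lands in $P_e(L)$.
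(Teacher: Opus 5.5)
Your proof is correct and follows the paper's outline: obtain $\alpha\in P_e(L)$ from the first identity of~\eqref{eq:QH6}, then $1\in P_e(L)$ from~\eqref{eq:QH7}, then multiply by $A_g$. The one difference is in how the nonvanishing counit is used. The paper first moves it into degree $e$ by multiplying by a counit-one element of $A_{h^{-1}}$, which relies on Propositions~\ref{prop:FaithStrong}(ii) and~\ref{prop:GradProp}(i), and then argues inside $A_e$ with $T=P_e(L)$ as an $S$-stable coideal. You instead apply~\eqref{eq:QH6} directly to $P_h(x)$ and use the mixed-degree inclusions $P_{h^{-1}}(L)A_h\subseteq P_e(L)$ and $A_{h^{-1}}P_h(L)\subseteq P_e(L)$, so that preliminary step is not needed.
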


\begin{proof}
Fix $\ell\in L$ with $\varepsilon\left(P_h(\ell)\right)\neq0$. Then $A_h\neq0$, hence $A_{h^{-1}}\neq0$ by Proposition~\ref{prop:FaithStrong}(ii), and Proposition~\ref{prop:GradProp}(i) provides $a\in A_{h^{-1}}$ with $\varepsilon(a)=1$. Since $P_e(a\ell)=aP_h(\ell)$ and $\varepsilon$ is multiplicative, the element $a\ell\in L$ satisfies $\varepsilon\left(P_e(a\ell)\right)\neq0$. Therefore the subspace $T:=P_e(L)$ of $A_e$ has $\varepsilon(T)\neq0$, and it is a two-sided ideal of $A_e$ because $A$ is $\Gamma$-graded as an algebra.

By~\eqref{eq:Grad1} we have $\left(P_e\otimes P_e\right)\Delta=\Delta P_e$, and by~\eqref{eq:Grad2} we have $P_eS=SP_e$. Applying the first identity to $\Delta(L)\subseteq L\otimes A+A\otimes L$ gives $\Delta(T)\subseteq T\otimes A_e+A_e\otimes T$, and the second gives $S(T)\subseteq T$. Choose $x\in T$ with $\varepsilon(x)=1$ and write $\Delta(x)=\sum_ix_i\otimes b_i+\sum_jd_j\otimes y_j$ with $x_i,y_j\in T$ and $b_i,d_j\in A_e$. Since $\alpha\in A_e$ by~\eqref{eq:Grad2}, the first identity of~\eqref{eq:QH6} gives
\[
\alpha=\varepsilon(x)\,\alpha=S(x')\,\alpha\,x''=\sum_iS(x_i)\,\alpha\,b_i+\sum_jS(d_j)\,\alpha\,y_j\in T,
\]
using $S(T)\subseteq T$, $S(A_e)\subseteq A_e$, and that $T$ is a two-sided ideal of $A_e$. As $\Phi\in A_e^{\otimes3}$ and $\beta\in A_e$ by~\eqref{eq:Grad2}, the first identity of~\eqref{eq:QH7} then yields $1=\Phi^{(1)}\,\beta\,S\left(\Phi^{(2)}\right)\,\alpha\,\Phi^{(3)}\in A_e\,\alpha\,A_e\subseteq T$, and hence $T=A_e$.

Now let $g\in\Gamma$ and $x\in A_g$. Choosing $\ell\in L$ with $P_e(\ell)=1$, we get $x\ell\in L$ and $P_g(x\ell)=xP_e(\ell)=x$, whence $A_g\subseteq P_g(L)$ and $P_g(L)=A_g$.
\end{proof}

The two results below rest on a pair of operators attached to a linear functional that annihilates the ideal. We record their properties once.

\begin{lemma}\label{lem:CoidealOp}
Let $\Gamma$ be a group, let $A$ be a $\Gamma$-graded quasi-Hopf algebra, and let $L\subseteq A$ be a quasi-Hopf ideal. For a linear functional $\phi\colon A\to\kk$ vanishing on $L$, put $E_\phi:=\left(\phi\otimes\id\right)\Delta$ and $D_\phi:=\left(\id\otimes\phi\right)\Delta$. Then:
\begin{enumerate}
    \item $E_\phi(L)\subseteq L$ and $D_\phi(L)\subseteq L$.

    \item Both maps send $A_h$ into $A_h$ for every $h\in\Gamma$, and both vanish on $A_h$ whenever $\phi$ does.

    \item $\varepsilon E_\phi=\varepsilon D_\phi=\phi$.
\end{enumerate}
\end{lemma}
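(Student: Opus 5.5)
The proof is a direct computation from the definition of a quasi-Hopf ideal and the diagonal grading condition~\eqref{eq:Grad1}; we treat the three parts in order and handle $E_\phi$ in detail, the case of $D_\phi$ being symmetric.

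For (i), let $\ell\in L$. Since $L$ is a quasi-Hopf ideal, $\Delta(\ell)\in L\otimes A+A\otimes L$. Write
\[
\Delta(\ell)=\sum_i u_i\otimes a_i+\sum_j c_j\otimes v_j
\]
with $u_i,v_j\in L$. Then
\[
E_\phi(\ell)=\sum_i\phi(u_i)\,a_i+\sum_j\phi(c_j)\,v_j.
\]
The first sum vanishes because $\phi(L)=0$, and the second lies in $L$. Hence $E_\phi(L)\subseteq L$. For $D_\phi$ the roles of the legs are exchanged: the terms $\sum_i u_i\,\phi(a_i)$ lie in $L$, and the terms $\sum_j c_j\,\phi(v_j)$ vanish.

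For (ii), let $x\in A_h$. By~\eqref{eq:Grad1}, $\Delta(x)\in A_h\otimes A_h$, so $E_\phi(x)$ is a linear combination of second legs, all lying in $A_h$. Moreover, each coefficient is a value of $\phi$ on a first leg in $A_h$. If $\phi|_{A_h}=0$, every such coefficient vanishes, and so $E_\phi(x)=0$. The same argument applies to $D_\phi$ with the legs exchanged.

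For (iii), we compute
\[
\varepsilon E_\phi(x)=\phi\bigl(x'\varepsilon(x'')\bigr)=\phi\bigl((\id\otimes\varepsilon)\Delta(x)\bigr)=\phi(x),
\]
where the last step is the counit axiom~\eqref{eq:QH4}. The identity $\varepsilon D_\phi=\phi$ follows in the same way from the other half of~\eqref{eq:QH4}.

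None of these steps presents a real obstacle; the only point needing care is (ii). There the diagonal condition~\eqref{eq:Grad1} is essential, since it guarantees that the legs of $\Delta(x)$ stay in the same degree $h$. Note also that coassociativity of $\Delta$ is never used, so the argument applies verbatim in the quasi-Hopf setting.
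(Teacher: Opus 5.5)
Your proposal is correct and follows essentially the same route as the paper: you write $\Delta(\ell)$ in $L\otimes A+A\otimes L$ and evaluate $\phi$ on the appropriate leg for (i), you use the diagonal condition~\eqref{eq:Grad1} for (ii), and you use the counit axiom~\eqref{eq:QH4} for (iii). Your remark that coassociativity is never used is also accurate.
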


\begin{proof}
Let $\ell\in L$ and write $\Delta(\ell)=\sum_i\ell_i\otimes a_i+\sum_jd_j\otimes\ell_j$ with $\ell_i,\ell_j\in L$, as permitted by Definition~\ref{def:QHIdeal}. Then $E_\phi(\ell)=\sum_i\phi(\ell_i)a_i+\sum_j\phi(d_j)\ell_j$, where the first sum vanishes because $\phi$ does on $L$ and the second lies in $L$. The same computation on the other tensor factor gives $D_\phi(\ell)\in L$, which proves (i). For (ii), the inclusion $\Delta(A_h)\subseteq A_h\otimes A_h$ of~\eqref{eq:Grad1} places both legs in $A_h$; hence each operator maps $A_h$ into itself and returns zero as soon as $\phi$ annihilates $A_h$. For (iii), the counit axiom~\eqref{eq:QH4} gives
\[
\varepsilon\left(E_\phi(x)\right)=\phi(x')\varepsilon(x'')=\phi\left(x'\varepsilon(x'')\right)=\phi(x),
\]
and likewise for $D_\phi$.
\end{proof}

Even if every component projection is surjective, it does not follow that $A=A_e+L$, since an element of $L$ with prescribed component of degree $g$ has further components, which need not lie in $A_e+L$. The next result proceeds one degree at a time, at the cost of the hypothesis $L\cap A_e=0$.

\begin{proposition}\label{prop:TransvDeg}
Let $\Gamma$ be a group, let $A$ be a faithfully $\Gamma$-graded quasi-Hopf algebra, and let $L\subseteq A$ be a quasi-Hopf ideal with $L\cap A_e=0$. If $g\in\Gamma$ is such that $\varepsilon$ does not vanish on $A_g\cap\left(A_e+L\right)$, then $A_g\subseteq A_e+L$.
\end{proposition}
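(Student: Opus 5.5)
The plan is to show that the subspace
\[
V:=A_g\cap\left(A_e+L\right)
\]
is all of $A_g$. The natural route is to attach to $V$ an ideal of the neutral component, show that this ideal is everything, and then transport back to degree $g$ using the strong grading (Proposition~\ref{prop:FaithStrong}(iii)).

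First, I will record the easy structural facts about $V$. It is an $A_e$-subbimodule of $A_g$: if $v=b+\ell$ with $b\in A_e$ and $\ell\in L$, then $avc=abc+a\ell c$ for $a,c\in A_e$. Next, $V$ is stable under the operators $E_\phi$ and $D_\phi$ of Lemma~\ref{lem:CoidealOp} for every functional $\phi$ vanishing on $L$. Indeed, $D_\phi(v)=D_\phi(b)+D_\phi(\ell)$, where $D_\phi(b)\in A_e$ and $D_\phi(\ell)\in L$ by Lemma~\ref{lem:CoidealOp}(i)--(ii), while $D_\phi(v)\in A_g$; the same holds for $E_\phi$. The restrictions to $A_g$ of the functionals vanishing on $L$ are exactly the functionals on $A_g$ vanishing on $A_g\cap L\subseteq V$. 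A linear-algebra argument (choose a basis of $A_g$ adapted to $A_g\cap L\subseteq V$) should then give
\[
\Delta(V)\subseteq V\otimes A_g+A_g\otimes V .
\]
Similarly, $S(V)\subseteq A_{g^{-1}}\cap(A_e+L)$, because $S(L)\subseteq L$ and $S(A_e)\subseteq A_e$.

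Second, I will set $T:=A_{g^{-1}}V+S(V)A_g\subseteq A_e$, enlarged if necessary to the two-sided ideal of $A_e$ it generates, and aim to prove $T=A_e$ by imitating the proof of Proposition~\ref{prop:AugSurj}. By hypothesis there is $v\in V$ with $\varepsilon(v)=1$. Expanding $\alpha=S(v')\,\alpha\,v''$ with the coproduct inclusion above, the terms of the form $S(c)\,\alpha\,w$ with $w\in V$ lie in $A_{g^{-1}}V$. The terms of the form $S(w)\,\alpha\,c$ with $w\in V$ lie in $S(V)A_g$. Both are absorbed into $T$, using that $A_eT A_e\subseteq T$. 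Hence $\alpha\in T$, and the first identity of~\eqref{eq:QH7} with $\Phi\in A_e^{\otimes 3}$ gives $1\in T$.

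Third, I will conclude from $T=A_e$ that $A_g\subseteq V$. The part $A_{g^{-1}}V$ is handled directly: $A_gA_{g^{-1}}V=A_eV\subseteq V$. For the part $S(V)A_g$, I need $A_gS(V)A_g\subseteq V$. This is where the hypothesis $L\cap A_e=0$ should enter: for $w=b+\ell\in V$, the element $\ell$ is uniquely determined, and $S(\ell)$ has components only in degrees $e$ and $g^{-1}$. Products of the form $x\,S(\ell)\,y$ with $x,y\in A_g$ then lie in $L$ and have components in degrees $g$ and $g^2$ only. I would try to pair them with $\ell$ itself to cancel the stray component. If this cancellation fails, the fallback is to work with the symmetric identity $\beta=v'\beta S(v'')$ instead, obtaining $1\in VA_{g^{-1}}$ directly, and then to use $A_g=A_eA_g=VA_{g^{-1}}A_g\subseteq VA_e=V$.

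I expect the main obstacle to be this control of the mixed terms involving $S(V)$. Concretely, I must choose the ideal $T$ (or choose between the $\alpha$ and $\beta$ identities) so that both $1\in T$ can be proved and $T$ can be transported back into $V$. It is precisely at this step that the trivial-intersection hypothesis $L\cap A_e=0$ must be used.
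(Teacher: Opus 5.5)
There is a genuine gap, and it is the step you yourself flag as the main obstacle. The inclusion $\Delta(V)\subseteq V\otimes A_g+A_g\otimes V$ from your first step is correct but too weak. With it, the $\alpha$-identity forces the summand $S(V)A_g$ into $T$. Transporting $T=A_e$ back to degree $g$ would then require $A_gS(V)A_g\subseteq V$, and you have no argument for this. For $w=b+\ell\in V$ and $x,y\in A_g$ one only gets $xS(w)y=xS(b)y+xS(\ell)y\in A_g\cap(A_{g^2}+L)$, and ``pairing with $\ell$'' does not remove the degree-$g^2$ part.

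The fallback fails for the same reason. Expanding $\beta=v'\beta S(v'')$ with the two-sided inclusion gives only $\beta\in VA_{g^{-1}}+A_gS(V)$, so $1\in VA_{g^{-1}}$ does not follow. Reaching it directly needs every left leg of $\Delta(v)$ to lie in $V$. Also, uniqueness of $\ell$ in $w=b+\ell$ is not where the hypothesis $L\cap A_e=0$ does its work.

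The missing idea is to use $L\cap A_e=0$ at the coproduct step. By strong grading it gives $L\cap A_g=0$: if $x\in L\cap A_g$, then $A_{g^{-1}}x\subseteq L\cap A_e=0$, so $x\in A_gA_{g^{-1}}x=0$. Hence every functional on $A_g$ extends to a functional on $A$ vanishing on $L$, and your stability statement $D_\phi(V)\subseteq V$ upgrades as follows. Write $\Delta(v)=\sum_ix_i\otimes y_i$ with the $y_i\in A_g$ linearly independent, and choose $\phi$ with $\phi(L)=0$ and $\phi(y_j)=\delta_{ij}$. Then $x_i=D_\phi(v)\in V$, so $\Delta(V)\subseteq V\otimes A_g$.

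Now $\beta=v'\beta S(v'')\in VA_{g^{-1}}$. This is a two-sided ideal of $A_e$, because $V$ is an $A_e$-sub-bimodule of $A_g$. So $1\in A_e\beta A_e\subseteq VA_{g^{-1}}$ by \eqref{eq:QH7}, and $A_g=VA_{g^{-1}}A_g=VA_e\subseteq V$. This is the paper's proof. It obtains the same one-sided inclusion via $E_\phi$ with $\phi$ vanishing on all of $A_e+L$, using $E_\phi(v)\in L\cap A_g=0$. Symmetrically, $E_\phi$ with your functionals gives $\Delta(V)\subseteq A_g\otimes V$. Your $\alpha$-route then also closes, with $T=A_{g^{-1}}V$ and no $S(V)$ term.
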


\begin{proof}
By Proposition~\ref{prop:FaithStrong}(iii), the grading of $A$ is strong. Write $C:=A_e+L$ and $T:=A_g\cap C$. Since $A_e$ is a subalgebra of $A$ (Proposition~\ref{prop:GradProp}(ii)) and $L$ is a two-sided ideal, $C$ is a subalgebra of $A$, and $T$ is an $A_e$-sub-bimodule of $A_g$. We first record that $L\cap A_g=0$. Indeed, if $x\in L\cap A_g$, then $A_{g^{-1}}x\subseteq L\cap A_e=0$; hence $A_ex=(A_gA_{g^{-1}})x=0$ and $x=0$.

Next we show that $\Delta(T)\subseteq T\otimes A_g$. Let $x\in T$ and write $\Delta(x)=\sum_ix_i\otimes y_i$ with $x_i,y_i\in A_g$ and the $y_i$ linearly independent. Let $\phi\colon A\to\kk$ be a linear functional vanishing on $C$, and hence on $L$ and on $A_e$, and set $E_\phi:=\left(\phi\otimes\id\right)\Delta$. By Lemma~\ref{lem:CoidealOp} we have $E_\phi(L)\subseteq L$, and $E_\phi$ both preserves $A_g$ and vanishes on $A_e$. Writing $x=b+\ell$ with $b\in A_e$ and $\ell\in L$, we get $E_\phi(x)=E_\phi(\ell)\in L\cap A_g=0$, whence $\sum_i\phi(x_i)y_i=0$ and $\phi(x_i)=0$ for every $i$. This holds for every $\phi$ vanishing on $C$, and every subspace of a vector space is the intersection of the kernels of the linear functionals vanishing on it. Therefore each $x_i$ lies in $A_g\cap C=T$.

Choose $x\in T$ with $\varepsilon(x)=1$. By~\eqref{eq:Grad1} we have $x''\in A_g$, hence $S(x'')\in A_{g^{-1}}$ by~\eqref{eq:Grad2}, while $\beta\in A_e$ and $x'\in T$ by the previous paragraph. The second identity of~\eqref{eq:QH6} therefore gives
\[
\beta=\varepsilon(x)\,\beta=x'\,\beta\,S(x'')\in TA_eA_{g^{-1}}\subseteq W:=TA_{g^{-1}}.
\]
The subspace $W$ is contained in $A_gA_{g^{-1}}=A_e$, and it is a two-sided ideal of $A_e$, since $A_eW=(A_eT)A_{g^{-1}}\subseteq W$ and $WA_e=T(A_{g^{-1}}A_e)\subseteq W$. As $\Phi\in A_e^{\otimes3}$, $\alpha\in A_e$ and $S(A_e)\subseteq A_e$ by~\eqref{eq:Grad2}, the first identity of~\eqref{eq:QH7} yields
\[
1=\Phi^{(1)}\,\beta\,S(\Phi^{(2)})\,\alpha\,\Phi^{(3)}\in A_e\,\beta\,A_e\subseteq W.
\]
Hence $W=A_e$, and strong grading gives $A_g=A_eA_g=T(A_{g^{-1}}A_g)=TA_e\subseteq T$. Therefore $A_g=T\subseteq A_e+L$.
\end{proof}

It remains to produce such an element in every degree. This is achieved by an elementary reduction of the support of a single element of $L$, supplied by the augmentation.

\begin{theorem}\label{thm:AugTransv}
Let $\Gamma$ be a group, let $A$ be a faithfully $\Gamma$-graded quasi-Hopf algebra, and let $L\subseteq A$ be an augmented quasi-Hopf ideal with $L\cap A_e=0$. Then $A=A_e+L$.
\end{theorem}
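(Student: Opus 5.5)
The plan is to reduce the theorem to Proposition~\ref{prop:TransvDeg} one degree at a time. Put $C:=A_e+L$, which is a subalgebra of $A$. Since $A=\bigoplus_{g\in\Gamma}A_g$, it suffices to show $A_g\subseteq C$ for every $g\in\Gamma$. By Proposition~\ref{prop:TransvDeg}, this reduces to producing, for each $g$, an element of $A_g\cap C$ with nonzero counit. The augmentation hypothesis provides the raw material: there is $\ell\in L$ whose grading-map image is $\zeta_g-\zeta_e$. Its homogeneous components therefore satisfy $\varepsilon(P_g\ell)=1$, $\varepsilon(P_e\ell)=-1$, and $\varepsilon(P_h\ell)=0$ for all other $h$. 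The task is to strip away the components of $\ell$ in the other degrees while staying inside $C$ and keeping the counit of the $g$-component.

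First I would set $D:=\{h\in\Gamma\mid A_h\subseteq C\}$ and show that it is a subgroup of $\Gamma$.
\begin{enumerate}
\item It contains $e$.
\item It is closed under products: strong grading (Proposition~\ref{prop:FaithStrong}(iii)) gives $A_{hk}=A_hA_k\subseteq C$ because $C$ is a subalgebra.
\item It is closed under inverses. Since $S(A_e)\subseteq A_e$ and $S(L)\subseteq L$, we have $S(C)\subseteq C$, so $S(A_h)\subseteq A_{h^{-1}}\cap C$. The equality $\varepsilon\circ S=\varepsilon$ (Remark~\ref{rem:Rescaling}) together with Proposition~\ref{prop:GradProp}(i) then supplies an element of $A_{h^{-1}}\cap C$ of counit $1$. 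Proposition~\ref{prop:TransvDeg} gives $h^{-1}\in D$.
\end{enumerate}
Consequently, subtracting from $\ell$ its components in degrees of $D$ yields an element $\ell'\in C$ that has the same $g$-component and is supported outside $D$.

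Next comes a support-reduction argument by induction. Among the elements of $C$ (or of $L$, after translating by homogeneous elements of counit one as in the proof of Proposition~\ref{prop:AugSurj}) whose $g$-component has nonzero counit, I would choose one with minimal support outside $D$. I want to show that this minimal support is $\{g\}$. If another degree $h\neq g$ with $h\notin D$ occurs, I would apply an operator $E_\phi$ or $D_\phi$ from Lemma~\ref{lem:CoidealOp}. Here $\phi$ is a functional vanishing on $L$ that agrees with $\varepsilon$ on the $g$-component but kills $A_h$. By parts (i)--(iii) of that lemma, the result stays in $L$ (or in $C$), preserves degrees, has strictly smaller support, and still has a $g$-component of nonzero counit. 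The case $L\cap A_e=0$ enters exactly as in Proposition~\ref{prop:TransvDeg}, through $L\cap A_k=0$ for every $k$. Once an element of $A_g\cap C$ with nonzero counit is found, Proposition~\ref{prop:TransvDeg} gives $g\in D$, so $D=\Gamma$ and $A=A_e+L$.

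The main obstacle is constructing the functional $\phi$ in the reduction step. It must vanish on $L$, but $L$ is not graded, so a functional defined degree by degree (such as $\varepsilon\circ P_X$ for a set $X$ of degrees) need not annihilate $L$. I would first try to build $\phi$ on $C$ directly, using the direct-sum decomposition $C=A_e\oplus L$ coming from $L\cap A_e=0$: take $\phi$ to be $\varepsilon$ composed with the projection $C\to A_e$ along $L$, extended by zero on a complement of $C$. Then I would check that $E_\phi$ or $D_\phi$ truncates the support in the required way. If that fails, the fallback is to exploit the subgroup $D$ more fully, by multiplying by homogeneous elements of counit one to move the offending degree into $D$ before subtracting.
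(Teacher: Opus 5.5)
Your preliminary steps are sound: $C=A_e+L$ is a subalgebra, $D$ is indeed a subgroup (although this is not needed), and removing the $D$-components of the augmented element is legitimate. The gap is in the support-reduction step, which is where you yourself located the obstacle.

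Let $x$ be your minimal element, $b_g$ its component of degree $g$, and $F''$ the set of degrees in its support lying outside $D\cup\{g\}$. A functional $\phi$ that vanishes on $L$ and on every $A_h$ with $h\in F''$, and satisfies $\phi(b_g)\neq0$, exists if and only if $b_g\notin V:=L+\sum_{h\in F''}A_h$. Nothing established at that point excludes $b_g\in V$; for instance, $L$ might contain $b_g+y_h$ with $y_h\in A_h$. So this case must be handled, and your plan gives no way to do so:
\begin{itemize}
\item every admissible $\phi$ then kills $b_g$, and your candidate, which restricts to $\varepsilon$ on $C$, is a special case that meets the same obstruction;
\item subtracting the element $\lambda\in L$ witnessing $b_g\in V$ removes the $g$-component altogether.
\end{itemize}
An induction with the target degree $g$ held fixed therefore cannot be completed. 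The fallback of multiplying by homogeneous elements of counit one is not developed enough to close the gap.

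The missing idea is to let the target degree vary and to carry an invariant that survives this subtraction. The paper works with $\ell\in L$, so that $\varepsilon(\ell)=0$, writes $\Sigma$ for your $D$, and sets $\nu(\ell):=\sum_{h\in\Sigma}\varepsilon(P_h(\ell))=-\sum_{h\notin\Sigma}\varepsilon(P_h(\ell))$. It shows that no pair $(\ell,g)$ exists with $\ell\in L$, $g\notin\Sigma$, $\varepsilon(P_g(\ell))\neq0$ and $\nu(\ell)\neq0$. To do so it takes such a pair with $\ell$ of minimal support, and defines $b_g$, $F''$ and $V$ for $\ell$ as above.
\begin{itemize}
\item If $b_g\notin V$, the operator $D_\phi$ of Lemma~\ref{lem:CoidealOp} yields an element of $L$ supported in $\Sigma\cup\{g\}$. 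Its $g$-component lies in $A_g\cap C$ and has counit $\phi(b_g)\neq0$, so Proposition~\ref{prop:TransvDeg} gives $g\in\Sigma$, a contradiction. This is your step.
\item If $b_g=\lambda+\sum_{h\in F''}c_h$ with $\lambda\in L$, then $\lambda$ is supported off $\Sigma$. Hence $\ell-\lambda\in L$ has strictly smaller support and $\nu(\ell-\lambda)=\nu(\ell)\neq0$. Since $\varepsilon(\ell-\lambda)=0$, the second expression for $\nu$ produces a new degree $h_0\notin\Sigma$ with $\varepsilon(P_{h_0}(\ell-\lambda))\neq0$, contradicting minimality.
\end{itemize}
Finally, the augmented element with image $\zeta_g-\zeta_e$ has $\nu=-1$ whenever $g\notin\Sigma$. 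Such a pair cannot exist, so every $g$ lies in $\Sigma$.
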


\begin{proof}
Write $C:=A_e+L$ and $\Sigma:=\{h\in\Gamma:A_h\subseteq C\}$, a subset containing $e$, and let $P_h\colon A\to A_h$ denote the projections attached to the decomposition of $A$. For $\ell\in L$, let $F(\ell)$ be its support, the finite set of degrees in which $\ell$ has a nonzero component, and put $\nu(\ell):=\sum_{h\in\Sigma}\varepsilon(P_h(\ell))$. Since $\varepsilon(\ell)=0$,
\[
\nu(\ell)=-\sum_{h\notin\Sigma}\varepsilon(P_h(\ell)).
\]
We claim that no pair $(\ell,g)$ satisfies $\ell\in L$, $g\notin\Sigma$, $\varepsilon(P_g(\ell))\neq0$ and $\nu(\ell)\neq0$. Suppose otherwise and choose such a pair with $|F(\ell)|$ minimal. Write $b_h:=P_h(\ell)$, $F:=F(\ell)$ and $F'':=F\setminus(\Sigma\cup\{g\})$, and note that $g\in F$.

Suppose first that $b_g\notin V:=L+\sum_{h\in F''}A_h$, and choose a linear functional $\phi\colon A\to\kk$ vanishing on $V$ with $\phi(b_g)\neq0$. Set $D_\phi:=(\id\otimes\phi)\Delta$. Since $\phi$ vanishes on $L$ and on $A_h$ for every $h\in F''$, Lemma~\ref{lem:CoidealOp} gives $D_\phi(L)\subseteq L$, $D_\phi(A_h)\subseteq A_h$ for every $h\in\Gamma$, $D_\phi(b_h)=0$ for $h\in F''$, and $\varepsilon D_\phi=\phi$. Hence $x:=D_\phi(\ell)\in L$ has support contained in $(F\cap\Sigma)\cup\{g\}$. Consequently, $P_g(x)=x-\sum_{h\in F\cap\Sigma}P_h(x)$ lies in $A_g\cap C$, since $x\in L$ and $P_h(x)\in A_h\subseteq C$ for every $h\in\Sigma$. Its counit is $\varepsilon(D_\phi(b_g))=\phi(b_g)\neq0$. Proposition~\ref{prop:TransvDeg} then gives $A_g\subseteq C$, contradicting $g\notin\Sigma$.

Suppose instead that $b_g=\lambda+\sum_{h\in F''}c_h$ with $\lambda\in L$ and $c_h\in A_h$. Comparing components of degree $g$, of degree $h\in F''$, and of the remaining degrees, we get $P_g(\lambda)=b_g$, $P_h(\lambda)=-c_h$ for $h\in F''$, and $P_h(\lambda)=0$ otherwise. Hence $\ell'':=\ell-\lambda\in L$ has support contained in $F\setminus\{g\}$, which is strictly smaller than $F$. As $F''$ excludes both $\Sigma$ and $g$, the support of $\lambda$ misses $\Sigma$ entirely; hence $\nu(\lambda)=0$ and $\nu(\ell'')=\nu(\ell)\neq0$. Since $\varepsilon(\ell'')=0$ and $\nu(\ell'')\neq0$, the displayed identity applied to $\ell''$ gives $h_0\notin\Sigma$ with $\varepsilon(P_{h_0}(\ell''))\neq0$, and the pair $(\ell'',h_0)$ contradicts the choice of $(\ell,g)$ with $|F(\ell)|$ minimal. The claim is proved.

Let $g\in\Gamma$ with $g\neq e$, the case $g=e$ being trivial. Since $L$ is augmented, there is $\ell\in L$ whose image under the grading map of $A$ is $\zeta_g-\zeta_e$. By~\eqref{eq:GradMap} this means $\varepsilon(P_g(\ell))=1$ and $\varepsilon(P_e(\ell))=-1$, the remaining components having zero counit. If $g\notin\Sigma$, then $\nu(\ell)=-1$, since $e\in\Sigma$, and the pair $(\ell,g)$ contradicts the claim. Therefore $A_g\subseteq C$ for every $g\in\Gamma$, and $A=A_e+L$.
\end{proof}

Recall that $H_N/J^N$ is strongly $N$-graded with neutral component $H_e/J$ and grading map $\pi_{N,J}$. Applied to it, the theorem completes the characterization of transversality begun in Lemmas~\ref{lem:TransvChar} and~\ref{lem:TransvAug}.

\begin{corollary}\label{cor:TransvEquiv}
A quasi-Hopf ideal $L\subseteq H_N/J^N$ is transversal to $H_e/J$ if and only if $L\cap(H_e/J)=0$ and $L$ is augmented.
\end{corollary}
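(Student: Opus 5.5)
The plan is to combine Lemma~\ref{lem:TransvChar}, Lemma~\ref{lem:TransvAug} and Theorem~\ref{thm:AugTransv}, applied to the strongly $N$-graded quasi-Hopf algebra $A:=H_N/J^N$. By Corollary~\ref{cor:RelLift}(ii), $A$ is faithfully $N$-graded with neutral component $A_e=H_e/J$. By Lemma~\ref{lem:RedSetup}(ii), its grading map is $\pi_{N,J}$. So \emph{augmented} in Definition~\ref{def:AugIdeal} means here exactly $\pi_{N,J}(L)=\aug{N}$.

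For the forward implication, we assume $L$ is transversal to $H_e/J$. Lemma~\ref{lem:TransvChar} then gives $L\cap(H_e/J)=0$ directly. Lemma~\ref{lem:TransvAug} gives that $L$ is augmented.

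For the converse, we assume $L\cap(H_e/J)=0$ and that $L$ is augmented. We apply Theorem~\ref{thm:AugTransv} with $\Gamma=N$ and $A=H_N/J^N$. Its hypotheses are that $A$ is faithfully $N$-graded, that $L$ is an augmented quasi-Hopf ideal, and that $L\cap A_e=0$, and all three hold. The theorem yields $H_N/J^N=(H_e/J)+L$. Together with $L\cap(H_e/J)=0$, Lemma~\ref{lem:TransvChar} shows that $L$ is transversal.

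There is no real obstacle here, since all the work lies in Theorem~\ref{thm:AugTransv}. The only point needing care is the identification of $A_e$ with $H_e/J$ as a subalgebra of $H_N/J^N$. This identification is used both in the intersection condition and in Lemma~\ref{lem:TransvChar}, and it is the canonical one from Corollary~\ref{cor:RelLift}(ii), via the image of $H_e$.
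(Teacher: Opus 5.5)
Your proposal is correct and follows the paper's proof exactly. For the forward direction you use Lemma~\ref{lem:TransvChar} and Lemma~\ref{lem:TransvAug}; for the converse you apply Theorem~\ref{thm:AugTransv} to the strongly, hence faithfully, $N$-graded algebra $H_N/J^N$ to obtain $H_N/J^N=(H_e/J)+L$, and then conclude with Lemma~\ref{lem:TransvChar}.
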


\begin{proof}
If $L$ is transversal, then $L\cap(H_e/J)=0$ by Lemma~\ref{lem:TransvChar} and $L$ is augmented by Lemma~\ref{lem:TransvAug}. Conversely, Theorem~\ref{thm:AugTransv} gives $H_N/J^N=(H_e/J)+L$, and Lemma~\ref{lem:TransvChar} applies.
\end{proof}

The antipode axioms are used in Propositions~\ref{prop:AugSurj} and~\ref{prop:TransvDeg}, and they cannot be dispensed with, as the following bialgebra shows.

\begin{example}\label{ex:AntipodeNec}
Let $\kk[z]$ be the polynomial bialgebra on one generator $z$ with $\Delta(z)=z\otimes z$ and $\varepsilon(z)=1$, let $\Gamma\cong\mathbb{Z}/2\mathbb{Z}$ be written multiplicatively as $\{e,t\}$, and let $A:=\kk[z]\otimes\kk[\Gamma]$, graded by the second tensor factor: a $\Gamma$-crossed product bialgebra with $A_e=\kk[z]$ and homogeneous unit $u:=1\otimes\zeta_t$, satisfying $u^2=1$. Its grading satisfies~\eqref{eq:Grad1}, since $\Delta(z^ku^i)=z^ku^i\otimes z^ku^i$ has both legs of degree $t^i$. The subspace $L:=z\kk[z](u-1)$ is a two-sided ideal, as $A$ is commutative and $u(u-1)=-(u-1)$, and it satisfies every hypothesis of Theorem~\ref{thm:AugTransv} that does not involve the antipode: it is a coideal with $\varepsilon(L)=0$, since
\[
\Delta(z^k(u-1))=z^ku\otimes z^k(u-1)+z^k(u-1)\otimes z^k,
\qquad k\geq1.
\]
It intersects $A_e$ trivially, since $c(u-1)\in A_e$ forces $cu=0$. The grading is strong, since $u$ is a unit. Its image under the map $x_g\mapsto\varepsilon(x_g)\zeta_g$ of~\eqref{eq:GradMap} is $\aug{\Gamma}$, since each $z^k(u-1)$ is sent to $\zeta_t-\zeta_e$. Nevertheless $A\neq A_e+L$, because the degree-$t$ component of $A_e+L$ is $z\kk[z]u\subsetneq A_eu$. Consistently, $A$ admits no quasi-Hopf algebra structure, whatever the reassociator: given a candidate $(\Phi,S,\alpha,\beta)$, commutativity and~\eqref{eq:QH7} force $\alpha\in A^\times$, and the first identity of~\eqref{eq:QH6} applied to the group-like element $z$ then yields $S(z)z=1$, which is impossible, as the algebra morphism $A\to\kk[z]$ sending $u$ to $1$ would exhibit $z$ as invertible in $\kk[z]$.
\end{example}

A retraction does not merely split the neutral component off. It trivializes the grading entirely.

\begin{theorem}\label{thm:Trivialization}
Let $\Gamma$ be a group and let $A$ be a faithfully $\Gamma$-graded quasi-Hopf algebra. Then the following are equivalent:
\begin{enumerate}
    \item There is a morphism of quasi-Hopf algebras $\tau\colon A\to A_e$ with $\tau|_{A_e}=\id$.
    \item There is an augmented quasi-Hopf ideal $L\subseteq A$ with $L\cap A_e=0$.
    \item There are group-like units $u_g\in A_g$, for $g\in\Gamma$, such that $u_gu_h=u_{gh}$ and $u_gb=bu_g$ for every $b\in A_e$.
\end{enumerate}
In that case $L=\ker(\tau)$ and $\tau(u_g)=1$ determine the three data from one another, and $b\otimes\zeta_g\mapsto bu_g$ is an isomorphism of $\Gamma$-graded quasi-Hopf algebras from $A_e\otimes\kk[\Gamma]$ onto $A$.
\end{theorem}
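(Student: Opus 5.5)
The plan is to prove the cycle (i)$\Rightarrow$(ii)$\Rightarrow$(i), then (i)$\Rightarrow$(iii) and (iii)$\Rightarrow$(i), and to read off the tensor product decomposition at the end.

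\emph{(i)$\Leftrightarrow$(ii).} Given $\tau$, put $L:=\ker(\tau)$. This is a quasi-Hopf ideal with $L\cap A_e=0$, and $A=A_e+L$ via $a=\tau(a)+(a-\tau(a))$. The argument of Lemma~\ref{lem:TransvAug} then applies verbatim with $A$ in place of $H_N/J^N$, and shows that $L$ is augmented. Conversely, given $L$ as in (ii), Theorem~\ref{thm:AugTransv} yields $A=A_e+L$, so $A_e\to A/L$ is an isomorphism of quasi-Hopf algebras. Exactly as in Proposition~\ref{prop:TransvRetr}, the retraction $\tau_L:=(q|_{A_e})^{-1}q$ has kernel $L$, and a retraction is determined by its kernel.

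\emph{(i)$\Rightarrow$(iii).} Fix $g\in\Gamma$ and let $L=\ker\tau$. The first paragraph of the proof of Proposition~\ref{prop:TransvDeg} shows $L\cap A_g=0$, since the grading is strong. Hence $\tau|_{A_g}\colon A_g\to A_e$ is injective. Its image $W:=\tau(A_g)$ is a two-sided ideal of $A_e$, because $\tau(bxb')=b\tau(x)b'$ for $b,b'\in A_e$. Moreover
\[
1\in\tau(A_gA_{g^{-1}})=W\,\tau(A_{g^{-1}})\subseteq W,
\]
so $W=A_e$ and $\tau|_{A_g}$ is bijective. Let $u_g\in A_g$ be the unique element with $\tau(u_g)=1$. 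This uniqueness is also what makes $\tau(u_g)=1$ determine the family $(u_g)$ from $\tau$. The required properties then all follow from injectivity of $\tau$ on homogeneous components:
\begin{itemize}
\item $u_gu_h$ and $u_{gh}$ both lie in $A_{gh}$ with $\tau$-value $1$, so $u_gu_h=u_{gh}$; also $u_e=1$, hence each $u_g$ is a unit.
\item $u_gb$ and $bu_g$ both lie in $A_g$ with $\tau$-value $b$, so $u_gb=bu_g$.
\item $\Delta(u_g)$ and $u_g\otimes u_g$ both lie in $A_g\otimes A_g$, and $\tau\otimes\tau$ sends both to $1\otimes1$. Since $(\tau\otimes\tau)|_{A_g\otimes A_g}$ is injective, $u_g$ is group-like, and $\varepsilon(u_g)=\varepsilon\tau(u_g)=1$.
\end{itemize}

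\emph{(iii)$\Rightarrow$(i) and the isomorphism.} Since the $u_g$ are homogeneous units, $A$ is a crossed product with $A_g=A_eu_g$, so every element is uniquely $\sum_g b_gu_g$. Define $\theta\colon A_e\otimes\kk[\Gamma]\to A$ by $b\otimes\zeta_g\mapsto bu_g$; it is a graded linear bijection. It is an algebra map because $u_gu_h=u_{gh}$ and the $u_g$ centralize $A_e$. It respects $\Delta$ and $\varepsilon$ because the $u_g$ are group-like, and it respects $\Phi,\alpha,\beta$ because these lie in $A_e$.

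The main obstacle is the antipode, namely proving $S(u_g)=u_{g^{-1}}$. I will write $S(u_g)=c\,u_{g^{-1}}$ with $c\in A_e$. Applying the anti-morphism $S$ to $bu_g=u_gb$ gives $S(u_g)S(b)=S(b)S(u_g)$; since $u_{g^{-1}}$ centralizes $A_e$, it follows that $c$ commutes with $S(A_e)$. The first identity of~\eqref{eq:QH6}, applied to the group-like $u_g$, gives $c\,u_{g^{-1}}\alpha u_g=c\alpha=\alpha$. Multiplying the second identity of~\eqref{eq:QH7} on the left by $c$ then gives
\[
c=c\,S(\Phi^{(-1)})\alpha\Phi^{(-2)}\beta S(\Phi^{(-3)})=S(\Phi^{(-1)})\,c\alpha\,\Phi^{(-2)}\beta S(\Phi^{(-3)})=1.
\]
Hence $S(bu_g)=u_{g^{-1}}S(b)=S(b)u_{g^{-1}}$, so $\theta$ intertwines antipodes and is an isomorphism of $\Gamma$-graded quasi-Hopf algebras. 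Composing $\theta^{-1}$ with $\id\otimes\varepsilon_{\kk[\Gamma]}$ gives a retraction $\tau(bu_g)=b$ with $\tau(u_g)=1$. This proves (i), and also that the three data determine one another as stated.
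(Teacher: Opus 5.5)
Your proof is correct and follows essentially the paper's route: the retraction built from Theorem~\ref{thm:AugTransv}, the bijectivity of $\tau|_{A_g}$ from $\ker(\tau)\cap A_g=0$ and strong grading, and the antipode step (your $c$ is the paper's $t=S(u_g)u_g$, killed by \eqref{eq:QH6} and \eqref{eq:QH7}) all match the paper's arguments. The differences are organizational and minor. You close the cycle through (i)$\Rightarrow$(ii) by reusing Lemma~\ref{lem:TransvAug}, whereas the paper reads off augmentation from the elements $u_g-1\in\ker(\tau)$. You obtain $\tau$ as $(\id\otimes\varepsilon)\circ\theta^{-1}$ from the isomorphism $A_e\otimes\kk[\Gamma]\cong A$, whereas the paper defines $\tau$ directly. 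Your injectivity of $\tau\otimes\tau$ on $A_g\otimes A_g$ is a slightly cleaner route to group-likeness.
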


\begin{proof}
By Proposition~\ref{prop:FaithStrong}(iii), the grading of $A$ is strong; we use this throughout the proof.

(ii) $\Rightarrow$ (i). Theorem~\ref{thm:AugTransv} gives $A=A_e+L$; hence the composite $A_e\hookrightarrow A\to A/L$ is a surjective morphism of quasi-Hopf algebras, and it is injective because $L\cap A_e=0$. Composing its inverse with the projection $A\to A/L$ gives a morphism $\tau$ as in (i), with $\ker(\tau)=L$. Such a retraction is unique, since two retractions with kernel $L$ agree on $A_e$ and vanish on $L$, and $A=A_e+L$.

(i) $\Rightarrow$ (iii). Fix $g\in\Gamma$. Since $\tau$ is an algebra morphism fixing $A_e$, its restriction $\tau|_{A_g}\colon A_g\to A_e$ is a morphism of $A_e$-bimodules. It is injective, because $\tau(x)=0$ for $x\in A_g$ gives $A_{g^{-1}}x\subseteq\ker(\tau)\cap A_e=0$ and hence $A_ex=(A_gA_{g^{-1}})x=0$. It is surjective, because $\tau(A_g)$ is a right $A_e$-submodule of $A_e$ and $A_e=\tau(A_e)=\tau(A_gA_{g^{-1}})=\tau(A_g)\tau(A_{g^{-1}})\subseteq\tau(A_g)$. Put $u_g:=(\tau|_{A_g})^{-1}(1)$.

For $b\in A_e$ the elements $u_gb$ and $bu_g$ both lie in $A_g$ and are sent to $b$ by $\tau$; hence they agree. Likewise $u_gu_h$ and $u_{gh}$ lie in $A_{gh}$ and are sent to $1$; hence they agree, and $u_e=1$ because $\tau(1)=1$. In particular $u_gu_{g^{-1}}=1$; therefore $u_g$ is a unit, and $A_g=A_eu_g$ because $x=\tau(x)u_g$ for every $x\in A_g$.

By~\eqref{eq:Grad1} the element $\Delta(u_g)$ lies in $A_g\otimes A_g$, which equals $(A_e\otimes A_e)(u_g\otimes u_g)$ because $A_g=A_eu_g$. Write $\Delta(u_g)=\sum_i(a_i\otimes a_i')(u_g\otimes u_g)$ with $a_i,a_i'\in A_e$. Applying $\tau\otimes\tau$ and using $\Delta\tau=(\tau\otimes\tau)\Delta$ gives $\sum_ia_i\otimes a_i'=\Delta(1)=1\otimes1$; hence $u_g$ is group-like. This proves (iii).

(iii) $\Rightarrow$ (ii). The two conditions imposed determine three more. First, $u_e^2=u_e$ and $u_e$ is a unit; hence $u_e=1$ and $u_{g^{-1}}=u_g^{-1}$. Second, $\Delta(u_g)=u_g\otimes u_g$ and~\eqref{eq:QH4} give $\varepsilon(u_g)\,u_g=u_g$, whence $\varepsilon(u_g)=1$. Third, put $t:=S(u_g)\,u_g$, which lies in $A_e$ because $S(u_g)\in A_{g^{-1}}$ by~\eqref{eq:Grad2}. The first identity of~\eqref{eq:QH6} applied to $u_g$ reads $S(u_g)\,\alpha\,u_g=\alpha$, and $\alpha\in A_e$ commutes with $u_g$; hence $t\alpha=\alpha$. Applying $S$ to $u_gb=bu_g$ shows that $S(u_g)$ commutes with $S(A_e)$, as does $t$. Since $A_e$ is a quasi-Hopf subalgebra by Proposition~\ref{prop:GradProp}(ii), the components of $\Phi^{-1}$ lie in $A_e$; hence left multiplication of the second identity of~\eqref{eq:QH7} by $t$ yields $t=1$. Therefore $S(u_g)=u_{g^{-1}}$.

For $x\in A_g$ we have $xu_{g^{-1}}\in A_e$ and $x=(xu_{g^{-1}})u_g$; hence $A_g=A_eu_g$ and $\tau(\sum_gb_gu_g):=\sum_gb_g$ is a well-defined linear map fixing $A_e$. It is multiplicative because $(bu_g)(cu_h)=bcu_{gh}$, comultiplicative and counital because each $u_g$ is group-like with $\varepsilon(u_g)=1$, and it commutes with $S$ because $S(bu_g)=S(u_g)S(b)=S(b)u_{g^{-1}}$. As $\Phi$, $\alpha$ and $\beta$ lie in $A_e$ by~\eqref{eq:Grad2}, the map $\tau$ fixes them and is therefore a morphism of quasi-Hopf algebras. Its kernel $L$ meets $A_e$ trivially and contains each $u_g-1$, whose image under the map $x_g\mapsto\varepsilon(x_g)\zeta_g$ of~\eqref{eq:GradMap} is $\zeta_g-\zeta_e$. These elements span $\aug{\Gamma}$, while the image of $L$ is contained in $\aug{\Gamma}$ because $\varepsilon(\ell)=\varepsilon(\tau(\ell))=0$ for every $\ell\in L$. Hence $L$ is augmented and (ii) holds.

For the last assertion, the assignment $b\otimes\zeta_g\mapsto bu_g$ respects the products, the coproducts, the counits and the antipodes by the relations in (iii) and the three identities just derived from them, and it carries the reassociator and the distinguished elements of $A_e\otimes\kk[\Gamma]$ to $\Phi$, $\alpha$ and $\beta$, which lie in $A_e$ by~\eqref{eq:Grad2}. It is bijective because $A_g=A_eu_g$ with $u_g$ a unit.
\end{proof}

A family as in part~(iii) is called a \emph{group-like splitting} of the grading of $A$.

\subsection{Classification by retractions}\label{subsec:SplitClass}

For a fixed quasi-Hopf ideal $I\subseteq H$, in this subsection we write $N=N_I$, $K=I\cap H_N$, and $J=I\cap H_e$. By Corollary~\ref{cor:InvNeutral}, $J$ is $G$-invariant, and $J^N\subseteq K$. Hence $K/J^N$ is defined.

\begin{definition}[Split ideal]\label{def:SplitIdeal}
$I$ is called \emph{split} if the natural map $H_e/J\to H_N/K$ is an isomorphism of quasi-Hopf algebras.
\end{definition}

The following results show that identifying split ideals with transversal ones turns Corollary~\ref{cor:RedClass} into a classification by retractions.

\begin{lemma}\label{lem:SplitTransv}
The ideal $I$ is split if and only if $L:=K/J^N\subseteq H_N/J^N$ is transversal to $H_e/J$.
\end{lemma}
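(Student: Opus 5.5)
The plan is to compare the two natural maps through a third isomorphism, the third isomorphism theorem for quasi-Hopf quotients. Since $J^N\subseteq K$, the quotient map $H_N\to H_N/K$ factors through $H_N/J^N$. It induces a canonical isomorphism of quasi-Hopf algebras
\[
\theta\colon (H_N/J^N)/L\xrightarrow{\ \sim\ } H_N/K,
\qquad L=K/J^N .
\]
Bijectivity of $\theta$ is linear algebra. Compatibility with $\Delta$, $\varepsilon$, $S$, $\Phi$, $\alpha$, $\beta$ follows from the universal property of quotients recalled after Definition~\ref{def:QHIdeal}, since both sides are quotients of $H_N$ by $K$. Here $L$ is a quasi-Hopf ideal of $H_N/J^N$, being the kernel of the induced morphism $H_N/J^N\to H_N/K$.

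Next, check that the two natural maps match under $\theta$. Write $\iota\colon H_e/J\to (H_N/J^N)/L$ for the map of Definition~\ref{def:TransvIdeal} and $\kappa\colon H_e/J\to H_N/K$ for that of Definition~\ref{def:SplitIdeal}. Use the identification of $H_e/J$ with the image of $H_e$ in $H_N/J^N$ from Corollary~\ref{cor:RelLift}(ii). Then $\theta\circ\iota=\kappa$, since both send the class of $b\in H_e$ to the class of $b$ in $H_N/K$. The map $\kappa$ is well defined because $J=K\cap H_e$ by Corollary~\ref{cor:InvNeutral}.

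Because $\theta$ is an isomorphism of quasi-Hopf algebras, $\kappa$ is an isomorphism if and only if $\iota$ is. Both maps are already quasi-Hopf morphisms, so being an isomorphism of quasi-Hopf algebras is the same as bijectivity. Thus $I$ is split if and only if $L$ is transversal.

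The only step needing care is the identification underlying $\theta\circ\iota=\kappa$. This requires the injectivity statement $J^N\cap H_e=J$ from Corollary~\ref{cor:RelLift}(i), which guarantees that the embedding of $H_e/J$ into $H_N/J^N$ is well defined. Everything else is formal.
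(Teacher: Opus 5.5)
Your proposal is correct and follows the paper's proof essentially verbatim: you factor $H_N\to H_N/K$ through $H_N/J^N$, use the induced isomorphism $(H_N/J^N)/L\cong H_N/K$, and observe that it intertwines the two natural maps because both are induced by the inclusion $H_e\hookrightarrow H_N$. One small wording fix: the equality $J^N\cap H_e=J$ is what makes $H_e/J\to H_N/J^N$ injective, so that $H_e/J$ really sits inside $H_N/J^N$; well-definedness of that map only needs $J\subseteq J^N$.
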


\begin{proof}
Since $J^N\subseteq K$, the canonical projection $H_N\to H_N/K$ factors through a surjective morphism of quasi-Hopf algebras $H_N/J^N\to H_N/K$ with kernel $L=K/J^N$. The induced isomorphism $(H_N/J^N)/L\xrightarrow{\ \sim\ }H_N/K$ carries the natural map $H_e/J\to (H_N/J^N)/L$ of Definition~\ref{def:TransvIdeal} to the natural map $H_e/J\to H_N/K$ of Definition~\ref{def:SplitIdeal}, since both are induced by the inclusion $H_e\hookrightarrow H_N$. Hence one is an isomorphism if and only if the other is.
\end{proof}

\begin{corollary}\label{cor:SplitAll}
Every quasi-Hopf ideal of $H$ is split.
\end{corollary}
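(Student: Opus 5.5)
The plan is to reduce the claim to Corollary~\ref{cor:TransvEquiv} via Lemma~\ref{lem:SplitTransv}. Fix a quasi-Hopf ideal $I\subseteq H$, and put $N=N_I$, $K=I\cap H_N$, $J=I\cap H_e$ and $L:=K/J^N\subseteq H_N/J^N$. By Lemma~\ref{lem:SplitTransv}, $I$ is split exactly when $L$ is transversal to $H_e/J$. Corollary~\ref{cor:TransvEquiv} then reduces the claim to two facts about $L$: that $L\cap(H_e/J)=0$, and that $L$ is augmented for the grading map $\pi_{N,J}$.

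Both facts come from the reduced-data classification. By Theorem~\ref{thm:MainA}, $(N,K)$ is an admissible pair. By Proposition~\ref{prop:RedData}, the triple $(N,J,K/J^N)$ is a reduced admissible datum, with $J=K\cap H_e=I\cap H_e$ as noted in Corollary~\ref{cor:RedClass}.

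Condition (iv) of Definition~\ref{def:RedAdmDatum} gives $L\cap(H_e/J)=0$ directly. Condition (v), together with Remark~\ref{rem:RedViaPairs}, gives $\pi_{N,J}(L)=\aug{N}$, which is exactly the statement that $L$ is augmented in the sense of Definition~\ref{def:AugIdeal}. Here the ambient algebra is $H_N/J^N$, which is faithfully $N$-graded with neutral component $H_e/J$ by Corollary~\ref{cor:RelLift}(ii) and Lemma~\ref{lem:RedSetup}(ii).

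Corollary~\ref{cor:TransvEquiv} therefore shows that $L$ is transversal, and Lemma~\ref{lem:SplitTransv} shows that $I$ is split.

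No step here is an obstacle, since the substantive work is already in Theorem~\ref{thm:AugTransv}. The only point to check carefully is that the ideal $J$ used in Lemma~\ref{lem:SplitTransv} matches the one in the reduced datum, so that $L$ is the same ideal of the same lifted quotient in both statements. This holds because $(I\cap H_N)\cap H_e=I\cap H_e$.
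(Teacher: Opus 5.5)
Your proposal is correct and follows essentially the same route as the paper: the paper obtains that $(N,J,K/J^N)$ is a reduced admissible datum directly from Corollary~\ref{cor:RedClass}, which is just the composite of Theorem~\ref{thm:MainA} and Proposition~\ref{prop:RedData} that you invoke separately. From there it uses Remark~\ref{rem:RedViaPairs}, Corollary~\ref{cor:TransvEquiv} and Lemma~\ref{lem:SplitTransv}, exactly as you do.
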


\begin{proof}
By Corollary~\ref{cor:RedClass}, the triple $(N,J,L)$ with $L:=K/J^N$ is a reduced admissible datum. In particular $L\cap(H_e/J)=0$, and $L$ is augmented because $\pi_{N,J}(L)=\aug{N}$ by Remark~\ref{rem:RedViaPairs}. Hence $L$ is transversal to $H_e/J$ by Corollary~\ref{cor:TransvEquiv}, and $I$ is split by Lemma~\ref{lem:SplitTransv}.
\end{proof}

The property is thus automatic, but it is the form in which the retraction is produced, and the classification below is stated in those terms.

\begin{corollary}\label{cor:LiftTensor}
Let $I$ be a quasi-Hopf ideal of $H$, with $N=N_I$ and $J=I\cap H_e$. Then the lifted quotient $H_N/J^N$ admits a group-like splitting $(u_n)_{n\in N}$. In particular it is isomorphic to $(H_e/J)\otimes\kk[N]$ as an $N$-graded quasi-Hopf algebra.
\end{corollary}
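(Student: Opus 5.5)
The plan is to chain the earlier results: first get a retraction of the lifted quotient onto its neutral component, then invoke Theorem~\ref{thm:Trivialization} with $\Gamma=N$ and $A=H_N/J^N$. By Corollary~\ref{cor:RelLift}(ii) and Lemma~\ref{lem:RedSetup}(ii), $A$ is a strongly, hence faithfully, $N$-graded quasi-Hopf algebra. Its neutral component is $A_e=H_e/J$, and its grading map is $\pi_{N,J}$. So Theorem~\ref{thm:Trivialization} applies to $A$ once one of its equivalent conditions is verified.

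We verify condition (ii) of Theorem~\ref{thm:Trivialization}. Put $K=I\cap H_N$ and $L:=K/J^N$. By Corollary~\ref{cor:RedClass}, $(N,J,L)$ is a reduced admissible datum, so $L\cap(H_e/J)=0$. By Remark~\ref{rem:RedViaPairs}, condition (v) of Definition~\ref{def:RedAdmDatum} gives $\pi_{N,J}(L)=\aug{N}$. Since $\pi_{N,J}$ is the grading map of $A$, this says precisely that $L$ is augmented in the sense of Definition~\ref{def:AugIdeal}.

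Theorem~\ref{thm:Trivialization} then yields group-like units $u_n\in A_n$, $n\in N$, with $u_nu_m=u_{nm}$ and $u_nb=bu_n$ for all $b\in H_e/J$. This is a group-like splitting. The same theorem gives that $b\otimes\zeta_n\mapsto bu_n$ is an isomorphism of $N$-graded quasi-Hopf algebras $(H_e/J)\otimes\kk[N]\to H_N/J^N$.

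Alternatively, Corollary~\ref{cor:SplitAll} with Lemma~\ref{lem:SplitTransv} and Proposition~\ref{prop:TransvRetr} produce the retraction $\tau_L$, and then (i)$\Rightarrow$(iii) of the theorem applies. Either route works. No step is a genuine obstacle: the substantive work lies in Theorem~\ref{thm:AugTransv} and Theorem~\ref{thm:Trivialization}. The only point to check is that the grading map used in the definition of ``augmented'' for $A$ coincides with $\pi_{N,J}$, which is the content of Lemma~\ref{lem:RedSetup}(ii).
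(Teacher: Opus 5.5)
Your proof is correct and is essentially the paper's. The paper takes your ``alternative'' route: Corollary~\ref{cor:SplitAll} and Lemma~\ref{lem:SplitTransv} give transversality, Proposition~\ref{prop:TransvRetr} gives the retraction, and then Theorem~\ref{thm:Trivialization} (i)$\Rightarrow$(iii) applies. Your main route instead enters Theorem~\ref{thm:Trivialization} at condition (ii), by directly checking the two properties (trivial intersection with $H_e/J$ and augmentation) that the proof of Corollary~\ref{cor:SplitAll} verifies anyway.
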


\begin{proof}
By Corollary~\ref{cor:SplitAll} and Lemma~\ref{lem:SplitTransv}, the ideal $K/J^N$ is transversal to $H_e/J$; hence Proposition~\ref{prop:TransvRetr} provides a quasi-Hopf algebra retraction $H_N/J^N\to H_e/J$. Since $H_N/J^N$ is strongly $N$-graded with neutral component $H_e/J$ by Corollary~\ref{cor:RelLift}(ii), Theorem~\ref{thm:Trivialization} applies.
\end{proof}

The hypothesis that some quasi-Hopf ideal realize the pair $(N,J)$ cannot be replaced by finite dimension. The group algebra $\kk[Q_8]$ of the quaternion group, graded by $Q_8/Z(Q_8)$ with neutral component $\kk[Z(Q_8)]$, is a finite-dimensional strongly graded Hopf algebra and a crossed product. Its group-like elements are the elements of $Q_8$; hence a group-like splitting would split the central extension $Z(Q_8)\to Q_8\to Q_8/Z(Q_8)$ and force $Q_8$ to be abelian.

Together with Proposition~\ref{prop:TransvRetr}, these results show that every quasi-Hopf ideal $I\subseteq H$ determines a normal subgroup $N=N_I$, a $G$-invariant quasi-Hopf ideal $J=I\cap H_e$, and a quasi-Hopf algebra retraction $\tau\colon H_N/J^N\to H_e/J$, which is the splitting that the name records. The following definition axiomatizes the triples that arise in this way.

\begin{definition}[Split admissible datum]\label{def:SplitAdmDatum}
A \emph{split admissible datum} for $H$ is a triple $(N,J,\tau)$ such that:
\begin{enumerate}
    \item $N\unlhd G$;

    \item $J\subseteq H_e$ is a $G$-invariant quasi-Hopf ideal;

    \item $\tau\colon H_N/J^N\to H_e/J$ is a quasi-Hopf algebra retraction;

    \item the kernel of $\tau$ is $G/N$-invariant for the strong $G/N$-grading of $H/J^G$, that is, $q_J(H_{\bar g})\ker(\tau)=\ker(\tau)\,q_J(H_{\bar g})$ for all $\bar g\in G/N$, where $q_J\colon H\to H/J^G$ is the canonical projection and the products are formed in $H/J^G$.
\end{enumerate}
\end{definition}

\begin{remark}\label{rem:SplitViaRed}
For a split admissible datum $(N,J,\tau)$, put $L=\ker\tau$. By Proposition~\ref{prop:TransvRetr}, $L$ is transversal to $H_e/J$; hence it automatically satisfies conditions (iv) and (v) of Definition~\ref{def:RedAdmDatum} (using Lemma~\ref{lem:TransvAug} for the augmentation). Condition (iv) above is precisely condition (vi) there. Thus split admissible data are reduced admissible data with transversal third component, expressed through its retraction.
\end{remark}

\begin{theorem}\label{thm:MainB}
Let $H$ be a faithfully $G$-graded quasi-Hopf algebra. The assignments
\[
I\longmapsto \left(N,\,J,\,\tau\right)
\qquad\text{and}\qquad
(N,J,\tau)\longmapsto \sum_{\bar g\in G/N}H_{\bar g}K_\tau,
\]
where, in the first assignment, $N:=N_I$, $J:=I\cap H_e$, and $\tau$ is the unique quasi-Hopf algebra retraction $\tau\colon H_N/J^N\to H_e/J$ with kernel $(I\cap H_N)/J^N$ and, in the second, $K_\tau\subseteq H_N$ is the inverse image of $\ker(\tau)$ under the quotient map $H_N\to H_N/J^N$, define a bijective correspondence between the set of quasi-Hopf ideals of $H$ and the set of split admissible data $(N,J,\tau)$.
\end{theorem}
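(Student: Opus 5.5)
The plan is to obtain the bijection as a composite of two bijections already in hand: Corollary~\ref{cor:RedClass}, which identifies quasi-Hopf ideals of $H$ with reduced admissible data $(N,J,L)$, and a bijection between reduced admissible data and split admissible data given by $(N,J,L)\mapsto(N,J,\tau_L)$ and $(N,J,\tau)\mapsto(N,J,\ker\tau)$. Once the latter is established, the composite sends $I$ to $(N_I,I\cap H_e,\tau)$, where $\tau$ is the retraction with kernel $(I\cap H_N)/J^N$. In the opposite direction it sends $(N,J,\tau)$ to $\sum_{\bar g}H_{\bar g}K_{\ker\tau}=\sum_{\bar g}H_{\bar g}K_\tau$. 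These are exactly the displayed assignments.

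For the middle bijection, fix $N$ and $J$ satisfying conditions (i) and (ii), which are common to both definitions, and argue as follows.
\begin{enumerate}
\item Given a reduced datum $(N,J,L)$, the ideal $L$ satisfies $L\cap(H_e/J)=0$, and by Remark~\ref{rem:RedViaPairs} it is augmented. Corollary~\ref{cor:TransvEquiv} then shows that $L$ is transversal, so Proposition~\ref{prop:TransvRetr} provides the unique retraction $\tau_L$ with $\ker\tau_L=L$.
\item Condition (vi) for $L$ is literally condition (iv) of Definition~\ref{def:SplitAdmDatum} for $\tau_L$. Hence $(N,J,\tau_L)$ is split admissible.
\item Conversely, given a split datum $(N,J,\tau)$, Remark~\ref{rem:SplitViaRed} shows that $(N,J,\ker\tau)$ is a reduced admissible datum. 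Transversality comes from Proposition~\ref{prop:TransvRetr}, and augmentation from Lemma~\ref{lem:TransvAug}.
\item The two maps are mutually inverse because the correspondence $\tau\leftrightarrow\ker\tau$ of Proposition~\ref{prop:TransvRetr} is a bijection onto the transversal ideals. The components $N$ and $J$ are untouched in both directions.
\end{enumerate}

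The only point needing care is that the uniqueness claim inside the first assignment is well posed. Corollary~\ref{cor:SplitAll} together with Lemma~\ref{lem:SplitTransv} shows that $(I\cap H_N)/J^N$ is transversal. Proposition~\ref{prop:TransvRetr} then guarantees that a retraction with this kernel exists and is unique. Beyond this, I expect no real obstacle: all the substantive work was done in Theorem~\ref{thm:AugTransv} and Corollary~\ref{cor:RedClass}, and what remains is bookkeeping to check that the composite formulas agree with the displayed ones. In particular, one must check that $K_\tau$, defined as the inverse image of $\ker\tau$, is the $K_L$ of Corollary~\ref{cor:RedClass} with $L=\ker\tau$.
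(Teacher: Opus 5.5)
Your proposal is correct and follows essentially the same route as the paper: compose Corollary~\ref{cor:RedClass} with the replacement $L\leftrightarrow\tau_L$ of Proposition~\ref{prop:TransvRetr}, and use Remark~\ref{rem:SplitViaRed} to match the remaining conditions. The only cosmetic difference is that the paper obtains transversality of $L$ from Corollary~\ref{cor:SplitAll} and Lemma~\ref{lem:SplitTransv}, whereas you apply Corollary~\ref{cor:TransvEquiv} directly to reduced data, which is exactly how Corollary~\ref{cor:SplitAll} is itself proved.
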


\begin{proof}
By Corollary~\ref{cor:RedClass}, the quasi-Hopf ideals of $H$ correspond to the reduced admissible data $(N,J,L)$. Every quasi-Hopf ideal is split by Corollary~\ref{cor:SplitAll}; hence every such $L$ is transversal to $H_e/J$ by Lemma~\ref{lem:SplitTransv}. Proposition~\ref{prop:TransvRetr} replaces $L$ uniquely by the retraction $\tau$ with kernel $L$, and Remark~\ref{rem:SplitViaRed} identifies the remaining conditions with those of Definition~\ref{def:SplitAdmDatum}. This gives the stated mutually inverse assignments.
\end{proof}

\section{The character form}\label{sec:CharForm}

We now describe the retractions of Section~\ref{sec:Explicit} explicitly. The conjugation action of the grading group reduces such a retraction to a single group homomorphism, and over a pair realized by some quasi-Hopf ideal the ideals lying over it form a torsor under those homomorphisms. The last two subsections compute the same data inside a chosen crossed-product presentation, which the lifted quotient of a quasi-Hopf ideal always admits by Corollary~\ref{cor:LiftTensor}, and which is the form the applications use.

\begin{definition}[Central group-like elements]\label{def:CentralGrpLike}
For a quasi-Hopf algebra $H$, let
\[
\Gc{H}:=Z(H)\cap G(H)\cap H^\times
\]
denote the set of invertible central group-like elements of $H$, where
\[
G(H):=\{x\in H\mid \Delta(x)=x\otimes x,\ \varepsilon(x)=1\}
\]
is the set of group-like elements of $H$. Although $G(H)$ need not consist of units in the present generality, the intersection $\Gc{H}$ is an abelian group under multiplication.
\end{definition}

\subsection{Conjugation and the action of the grading group}\label{subsec:Conj}

Let $H$ be a faithfully $G$-graded quasi-Hopf algebra, let $N\unlhd G$, and let $J\subseteq H_e$ be a $G$-invariant quasi-Hopf ideal. Throughout this subsection and the next, we work with the lifted quotient $H_N/J^N$, whose neutral component is $H_e/J$. The quotient $H/J^G$ is strongly $G$-graded, with components $q_J(H_g)$, where $q_J\colon H\to H/J^G$ is the canonical projection. For $g\in G$, call a decomposition
\[
1=\sum_i\bar h_i\bar h_i',
\qquad
\bar h_i\in q_J(H_g),\quad
\bar h_i'\in q_J(H_{g^{-1}})
\]
\emph{adapted} to $g$.

\begin{lemma}\label{lem:ConjComm}
For $g\in G$ and $x\in H/J^G$ commuting with $H_e/J$, set $\conj{g}{x}:=\sum_i\bar h_i\,x\,\bar h_i'$, where $\sum_i\bar h_i\bar h_i'=1$ is any decomposition adapted to $g$. Then $\conj{g}{x}$ is independent of the chosen decomposition. Moreover, $\conj{g}{(xy)}=\conj{g}{x}\,\conj{g}{y}$ for all $x,y\in H/J^G$ commuting with $H_e/J$, and $\conj{g}{1}=1$.
\end{lemma}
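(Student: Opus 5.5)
Write $B:=H/J^G$, strongly $G$-graded with components $B_g=q_J(H_g)$ and $B_e=H_e/J$ (Proposition~\ref{prop:Lift}(ii)). The argument only uses strong grading and the fact that $x$ commutes with $B_e$. The key device is to compare two adapted decompositions through the identity $1=\sum_j\bar k_j\bar k_j'$ inserted in the middle.

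\emph{Independence.} Let $1=\sum_i\bar h_i\bar h_i'=\sum_j\bar k_j\bar k_j'$ be two decompositions adapted to $g$. For each $i,j$ the product $\bar h_i'\bar k_j$ lies in $B_{g^{-1}}B_g\subseteq B_e$, so it commutes with $x$. Hence
\[
\sum_i\bar h_i x\bar h_i'
=\sum_{i,j}\bar h_i x\bar h_i'\bar k_j\bar k_j'
=\sum_{i,j}\bar h_i\bar h_i'\bar k_j x\bar k_j'
=\sum_j\bar k_j x\bar k_j'.
\]
The first step inserts $1=\sum_j\bar k_j\bar k_j'$ on the right, the second moves $\bar h_i'\bar k_j\in B_e$ past $x$, and the third uses $\sum_i\bar h_i\bar h_i'=1$. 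This proves well-definedness. Adapted decompositions exist because $1\in H_e=H_gH_{g^{-1}}$ by strong grading.

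\emph{Unit.} We have $\conj{g}{1}=\sum_i\bar h_i\bar h_i'=1$.

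\emph{Multiplicativity.} Using the same decomposition twice,
\[
\conj{g}{x}\,\conj{g}{y}=\sum_{i,j}\bar h_i x(\bar h_i'\bar h_j)y\bar h_j'.
\]
Since $\bar h_i'\bar h_j\in B_e$ commutes with $x$, this equals
\[
\sum_{i,j}\bar h_i\bar h_i'\bar h_j\,xy\,\bar h_j'=\sum_j\bar h_j\,xy\,\bar h_j'=\conj{g}{(xy)}.
\]

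There is no real obstacle beyond noticing the insertion trick. The only point to check is that the commutation hypothesis is applied only to elements of $B_e$, which holds because every product $\bar h'\bar k$ lies in $B_{g^{-1}}B_g\subseteq B_e$. The product $xy$ also commutes with $B_e$, so $\conj{g}{(xy)}$ is defined.
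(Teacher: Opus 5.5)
Your proposal is correct and follows the paper's proof essentially step for step. Both arguments insert a second adapted decomposition and commute the degree-$e$ products $\bar h_i'\bar k_j$ (respectively $\bar h_i'\bar h_j$) past $x$ to get independence (respectively multiplicativity), and both read off $\conj{g}{1}=1$ directly from $\sum_i\bar h_i\bar h_i'=1$.
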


\begin{proof}
For independence, let $\sum_i \bar h_i\bar h_i'=1$ and $\sum_j \bar k_j\bar k_j'=1$ be two decompositions adapted to $g$. Each $\bar h_i'\bar k_j$ lies in $q_J(H_{g^{-1}})q_J(H_g)\subseteq H_e/J$ and therefore commutes with $x$. Thus
\[
\sum_i \bar h_i x\bar h_i'
=\sum_{i,j}\bar h_i x(\bar h_i'\bar k_j)\bar k_j'
=\sum_{i,j}\bar h_i(\bar h_i'\bar k_j)x\bar k_j'
=\sum_j \bar k_j x\bar k_j'.
\]
For multiplicativity, fix a decomposition $\sum_i\bar h_i\bar h_i'=1$ adapted to $g$. Each $\bar h_i'\bar h_j$ lies in $H_e/J$ and therefore commutes with $x$; hence
\[
\conj{g}{x}\,\conj{g}{y}
=\sum_{i,j}\bar h_i x(\bar h_i'\bar h_j)y\bar h_j'
=\sum_{i,j}\bar h_i(\bar h_i'\bar h_j)\,xy\,\bar h_j'
=\sum_j\bar h_j\,xy\,\bar h_j'
=\conj{g}{(xy)},
\]
using $\sum_i\bar h_i(\bar h_i'\bar h_j)=\bar h_j$. Finally, $\conj{g}{1}=\sum_i\bar h_i\bar h_i'=1$.
\end{proof}

The same statement and proof apply verbatim in any strongly graded algebra, in particular in the strongly $(G\times G)$-graded algebra $H/J^G\otimes H/J^G$ used below. Conjugation in $H/J^G$ now induces an action of $G$ on $\Gc{H_e/J}$.

\begin{lemma}\label{lem:ConjAction}
For $g\in G$ and $z\in Z(H_e/J)$, set
\[
\conj{g}{z}:=\sum_i \bar h_i\,z\,\bar h_i',
\]
where $\sum_i \bar h_i\bar h_i'=1$ is any finite decomposition with $\bar h_i\in q_J(H_g)$ and $\bar h_i'\in q_J(H_{g^{-1}})$. Then:
\begin{enumerate}
    \item The sum $\conj{g}{z}$ lies in $H_e/J$ and is independent of the chosen decomposition.

    \item For each $g\in G$, the map $z\mapsto\conj{g}{z}$ is an algebra automorphism of $Z(H_e/J)$, and $\conj{e}{z}=z$, $\conj{g}{(\conj{h}{z})}=\conj{gh}{z}$ for all $g,h\in G$. Thus $G$ acts on $Z(H_e/J)$ on the left by algebra automorphisms.

    \item This action restricts to an action of $G$ on the group $\Gc{H_e/J}$ by group automorphisms.
\end{enumerate}
\end{lemma}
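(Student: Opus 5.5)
Part (i) comes straight from Lemma~\ref{lem:ConjComm}. An element $z\in Z(H_e/J)$ commutes with $H_e/J$, viewed inside $H/J^G$ via Proposition~\ref{prop:Lift}(ii). Hence $\conj{g}{z}$ is well defined and independent of the adapted decomposition. Adapted decompositions exist because $H/J^G$ is strongly $G$-graded, so $1\in q_J(H_g)q_J(H_{g^{-1}})$. Since each summand $\bar h_iz\bar h_i'$ lies in $q_J(H_g)\,(H_e/J)\,q_J(H_{g^{-1}})\subseteq q_J(H_e)=H_e/J$, the element $\conj{g}{z}$ lies in $H_e/J$.

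For (ii), I first show that $\conj{g}{z}$ is central in $H_e/J$. Take $b\in H_e/J$. Expand $b\conj{g}{z}=\sum_i b\bar h_iz\bar h_i'$ and insert a second adapted decomposition $1=\sum_j\bar k_j\bar k_j'$ on the left. Each $\bar k_j'b\bar h_i$ lies in $H_e/J$ and therefore commutes with $z$. Using $\sum_i\bar h_i\bar h_i'=1$, this gives
\[
b\,\conj{g}{z}=\sum_j\bar k_j\,z\,\bar k_j'b=\conj{g}{z}\,b .
\]
Multiplicativity and $\conj{g}{1}=1$ are part of Lemma~\ref{lem:ConjComm}, and linearity is clear. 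For composition, choose adapted decompositions for $g$ and for $h$. Their products $\sum_{i,j}(\bar h_i\bar k_j)(\bar k_j'\bar h_i')=1$ form a decomposition adapted to $gh$, so by independence $\conj{g}{(\conj{h}{z})}=\conj{gh}{z}$. Here I use that $\conj{h}{z}$ is again central in $H_e/J$, which was just shown. Taking $1=1\cdot1$ gives $\conj{e}{z}=z$. Bijectivity then follows, since $\conj{g^{-1}}{}$ is an inverse.

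For (iii), let $z\in\Gc{H_e/J}$. Multiplicativity of $z\mapsto\conj{g}{z}$ sends $z^{-1}$ to an inverse of $\conj{g}{z}$, and centrality was shown in (ii). The counit is preserved: $\varepsilon$ is multiplicative and $\varepsilon(z)=1$, so
\[
\varepsilon(\conj{g}{z})=\sum_i\varepsilon(\bar h_i\bar h_i')=1 .
\]
The main step is group-likeness. I compute
\[
\Delta(\conj{g}{z})=\sum_i\Delta(\bar h_i)(z\otimes z)\Delta(\bar h_i') .
\]
The family $\Delta(\bar h_i)$, $\Delta(\bar h_i')$ gives $1\otimes1=\sum_i\Delta(\bar h_i)\Delta(\bar h_i')$. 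By \eqref{eq:Grad1} this is a decomposition adapted to $(g,g)$ in the strongly $(G\times G)$-graded algebra $H/J^G\otimes H/J^G$, whose neutral component is $(H_e/J)^{\otimes2}$.

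The element $z\otimes z$ commutes with that neutral component. By the remark after Lemma~\ref{lem:ConjComm}, the conjugate of $z\otimes z$ by $(g,g)$ is independent of the decomposition. I therefore replace $\sum_i\Delta(\bar h_i)\otimes\Delta(\bar h_i')$ by the product decomposition $\sum_{i,j}(\bar h_i\otimes\bar h_j)\otimes(\bar h_i'\otimes\bar h_j')$, which is also adapted to $(g,g)$. This yields
\[
\Delta(\conj{g}{z})=\conj{g}{z}\otimes\conj{g}{z} .
\]
Hence $\conj{g}{z}\in\Gc{H_e/J}$, and the action restricts to one by group automorphisms. The anticipated obstacle is exactly this group-likeness check; it rests on having the independence statement available in the tensor-square grading.
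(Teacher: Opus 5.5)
Your proposal is correct and follows the paper's proof essentially step by step. Existence and independence come from the strong grading and Lemma~\ref{lem:ConjComm}, the composition law from the product of adapted decompositions, and group-likeness from the $(G\times G)$-graded tensor square with the product decomposition. The one difference is that you check directly, by inserting a second adapted decomposition, that $\conj{g}{z}$ is again central in $H_e/J$, whereas the paper cites \cite[Cor.~3.4.3]{NVO04} for this step; your computation is a valid self-contained replacement.
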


\begin{proof}
(i) Since $H/J^G$ is strongly $G$-graded, $1\in H_e/J=q_J(H_g)q_J(H_{g^{-1}})$, hence a decomposition $\sum_i \bar h_i\bar h_i'=1$ as in the statement, adapted to $g$, exists for every $g\in G$. Each product $\bar h_i z\bar h_i'$ then lies in $q_J(H_g)q_J(H_{g^{-1}})\subseteq H_e/J$, and hence $\conj{g}{z}$ also lies in $H_e/J$. Independence of the chosen decomposition is Lemma~\ref{lem:ConjComm}, applied to the central element $z$.

(ii) By Lemma~\ref{lem:ConjComm}, the map $z\mapsto\conj{g}{z}$ is multiplicative and unital. Additivity is clear, and by~\cite[Cor.~3.4.3]{NVO04} the map $\conj{g}{}$ sends $Z(H_e/J)$ into itself; hence it is an algebra endomorphism of $Z(H_e/J)$. Taking the decomposition with $\bar h_1=\bar h_1'=1\in H_e/J$ gives $\conj{e}{z}=z$. If $\sum_i \bar h_i\bar h_i'=1$ is adapted to $g$ and $\sum_j \bar k_j\bar k_j'=1$ to $h$, then
\[
\sum_{i,j}(\bar h_i\bar k_j)(\bar k_j'\bar h_i')=\sum_i\bar h_i\left(\sum_j\bar k_j\bar k_j'\right)\bar h_i'=1
\]
is a decomposition adapted to $gh$, with $\bar h_i\bar k_j\in q_J(H_{gh})$ and $\bar k_j'\bar h_i'\in q_J(H_{(gh)^{-1}})$; since $\conj{h}{z}\in Z(H_e/J)$,
\[
\conj{gh}{z}
=\sum_{i,j}\bar h_i\bar k_j z\bar k_j'\bar h_i'
=\sum_i\bar h_i\left(\sum_j\bar k_j z\bar k_j'\right)\bar h_i'
=\sum_i\bar h_i\,(\conj{h}{z})\,\bar h_i'
=\conj{g}{(\conj{h}{z})}.
\]
In particular $\conj{g}{}\circ\conj{g^{-1}}{}=\conj{e}{}=\id$; hence $\conj{g}{}$ and $\conj{g^{-1}}{}$ are mutually inverse automorphisms of $Z(H_e/J)$, and $g\mapsto\conj{g}{}$ is a left action of $G$.

(iii) Let $z\in\Gc{H_e/J}=Z(H_e/J)\cap G(H_e/J)\cap (H_e/J)^\times$. By (ii), $\conj{g}{z}$ is central, and $\conj{g}{z}\,\conj{g}{(z^{-1})}=\conj{g}{(zz^{-1})}=\conj{g}{1}=1$; hence $\conj{g}{z}$ is invertible with inverse $\conj{g}{(z^{-1})}$. It remains to see that $\conj{g}{z}$ is group-like. Fix a decomposition $\sum_i\bar h_i\bar h_i'=1$ adapted to $g$. Since $\varepsilon$ is an algebra morphism with $\varepsilon(z)=1$, we get $\varepsilon(\conj{g}{z})=\sum_i\varepsilon(\bar h_i)\varepsilon(z)\varepsilon(\bar h_i')=\varepsilon\left(\sum_i\bar h_i\bar h_i'\right)=\varepsilon(1)=1$. For the comultiplication, $H/J^G\otimes H/J^G$ is strongly $(G\times G)$-graded, with neutral component $(H_e/J)\otimes(H_e/J)$. By the grading axiom~\eqref{eq:Grad1}, $\Delta(\bar h_i)\in q_J(H_g)\otimes q_J(H_g)$ and $\Delta(\bar h_i')\in q_J(H_{g^{-1}})\otimes q_J(H_{g^{-1}})$, and since $\Delta$ is an algebra morphism,
\[
\sum_i\Delta(\bar h_i)\Delta(\bar h_i')=\Delta\left(\sum_i\bar h_i\bar h_i'\right)=\Delta(1)=1\otimes 1
\]
is a decomposition adapted to $(g,g)$. The element $z\otimes z$ is central there, since $z$ is central in $H_e/J$. By Lemma~\ref{lem:ConjComm}, applied in $H/J^G\otimes H/J^G$, the action of $(g,g)$ on $z\otimes z$ may be evaluated using either this decomposition, with $\Delta(z)=z\otimes z$, or the product decomposition $\sum_{i,j}(\bar h_i\otimes\bar h_j)(\bar h_i'\otimes\bar h_j')=1\otimes 1$, giving
\begin{align*}
\Delta(\conj{g}{z})
&=\sum_i\Delta(\bar h_i)\,(z\otimes z)\,\Delta(\bar h_i')
=\sum_{i,j}(\bar h_i\otimes\bar h_j)(z\otimes z)(\bar h_i'\otimes\bar h_j')
\\
&=\left(\sum_i\bar h_i z\bar h_i'\right)\otimes\left(\sum_j\bar h_j z\bar h_j'\right)
=\conj{g}{z}\otimes\conj{g}{z}.
\end{align*}
Hence $\conj{g}{z}$ is group-like. Therefore $\conj{g}{z}\in\Gc{H_e/J}$, and $z\mapsto\conj{g}{z}$ restricts to a group automorphism of $\Gc{H_e/J}$.
\end{proof}

Concretely, if $\widetilde z\in H_e$ lifts $z$ and $\sum_i h_ih_i'=1$ in $H$ with $h_i\in H_g$, $h_i'\in H_{g^{-1}}$, then the class of $\sum_i h_i\widetilde z h_i'$ in $H_e/J$ equals $\conj{g}{z}$, independently of the lift $\widetilde z$ because the $G$-invariance of $J$ gives $H_gJH_{g^{-1}}\subseteq J$.

\begin{lemma}\label{lem:ConjUnits}
Let $(u_n)_{n\in N}$ be a group-like splitting of the grading of the lifted quotient $H_N/J^N$. Then, for every $g\in G$:
\begin{enumerate}
    \item For each $n\in N$ one has $\conj{g}{u_n}=c_{g,n}\,u_{gng^{-1}}$ for a unique $c_{g,n}\in(H_e/J)^\times$.

    \item Each $\conj{g}{u_n}$ is group-like and $c_{g,n}\in G(H_e/J)\cap(H_e/J)^\times$.
\end{enumerate}
\end{lemma}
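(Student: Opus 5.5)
The plan is to work inside $H/J^G$, which is strongly $G$-graded with components $q_J(H_g)$. We identify $H_N/J^N$ with its neutral component for the $G/N$-grading (Lemma~\ref{lem:RedSetup}(i)), so that $u_n\in q_J(H_n)$. By Theorem~\ref{thm:Trivialization}(iii), $u_n$ commutes with $H_e/J$. Hence Lemma~\ref{lem:ConjComm} applies, and $\conj{g}{u_n}=\sum_i\bar h_iu_n\bar h_i'$ is well defined for any decomposition adapted to $g$.

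First we locate the degree. Each summand lies in $q_J(H_g)q_J(H_n)q_J(H_{g^{-1}})\subseteq q_J(H_{gng^{-1}})$, and $gng^{-1}\in N$ by normality. Thus $\conj{g}{u_n}$ is a homogeneous element of degree $gng^{-1}$ in $H_N/J^N$.

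For part (i) we use $A_m=A_eu_m$ with $u_m$ a unit, from the proof of Theorem~\ref{thm:Trivialization}. This lets us write $\conj{g}{u_n}=c_{g,n}u_{gng^{-1}}$, where $c_{g,n}:=\conj{g}{u_n}\,u_{gng^{-1}}^{-1}\in H_e/J$ is unique because $u_{gng^{-1}}$ is invertible. For invertibility, Lemma~\ref{lem:ConjComm} gives $\conj{g}{u_n}\,\conj{g}{u_{n^{-1}}}=\conj{g}{1}=1$, and likewise in the other order, since $u_{n^{-1}}=u_n^{-1}$ also commutes with $H_e/J$. So $\conj{g}{u_n}$ is a unit, and then so is $c_{g,n}$.

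Part (ii) follows the pattern of the proof of Lemma~\ref{lem:ConjAction}(iii). We have $\varepsilon(\conj{g}{u_n})=\varepsilon(\sum_i\bar h_i\bar h_i')\varepsilon(u_n)=1$. For the coproduct we work in the strongly $(G\times G)$-graded algebra $H/J^G\otimes H/J^G$. There $\sum_i\Delta(\bar h_i)\Delta(\bar h_i')=1\otimes1$ and $\sum_{i,j}(\bar h_i\otimes\bar h_j)(\bar h_i'\otimes\bar h_j')=1\otimes1$ are both decompositions adapted to $(g,g)$. The element $u_n\otimes u_n=\Delta(u_n)$ commutes with the neutral component $(H_e/J)\otimes(H_e/J)$, which is the only hypothesis needed in Lemma~\ref{lem:ConjComm}; centrality in the whole algebra is not required. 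Evaluating with the two decompositions then gives $\Delta(\conj{g}{u_n})=\conj{g}{u_n}\otimes\conj{g}{u_n}$. Since $\Delta$ is multiplicative, $c_{g,n}=\conj{g}{u_n}\,u_{gng^{-1}}^{-1}$ is a product of group-like elements. Here $u_m^{-1}=u_{m^{-1}}$ is group-like, so $c_{g,n}$ is group-like with counit $1$, and it lies in $H_e/J$.

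The main obstacle is bookkeeping rather than a new idea. The difficulty is justifying that Lemma~\ref{lem:ConjComm}, which was stated for $H/J^G$, applies in the tensor square. It must apply to an element that is not central but only commutes with the neutral component, and this is exactly the hypothesis in that lemma. We also have to check that the neutral component of $H/J^G\otimes H/J^G$ is indeed $(H_e/J)\otimes(H_e/J)$ and that the grading is strong, which follows componentwise from the strong $G$-grading of $H/J^G$.
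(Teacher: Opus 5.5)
Your proof is correct and follows essentially the same route as the paper: place $\conj{g}{u_n}$ in degree $gng^{-1}$, factor it through the unit $u_{gng^{-1}}$ using $q_J(H_{gng^{-1}})=(H_e/J)\,u_{gng^{-1}}$, and obtain invertibility from the multiplicativity in Lemma~\ref{lem:ConjComm} applied to $u_n$ and $u_n^{-1}$. For group-likeness, you evaluate the $(g,g)$-conjugate of $u_n\otimes u_n$ in $H/J^G\otimes H/J^G$ with the two adapted decompositions, using only that $u_n\otimes u_n$ commutes with $(H_e/J)^{\otimes2}$, exactly as the paper does; the one detail you leave implicit is that the inverse of the degree-$e$ unit $c_{g,n}$ again has degree $e$ and so lies in $H_e/J$, which the paper notes explicitly.
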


\begin{proof}
Each $u_n$, and likewise each $u_n^{-1}$, commutes with $H_e/J$ by hypothesis; hence $\conj{g}{u_n}$ and $\conj{g}{(u_n^{-1})}$ are well defined by Lemma~\ref{lem:ConjComm}. For~(i), since $u_n\in q_J(H_n)$ and $gng^{-1}\in N$, the element $\conj{g}{u_n}$ lies in $q_J(H_{gng^{-1}})=(H_e/J)\,u_{gng^{-1}}$, and therefore $\conj{g}{u_n}=c_{g,n}u_{gng^{-1}}$ with $c_{g,n}\in H_e/J$. Lemma~\ref{lem:ConjComm} gives both
\[
\conj{g}{u_n}\,\conj{g}{(u_n^{-1})}=\conj{g}{1}=1
\quad\text{and}\quad
\conj{g}{(u_n^{-1})}\,\conj{g}{u_n}=\conj{g}{1}=1;
\]
hence $\conj{g}{u_n}$ is invertible. Consequently $c_{g,n}=\conj{g}{u_n}\,u_{gng^{-1}}^{-1}$ is invertible, and being a degree-$e$ element its inverse lies in $H_e/J$; thus $c_{g,n}\in(H_e/J)^\times$. For~(ii), applying $\varepsilon$ and $\Delta$ as in part~(iii) of Lemma~\ref{lem:ConjAction}, with Lemma~\ref{lem:ConjComm} supplying the decomposition-independence in $H/J^G\otimes H/J^G$ (applied to $u_n\otimes u_n$, which commutes with $(H_e/J)^{\otimes2}$) in place of centrality, shows that $\conj{g}{u_n}$ is group-like. Cancelling the group-like unit $u_{gng^{-1}}$ then gives $c_{g,n}\in G(H_e/J)$.
\end{proof}

Thus conjugation by $g$ carries each member of a group-like splitting to a group-like unit of degree $gng^{-1}$. Consequently, $\conj{g}{u_n}$ and $u_{gng^{-1}}$ are elements of the same component and may be compared.

The next lemma is what allows the ideal to be dropped from the datum. It replaces the $G/N$-invariance of $\ker(\tau)$ by an identity between homogeneous units.

\begin{lemma}\label{lem:ConjInv}
Let $(v_n)_{n\in N}$ be a group-like splitting of the grading of $H_N/J^N$, let $\tau\colon H_N/J^N\to H_e/J$ be the retraction it determines, and put $L:=\ker(\tau)$. Then:
\begin{enumerate}
    \item One has $L=\sum_{n\in N}(H_e/J)(v_n-1)$ and $L\cap(H_e/J)=0$. Moreover $(H_e/J)(v_n-1)=(H_e/J)(rv_n-r)$ for every unit $r$ of $H_e/J$.

    \item The ideal $L$ is $G/N$-invariant in $H/J^G$ if and only if $\conj{g}{v_n}=v_{gng^{-1}}$ for all $g\in G$ and $n\in N$.
\end{enumerate}
\end{lemma}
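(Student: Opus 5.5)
For (i), I will use the explicit retraction $\tau(\sum_n b_nv_n)=\sum_n b_n$ from the proof of Theorem~\ref{thm:Trivialization} (iii)$\Rightarrow$(ii). Every element of $H_N/J^N$ is uniquely $\sum_n b_nv_n$ with $b_n\in H_e/J$, since $(H_N/J^N)_n=(H_e/J)v_n$ and $v_n$ is a unit. The element $\sum_n b_nv_n$ lies in $\ker\tau$ exactly when $\sum_n b_n=0$, and then
\[
\sum_n b_nv_n=\sum_n b_n(v_n-1).
\]
This gives $L=\sum_n(H_e/J)(v_n-1)$, and the reverse inclusion is clear because $\tau(v_n)=1$. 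Trivial intersection holds because $\tau$ restricts to the identity on $H_e/J$. For the last claim, $rv_n-r=r(v_n-1)$ and $r$ is a unit of $H_e/J$, so left multiplication by $H_e/J$ absorbs $r$ and $r^{-1}$.

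For (ii), fix $g\in G$ with $\bar g=gN$ and a decomposition $1=\sum_i\bar h_i\bar h_i'$ adapted to $g$. By Lemma~\ref{lem:ConjUnits}, $\conj{g}{v_n}=c_{g,n}v_{gng^{-1}}$ with $c_{g,n}$ a group-like unit of $H_e/J$. First I will compute the transported ideal $\sum_i\bar h_i L\bar h_i'$. Lemma~\ref{lem:ConjComm} gives, for $b\in H_e/J$,
\[
\bar h_i\,b(v_n-1)\,\bar h_i'=(\bar h_ib\bar h_i'')\cdots
\]
which is messy, so I will argue instead as follows. Write $\bar h_i b=\sum_j b'_{ij}\bar h_j$ using the strong grading $q_J(H_g)H_e/J=(H_e/J)q_J(H_g)$. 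Summing over $i$, one finds that $q_J(H_{\bar g})L\,q_J(H_{\bar g^{-1}})$ is the left $H_e/J$-span of the elements $\conj{g}{v_n}-1=c_{g,n}v_{gng^{-1}}-1$. Since $q_J(H_{\bar g})=q_J(H_g)(H_N/J^N)$, the $G/N$-invariance $q_J(H_{\bar g})L=Lq_J(H_{\bar g})$ is equivalent, by strong grading, to
\[
q_J(H_{\bar g})L\,q_J(H_{\bar g^{-1}})=L\quad\text{for all }\bar g,
\]
and hence to $\conj{g}{v_n}-1\in L$ for all $g$ and $n$. One direction uses $L\subseteq\conj{g}{(\conj{g^{-1}}{L})}$.

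Now $\tau(\conj{g}{v_n}-1)=c_{g,n}-1$, so $\conj{g}{v_n}-1\in L$ holds exactly when $c_{g,n}=1$, that is, when $\conj{g}{v_n}=v_{gng^{-1}}$. This proves both directions of (ii).

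The main obstacle is the second step: showing rigorously that $q_J(H_{\bar g})L$ is generated, modulo multiplying on the right by $q_J(H_{\bar g})$, by the conjugates $\conj{g}{v_n}-1$. The difficulty is that $\bar h_i$ need not be units, so conjugation is not literally an automorphism of $H/J^G$. I would handle this by showing that $x\mapsto\sum_i\bar h_ix\bar h_i'$ maps $L$ into $L$ when the condition holds. Here one uses that $H_e/J$ is $G$-stable under such sandwiches: $q_J(H_g)(H_e/J)q_J(H_{g^{-1}})\subseteq H_e/J$. One also uses that $\bar h_i v_n=\conj{g}{v_n}\,\bar h_i$, which follows from Lemma~\ref{lem:ConjComm}-type manipulations, since $v_n$ commutes with $H_e/J$.
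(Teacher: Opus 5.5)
Your part (i), your reformulation of $G/N$-invariance as $T_{\bar g}(L)=L$ for all $\bar g$, where $T_{\bar g}(X):=q_J(H_{\bar g})\,X\,q_J(H_{\bar g^{-1}})$, and your implication from invariance to equivariance are correct and essentially follow the paper. The reformulation is valid because $q_J(H_{\bar g})q_J(H_{\bar g^{-1}})=H_N/J^N$. The implication from invariance to equivariance needs only the easy half of your span claim, $\conj{g}{v_n}-1=\sum_i\bar h_i(v_n-1)\bar h_i'\in T_{\bar g}(L)=L$. After that, $\tau(\conj{g}{v_n}-1)=c_{g,n}-1$ forces $c_{g,n}=1$. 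The paper instead uses $c\,v_w-1=c(v_w-1)+(c-1)$ together with $L\cap(H_e/J)=0$, which amounts to the same thing.

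The gap is the implication from equivariance to invariance. You assert, without argument, that $T_{\bar g}(L)$ is contained in the left $H_e/J$-span of the elements $\conj{g}{v_n}-1$. Your proposed repair, showing that $x\mapsto\sum_i\bar h_i x\bar h_i'$ maps $L$ into $L$, does not establish this. First, on $L$ this sandwich need not be independent of the decomposition: an element $b(v_n-1)$ with $b$ noncentral does not commute with $H_e/J$, so Lemma~\ref{lem:ConjComm} does not apply. More to the point, $T_{\bar g}(L)$ is spanned by arbitrary products $a\ell a'$. These expand into double sums $\sum_{i,j}\bar h_i\ell_{ij}\bar h_j'$ whose off-diagonal terms a diagonal sandwich does not control.

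What closes the gap is the identity you mention only in passing, but you need it for every $a\in q_J(H_g)$, not just for the $\bar h_i$: $\conj{g}{v_n}\,a=\sum_i\bar h_iv_n(\bar h_i'a)=\sum_i\bar h_i(\bar h_i'a)v_n=a\,v_n$. This holds because $\bar h_i'a\in H_e/J$ commutes with $v_n$. Applied to $ab$ with $b\in H_e/J$, it gives $a\,b(v_n-1)=(\conj{g}{v_n}-1)\,ab$.

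Now use $q_J(H_{\bar g})L=q_J(H_g)L$ and $L=\sum_n(H_e/J)(v_n-1)$. Equivariance then gives $q_J(H_{\bar g})L\subseteq L\,q_J(H_g)\subseteq L\,q_J(H_{\bar g})$, hence $T_{\bar g}(L)\subseteq L$ for every $\bar g$. Your observation $L=T_{\bar g}(T_{\bar g^{-1}}(L))\subseteq T_{\bar g}(L)$ upgrades this to equality.

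This is precisely the paper's route, with one difference. The paper obtains the reverse inclusion $L\,q_J(H_{\bar g})\subseteq q_J(H_{\bar g})L$ from the companion identity $v_na=a\,\conj{g^{-1}}{v_n}$ rather than through $T_{\bar g}T_{\bar g^{-1}}$. Neither argument needs your stronger claim that $T_{\bar g}(L)$ equals the span of the $\conj{g}{v_n}-1$.
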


\begin{proof}
By Theorem~\ref{thm:Trivialization} the retraction $\tau$ is the unique morphism of quasi-Hopf algebras with $\tau|_{H_e/J}=\id$ and $\tau(v_n)=1$, and the component of degree $n$ is $(H_e/J)v_n$. If $x=\sum_nb_nv_n\in L$, then $\sum_nb_n=\tau(x)=0$; hence $x=\sum_nb_n(v_n-1)$, and the reverse inclusion is immediate. It follows that $L=\sum_n(H_e/J)(v_n-1)$, while $L\cap(H_e/J)=0$ because $\tau$ restricts to the identity on $H_e/J$. For a unit $r$ of $H_e/J$ one has $rv_n-r=r(v_n-1)$; therefore the two left ideals coincide.

For (ii), work with the strong $G$-grading of $H/J^G$ and the induced $G/N$-grading. The conjugation yields two identities, valid for every homogeneous $a\in q_J(H_g)$ with $g\in G$ and every $n\in N$, namely
\[
\conj{g}{v_n}\,a=a\,v_n
\qquad\text{and}\qquad
v_n\,a=a\,\conj{g^{-1}}{v_n}.
\]
For the first, take $1=\sum_i\bar h_i\bar h_i'$ adapted to $g$. Then $\bar h_i'a\in q_J(H_{g^{-1}})q_J(H_g)=H_e/J$ commutes with $v_n$, and therefore $\conj{g}{v_n}\,a=\sum_i\bar h_iv_n\bar h_i'a=\sum_i\bar h_i(\bar h_i'a)v_n=\left(\sum_i\bar h_i\bar h_i'\right)av_n=av_n$.

For the second, take $1=\sum_j\bar k_j\bar k_j'$ adapted to $g^{-1}$, with $\bar k_j\in q_J(H_{g^{-1}})$ and $\bar k_j'\in q_J(H_g)$. Then $a\bar k_j\in q_J(H_g)q_J(H_{g^{-1}})=H_e/J$ commutes with $v_n$, and therefore
\[
a\,\conj{g^{-1}}{v_n}=\sum_j a\bar k_jv_n\bar k_j'=\sum_j v_n(a\bar k_j)\bar k_j'=v_na\left(\sum_j\bar k_j\bar k_j'\right)=v_na.
\]
Both right-hand sides are independent of the decomposition; hence the identities hold.

Lemma~\ref{lem:ConjUnits}(i), applied to the family $(v_n)_{n\in N}$, gives $\conj{g}{v_n}=d_{g,n}\,v_{gng^{-1}}$ with $d_{g,n}\in(H_e/J)^\times$. Note that for $w\in N$ and $c\in H_e/J$, we have $c\,v_w-1=c(v_w-1)+(c-1)$ with $c(v_w-1)\in(H_e/J)(v_w-1)\subseteq L$. Hence $c\,v_w-1\in L$ if and only if $c-1\in L\cap(H_e/J)=0$, that is, if and only if $c=1$. In particular $\conj{g}{v_n}-1\in L$ if and only if $d_{g,n}=1$, that is, $\conj{g}{v_n}=v_{gng^{-1}}$.

Assume first that $\conj{g}{v_n}=v_{gng^{-1}}$ for all $g$ and $n$. Fix $\bar g\in G/N$. Since $q_J(H_{\bar g})=\sum_{g'\in\bar g}q_J(H_{g'})$ and $L$ is generated over $H_e/J$ by the elements $v_n-1$, it suffices to check the two inclusions on a homogeneous $a\in q_J(H_{g'})$ with $g'\in\bar g$ and on these generators. By the first identity, $a(v_n-1)=\conj{g'}{v_n}\,a-a=(v_{g'n g'^{-1}}-1)a$, which lies in $L\,q_J(H_{g'})\subseteq L\,q_J(H_{\bar g})$ because $g'n g'^{-1}\in N$ and $v_{g'n g'^{-1}}-1\in L$. Hence $q_J(H_{\bar g})L\subseteq L\,q_J(H_{\bar g})$. By the second identity, $(v_n-1)a=v_na-a=a\,\conj{g'^{-1}}{v_n}-a=a(v_{g'^{-1}ng'}-1)$, which lies in $q_J(H_{g'})L\subseteq q_J(H_{\bar g})L$. Hence $L\,q_J(H_{\bar g})\subseteq q_J(H_{\bar g})L$, and $L$ is $G/N$-invariant.

Conversely, assume that $L$ is $G/N$-invariant, and fix $g\in G$, $n\in N$ and a decomposition $1=\sum_i\bar h_i\bar h_i'$ adapted to $g$. From $\sum_i\bar h_i\bar h_i'=1$ we get
\[
\conj{g}{v_n}-1=\sum_i\bar h_iv_n\bar h_i'-\sum_i\bar h_i\bar h_i'=\sum_i\bar h_i(v_n-1)\bar h_i'.
\]
Each summand lies in $q_J(H_{\bar g})\,L\,q_J(H_{\bar g^{-1}})=L\,q_J(H_{\bar g})\,q_J(H_{\bar g^{-1}})=L\,(H_N/J^N)\subseteq L$, the first equality by invariance, the second by the strong $G/N$-grading of $H/J^G$, and the inclusion because $L$ is an ideal of $H_N/J^N$. Therefore $\conj{g}{v_n}-1\in L$, which as observed above forces $\conj{g}{v_n}=v_{gng^{-1}}$. This proves (ii).
\end{proof}

\subsection{Equivariant splittings and character data}\label{subsec:AdmData}

The three preceding lemmas combine to express a split admissible datum through a single group homomorphism. Conjugation in $H/J^G$ makes $G$ act on the central group-like elements $\Gc{H_e/J}$ by Lemma~\ref{lem:ConjAction} and carries each homogeneous unit to a unit of degree $gng^{-1}$ by Lemma~\ref{lem:ConjUnits}, while the remaining datum, the $G/N$-invariance of $\ker(\tau)$, is by Lemma~\ref{lem:ConjInv} equivalent to the conjugation-invariance of the units themselves. The families for which this holds deserve a name.

\begin{definition}[Equivariant family]\label{def:EquivFamily}
Let $N\unlhd G$ and let $J\subseteq H_e$ be a $G$-invariant quasi-Hopf ideal. An \emph{equivariant family} for $(N,J)$ is a group-like splitting $(u_n)_{n\in N}$ of the grading of the lifted quotient $H_N/J^N$ such that $\conj{g}{u_n}=u_{gng^{-1}}$ for all $g\in G$ and $n\in N$.
\end{definition}

\begin{proposition}\label{prop:Realized}
Let $N\unlhd G$ and let $J\subseteq H_e$ be a $G$-invariant quasi-Hopf ideal. There is a quasi-Hopf ideal $I\subseteq H$ with $N_I=N$ and $I\cap H_e=J$ if and only if $(N,J)$ admits an equivariant family.
\end{proposition}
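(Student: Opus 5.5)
Both directions go through the classification by split admissible data (Theorem~\ref{thm:MainB}), the correspondence between retractions and group-like splittings (Theorem~\ref{thm:Trivialization}), and the translation of $G/N$-invariance into equivariance (Lemma~\ref{lem:ConjInv}(ii)).

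For the forward direction, let $I$ be a quasi-Hopf ideal with $N_I=N$ and $I\cap H_e=J$. Theorem~\ref{thm:MainB} attaches to $I$ a split admissible datum $(N,J,\tau)$. Its retraction $\tau\colon H_N/J^N\to H_e/J$ has $G/N$-invariant kernel, by condition (iv) of Definition~\ref{def:SplitAdmDatum}. Since $H_N/J^N$ is faithfully $N$-graded with neutral component $H_e/J$ (Corollary~\ref{cor:RelLift}(ii)), Theorem~\ref{thm:Trivialization} (i)$\Rightarrow$(iii) yields a group-like splitting $(v_n)_{n\in N}$ with $\tau(v_n)=1$. The retraction it determines is $\tau$ itself, because that theorem says the two data determine each other. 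Lemma~\ref{lem:ConjInv}(ii) then turns the $G/N$-invariance of $\ker\tau$ into $\conj{g}{v_n}=v_{gng^{-1}}$ for all $g\in G$ and $n\in N$. Hence $(v_n)$ is an equivariant family.

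For the converse, start from an equivariant family $(u_n)_{n\in N}$ and let $\tau$ be the retraction given by Theorem~\ref{thm:Trivialization} (iii)$\Rightarrow$(i), so that $\tau|_{H_e/J}=\id$ and $\tau(u_n)=1$. Lemma~\ref{lem:ConjInv}(ii), read in the other direction, shows that $\ker\tau$ is $G/N$-invariant in $H/J^G$. Therefore $(N,J,\tau)$ is a split admissible datum, and Theorem~\ref{thm:MainB} produces the quasi-Hopf ideal $I=\sum_{\bar g}H_{\bar g}K_\tau$.

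It remains to check that $N_I=N$ and $I\cap H_e=J$, which follows because the two assignments in Theorem~\ref{thm:MainB} are mutually inverse. Applying the first assignment to $I$ returns $(N,J,\tau)$, so $N_I=N$ and $I\cap H_e=J$ by the definition of that assignment. The one point to watch, which I expect to be the only subtlety, is that Lemma~\ref{lem:ConjInv} is stated for the retraction determined by the splitting. In both directions the splitting and the retraction correspond under Theorem~\ref{thm:Trivialization}, so the lemma applies exactly; otherwise the argument is bookkeeping.
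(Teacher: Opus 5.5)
Your proposal is correct and follows the paper's proof essentially step for step. It uses Theorem~\ref{thm:MainB} to pass between ideals and split admissible data, Theorem~\ref{thm:Trivialization} to pass between the retraction and the group-like splitting with $\tau(u_n)=1$, and Lemma~\ref{lem:ConjInv}(ii) to convert the $G/N$-invariance of $\ker\tau$ into equivariance. Your explicit appeal to the mutual inverseness of the assignments in Theorem~\ref{thm:MainB}, to confirm $N_I=N$ and $I\cap H_e=J$, just spells out what the paper leaves implicit.
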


\begin{proof}
Let $I$ be such an ideal. By Theorem~\ref{thm:MainB} it determines a split admissible datum $(N,J,\tau)$, whose retraction $\tau\colon H_N/J^N\to H_e/J$ has $G/N$-invariant kernel by Definition~\ref{def:SplitAdmDatum}(iv). Theorem~\ref{thm:Trivialization} attaches to $\tau$ a family $(u_n)_{n\in N}$ as in its part~(iii), with $\tau(u_n)=1$, and Lemma~\ref{lem:ConjInv}(ii), applied to that family, gives $\conj{g}{u_n}=u_{gng^{-1}}$. The family is therefore equivariant.

Conversely, let $(u_n)_{n\in N}$ be an equivariant family. Theorem~\ref{thm:Trivialization} provides a quasi-Hopf algebra retraction $\tau\colon H_N/J^N\to H_e/J$ with $\tau(u_n)=1$, and Lemma~\ref{lem:ConjInv}(ii), applied to the same family, makes $\ker(\tau)$ $G/N$-invariant. Hence $(N,J,\tau)$ is a split admissible datum, and Theorem~\ref{thm:MainB} returns a quasi-Hopf ideal $I$ with $N_I=N$ and $I\cap H_e=J$.
\end{proof}

Over a pair realized by some quasi-Hopf ideal, Proposition~\ref{prop:Realized} lets a family of homogeneous units be taken equivariant, and the datum that remains is then a single group homomorphism, which the following definition axiomatizes.

\begin{definition}[Character datum]\label{def:AdmData}
A \emph{character datum} for $H$ is a triple $(N,J,\gamma)$ such that:
\begin{enumerate}
    \item $N\unlhd G$;

    \item $J\subseteq H_e$ is a $G$-invariant quasi-Hopf ideal such that $(N,J)$ admits an equivariant family;

    \item $\gamma\colon N\to\Gc{H_e/J}$ is a group homomorphism satisfying
    \begin{equation}\label{eq:Equivar}
    \conj{g}{\gamma(n)}=\gamma(gng^{-1}),
    \qquad
    g\in G,\ n\in N.
    \end{equation}
\end{enumerate}
\end{definition}

A group homomorphism satisfying~\eqref{eq:Equivar} is called \emph{$G$-equivariant}, the action on $\Gc{H_e/J}$ being that of Lemma~\ref{lem:ConjAction}.

The name is chosen in analogy with the collective characters of~\cite[\S4]{Ce02}, which attach to a strongly graded Hopf algebra a group homomorphism of the grading group into an invariant of the neutral component.

For each pair $(N,J)$ admitting an equivariant family, we fix one and refer to its members as the \emph{reference units} for $(N,J)$. They are background data rather than a fourth component of the datum, and $\gamma$ is defined relative to them.

\begin{theorem}\label{thm:Char}
Let $H$ be a faithfully $G$-graded quasi-Hopf algebra, let $N\unlhd G$ and let $J\subseteq H_e$ be a $G$-invariant quasi-Hopf ideal admitting an equivariant family, with reference units $\{u_n\}_{n\in N}$. The assignments
\[
(N,J,\gamma)\longmapsto I_\gamma:=\left\langle\,J^G,\ \widetilde u_n-\widetilde\gamma(n)\ \mid\ n\in N\,\right\rangle_H
\qquad\text{and}\qquad
I\longmapsto\left(N_I,\,I\cap H_e,\,\gamma\right)
\]
define a bijective correspondence between the set of character data for $H$ with first two components $N$ and $J$ and the set of quasi-Hopf ideals $I\subseteq H$ with $N_I=N$ and $I\cap H_e=J$, where $\widetilde u_n\in H_n$ and $\widetilde\gamma(n)\in H_e$ are arbitrary lifts of $u_n$ and $\gamma(n)$, respectively, and, in the second assignment, $\gamma(n)=\tau(u_n)$ for the retraction $\tau$ of Theorem~\ref{thm:MainB}.
\end{theorem}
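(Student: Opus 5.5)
The plan is to reduce the statement to Theorem~\ref{thm:MainB} combined with Theorem~\ref{thm:Trivialization} and Lemma~\ref{lem:ConjInv}. By Theorem~\ref{thm:MainB}, the quasi-Hopf ideals $I$ with $N_I=N$ and $I\cap H_e=J$ correspond bijectively to the retractions $\tau\colon H_N/J^N\to H_e/J$ whose kernel is $G/N$-invariant. By Theorem~\ref{thm:Trivialization}, such retractions correspond bijectively to group-like splittings $(v_n)_{n\in N}$ of $H_N/J^N$, via $\tau(v_n)=1$. By Lemma~\ref{lem:ConjInv}(ii), $G/N$-invariance of $\ker\tau$ amounts to $\conj{g}{v_n}=v_{gng^{-1}}$. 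So it suffices to put equivariant families in bijection with $G$-equivariant homomorphisms $\gamma\colon N\to\Gc{H_e/J}$, and then to identify the resulting ideal with $I_\gamma$.

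\emph{Comparing families.} Let $(v_n)$ be any group-like splitting. Since $v_n$ and $u_n$ lie in the same component $(H_e/J)u_n$, we can write $v_n=c_nu_n$ with $c_n\in H_e/J$. Then $c_n=v_nu_n^{-1}$ is a unit. It is group-like, because $v_n$ and $u_n^{-1}=u_{n^{-1}}$ are group-like. It is central in $H_e/J$, because both $v_n$ and $u_n$ commute with $H_e/J$. From $v_nv_m=c_nu_nc_mu_m=c_nc_mu_{nm}$ we get $c_{nm}=c_nc_m$, so $n\mapsto c_n$ is a homomorphism $N\to\Gc{H_e/J}$. Conversely, for any such homomorphism $c$, the family $(c_nu_n)$ is again a group-like splitting; this is a direct check using centrality.

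\emph{Equivariance.} By the multiplicativity in Lemma~\ref{lem:ConjComm} and the equivariance of $(u_n)$, we have $\conj{g}{v_n}=\conj{g}{c_n}\,u_{gng^{-1}}$. Hence $(v_n)$ is equivariant if and only if $\conj{g}{c_n}=c_{gng^{-1}}$. Set $\gamma(n):=\tau(u_n)=\tau(c_n^{-1}v_n)=c_n^{-1}$. Since $\Gc{H_e/J}$ is abelian and each $\conj{g}{}$ acts by group automorphisms (Lemma~\ref{lem:ConjAction}(iii)), $\gamma$ is a $G$-equivariant homomorphism exactly when $c$ is. This gives the bijection on the third components, and it matches the prescription $\gamma(n)=\tau(u_n)$ in the second assignment.

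\emph{Identifying the ideal.} This is the step I expect to be the main obstacle. By Lemma~\ref{lem:ConjInv}(i), applied with $r=\gamma(n)$ and $v_n=\gamma(n)^{-1}u_n$, we get
\[
\ker\tau=\sum_{n}(H_e/J)\bigl(u_n-\gamma(n)\bigr).
\]
Hence $K_\tau=J^N+\sum_n H_e(\widetilde u_n-\widetilde\gamma(n))$. The ideal of Theorem~\ref{thm:MainB} is $\sum_{\bar g}H_{\bar g}K_\tau=HK_\tau$. This ideal is two-sided by Theorem~\ref{thm:MainA}, and it contains $J^G$ and every $\widetilde u_n-\widetilde\gamma(n)$; therefore it contains $I_\gamma$. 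Conversely, $HK_\tau$ is generated as a left ideal by $J$ and the elements $\widetilde u_n-\widetilde\gamma(n)$, all of which lie in $I_\gamma$, so $HK_\tau\subseteq I_\gamma$. Changing the lifts alters each generator by an element of $J^N\subseteq J^G$, so $I_\gamma$ does not depend on the lifts. Finally, the two assignments are mutually inverse because each step above is a bijection whose inverse matches the displayed formulas.
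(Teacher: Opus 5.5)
Your proposal is correct and follows essentially the same route as the paper. Both rescale the reference units by $\gamma(n)^{-1}$ to pass between equivariant families and $G$-equivariant homomorphisms, and both then invoke Theorem~\ref{thm:Trivialization}, Theorem~\ref{thm:MainB} and Lemma~\ref{lem:ConjInv} to identify the ideal; your double-inclusion check that $HK_\tau=I_\gamma$, together with the independence of the lifts, spells out a step the paper only states briefly.
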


\begin{proof}
Let $\gamma$ be as in Definition~\ref{def:AdmData} and put $v_n:=\gamma(n)^{-1}u_n$. Each $v_n$ is a group-like unit of degree $n$ commuting with $H_e/J$, and $v_nv_m=v_{nm}$; hence $(v_n)_{n\in N}$ is a group-like splitting. Since the reference units are equivariant, Lemmas~\ref{lem:ConjComm} and~\ref{lem:ConjAction} give $\conj{g}{v_n}=\conj{g}{\gamma(n)}^{-1}u_{gng^{-1}}$; therefore~\eqref{eq:Equivar} holds if and only if $(v_n)_{n\in N}$ is an equivariant family. Conversely an equivariant family $(v_n)_{n\in N}$ determines $\gamma(n):=u_nv_n^{-1}\in\Gc{H_e/J}$, a group homomorphism because both families are multiplicative and centralize $H_e/J$.

Theorem~\ref{thm:Trivialization} matches such families with the quasi-Hopf algebra retractions of $H_N/J^N$ onto $H_e/J$, sending $(v_n)$ to the retraction $\tau$ with $\tau(u_n)=\gamma(n)$. By Proposition~\ref{prop:Realized} and Theorem~\ref{thm:MainB} these retractions correspond in turn to the quasi-Hopf ideals $I$ with $N_I=N$ and $I\cap H_e=J$. By Lemma~\ref{lem:ConjInv}(i) and its rescaling clause, $\ker\tau=\sum_n(H_e/J)(v_n-1)=\sum_n(H_e/J)(u_n-\gamma(n))$; hence the inverse image $K\subseteq H_N$ of $\ker\tau$ is generated by $J^N$ and the lifted differences, and the reconstruction $\sum_{\bar g\in G/N}H_{\bar g}K$ of Theorem~\ref{thm:MainB} is the displayed ideal $I_\gamma$. The two assignments are therefore mutually inverse.
\end{proof}

The fibre over $(N,J)$ is thus a torsor under the group of $G$-equivariant homomorphisms $N\to\Gc{H_e/J}$. The reference units supply a base point, but no choice of them is canonical. By Theorem~\ref{thm:MainB} the ideal determines its retraction, whereas $\gamma(n)=\tau(u_n)$ depends on the chosen units. Thus the retraction is intrinsic, while $\gamma$ is the coordinate of the ideal with respect to that base point. The obstruction to the existence of an ideal over $(N,J)$ is accordingly that the fibre is empty, as recorded in Proposition~\ref{prop:Realized}, and not a cohomology class.

\subsection{Crossed-product presentations}\label{subsec:CrossedRetr}

The preceding subsections work with the group-like splittings directly. We now compute them inside a chosen crossed-product presentation, where a retraction appears as a family of units of the neutral component and the failure of that family to be group-like and multiplicative is measured by a factor set and a cohomology class. Throughout this subsection and the next, $N$ is a group and $A$ denotes an $N$-crossed product quasi-Hopf algebra with neutral component $A_e$. We also fix homogeneous units $u_n\in A_n^\times$, for $n\in N$, with $u_e=1$, and recall that every element of $A$ then has a unique expression as a finite sum $\sum_{n\in N}b_nu_n$ with $b_n\in A_e$.

\begin{definition}[Crossed system]\label{def:CrossedSystem}
A \emph{crossed system} over $A_e$ is a pair $(\varphi,\sigma)$ consisting of algebra automorphisms $\varphi_n\colon A_e\to A_e$ and units $\sigma(n,m)\in A_e^\times$, for $n,m\in N$, satisfying
\begin{gather}
\varphi_n\varphi_m(b)
=
\sigma(n,m)\varphi_{nm}(b)\sigma(n,m)^{-1},
\qquad
b\in A_e, \label{eq:CrossedActId}\\
\varphi_n(\sigma(m,\ell))\,\sigma(n,m\ell)
=
\sigma(n,m)\,\sigma(nm,\ell),
\qquad
n,m,\ell\in N, \label{eq:CrossedCocyId}
\end{gather}
together with the normalization $\varphi_e=\id_{A_e}$ and $\sigma(e,n)=\sigma(n,e)=1$. We call $\varphi=(\varphi_n)_{n\in N}$ a \emph{weak action} of $N$ on $A_e$ and $\sigma$ a \emph{factor set}~\cite[\S1.4]{NVO04}.
\end{definition}

Following~\cite[Prop.~1.4.2]{NVO04}, the chosen homogeneous units determine such a crossed system.

\begin{lemma}\label{lem:CrossedSystem}
For $n,m\in N$, setting
\[
\varphi_n(b)=u_nbu_n^{-1}\quad(b\in A_e)\qquad\text{and}\qquad\sigma(n,m)=u_nu_mu_{nm}^{-1}
\]
defines a crossed system $(\varphi,\sigma)$ over $A_e$. Moreover,
\[
u_nb=\varphi_n(b)u_n
\qquad\text{and}\qquad
u_nu_m=\sigma(n,m)u_{nm}.
\]
\end{lemma}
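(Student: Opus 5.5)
The argument is the standard crossed-product computation of \cite[Prop.~1.4.2]{NVO04}, carried out in the algebra $A$. The quasi-Hopf structure plays no role here; only the associative algebra structure and the grading are used.

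\emph{The maps are well defined.} Since $u_n\in A_n^\times$, its inverse lies in $A_{n^{-1}}$. Hence $u_nbu_n^{-1}\in A_nA_eA_{n^{-1}}\subseteq A_e$ for every $b\in A_e$, so $\varphi_n(b)\in A_e$. Conjugation by a unit is an algebra automorphism of $A$, and $\varphi_n$ preserves $A_e$. Its inverse, $b\mapsto u_n^{-1}bu_n$, also preserves $A_e$ by the same degree count. Therefore $\varphi_n$ is an algebra automorphism of $A_e$. Similarly, $\sigma(n,m)=u_nu_mu_{nm}^{-1}$ is a product of units, and it has degree $nm(nm)^{-1}=e$. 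Its inverse $u_{nm}u_m^{-1}u_n^{-1}$ also lies in $A_e$, so $\sigma(n,m)\in A_e^\times$.

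\emph{The two displayed relations.} These are immediate rearrangements of the definitions: $\varphi_n(b)u_n=u_nbu_n^{-1}u_n=u_nb$ and $\sigma(n,m)u_{nm}=u_nu_m$.

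\emph{The normalization and the two identities.} The normalization follows from $u_e=1$: we get $\varphi_e=\id$ and $\sigma(e,n)=u_nu_n^{-1}=1=\sigma(n,e)$.

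For \eqref{eq:CrossedActId}, expand the left side and insert $u_{nm}^{-1}u_{nm}=1$:
\[
\varphi_n\varphi_m(b)=(u_nu_m)\,b\,(u_nu_m)^{-1}=\sigma(n,m)\,u_{nm}\,b\,u_{nm}^{-1}\,\sigma(n,m)^{-1}=\sigma(n,m)\,\varphi_{nm}(b)\,\sigma(n,m)^{-1}.
\]

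For \eqref{eq:CrossedCocyId}, expand $u_nu_mu_\ell$ in the two possible ways using the displayed relations. On one side,
\[
(u_nu_m)u_\ell=\sigma(n,m)\,u_{nm}u_\ell=\sigma(n,m)\,\sigma(nm,\ell)\,u_{nm\ell}.
\]
On the other side,
\[
u_n(u_mu_\ell)=u_n\,\sigma(m,\ell)\,u_{m\ell}=\varphi_n(\sigma(m,\ell))\,u_nu_{m\ell}=\varphi_n(\sigma(m,\ell))\,\sigma(n,m\ell)\,u_{nm\ell}.
\]
Associativity makes the two expressions equal, and cancelling the unit $u_{nm\ell}$ gives the cocycle identity.

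\emph{Expected difficulty.} None of these steps is a real obstacle. The only point needing care is the degree bookkeeping: the conjugates and $\sigma(n,m)$ must land in $A_e$, and the inverses of $\varphi_n$ and $\sigma(n,m)$ must stay in $A_e$. This is settled by the earlier observation that inverses of homogeneous units are homogeneous of inverse degree.
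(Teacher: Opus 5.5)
Your proof is correct, and it is essentially the paper's own approach: the paper gives no proof of its own and defers to \cite[Prop.~1.4.2]{NVO04}, whose standard computation you reproduce. That computation places $\varphi_n(b)$ and $\sigma(n,m)^{\pm1}$ in $A_e$ by a degree count, reads off the two displayed relations from the definitions, and obtains the twisted cocycle identity by comparing the two bracketings of $u_nu_mu_\ell$ and cancelling the unit $u_{nm\ell}$.
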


\begin{definition}[Trivializing family]\label{def:TrivFamily}
A \emph{trivializing family} for the crossed system $(\varphi,\sigma)$ is a family $r=(r_n)_{n\in N}$ of units $r_n\in A_e^\times$ satisfying
\begin{gather}
r_n b=\varphi_n(b)r_n,
\qquad
b\in A_e, \label{eq:RetrAct}\\
r_nr_m=\sigma(n,m)r_{nm}, \label{eq:RetrCocy}\\
r_e=1. \label{eq:RetrUnit}
\end{gather}
\end{definition}

Algebra retractions are equivalent to trivializing families, as follows; compare~\cite[\S1.5, Ex.~9]{NVO04} and~\cite[\S7.3]{Mon93}. The precise statement does not appear in this form in the literature; we therefore include a proof.

\begin{lemma}\label{lem:CrossedAlgRetr}
The algebra retractions $\tau\colon A\to A_e$ with $\tau|_{A_e}=\id_{A_e}$ are in bijective correspondence, via $r_n=\tau(u_n)$, with the trivializing families $r=(r_n)_{n\in N}$. The inverse correspondence sends a trivializing family $r$ to the map $\tau_r$ defined by
\begin{equation}\label{eq:TauRDef}
\tau_r\left(\sum_{n\in N}b_nu_n\right)
=
\sum_{n\in N}b_nr_n.
\end{equation}
\end{lemma}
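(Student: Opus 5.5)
The plan is to check that the two assignments are well defined and then that they are mutually inverse, using only the uniqueness of the expressions $\sum_n b_nu_n$ and the relations of Lemma~\ref{lem:CrossedSystem}.

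\emph{From retractions to families.} Let $\tau$ be an algebra retraction and put $r_n:=\tau(u_n)$.
\begin{enumerate}
\item Applying $\tau$ to $u_nu_{n^{-1}}u_e^{-1}=\sigma(n,n^{-1})\in A_e^\times$ shows that $r_nr_{n^{-1}}$ is a unit. The same argument with $u_{n^{-1}}u_n$ shows that $r_{n^{-1}}r_n$ is a unit. Hence $r_n$ is a unit; more directly, $\tau(u_n^{-1})$ is a two-sided inverse of $r_n$, and it lies in $A_e$ because $\tau$ takes values there.
\item Applying $\tau$ to $u_nb=\varphi_n(b)u_n$ and using $\tau|_{A_e}=\id$ gives \eqref{eq:RetrAct}.
\item Applying $\tau$ to $u_nu_m=\sigma(n,m)u_{nm}$ gives \eqref{eq:RetrCocy}.
\item Since $u_e=1$, we get $r_e=\tau(1)=1$, which is \eqref{eq:RetrUnit}.
\end{enumerate}

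\emph{From families to retractions.} Given a trivializing family $r$, the map $\tau_r$ in \eqref{eq:TauRDef} is well defined and linear, because the expression $\sum_n b_nu_n$ is unique. It fixes $A_e$: since $u_e=1$ and $r_e=1$, we have $\tau_r(b)=br_e=b$. It is unital for the same reason. For multiplicativity, bilinearity reduces the check to the products $(bu_n)(cu_m)$ with $b,c\in A_e$. By Lemma~\ref{lem:CrossedSystem},
\[
(bu_n)(cu_m)=b\,\varphi_n(c)\,\sigma(n,m)\,u_{nm},
\]
so
\[
\tau_r\bigl((bu_n)(cu_m)\bigr)=b\,\varphi_n(c)\,\sigma(n,m)\,r_{nm}.
\]
On the other hand, \eqref{eq:RetrAct} and then \eqref{eq:RetrCocy} give
\[
\tau_r(bu_n)\,\tau_r(cu_m)=b\,r_nc\,r_m=b\,\varphi_n(c)\,r_nr_m=b\,\varphi_n(c)\,\sigma(n,m)\,r_{nm}.
\]
The two expressions agree, so $\tau_r$ is an algebra morphism.

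\emph{Mutual inverseness.} Starting from $r$, we have $\tau_r(u_n)=r_n$, so $r$ is recovered. Starting from $\tau$, the retraction $\tau$ is $A_e$-linear on the left, so
\[
\tau\Bigl(\sum_n b_nu_n\Bigr)=\sum_n b_n\tau(u_n)=\tau_r\Bigl(\sum_n b_nu_n\Bigr),
\]
and $\tau$ is recovered.

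The only delicate point is the invertibility of $r_n$ when starting from $\tau$. It is settled by observing that $u_n^{-1}\in A_{n^{-1}}$ and that $\tau$ is a unital algebra morphism into $A_e$, so $\tau(u_n^{-1})$ inverts $r_n$ inside $A_e$. Everything else is a direct computation.
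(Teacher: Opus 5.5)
Your proposal is correct and follows the paper's proof essentially step for step. You apply $\tau$ to the relations of Lemma~\ref{lem:CrossedSystem} to obtain \eqref{eq:RetrAct}--\eqref{eq:RetrUnit}, check multiplicativity of $\tau_r$ on products $(bu_n)(cu_m)$, and recover $\tau$ from left $A_e$-linearity; your use of $\tau(u_n^{-1})$ to invert $r_n$ is the paper's observation that $r_n$ is invertible because $u_n$ is.
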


\begin{proof}
Let $\tau\colon A\to A_e$ be an algebra retraction and set $r_n=\tau(u_n)$. Since each $u_n$ is invertible, each $r_n$ is invertible in $A_e$, and $r_e=\tau(1)=1$; applying $\tau$ to $u_nb=\varphi_n(b)u_n$ and to $u_nu_m=\sigma(n,m)u_{nm}$ gives $r_nb=\varphi_n(b)r_n$ and $r_nr_m=\sigma(n,m)r_{nm}$. Thus the family $(r_n)$ satisfies \eqref{eq:RetrAct}, \eqref{eq:RetrCocy} and \eqref{eq:RetrUnit}.

Conversely, suppose that a family $(r_n)$ satisfies these identities and define $\tau_r$ by~\eqref{eq:TauRDef}. This map is $\kk$-linear, restricts to the identity on $A_e$, and preserves the unit because $r_e=1$. For $b,c\in A_e$ and $n,m\in N$, one has $(bu_n)(cu_m)=b\varphi_n(c)\sigma(n,m)u_{nm}$, and
\[
\tau_r(bu_n)\,\tau_r(cu_m)
=br_ncr_m
=b\varphi_n(c)r_nr_m
=b\varphi_n(c)\sigma(n,m)r_{nm}
=\tau_r\!\left((bu_n)(cu_m)\right),
\]
by \eqref{eq:RetrAct} and \eqref{eq:RetrCocy}. Thus $\tau_r$ is multiplicative on the homogeneous generators $bu_n$, and therefore on all of $A$ by $\kk$-bilinearity. Thus $\tau_r$ is an algebra retraction.

The two constructions are mutually inverse: $\tau_r(u_n)=r_n$ and, since any algebra retraction satisfies $\tau\left(\sum_n b_nu_n\right)=\sum_n b_n\tau(u_n)$, comparison with~\eqref{eq:TauRDef} gives $\tau=\tau_r$ for $r_n=\tau(u_n)$.
\end{proof}

Since $\alpha,\beta\in A_e$ and $\Phi\in A_e^{\otimes3}$, an algebra retraction preserves them automatically. It remains to impose compatibility with $\varepsilon$, $\Delta$, and $S$.

\begin{lemma}\label{lem:CrossedQHRetr}
The algebra retraction $\tau_r\colon A\to A_e$ associated to a trivializing family $(r_n)_{n\in N}$ is a morphism of quasi-Hopf algebras if and only if, for every $n\in N$,
\begin{gather}
\varepsilon(r_n)=\varepsilon(u_n), \label{eq:QHRetrCounit}\\
\Delta(r_n)=(\tau_r\otimes \tau_r)\Delta(u_n), \label{eq:QHRetrCoprod}\\
S(r_n)=\tau_r(S(u_n)). \label{eq:QHRetrAntipode}
\end{gather}
\end{lemma}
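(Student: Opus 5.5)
Necessity is immediate, and sufficiency will be verified on the spanning elements $bu_n$. If $\tau_r$ is a morphism of quasi-Hopf algebras, apply $\varepsilon\tau_r=\varepsilon$, $\Delta\tau_r=(\tau_r\otimes\tau_r)\Delta$ and $S\tau_r=\tau_rS$ to $u_n$; since $\tau_r(u_n)=r_n$ by~\eqref{eq:TauRDef}, this gives \eqref{eq:QHRetrCounit}--\eqref{eq:QHRetrAntipode}. For the converse, recall from the remark preceding the statement that $\tau_r$ is an algebra morphism fixing $\Phi$, $\alpha$ and $\beta$, because these lie in $A_e$ and $\tau_r|_{A_e}=\id$. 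It therefore remains to check compatibility with $\varepsilon$, $\Delta$ and $S$. Since every element of $A$ is a finite sum $\sum_n b_nu_n$ and all maps involved are linear, it suffices to check each identity on elements $bu_n$ with $b\in A_e$.

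For the counit and comultiplication we use multiplicativity. All of $\varepsilon$, $\varepsilon\tau_r$, $\Delta$ and $(\tau_r\otimes\tau_r)\Delta$ are algebra morphisms, since $\tau_r$ and $\tau_r\otimes\tau_r$ are. Hence
\[
\varepsilon(\tau_r(bu_n))=\varepsilon(b)\varepsilon(r_n)=\varepsilon(b)\varepsilon(u_n)=\varepsilon(bu_n)
\]
by~\eqref{eq:QHRetrCounit}. Similarly,
\[
\Delta(\tau_r(bu_n))=\Delta(b)\Delta(r_n)=(\tau_r\otimes\tau_r)(\Delta(b))\,(\tau_r\otimes\tau_r)(\Delta(u_n))=(\tau_r\otimes\tau_r)\Delta(bu_n),
\]
using $\Delta(b)\in A_e\otimes A_e$, on which $\tau_r\otimes\tau_r$ is the identity, together with~\eqref{eq:QHRetrCoprod}.

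The antipode needs slightly more care, since $S$ and $\tau_rS$ are algebra anti-morphisms rather than morphisms. The plan is to compute
\[
\tau_r(S(bu_n))=\tau_r(S(u_n)S(b))=\tau_r(S(u_n))\,\tau_r(S(b))=S(r_n)\,S(b)=S(br_n)=S(\tau_r(bu_n)).
\]
The second equality uses that $\tau_r$ is multiplicative. The third uses~\eqref{eq:QHRetrAntipode} together with $S(b)\in A_e$, which holds by Proposition~\ref{prop:GradProp}(ii) and is therefore fixed by $\tau_r$. The fourth uses that $S$ is an anti-morphism of $A_e$.

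The only real subtlety is thus the antipode step, where the order of factors must be tracked. Since the computation above handles it directly, I expect no genuine obstacle. Finally, $\tau_r$ intertwines the reassociators and the distinguished elements, as already noted. Hence $\tau_r$ is a morphism of quasi-Hopf algebras.
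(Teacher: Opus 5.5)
Your proof is correct and follows the paper's argument essentially step for step. Necessity comes from evaluating the compatibilities at $u_n$, and sufficiency from checking the counit, coproduct and antipode on the spanning elements $bu_n$, with $\Phi$, $\alpha$, $\beta$ preserved because they lie in tensor powers of $A_e$. The only cosmetic difference is that you invoke the multiplicativity of $\tau_r$ and $\tau_r\otimes\tau_r$, whereas the paper phrases the same steps as left $(A_e\otimes A_e)$-linearity and right $A_e$-linearity of $\tau_r$.
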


\begin{proof}
If $\tau_r$ is a morphism of quasi-Hopf algebras, then \eqref{eq:QHRetrCounit}--\eqref{eq:QHRetrAntipode} are obtained by applying the counit, coproduct and antipode compatibility to the elements $u_n$.

Conversely, assume that \eqref{eq:QHRetrCounit}, \eqref{eq:QHRetrCoprod} and \eqref{eq:QHRetrAntipode} hold for every $n\in N$. Since every element of $A$ is a finite sum of elements $bu_n$ with $b\in A_e$, and $A_e$ is a quasi-Hopf subalgebra of $A$ (Proposition~\ref{prop:GradProp}(ii)) on which $\tau_r$ restricts to the identity, each compatibility reduces to the corresponding hypothesis. For the counit, $\varepsilon(\tau_r(bu_n))=\varepsilon(b)\varepsilon(r_n)=\varepsilon(b)\varepsilon(u_n)=\varepsilon(bu_n)$, by~\eqref{eq:QHRetrCounit}. For the coproduct,
\[
\Delta(\tau_r(bu_n))
=\Delta(b)\Delta(r_n)
=\Delta(b)\,(\tau_r\otimes\tau_r)\Delta(u_n)
=(\tau_r\otimes\tau_r)\Delta(bu_n),
\]
using~\eqref{eq:QHRetrCoprod} in the middle equality and, in the last one, that $\tau_r\otimes\tau_r$ is left $(A_e\otimes A_e)$-linear. For the antipode, since $S$ is an anti-morphism and $\tau_r$ is right $A_e$-linear by~\eqref{eq:RetrAct}, we get 
\[
\tau_r(S(bu_n))=\tau_r(S(u_n))\,S(b)=S(r_n)S(b)=S(br_n)=S(\tau_r(bu_n)),
\]
by~\eqref{eq:QHRetrAntipode}. Finally, $\alpha$, $\beta$ and $\Phi$ are preserved because they lie in the corresponding tensor powers of $A_e$, on which $\tau_r$ restricts to the identity. Hence $\tau_r$ is a morphism of quasi-Hopf algebras.
\end{proof}

Let $H$ be a faithfully $G$-graded quasi-Hopf algebra, fix $N\unlhd G$ and a $G$-invariant quasi-Hopf ideal $J\subseteq H_e$, and suppose that the lifted quotient $H_N/J^N$ is an $N$-crossed product, with fixed homogeneous units $u_n$. We apply the two preceding lemmas with $A=H_N/J^N$ and $A_e=H_e/J$. They identify its quasi-Hopf retractions with the trivializing families satisfying~\eqref{eq:RetrAct}--\eqref{eq:RetrUnit} and~\eqref{eq:QHRetrCounit}--\eqref{eq:QHRetrAntipode}.

\begin{proposition}\label{prop:CrossedSplitData}
With the notation above, the split admissible data $(N,J,\tau)$ are in bijection, via $r_n=\tau(u_n)$, with the trivializing families $r=(r_n)_{n\in N}$ of units $r_n\in (H_e/J)^\times$ such that:
\begin{enumerate}
    \item $\tau_r$ is a morphism of quasi-Hopf algebras;

    \item $\ker(\tau_r)$ is $G/N$-invariant in $H/J^G$.
\end{enumerate}
The corresponding quasi-Hopf ideal of $H$ is $I=\sum_{\bar g\in G/N}H_{\bar g}K$, where $K\subseteq H_N$ is the inverse image under the quotient map $H_N\to H_N/J^N$ of
\[
L:=\ker(\tau_r)=\sum_{n\in N}(H_e/J)(u_n-r_n).
\]
Equivalently, for arbitrary lifts $\widetilde u_n\in H_n$ and $\widetilde r_n\in H_e$ of $u_n$ and $r_n$, the ideal $K$ is generated in $H_N$ by $J^N$ together with the elements $\widetilde u_n-\widetilde r_n$, $n\in N$.
\end{proposition}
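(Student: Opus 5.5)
The plan is to compose three bijections that are already available. By Definition~\ref{def:SplitAdmDatum}, a split admissible datum with fixed $(N,J)$ is simply a quasi-Hopf algebra retraction $\tau\colon H_N/J^N\to H_e/J$ whose kernel is $G/N$-invariant in $H/J^G$. Lemma~\ref{lem:CrossedAlgRetr}, applied with $A=H_N/J^N$ and $A_e=H_e/J$, identifies the algebra retractions with the trivializing families $r$, via $r_n=\tau(u_n)$ and inverse $r\mapsto\tau_r$. Imposing condition (i) singles out exactly the families whose $\tau_r$ is a quasi-Hopf retraction; Lemma~\ref{lem:CrossedQHRetr} makes this condition concrete, though it is not needed for the bijection itself. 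Imposing (ii) is then literally condition (iv) of Definition~\ref{def:SplitAdmDatum}. So the first assertion amounts to restricting the bijection of Lemma~\ref{lem:CrossedAlgRetr} to the subsets cut out by (i) and (ii). The corresponding ideal comes from Theorem~\ref{thm:MainB}: it is $\sum_{\bar g}H_{\bar g}K_\tau$, where $K_\tau$ is the inverse image of $\ker\tau_r$.

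The remaining work is to compute $\ker(\tau_r)$, and here I would redo the argument of Lemma~\ref{lem:ConjInv}(i) directly in the crossed-product presentation. Take $x=\sum_n b_nu_n$. Then $\tau_r(x)=\sum_n b_nr_n$. Since $x-\tau_r(x)=\sum_n b_n(u_n-r_n)$, every element of the kernel lies in $\sum_n(H_e/J)(u_n-r_n)$. Conversely, each $b(u_n-r_n)$ is killed by $\tau_r$, because $\tau_r(bu_n)=br_n$. This gives $L=\sum_n(H_e/J)(u_n-r_n)$. Note that this step uses only the left $H_e/J$-linearity of $\tau_r$, so no group-like property of the $u_n$ is needed.

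For the final description of $K$, I would argue as follows. Let $K'$ be the ideal of $H_N$ generated by $J^N$ and the elements $\widetilde u_n-\widetilde r_n$. Each generator maps into $L$, and $L$ is an ideal, so $K'\subseteq K$. For the reverse inclusion, the image of $K'$ contains every $(H_e/J)(u_n-r_n)$, hence contains $L$. Since $K'\supseteq J^N$, which is the kernel of $H_N\to H_N/J^N$, it follows that $K'\supseteq K$.

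The step needing the most care is independence of the choice of lifts. Changing $\widetilde u_n$ or $\widetilde r_n$ alters the generator by an element of $J^N$: lifts of $u_n$ in $H_n$ differ by elements of $J^G\cap H_n=H_nJ\subseteq J^N$, and lifts of $r_n$ differ by elements of $J$. This is harmless because $J^N$ is already among the generators. No genuine obstacle is expected beyond this bookkeeping.
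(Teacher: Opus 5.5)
Your proposal is correct and follows essentially the same route as the paper's proof: you restrict the bijection of Lemma~\ref{lem:CrossedAlgRetr} by conditions (i) and (ii), invoke Theorem~\ref{thm:MainB} for the ideal, and compute $\ker(\tau_r)$ by writing $x=\sum_n b_nu_n$ and subtracting $\tau_r(x)$. Your two-inclusion argument for $K$ spells out the paper's one-line ``taking inverse images'' step, and the lift-independence you single out as delicate is in fact automatic, since that argument works for any choice of lifts.
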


\begin{proof}
By Lemmas~\ref{lem:CrossedAlgRetr} and~\ref{lem:CrossedQHRetr}, condition (i) holds exactly for the families $r$ that correspond, via $r_n=\tau(u_n)$, to quasi-Hopf algebra retractions $\tau\colon H_N/J^N\to H_e/J$, and (ii) is condition (iv) of Definition~\ref{def:SplitAdmDatum} for $\ker(\tau_r)$. The corresponding ideal is the one of Theorem~\ref{thm:MainB}. For the description of $L$, if $x=\sum_{n\in N}b_nu_n$ lies in $\ker(\tau_r)$, then $\sum_{n\in N}b_nr_n=\tau_r(x)=0$ by~\eqref{eq:TauRDef}; hence $x=\sum_{n\in N}b_n(u_n-r_n)$ and $L\subseteq\sum_{n\in N}(H_e/J)(u_n-r_n)$. The reverse inclusion holds since $\tau_r(u_n-r_n)=0$. Taking inverse images under the quotient map $H_N\to H_N/J^N$, whose kernel is $J^N$, the ideal $K$ is generated by $J^N$ and the lifts $\widetilde u_n-\widetilde r_n$.
\end{proof}

The family $(r_n)$ and the crossed system $(\varphi,\sigma)$ depend on the homogeneous units; the retraction $\tau$ and its kernel do not.

\subsection{The cohomological obstruction}\label{subsec:CohObstr}

We now identify the obstruction to the existence of a trivializing family with central group-like values, described by a map $\gamma\colon N\to\Gc{A_e}$. Such a map need not be a group homomorphism; it is one exactly when $\sigma=1$, and when solutions exist they form a torsor under $\Hom(N,\Gc{A_e})$.

Return to the abstract $N$-crossed product $A$ fixed at the start of Subsection~\ref{subsec:CrossedRetr}, with homogeneous units $u_n$ and associated crossed system $(\varphi,\sigma)$.

\begin{lemma}\label{lem:CentralRetrData}
Let $r=(r_n)_{n\in N}$ be a trivializing family and suppose that every $r_n$ is central in $A_e$. Then $\varphi_n=\id_{A_e}$ for all $n\in N$. Moreover, if $\varphi_n=\id_{A_e}$ for all $n\in N$, then the factor set $\sigma$ takes values in $Z(A_e)^\times$ and satisfies the ordinary $2$-cocycle identity
\[
\sigma(m,\ell)\,\sigma(n,m\ell)=\sigma(n,m)\,\sigma(nm,\ell),
\qquad
n,m,\ell\in N.
\]
\end{lemma}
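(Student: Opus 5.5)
The first assertion follows from~\eqref{eq:RetrAct}. Fix $n\in N$. Since $r_n$ is central in $A_e$, for every $b\in A_e$ we have $r_nb=br_n$. Combining this with~\eqref{eq:RetrAct} gives $\varphi_n(b)r_n=br_n$. Because $r_n$ is a unit of $A_e$ (Definition~\ref{def:TrivFamily}), we may cancel it on the right and obtain $\varphi_n(b)=b$. Hence $\varphi_n=\id_{A_e}$.

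For the second assertion, assume $\varphi_n=\id_{A_e}$ for all $n$. We substitute this into the two crossed-system identities of Definition~\ref{def:CrossedSystem}, as realized by Lemma~\ref{lem:CrossedSystem}.

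Identity~\eqref{eq:CrossedActId} becomes $b=\sigma(n,m)\,b\,\sigma(n,m)^{-1}$ for every $b\in A_e$. Thus $\sigma(n,m)$ commutes with all of $A_e$, so $\sigma(n,m)\in Z(A_e)$. It is a unit of $A_e$ by definition, and its inverse is again central, so $\sigma(n,m)\in Z(A_e)^\times$.

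Identity~\eqref{eq:CrossedCocyId} becomes
\[
\sigma(m,\ell)\,\sigma(n,m\ell)=\sigma(n,m)\,\sigma(nm,\ell),
\]
because $\varphi_n(\sigma(m,\ell))=\sigma(m,\ell)$. This is exactly the ordinary $2$-cocycle identity. Since all values of $\sigma$ are now central, their order of multiplication is immaterial, so this identity is a genuine cocycle condition in the abelian group $Z(A_e)^\times$.

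The argument is a direct substitution, and only one point requires care. The cancellation of $r_n$ in the first step uses that $r_n$ is invertible in $A_e$, not merely in $A$. This holds because the definition of a trivializing family requires $r_n\in A_e^\times$.
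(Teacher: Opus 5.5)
Your proof is correct and matches the paper's argument step for step. You cancel the unit $r_n$ in $\varphi_n(b)r_n=br_n$, then substitute $\varphi=\id$ into \eqref{eq:CrossedActId} to get centrality of $\sigma(n,m)$ and into \eqref{eq:CrossedCocyId} to get the ordinary cocycle identity.
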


\begin{proof}
Suppose that every $r_n$ is central. For $b\in A_e$, equation~\eqref{eq:RetrAct} gives $br_n=r_nb=\varphi_n(b)r_n$. Since $r_n$ is invertible, $\varphi_n(b)=b$; hence $\varphi_n=\id_{A_e}$. For the second assertion, assume that $\varphi_n=\id_{A_e}$ for all $n\in N$. Then identity~\eqref{eq:CrossedActId} reads $b=\sigma(n,m)\,b\,\sigma(n,m)^{-1}$ for all $b\in A_e$; hence each $\sigma(n,m)$ is central. Identity~\eqref{eq:CrossedCocyId} then becomes the displayed cocycle identity, because $\varphi_n(\sigma(m,\ell))=\sigma(m,\ell)$.
\end{proof}

In view of Lemma~\ref{lem:CentralRetrData}, we make the following standing hypotheses for the rest of this subsection:
\begin{enumerate}
    \condlabel{CP}{cond:CP} $A$ is an $N$-crossed product, with a fixed choice of homogeneous units $u_n\in A_n^\times$, $u_e=1$, normalized by $\varepsilon(u_n)=1$ for all $n\in N$ (this is always possible, replacing $u_n$ by $\varepsilon(u_n)^{-1}u_n$, since $\varepsilon(u_n)\in\kk^\times$);

    \condlabel{TA}{cond:TrivAct} the associated action is trivial: $\varphi_n=\id_{A_e}$ for all $n\in N$;

    \condlabel{CG}{cond:CentralCocy} the factor set takes central group-like values: $\sigma(n,m)\in\Gc{A_e}$ for all $n,m\in N$.
\end{enumerate}
Under \condref{cond:TrivAct}, Lemma~\ref{lem:CentralRetrData} shows that $\sigma$ is an ordinary $2$-cocycle with central values, normalized because $u_e=1$ (Definition~\ref{def:CrossedSystem}). Condition \condref{cond:CentralCocy} asks in addition that these values be group-like. Thus, under \condref{cond:CP}, \condref{cond:TrivAct} and \condref{cond:CentralCocy}, $\sigma\in Z^2(N,\Gc{A_e})$ is a normalized $2$-cocycle of $N$ with values in the abelian group $\Gc{A_e}$, for the trivial action, with a cohomology class $[\sigma]\in H^2(N,\Gc{A_e})$.

\begin{proposition}\label{prop:CohObstr}
Assume hypotheses \condref{cond:CP}, \condref{cond:TrivAct} and \condref{cond:CentralCocy}. Then:
\begin{enumerate}
    \item Algebra retractions $\tau\colon A\to A_e$ with $\tau|_{A_e}=\id_{A_e}$ and $\tau(u_n)\in\Gc{A_e}$ for all $n\in N$ are in bijective correspondence, via $\gamma(n)=\tau(u_n)$, with the maps $\gamma\colon N\to\Gc{A_e}$ satisfying $\gamma(e)=1$ and the coboundary condition
    \begin{equation}\label{eq:CobdyCond}
    \sigma(n,m)=\gamma(n)\gamma(m)\gamma(nm)^{-1},
    \qquad
    n,m\in N.
    \end{equation}

    \item If $[\sigma]\neq1$ in $H^2(N,\Gc{A_e})$, the set of such retractions is empty. If $[\sigma]=1$, it is a torsor under the group $\Hom(N,\Gc{A_e})$, acting by pointwise multiplication on $\gamma$.
\end{enumerate}
\end{proposition}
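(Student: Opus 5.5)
Part (i) reduces to Lemma~\ref{lem:CrossedAlgRetr}, with the trivializing-family conditions specialized to central group-like values. Given an algebra retraction $\tau$ with $\tau(u_n)\in\Gc{A_e}$, set $\gamma(n):=\tau(u_n)$. Then $r_n=\gamma(n)$ is a trivializing family. Condition~\eqref{eq:RetrUnit} gives $\gamma(e)=1$. Condition~\eqref{eq:RetrCocy} reads $\gamma(n)\gamma(m)=\sigma(n,m)\gamma(nm)$, which, since all values are central units, is exactly the coboundary condition~\eqref{eq:CobdyCond}.

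Conversely, let $\gamma\colon N\to\Gc{A_e}$ satisfy $\gamma(e)=1$ and~\eqref{eq:CobdyCond}, and put $r_n:=\gamma(n)$. We check the three conditions of Definition~\ref{def:TrivFamily}. Condition~\eqref{eq:RetrAct} holds because $\varphi_n=\id_{A_e}$ by \condref{cond:TrivAct} and $r_n$ is central. Condition~\eqref{eq:RetrCocy} is~\eqref{eq:CobdyCond} rearranged, using that $\sigma(n,m)$ is central by Lemma~\ref{lem:CentralRetrData}. Condition~\eqref{eq:RetrUnit} is $\gamma(e)=1$. Lemma~\ref{lem:CrossedAlgRetr} therefore produces $\tau_r$ with $\tau_r(u_n)=\gamma(n)\in\Gc{A_e}$. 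The two assignments are mutually inverse because the correspondence of Lemma~\ref{lem:CrossedAlgRetr} is.

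Part (ii) is then the standard cohomological statement in the abelian group $\Gc{A_e}$ with trivial action. Condition~\eqref{eq:CobdyCond} says precisely that $\sigma=\partial\gamma$ is the coboundary of the normalized $1$-cochain $\gamma$. If $[\sigma]\neq1$, no such $\gamma$ exists, and by (i) no such retraction exists. If $[\sigma]=1$, pick one solution $\gamma_0$. For any other solution $\gamma$, the quotient $\chi:=\gamma\gamma_0^{-1}$ satisfies $\chi(n)\chi(m)\chi(nm)^{-1}=\sigma\sigma^{-1}=1$, using commutativity of $\Gc{A_e}$. Hence $\chi\in\Hom(N,\Gc{A_e})$. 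Conversely, $\chi\gamma_0$ is a solution for every homomorphism $\chi$, and $\chi\gamma_0(e)=1$ automatically. The action of $\Hom(N,\Gc{A_e})$ by pointwise multiplication is free, since $\chi\gamma=\gamma$ forces $\chi=1$, and transitive by the quotient computation. So the solution set is a torsor.

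The only delicate point is that all manipulations take place in the abelian group $\Gc{A_e}$. Closure of $\Gc{A_e}$ under products and inverses is recorded in Definition~\ref{def:CentralGrpLike}, and $\sigma$ takes values there by \condref{cond:CentralCocy}. The coproduct and antipode compatibilities of Lemma~\ref{lem:CrossedQHRetr} are not needed, since (i) concerns algebra retractions only. I therefore expect no real obstacle beyond bookkeeping; the one check to make carefully is that the normalization $\gamma(e)=1$ matches $u_e=1$.
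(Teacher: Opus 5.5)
Your proposal is correct and follows the paper's proof essentially step for step. Part (i) specializes Lemma~\ref{lem:CrossedAlgRetr}, with \condref{cond:TrivAct} and centrality making \eqref{eq:RetrAct} automatic, and part (ii) is the standard coboundary and torsor argument in the abelian group $\Gc{A_e}$. The appeal to Lemma~\ref{lem:CentralRetrData} when rearranging \eqref{eq:CobdyCond} is unnecessary, since that step is only right multiplication by $\gamma(nm)$.
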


\begin{proof}
(i) By Lemma~\ref{lem:CrossedAlgRetr}, algebra retractions $\tau\colon A\to A_e$ correspond to families $r_n=\tau(u_n)\in A_e^\times$ satisfying \eqref{eq:RetrAct}, \eqref{eq:RetrCocy} and \eqref{eq:RetrUnit}. Under \condref{cond:TrivAct}, equation~\eqref{eq:RetrAct} holds automatically for any central family. Hence retractions with values $r_n=\gamma(n)\in\Gc{A_e}$ correspond exactly to maps $\gamma\colon N\to\Gc{A_e}$ with $\gamma(e)=1$ and $\gamma(n)\gamma(m)=\sigma(n,m)\gamma(nm)$. Since $\gamma(nm)$ is invertible, this is condition~\eqref{eq:CobdyCond}.

(ii) Condition~\eqref{eq:CobdyCond} states precisely that $\sigma=\partial\gamma$ is the coboundary of the cochain $\gamma$ with values in $\Gc{A_e}$. A solution $\gamma$ therefore exists if and only if $[\sigma]=1$ in $H^2(N,\Gc{A_e})$, and the set of retractions is empty otherwise. When it is nonempty, let $\gamma$ and $\gamma'$ be two solutions and put $\chi(n):=\gamma'(n)\gamma(n)^{-1}\in\Gc{A_e}$. Centrality and~\eqref{eq:CobdyCond} for both solutions give 
\[
\chi(n)\chi(m)=\gamma'(n)\gamma'(m)\left(\gamma(n)\gamma(m)\right)^{-1}=\sigma(n,m)\gamma'(nm)\left(\sigma(n,m)\gamma(nm)\right)^{-1}=\chi(nm).
\]
Thus $\chi\in\Hom(N,\Gc{A_e})$. Conversely, if $\gamma$ solves~\eqref{eq:CobdyCond} and $\chi\in\Hom(N,\Gc{A_e})$, then $\chi\gamma$ is again a solution. The action is simply transitive.
\end{proof}

\begin{remark}\label{rem:UnitDep}
Let $(N,J,\tau)$ be split admissible and suppose that $H_N/J^N$ is a crossed product satisfying \condref{cond:TrivAct} for chosen homogeneous units $\{u_n\}_{n\in N}$, with factor set $\sigma$. Put $r_n:=\tau(u_n)$ and $u_n':=r_n^{-1}u_n$. This changes the homogeneous section, not the quasi-Hopf structure. Each $u_n'$ is the unique element of degree $n$ that $\tau$ sends to $1$; hence $(u_n')_{n\in N}$ is the family attached to $\tau$ by Theorem~\ref{thm:Trivialization}. Its members are group-like units satisfying $u_n'u_m'=u_{nm}'$ and commuting with $H_e/J$; therefore the new factor set is trivial. Let $\gamma'$ be the constant map with value $1$. Its twisted units are the $u_n'$ themselves; hence Lemma~\ref{lem:ConjInv}, applied to the new units, supplies their equivariance, and $(N,J,\gamma')$ is a character datum. In particular, the new units form an equivariant family, whatever the original factor set. Thus the factor set and whether the original retraction values are group-like depend on the chosen section. Whenever the factor set is $\Gc{H_e/J}$-valued, it determines a class in $H^2(N,\Gc{H_e/J})$. This class is invariant under regauging by normalized $\Gc{H_e/J}$-valued families. Under regauging by arbitrary central units, the new factor set need not be group-like and hence need not determine such a class; even when it does, its class may change. These section-dependent data remain useful for a distinguished section, such as $u_g=1\#g$ in~\cite[\S5.1]{GM12}.
\end{remark}

By Corollary~\ref{cor:LiftTensor} the weak action is trivial for the units attached to a quasi-Hopf ideal; hence condition~\condref{cond:TrivAct} can always be arranged on the lifted quotient of an ideal. It can fail for a crossed product that is not of that form, as for the skew group algebra $\kk[t]\rtimes\mathbb{Z}/2\mathbb{Z}$ with $g\cdot t=-t$ over a field of characteristic different from $2$~\cite[\S4.1]{Mon93}. Its neutral component is commutative; consequently, changing homogeneous units does not alter the nontrivial action. By contrast, the factor set with central group-like values and the group-like retraction values are section-dependent, as Remark~\ref{rem:UnitDep} shows.

By Corollary~\ref{cor:LiftTensor} the lifted quotient of a quasi-Hopf ideal is always a crossed product, indeed a trivial one. Thus Theorem~\ref{thm:Char} requires no additional crossed-product hypothesis. What it requires is an equivariant family, which by Proposition~\ref{prop:Realized} is exactly what a realized pair provides. The semisimple categorical classification of~\cite[Thm.~4.8]{GJ25} provides the analogous trivialization in its own setting; a direct algebraic comparison requires duality hypotheses unavailable here. Related questions about whether Hopf algebras are faithfully (co)flat over coideal subalgebras remain open even for ordinary Hopf algebras: see~\cite{Skr25} for an open question in Takeuchi's correspondence about whether the relevant comodule is faithfully coflat, \cite{Skr25b} for examples in which a Hopf algebra is not flat over a finite-dimensional Hopf subalgebra, and~\cite{Bic25} for a theorem proving that such an extension is faithfully flat when it admits a bimodule conditional expectation.

\section{Applications and examples}\label{sec:Apps}

Throughout this section, $H$ is a faithfully $G$-graded quasi-Hopf algebra with grading map $\pi$, except where a specific algebra is introduced. We specialize the classification to several classes of quasi-Hopf algebras and to distinguished subgroups. Theorem~\ref{thm:MainA} describes every quasi-Hopf ideal by an admissible pair. Theorem~\ref{thm:MainB} gives the corresponding description by retractions without assuming a crossed-product structure or requiring any algebra to be finite-dimensional or semisimple. Theorem~\ref{thm:Char} refines this description over a pair realized by some quasi-Hopf ideal, where the ideals lying over the pair are classified by the $G$-equivariant group homomorphisms $N\to\Gc{H_e/J}$. The examples first isolate the cohomological obstruction. They then exhibit a strongly graded Hopf algebra that is not a crossed product and clarify the role of the equivariant family.

\subsection{Graded and augmented ideals}\label{subsec:Extreme}

For $N=\{e\}$, Theorem~\ref{thm:MainA} reduces to the neutral component.

\begin{corollary}\label{cor:GradIdeals}
The assignments
\[
I\longmapsto I\cap H_e
\qquad\text{and}\qquad
J\longmapsto J^G
\]
define a bijective correspondence between the set of $G$-graded quasi-Hopf ideals of $H$ and the set of $G$-invariant quasi-Hopf ideals $J\subseteq H_e$.
\end{corollary}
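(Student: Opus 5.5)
The plan is to specialize Theorem~\ref{thm:MainA} to the trivial subgroup. First, I will identify the $G$-graded quasi-Hopf ideals with those quasi-Hopf ideals $I$ whose associated subgroup is trivial, $N_I=\{e\}$.

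If $N_I=\{e\}$, then Proposition~\ref{prop:AssocSubgrp}(iii) says directly that $I$ is graded for the induced $G/\{e\}$-grading, which is the original $G$-grading.

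Conversely, suppose $I$ is $G$-graded. Then $I=\bigoplus_g (I\cap H_g)$. By~\eqref{eq:GradMap}, $\pi(I\cap H_g)=\varepsilon(I\cap H_g)\,\zeta_g=0$, because $\varepsilon(I)=0$. Hence $\pi(I)=0=\aug{\{e\}}\cdot\kk[G]$, and the uniqueness in Proposition~\ref{prop:AssocSubgrp}(i) gives $N_I=\{e\}$.

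Second, I will check that for $N=\{e\}$ the admissible pairs are exactly the pairs $(\{e\},J)$ with $J$ a $G$-invariant quasi-Hopf ideal of $H_e$. This holds because $H_{\{e\}}=H_e$, the induced grading is the $G$-grading, and condition (iv) of Definition~\ref{def:AdmPair} is vacuous since $\aug{\{e\}}=0$. This is the same observation already made in the proof of Proposition~\ref{prop:Lift}(i).

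Third, I restrict the bijection of Theorem~\ref{thm:MainA} to these two subsets. Ideals with $N_I=\{e\}$ go to pairs with first component $\{e\}$, and conversely, since that theorem gives $N_{K^{G/N}}=N$. On these subsets the maps become $I\mapsto I\cap H_e$ and $J\mapsto\sum_g H_gJ=J^G$, which are the displayed assignments.

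No step is a real obstacle. The only point needing care is the converse direction of the first step: showing that a graded ideal has trivial associated subgroup. This uses that $\varepsilon$ vanishes on each homogeneous piece of $I$, which holds because those pieces lie in $I$.
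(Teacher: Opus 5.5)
Your proposal is correct and follows the paper's proof essentially step for step. You specialize Theorem~\ref{thm:MainA} to $N=\{e\}$, where the augmentation condition is vacuous, and you identify the graded quasi-Hopf ideals with those satisfying $N_I=\{e\}$: one direction uses $\pi(I)=0$, since $\varepsilon$ vanishes on the homogeneous components of $I$, and the other uses Proposition~\ref{prop:AssocSubgrp}(iii).
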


\begin{proof}
For $N=\{e\}$, the augmentation condition is void and the admissible pairs are exactly $(\{e\},J)$ with $J$ a $G$-invariant quasi-Hopf ideal of $H_e$. Theorem~\ref{thm:MainA} sends $(\{e\},J)$ to $J^G$. Conversely, a graded ideal $I$ has $\pi(I)=0$ by~\eqref{eq:GradMap}, since $\varepsilon$ vanishes on its homogeneous components; hence $N_I=\{e\}$. The reverse implication is Proposition~\ref{prop:AssocSubgrp}(iii).
\end{proof}

For $N=G$, Theorem~\ref{thm:MainA} describes the augmented ideals. By Proposition~\ref{prop:AssocSubgrp}(i), $I$ is augmented (Definition~\ref{def:AugIdeal}) if and only if $N_I=G$, since the assignment $N\mapsto\aug{N}\cdot\kk[G]$ is injective on normal subgroups. Thus augmented ideals correspond to admissible pairs $(G,K)$. By Theorem~\ref{thm:MainB}, they correspond to retractions $\tau\colon H/J^G\to H_e/J$ with $J=I\cap H_e$.

The character form records an augmented ideal by $\gamma(g)=\tau(u_g)$ (cf.~\cite[\S4]{Ce02}); conversely, a cochain satisfying the conditions below determines
\[
I_\gamma=\left\langle J^G,\ \widetilde u_g-\widetilde\gamma(g)\mid g\in G\right\rangle_H,
\]
with $\widetilde u_g\in H_g$ and $\widetilde\gamma(g)\in H_e$ arbitrary lifts of $u_g$ and $\gamma(g)$.

\begin{corollary}\label{cor:AugApps}
Let $J\subseteq H_e$ be a $G$-invariant quasi-Hopf ideal such that $(G,J)$ admits an equivariant family $\{u_g\}_{g\in G}$. The assignments
\[
I\longmapsto\gamma
\qquad\text{and}\qquad
\gamma\longmapsto I_\gamma
\]
define a bijective correspondence between the set of augmented quasi-Hopf ideals of $H$ with $I\cap H_e=J$ and the set of $G$-equivariant group homomorphisms $\gamma\colon G\to\Gc{H_e/J}$, a subgroup of $\Hom\!\left(G,\Gc{H_e/J}\right)$. The ideals therefore form a torsor under that subgroup, acting through the correspondence by pointwise multiplication on $\gamma$, with base point the ideal of the trivial homomorphism.
\end{corollary}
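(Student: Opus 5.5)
This is the specialization $N=G$ of Theorem~\ref{thm:Char}, combined with the identification of augmented ideals through their associated subgroup. The proof therefore has two steps.

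First, we show that a quasi-Hopf ideal $I$ is augmented precisely when $N_I=G$. By Definition~\ref{def:AugIdeal}, $I$ is augmented if and only if $\pi(I)=\aug{G}$. Proposition~\ref{prop:AssocSubgrp}(i) gives $\pi(I)=\aug{N_I}\cdot\kk[G]$. For $N=G$ we have $\aug{G}\cdot\kk[G]=\aug{G}$, because $\aug{G}$ is an ideal of $\kk[G]$. Since $N\mapsto\aug{N}\cdot\kk[G]$ is injective on normal subgroups, as shown in the uniqueness part of Proposition~\ref{prop:AssocSubgrp}(i), the equality $\pi(I)=\aug{G}$ holds exactly when $N_I=G$. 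Hence the augmented quasi-Hopf ideals with $I\cap H_e=J$ are exactly the quasi-Hopf ideals with $N_I=G$ and $I\cap H_e=J$.

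Second, we apply Theorem~\ref{thm:Char} with $N=G$ and with the given equivariant family as reference units. The hypothesis that $(G,J)$ admits an equivariant family is condition (ii) of Definition~\ref{def:AdmData}. Hence character data with first components $(G,J)$ are the same as $G$-equivariant homomorphisms $\gamma\colon G\to\Gc{H_e/J}$. The displayed ideal $I_\gamma$ is literally the ideal of Theorem~\ref{thm:Char} with $N=G$. The inverse map is $I\mapsto\gamma$ with $\gamma(g)=\tau(u_g)$, where $\tau$ is the retraction of Theorem~\ref{thm:MainB}. This gives the bijection.

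Finally, we check the group-theoretic claims. The equivariant homomorphisms form a subgroup of $\Hom(G,\Gc{H_e/J})$. This uses three facts:
\begin{itemize}
\item $\Gc{H_e/J}$ is abelian, so pointwise products of homomorphisms are homomorphisms;
\item the action of Lemma~\ref{lem:ConjAction}(iii) is by group automorphisms, so pointwise products and inverses of equivariant maps are equivariant, since $\conj{g}{(\gamma(n)\gamma'(n))}=\conj{g}{\gamma(n)}\,\conj{g}{\gamma'(n)}$;
\item the trivial homomorphism is equivariant, because $\conj{g}{1}=1$.
\end{itemize}
Transporting the regular action of this subgroup on itself through the bijection makes the set of ideals a torsor. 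The base point is $I_1$, which corresponds to the reference units via $v_g=u_g$ in the proof of Theorem~\ref{thm:Char}.

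No step is a real obstacle. The only point needing care is the equivalence between being augmented and having $N_I=G$. In particular one should note that $\aug{G}\cdot\kk[G]=\aug{G}$, so that the two descriptions of the image of $I$ agree.
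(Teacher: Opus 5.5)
Your proposal is correct and follows the paper's proof essentially verbatim: Theorem~\ref{thm:Char} with $N=G$, Proposition~\ref{prop:AssocSubgrp}(i) to identify augmented ideals with those having $N_I=G$, and Lemma~\ref{lem:ConjAction}(iii) for closure of the equivariant homomorphisms under products and inverses. Your explicit remark that $\aug{G}\cdot\kk[G]=\aug{G}$ only makes precise a step the paper leaves implicit.
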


\begin{proof}
This is Theorem~\ref{thm:Char} with $N=G$, together with Proposition~\ref{prop:AssocSubgrp}(i), which identifies the augmented ideals as those with $N_I=G$. Since each $\conj{h}{(\cdot)}$ is a group automorphism of $\Gc{H_e/J}$ by Lemma~\ref{lem:ConjAction}(iii), the $G$-equivariant group homomorphisms are closed under products and inverses and contain the trivial one. The torsor structure is the one described after Theorem~\ref{thm:Char}.
\end{proof}

\subsection{Crossed products}\label{subsec:CrossedApp}

Let $A$ be a quasi-Hopf algebra, let $G$ act on $A$ by quasi-Hopf algebra automorphisms, and let $\sigma\colon G\times G\to\Gc{A}$ be a normalized $2$-cocycle. The automorphisms $\varphi_g=\conj{g}{(\cdot)}$ and the factor set $\sigma$ form a crossed system over $A$ in the sense of Definition~\ref{def:CrossedSystem}, with $G$ and $A$ in place of $N$ and $A_e$. The quasi-Hopf crossed systems of~\cite[\S5.1]{GM12} carry in addition a coproduct twist for each group element and an antipode datum, both recorded below. Let $H=A\#_\sigma\kk[G]$ be the associated crossed product, with underlying space $A\otimes\kk[G]$, elements written $a\#g$, and multiplication $(a\#g)(b\#h)=a\,(\conj{g}{b})\,\sigma(g,h)\#gh$. We use the specialization of the crossed-product quasi-Hopf construction in~\cite[Props.~5.2--5.3]{GM12}:
\[
\begin{gathered}
\Delta_H(a\# g)=a'\#g\otimes a''\#g,\qquad
\varepsilon_H(a\# g)=\varepsilon_A(a),\\
\Phi_H=\Phi_A^{(1)}\#e\otimes\Phi_A^{(2)}\#e\otimes\Phi_A^{(3)}\#e,\qquad
\alpha_H=\alpha_A\#e,\qquad
\beta_H=\beta_A\#e,\\
S_H(a\# g)=\sigma(g^{-1},g)^{-1}\,\conj{g^{-1}}{S_A(a)}\# g^{-1},
\end{gathered}
\]
where $a'\otimes a''=\Delta_A(a)$. Here the twisted automorphisms of the general construction are ordinary quasi-Hopf algebra automorphisms; hence their coproduct twists are trivial. The central group-like values of $\sigma$ then make these formulas compatible with the crossed-system identities, the antipode of the crossed system being $\upsilon_g=\sigma(g^{-1},g)^{-1}$. This last point rests on the identity $S_A(z)=z^{-1}$, valid for every $z\in\Gc{A}$. Indeed, set $t:=S_A(z)\,z$. The first identity of~\eqref{eq:QH6} applied to $z$ reads $S_A(z)\,\alpha_A\,z=\alpha_A$; hence $t\,\alpha_A=\alpha_A$ because $z$ is central. The same centrality gives $S_A(z)\,S_A(a)=S_A(az)=S_A(za)=S_A(a)\,S_A(z)$ for every $a\in A$; hence $t$ commutes with the image of $S_A$. Multiplying the second identity of~\eqref{eq:QH7} on the left by $t$ therefore gives $t=S_A\!(\Phi_A^{(-1)})\,t\,\alpha_A\,\Phi_A^{(-2)}\,\beta_A\,S_A\!(\Phi_A^{(-3)})=1$. The resulting algebra is strongly $G$-graded with $H_g=A\#g$ and $H_e\cong A$ via $a\mapsto a\#e$, and the elements $u_g=1\#g$ are homogeneous units. Hence $H$ is a $G$-crossed product in the sense of Definition~\ref{def:SG}.

For a normal subgroup $N\unlhd G$ and a $G$-invariant quasi-Hopf ideal $J\subseteq A$, the subalgebra $H_N=\bigoplus_{n\in N}A\#n$ equals $A\#_\sigma\kk[N]$. Since $J$ is $G$-invariant, $\conj{n}{J}\subseteq J$ for all $n\in N$; hence the action and factor set descend to $A/J$, and $J^N=H_NJ$ by Corollary~\ref{cor:RelLift}(i). Therefore the lifted quotient $H_N/J^N$ is identified with the $N$-crossed product $(A/J)\#_{\bar\sigma}\kk[N]$, with induced action $\varphi_n=\overline{\conj{n}{(\cdot)}}$, factor set $\bar\sigma$ the image of $\sigma$, and neutral component $H_e/J\cong A/J$. Thus the lifted quotient of every such $(N,J)$ is a crossed product, and Theorem~\ref{thm:MainB} applies.

\begin{proposition}\label{prop:CrossedApp}
Let $H=A\#_\sigma\kk[G]$. The assignments
\[
I\longmapsto\left(N_I,\,I\cap H_e,\,\tau\right)
\qquad\text{and}\qquad
(N,J,\tau)\longmapsto\sum_{\bar g\in G/N}H_{\bar g}K_\tau,
\]
where in the first assignment $\tau\colon (A/J)\#_{\bar\sigma}\kk[N]\to A/J$ is the unique quasi-Hopf algebra retraction with kernel $(I\cap H_N)/J^N$ and in the second $K_\tau\subseteq H_N$ is the inverse image of $\ker(\tau)$ under the quotient map $H_N\to H_N/J^N$, define a bijective correspondence between the set of quasi-Hopf ideals of $H$ and the set of split admissible data $(N,J,\tau)$. Under the bijection $r_n=\tau(u_n)$ of Proposition~\ref{prop:CrossedSplitData} the ideal is generated by $J^G$ together with the elements $(1\#n)-\widetilde r_n\#e$, for $n\in N$ and arbitrary lifts $\widetilde r_n\in A$ of $r_n$.
\end{proposition}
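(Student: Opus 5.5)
The plan is to deduce the statement from Theorem~\ref{thm:MainB} and Proposition~\ref{prop:CrossedSplitData}, using the explicit crossed-product structure of $H=A\#_\sigma\kk[G]$ to identify the data. First I check that $H$ is faithfully $G$-graded as a quasi-Hopf algebra in the sense of Definition~\ref{def:Grad}. The grading is $H_g=A\#g$. The displayed coproduct gives $\Delta_H(A\#g)\subseteq (A\#g)\otimes(A\#g)$. The reassociator and $\alpha_H,\beta_H$ lie in $H_e=A\#e$. The antipode formula gives $S_H(A\#g)\subseteq A\#g^{-1}$. Faithfulness is clear because $u_g=1\#g\neq0$. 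This places us under the standing hypotheses of Section~\ref{sec:Explicit}.

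Next I identify the objects appearing in Theorem~\ref{thm:MainB}. Under $a\mapsto a\#e$ we have $H_e\cong A$, so the $G$-invariant quasi-Hopf ideals $J\subseteq H_e$ are the $G$-invariant quasi-Hopf ideals of $A$. Here $G$-invariance means $u_gJu_g^{-1}=J$, that is, $\conj{g}{J}=J$, since $\sigma$ is central. As explained before the statement, $H_N=A\#_\sigma\kk[N]$ and $J^N=H_NJ=J\#\kk[N]$. Hence $H_N/J^N\cong(A/J)\#_{\bar\sigma}\kk[N]$ with neutral component $A/J$. Substituting these identifications into Theorem~\ref{thm:MainB} gives the first assertion verbatim, with $\tau$ a retraction $(A/J)\#_{\bar\sigma}\kk[N]\to A/J$ whose kernel is $G/N$-invariant.

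For the generators, I invoke Proposition~\ref{prop:CrossedSplitData} with the homogeneous units $u_n=1\#n$. It says $K$ is generated in $H_N$ by $J^N$ and the elements $\widetilde u_n-\widetilde r_n$. We may take $\widetilde u_n=1\#n$ and $\widetilde r_n\#e$ as lifts, with $\widetilde r_n\in A$ any lift of $r_n$. The ideal of $H$ is then $I=\sum_{\bar g}H_{\bar g}K=HK$. Since $J^N\subseteq J^G=HJ$ and $HJ^N=HJ=J^G$, the ideal $I$ is the two-sided ideal of $H$ generated by $J^G$ and the elements $(1\#n)-\widetilde r_n\#e$. For this I need $HK$ to be two-sided. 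That holds by the $G/N$-invariance of $K$ (Theorem~\ref{thm:MainA}), which gives $HK=KH$. It also needs the left ideal $HK$ to coincide with the two-sided ideal generated by $K$, which follows from $HKH=HHK=HK$.

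The only step that requires care is the identification $H_N/J^N\cong(A/J)\#_{\bar\sigma}\kk[N]$ as quasi-Hopf algebras, including the antipode datum $\upsilon_n=\sigma(n^{-1},n)^{-1}$. I would handle it by noting that the quotient map $A\#_\sigma\kk[N]\to(A/J)\#_{\bar\sigma}\kk[N]$, $a\#n\mapsto\bar a\#n$, visibly respects every structure map in the displayed formulas and has kernel $J\#\kk[N]=J^N$. Everything else is bookkeeping on top of Theorem~\ref{thm:MainB}.
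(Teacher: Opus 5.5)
Your proposal is correct and follows the paper's proof, which simply applies Theorem~\ref{thm:MainB} and Proposition~\ref{prop:CrossedSplitData} to the identification $H_N/J^N\cong(A/J)\#_{\bar\sigma}\kk[N]$. Your extra checks make explicit the bookkeeping the paper leaves implicit: the grading axioms, $J^N=J\#\kk[N]$, and $HKH=HK$ via $G/N$-invariance, which lets you pass from generators of $K$ in $H_N$ to generators of the ideal in $H$.
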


\begin{proof}
This is Theorem~\ref{thm:MainB} and Proposition~\ref{prop:CrossedSplitData} applied to $H_N/J^N\cong(A/J)\#_{\bar\sigma}\kk[N]$.
\end{proof}

When the lifted action is trivial, the split data reduce to a single cochain trivializing the cohomological obstruction (cf.~\cite[\S4]{Ce02}). An ideal $I$ with $N_I=N$ and $I\cap H_e=J$ determines the cochain $\gamma(n)=\tau(u_n)$ for its retraction $\tau$, and conversely a map $\gamma\colon N\to\Gc{A/J}$ satisfying the conditions below determines the ideal $I_\gamma=\left\langle\,J^G,\ (1\#n)-\widetilde\gamma(n)\#e\ \mid\ n\in N\,\right\rangle_H$, with $\widetilde\gamma(n)\in A$ an arbitrary lift of $\gamma(n)$.

\begin{corollary}\label{cor:CrossedApp}
Let $H=A\#_\sigma\kk[G]$, let $N\unlhd G$ and let $J\subseteq A$ be a $G$-invariant quasi-Hopf ideal such that $N$ acts trivially on $A/J$ and $\bar\sigma(n,m)\in\Gc{A/J}$ for all $n,m\in N$. The assignments
\[
I\longmapsto\gamma
\qquad\text{and}\qquad
\gamma\longmapsto I_\gamma
\]
define a bijective correspondence between the set of quasi-Hopf ideals $I$ of $H$ with $N_I=N$ and $I\cap H_e=J$ and the set of maps $\gamma\colon N\to\Gc{A/J}$ such that:
\begin{enumerate}
    \item $\gamma(e)=1$ and $\bar\sigma(n,m)=\gamma(n)\gamma(m)\gamma(nm)^{-1}$ for all $n,m\in N$;

    \item the twisted units $v_n:=\gamma(n)^{-1}(1\#n)$ satisfy $\conj{g}{v_n}=v_{gng^{-1}}$ for all $g\in G$ and $n\in N$.
\end{enumerate}
\end{corollary}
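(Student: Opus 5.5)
The plan is to combine Proposition~\ref{prop:CrossedApp}, which lists the ideals over $(N,J)$ through retractions $\tau$ of $(A/J)\#_{\bar\sigma}\kk[N]$ onto $A/J$ with $G/N$-invariant kernel, with Proposition~\ref{prop:CohObstr} and Lemma~\ref{lem:ConjInv}. The reference units will be $u_n=1\#n$ in the lifted quotient. Hypotheses \condref{cond:CP}, \condref{cond:TrivAct} and \condref{cond:CentralCocy} hold for this choice. Indeed, $\varepsilon(1\#n)=1$, the action of $N$ on $A/J$ is trivial by assumption, and $\bar\sigma$ is $\Gc{A/J}$-valued.

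For the direction from ideals to maps, let $I$ be an ideal over $(N,J)$ with retraction $\tau$. By Theorem~\ref{thm:Trivialization}, $\tau$ corresponds to a group-like splitting $(w_n)$ with $\tau(w_n)=1$. We write $u_n=\gamma(n)w_n$ with $\gamma(n):=\tau(u_n)\in A/J$. This is legitimate since $(H_N/J^N)_n=(A/J)w_n$, and $\tau(u_n)$ is the coefficient because $\tau$ fixes $A/J$.

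The step I expect to be the main obstacle is showing that $\gamma(n)\in\Gc{A/J}$. For centrality, the units $u_n$ commute with $A/J$ by \condref{cond:TrivAct}, and so do the $w_n$; hence $\gamma(n)=u_nw_n^{-1}$ is central. It is a unit because $u_n$ and $w_n$ are units. It is group-like because $u_n=1\#n$ is group-like by the formula for $\Delta_H$, $w_n$ is a group-like unit, and $\Delta$ is multiplicative, so $\Delta(\gamma(n))=\gamma(n)\otimes\gamma(n)$; the counit normalization is $\varepsilon(\gamma(n))=1$.

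Since $\tau$ is an algebra retraction with central group-like values, Proposition~\ref{prop:CohObstr}(i) gives condition (i). Moreover $v_n=\gamma(n)^{-1}u_n=w_n$ is exactly the splitting attached to $\tau$. Lemma~\ref{lem:ConjInv}(ii) converts the $G/N$-invariance of $\ker\tau$, which is Definition~\ref{def:SplitAdmDatum}(iv), into condition (ii).

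For the converse, start with a map $\gamma$ satisfying (i) and (ii). Proposition~\ref{prop:CohObstr}(i) yields an algebra retraction $\tau_\gamma$ with $\tau_\gamma(u_n)=\gamma(n)$. The family $v_n=\gamma(n)^{-1}u_n$ consists of group-like units of degree $n$ that centralize $A/J$, and (i) gives $v_nv_m=\gamma(n)^{-1}\gamma(m)^{-1}\bar\sigma(n,m)u_{nm}=v_{nm}$. Hence $(v_n)$ is a group-like splitting, and Theorem~\ref{thm:Trivialization} shows that its retraction, which sends $v_n\mapsto1$ and thus agrees with $\tau_\gamma$, is a quasi-Hopf morphism. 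Condition (ii) together with Lemma~\ref{lem:ConjInv}(ii) then makes $\ker\tau_\gamma$ $G/N$-invariant, so $(N,J,\tau_\gamma)$ is split admissible.

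By Lemma~\ref{lem:ConjInv}(i), the kernel is $\sum_n(A/J)(u_n-\gamma(n))$. Proposition~\ref{prop:CrossedApp} then identifies the resulting ideal as $I_\gamma$. The two constructions are mutually inverse because $\gamma(n)=\tau(u_n)$ determines $\tau$ by Lemma~\ref{lem:CrossedAlgRetr}, and Theorem~\ref{thm:MainB} shows that $\tau$ determines $I$.
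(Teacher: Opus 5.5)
Your proposal is correct and follows essentially the paper's argument: the paper likewise passes between $\gamma$ and the group-like splitting $v_n=\gamma(n)^{-1}(1\#n)$, uses Theorem~\ref{thm:Trivialization} to produce the retraction, Lemma~\ref{lem:ConjInv}(ii) to match condition (ii) with the $G/N$-invariance of the kernel, Lemma~\ref{lem:ConjInv}(i) to identify the kernel, and Theorem~\ref{thm:MainB} to recover $I_\gamma$. Your detour through Proposition~\ref{prop:CohObstr}(i) and Lemma~\ref{lem:CrossedAlgRetr} is only a repackaging of the same steps; the paper reads condition (i) directly off the identity $v_nv_m=v_{nm}$.
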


\begin{proof}
The units $u_n=1\#n$ are group-like of degree $n$ and, because the action is trivial, centralize $A/J$. Hence $v_n:=\gamma(n)^{-1}u_n$ is a group-like unit of degree $n$ centralizing $A/J$, and $v_nv_m=v_{nm}$ holds exactly when $\gamma$ satisfies~(i). Conversely, the degree-$n$ component is $(A/J)u_n$; hence any group-like splitting is $v_n=c_nu_n$ with $\gamma(n):=u_nv_n^{-1}\in\Gc{A/J}$ satisfying~(i). With~(ii) this identifies such $\gamma$ with the equivariant families for $(N,J)$. Theorem~\ref{thm:Trivialization} attaches to $(v_n)$ the retraction $\tau$ with $\tau(u_n)=\gamma(n)$, whose kernel is $G/N$-invariant by Lemma~\ref{lem:ConjInv}(ii) and equals $\sum_{n\in N}(A/J)\left(u_n-\gamma(n)\right)$ by Lemma~\ref{lem:ConjInv}(i); hence Theorem~\ref{thm:MainB} returns $I_\gamma$.
\end{proof}

For the untwisted product $H=A\otimes\kk[G]$ one has $\bar\sigma=1$; hence condition~(i) says that $\gamma$ is a group homomorphism, and the units $u_n=1\#n$ are group-like, multiplicative and centralize $A/J$, with $\conj{g}{u_n}=u_{gng^{-1}}$. Every pair $(N,J)$ is therefore realized, and the ideals over it are classified by the $G$-equivariant group homomorphisms $\gamma\colon N\to\Gc{A/J}$.

\begin{example}\label{ex:CrossedZSq}
Let $C=\langle z\rangle\cong\mathbb{Z}/m\mathbb{Z}$ with $m\geq2$, and set $\sigma(a,b)=z^{a_2b_1}$ for $a=(a_1,a_2)$ and $b=(b_1,b_2)$ in $\mathbb{Z}^2$. Then $H=\kk[C]\#_\sigma\kk[\mathbb{Z}^2]$ has trivial action and a factor set with central group-like values. For $N=G$ and $J=0$, the alternation of $\sigma$ is $z^{a_2b_1-b_2a_1}$; hence $[\sigma]\neq1$ in $H^2(\mathbb{Z}^2,C)$. Equivalently, $u_{(0,1)}u_{(1,0)}=z\,u_{(1,0)}u_{(0,1)}$. Since $\kk[C]$ is commutative and the action is trivial, replacing $u_a$ by $c_au_a$ with $c_a\in\kk[C]^\times$ leaves that relation unchanged. No family of homogeneous units can therefore be multiplicative, as $\mathbb{Z}^2$ is abelian. By Corollary~\ref{cor:LiftTensor}, no quasi-Hopf ideal $I$ of $H$ satisfies $N_I=\mathbb{Z}^2$ and $I\cap H_e=0$. The algebra $H$ is the twisted group algebra of $\mathbb{Z}^2$ over $\kk[C]$ for the cocycle $\sigma$ (cf.~\cite[\S7.1]{Mon93}); its homogeneous units generate a central extension of discrete Heisenberg type classified by $[\sigma]$.
\end{example}

\subsection{A strongly graded Hopf algebra that is not a crossed product}\label{subsec:SL2}

Strong grading does not imply the existence of homogeneous units, even for Hopf algebras.

\begin{example}\label{ex:SL2}
Let $\kk$ be algebraically closed of characteristic different from $2$ and $H=\mathcal{O}(\mathrm{SL}_2)=\kk[a,b,c,d]/(ad-bc-1)$. Restriction to the central subgroup $\mu_2$ gives a cocentral map $H\to\mathcal{O}(\mu_2)$ and hence a $\mathbb{Z}/2\mathbb{Z}$-grading in which $a,b,c,d$ are odd and $H_0=\mathcal{O}(\mathrm{PSL}_2)$. The relation $ad-bc=1$ gives $1\in H_1H_1$, whence $H_0=H_0\cdot1\subseteq H_0H_1H_1\subseteq H_1H_1$ and the grading is strong.

On the other hand, $\mathcal{O}(\mathrm{SL}_2)^\times=\kk^\times$. By~\cite[Thm.~3]{Ros61}, normalized invertible regular functions on a connected algebraic group are characters; see also~\cite[Prop.~1.2]{KKV89} for this formulation. Since $\mathrm{SL}_2$ has no nontrivial characters and scalars lie in $H_0$, the component $H_1$ has no units. Thus $H$ is not a crossed product.
\end{example}

In particular, Theorem~\ref{thm:Char} requires a pair realized by some quasi-Hopf ideal, and $\mathcal{O}(\mathrm{SL}_2)$ shows that a faithfully graded quasi-Hopf algebra need not supply one. Homogeneous units would not by themselves suffice, since the theorem needs a group-like splitting. With the grading of Example~\ref{ex:SL2}, the algebra $\mathcal{O}(\mathrm{SL}_2)$ cannot occur as the lifted quotient associated with a quasi-Hopf ideal, as the next result shows.

\begin{proposition}\label{prop:SL2NotQuot}
Let $\kk$ be as in Example~\ref{ex:SL2}, let $\Gamma$ be a group, let $H'$ be a faithfully $\Gamma$-graded quasi-Hopf algebra, and let $I$ be a quasi-Hopf ideal of $H'$, with $N=N_I$ and $J=I\cap H'_e$. Suppose that a group isomorphism $\varphi\colon N\to\mathbb{Z}/2\mathbb{Z}$ is fixed. Then $H'_N/J^N$ is not isomorphic, as an $N$-graded quasi-Hopf algebra, to $\mathcal{O}(\mathrm{SL}_2)$ endowed with the transported $N$-grading
\[
\mathcal{O}(\mathrm{SL}_2)_n:=\mathcal{O}(\mathrm{SL}_2)_{\varphi(n)}.
\]
Moreover, with respect to the grading of Example~\ref{ex:SL2}, $\mathcal{O}(\mathrm{SL}_2)$ has no augmented Hopf ideal meeting $\mathcal{O}(\mathrm{PSL}_2)$ trivially.
\end{proposition}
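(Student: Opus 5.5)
The plan is to argue by contradiction using Corollary~\ref{cor:LiftTensor}. By that corollary, the lifted quotient $H'_N/J^N$ admits a group-like splitting $(u_n)_{n\in N}$. In particular, its component of each degree $n$ contains a unit $u_n$, so $H'_N/J^N$ is an $N$-crossed product. Suppose $\psi\colon H'_N/J^N\to\mathcal{O}(\mathrm{SL}_2)$ is an isomorphism of $N$-graded quasi-Hopf algebras for the transported grading. Let $n_0\in N$ be the element with $\varphi(n_0)=1$. Then $\psi(u_{n_0})$ is a unit of $\mathcal{O}(\mathrm{SL}_2)$ lying in $\mathcal{O}(\mathrm{SL}_2)_{\varphi(n_0)}=H_1$, because a graded algebra isomorphism preserves units and degrees. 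Example~\ref{ex:SL2} shows that $H_1$ contains no units, since $\mathcal{O}(\mathrm{SL}_2)^\times=\kk^\times\subseteq H_0$. This is a contradiction, and it proves the first assertion. The transport only relabels degrees through $\varphi$, so nothing more is needed there; the work is checking that ``graded isomorphism'' carries components to components.

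For the second assertion, I apply Theorem~\ref{thm:Trivialization} to $A=\mathcal{O}(\mathrm{SL}_2)$ with $\Gamma=\mathbb{Z}/2\mathbb{Z}$. This grading is faithful because it is strong and nonzero, as Example~\ref{ex:SL2} shows. A Hopf ideal is a quasi-Hopf ideal with trivial reassociator, so an augmented Hopf ideal $L$ with $L\cap\mathcal{O}(\mathrm{PSL}_2)=0$ gives condition (ii) of that theorem. By (ii)$\Rightarrow$(iii), there would then be a group-like unit $u_1\in H_1$. Again this contradicts $H_1\cap H^\times=\emptyset$, and so no such ideal exists. Equivalently, one could invoke Theorem~\ref{thm:AugTransv} and then Theorem~\ref{thm:Trivialization}(ii)$\Rightarrow$(i)$\Rightarrow$(iii), but the direct implication suffices.

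The only delicate point is the input that $H_1$ has no units. This rests on $\mathcal{O}(\mathrm{SL}_2)^\times=\kk^\times$, which Example~\ref{ex:SL2} already records via Rosenlicht's theorem, together with $\kk^\times\subseteq H_0$ and the fact that a unit cannot have zero component in $H_0$ and also lie in $H_1$. One should also check that the grading of Example~\ref{ex:SL2} is a quasi-Hopf grading in the sense of Definition~\ref{def:Grad}. This holds because it comes from a cocentral Hopf map to $\mathcal{O}(\mu_2)\cong\kk[\mathbb{Z}/2\mathbb{Z}]$, where characteristic $\neq2$ is used, and Theorem~\ref{thm:Cocentral} then applies. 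I expect no real obstacle beyond these bookkeeping checks.
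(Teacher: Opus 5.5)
Your proposal is correct and follows the paper's proof essentially step for step. For the first assertion, Corollary~\ref{cor:LiftTensor} supplies a unit in the component of degree $\varphi^{-1}(1)$, and a graded isomorphism would carry it into the unit-free component $\mathcal{O}(\mathrm{SL}_2)_1$. For the second assertion, you use the implication (ii)$\Rightarrow$(iii) of Theorem~\ref{thm:Trivialization} together with the same fact, exactly as the paper does.
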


\begin{proof}
By Corollary~\ref{cor:LiftTensor}, such a lifted quotient has a unit in every homogeneous component. A graded isomorphism as above would therefore carry a unit in the component indexed by $\varphi^{-1}(1)$ to a unit in $\mathcal{O}(\mathrm{SL}_2)_1$, contradicting Example~\ref{ex:SL2}. The second assertion follows from the same fact and the equivalence of conditions~(ii) and~(iii) in Theorem~\ref{thm:Trivialization}.
\end{proof}

\subsection{Cocentral abelian extensions with infinite grading}\label{subsec:AbelExt}

We now consider the cocentral abelian extensions of~\cite{Mas02} with finite neutral component and arbitrary grading group. Masuoka constructs and classifies these extensions, and observes that the grading group may be taken infinite, see~\cite[Rem.~2.14]{Mas02}. Quotients of these extensions have been considered for finite $G$ in~\cite{MN14}, where the surjections onto the extension associated with a central quotient of the grading group are characterized cohomologically, which in the present terms is the case of a quasi-Hopf ideal $I$ with $I\cap H_e=0$ and central associated subgroup. What is new here is the classification of all quasi-Hopf ideals, for an arbitrary grading group and an arbitrary normal subgroup.

Let $G$ be an arbitrary group (possibly infinite) and let $F$ be a finite group equipped with a left action of $G$ by group automorphisms. This induces a left action of $G$ on the dual group algebra $(\kk F)^*$ by Hopf algebra automorphisms, via $(\conj{g}{p_a})(b)=p_a(g^{-1}\cdot b)$, with $\conj{g}{p_a}=p_{\conj{g}{a}}$, where $\{p_a\}_{a\in F}$ is the dual basis of $\kk F$. Since $F$ is finite, $(\kk F)^*$ carries the standard dual Hopf algebra structure.

Let $\sigma\colon G\times G\to((\kk F)^*)^\times$ and $\theta\colon G\to((\kk F)^*\otimes(\kk F)^*)^\times$ be maps, with components $\sigma(g,h)=\sum_{a\in F}\sigma_a(g,h)\,p_a$ and $\theta(g)=\sum_{b,c}\theta_{b,c}(g)\,p_b\otimes p_c$ in the dual basis. We require that they be normalized, $\sigma_a(e,h)=\sigma_a(g,e)=\sigma_e(g,h)=1$ and $\theta_{e,c}(g)=\theta_{b,e}(g)=\theta_{b,c}(e)=1$, that $\sigma$ be a $2$-cocycle twisted by the action of $G$,
\[
\sigma_a(g,h)\,\sigma_a(gh,k)=\sigma_{\conj{g^{-1}}{a}}(h,k)\,\sigma_a(g,hk),
\]
that $\theta$ be a dual $2$-cocycle,
\[
\theta_{x,y}(g)\,\theta_{xy,z}(g)=\theta_{y,z}(g)\,\theta_{x,yz}(g),
\]
and that the two be compatible,
\[
\sigma_{xy}(g,h)\,\theta_{x,y}(gh)=\sigma_x(g,h)\,\sigma_y(g,h)\,\theta_{x,y}(g)\,\theta_{\conj{g^{-1}}{x},\,\conj{g^{-1}}{y}}(h),
\]
for all $a,x,y,z\in F$ and $g,h,k\in G$. The first expresses associativity of the multiplication, the second coassociativity of the comultiplication before insertion of the neutral reassociator below, and the third that $\Delta$ is an algebra morphism; they are the normalized-cocycle and compatibility conditions of~\cite[Lem.~1.2, Prop.~1.8]{Mas02}. We define the algebra $H=(\kk F)^*\#_\sigma^\theta\kk[G]$ as the vector space $(\kk F)^*\otimes\kk[G]$, with basis $p_a u_g$ for $a\in F$, $g\in G$. For later use, write
\[
u_g:=\sum_{a\in F}p_a u_g=1\otimes\zeta_g.
\]

The multiplication is the crossed product twisted by $\sigma$,
\[
(p_a u_g)(p_b u_h)=\delta_{a,\conj{g}{b}}\,\sigma_a(g,h)\,p_a u_{gh},
\]
and the comultiplication is twisted by $\theta$,
\[
\Delta(p_a u_g)=\sum_{bc=a}\theta_{b,c}(g)\,p_b u_g\otimes p_c u_g.
\]
The counit is $\varepsilon(p_a u_g)=\delta_{a,e}$. Put $\bar a_g:=\conj{g^{-1}}{a^{-1}}$. The antipode is
\[
S(p_a u_g)=\theta_{a,a^{-1}}(g)^{-1}\,\sigma_{\bar a_g}(g^{-1},g)^{-1}\,p_{\bar a_g}\,u_{g^{-1}}.
\]
When the action of $G$ on $F$ is trivial, the dual cocycle identity gives $\theta_{a,a^{-1}}(g)=\theta_{a^{-1},a}(g)$, and the cocycle identity for $\sigma$ gives $\sigma_{a^{-1}}(g^{-1},g)=\sigma_{a^{-1}}(g,g^{-1})$. Thus the scalar is equivalently $\theta_{a^{-1},a}(g)^{-1}\sigma_{a^{-1}}(g,g^{-1})^{-1}$, the form appearing in~\cite[Lem.~2.10]{Mas02}. Following the Drinfeld associator of~\cite[Lem.~4.1]{Mas02}, choose a normalized, $G$-invariant $3$-cocycle $\omega\colon F\times F\times F\to\kk^\times$ on the neutral component, that is, $\omega(\conj{g}{a},\conj{g}{b},\conj{g}{c})=\omega(a,b,c)$ for all $g\in G$; this invariance is automatic when $G$ acts trivially on $F$. Set
\[
\Phi=\sum_{a,b,c\in F}\omega(a,b,c)\,p_a u_e\otimes p_b u_e\otimes p_c u_e,
\qquad
\alpha=\sum_{a\in F}\omega(a,a^{-1},a)\,p_{a^{-1}}u_e,
\qquad
\beta=1,
\]
with class $[\omega]\in H^3(F,\kk^\times)$. With these displayed formulas, $\Phi$ is strictly trivial precisely when $\omega=1$; if $[\omega]=1$, a neutral gauge transform identifies the reassociator with the trivial one.

\begin{lemma}\label{lem:AbelExtQH}
Under the normalized cocycle, compatibility and $G$-invariance assumptions above, the displayed formulas define a quasi-Hopf algebra structure on $H=(\kk F)^*\#_\sigma^\theta\kk[G]$.
\end{lemma}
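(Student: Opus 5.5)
Our plan is to verify the axioms directly, separating them into a bialgebra part, which we reduce to Masuoka's computations, and a genuinely quasi part, which only involves the neutral component $(\kk F)^*\otimes\kk u_e$. First, associativity and unitality of the product follow from the twisted $2$-cocycle identity for $\sigma$ together with its normalization. Concretely, we compare $((p_au_g)(p_bu_h))(p_cu_k)$ and $(p_au_g)((p_bu_h)(p_cu_k))$, where both sides carry the factor $\delta_{a,\conj{g}{b}}\delta_{b,\conj{h}{c}}$. Second, $\Delta$ and $\varepsilon$ are algebra morphisms: multiplicativity of $\Delta$ on $p_au_g\cdot p_bu_h$ is exactly the compatibility condition, after using that the action is by group automorphisms, so that $\conj{g}{(bc)}=\conj{g}{b}\,\conj{g}{c}$. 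Counitality of $\Delta$ uses $\theta_{e,c}=\theta_{b,e}=1$. These are the computations of \cite[Lem.~1.2, Prop.~1.8]{Mas02}, and we would cite them, rechecking the conventions.

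Next, we treat the quasi-coassociativity \eqref{eq:QH2}. On the $\theta$-twisted coproduct, strict coassociativity holds by the dual cocycle identity. Hence \eqref{eq:QH2} reduces to the claim that $\Phi$ commutes with $(\Delta\otimes\id)\Delta(p_au_g)=\sum_{xyz=a}\theta\theta\, p_xu_g\otimes p_yu_g\otimes p_zu_g$. We compute
\[
\Phi\,(p_xu_g\otimes p_yu_g\otimes p_zu_g)=\omega(x,y,z)(\cdots)
\]
and
\[
(p_xu_g\otimes p_yu_g\otimes p_zu_g)\,\Phi=\omega(\conj{g^{-1}}{x},\conj{g^{-1}}{y},\conj{g^{-1}}{z})(\cdots).
\]
These agree precisely by the $G$-invariance of $\omega$. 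The pentagon \eqref{eq:QH1} and the counit axiom \eqref{eq:QH3} live entirely in $(\kk F)^*$ with its untwisted coproduct, because $\theta(e)=1$. They are therefore the $3$-cocycle identity and the normalization of $\omega$, exactly as for the quasi-Hopf algebra $(\kk F)^*_\omega$.

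For the antipode, we first check that $S$ is an algebra anti-morphism. On the generators, $S(p_au_g)$ is $p_{\bar a_g}$ times the unit $u_g^{-1}$ up to scalar, and $u_g^{-1}=\sigma(g^{-1},g)^{-1}u_{g^{-1}}$. We would therefore rewrite $S(p_au_g)=\theta_{a,a^{-1}}(g)^{-1}u_g^{-1}p_{a^{-1}}$, which makes anti-multiplicativity reduce to the fact that $u_g\mapsto u_g^{-1}$ and $p_a\mapsto p_{a^{-1}}$ are anti-multiplicative, corrected by the scalar $\theta_{\cdot,\cdot^{-1}}$. Matching these scalars is the compatibility identity evaluated at $y=x^{-1}$. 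We then verify \eqref{eq:QH6} with $\beta=1$ on $x=p_au_g$. For the $\alpha$ identity, the sum $\sum_{bc=a}\theta_{b,c}(g)\,S(p_bu_g)\,\alpha\,p_cu_g$ collapses via the delta functions to the terms with $b=c^{-1}$ forced when $a=e$. After conjugating $\alpha$ past $u_g$ using $G$-invariance of $\omega$, the $\theta$ factors cancel against $\theta_{b,b^{-1}}(g)^{-1}$, giving $\delta_{a,e}\alpha$. The $\beta$ identity is similar and simpler. Finally, \eqref{eq:QH7} takes place in $(\kk F)^*$ and is the standard identity $\omega(a,a^{-1},a)$ times its inverse in the $\Phi^{-1}$ term, as in \cite[Lem.~4.1]{Mas02}.

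We expect the main obstacle to be the bookkeeping in the antipode axioms \eqref{eq:QH6} for nontrivial action. There the index $\bar a_g=\conj{g^{-1}}{a^{-1}}$, the $\sigma(g^{-1},g)$ factor and $u_g$ commuting past $\alpha$ interact. We would handle this first by proving the auxiliary identities $u_gp_b=p_{\conj{g}{b}}u_g$ and $u_gu_{g^{-1}}=\sigma(g,g^{-1})$, with the latter central, and then compute in terms of $u_g^{-1}$ rather than $u_{g^{-1}}$.
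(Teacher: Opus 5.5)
Your proposal is correct and follows essentially the paper's proof: a coefficientwise check in the basis $\{p_au_g\}$, with quasi-coassociativity from the dual cocycle identity for $\theta$ plus the $G$-invariance of $\omega$, the pentagon and \eqref{eq:QH7} in the neutral component, and \eqref{eq:QH6} by the collapse to the terms with $c=b^{-1}$. In \eqref{eq:QH7}, the first identity is not literally ``a factor times its inverse'': it needs $\omega(a,a^{-1},a)\,\omega(a^{-1},a,a^{-1})=1$, which is the $3$-cocycle identity at $(a,a^{-1},a,a^{-1})$. Your rewriting $S(p_au_g)=\theta_{a,a^{-1}}(g)^{-1}u_g^{-1}p_{a^{-1}}$ is valid and slightly streamlines the anti-multiplicativity check: associativity absorbs the two applications of the twisted cocycle identity that the paper makes explicitly, leaving only the compatibility identity at $(x,y)=(a,a^{-1})$. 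Note that $\sigma(g,g^{-1})$ is central in $H_e$ but not in $H$ when the action is nontrivial; centrality in $H_e$ is all you actually use.
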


\begin{proof}
All verifications are coefficientwise in the basis $\{p_a u_g\}$. The proof does not require $G$ to be finite. The associativity of the multiplication is exactly the twisted $2$-cocycle identity for $\sigma$, and the unit follows from normalization. The counit is multiplicative because $\sigma_e(g,h)=1$, while the two counit identities for the coproduct follow from the normalization of $\theta$. For brevity, put
\[
C_a(g):=\theta_{a,a^{-1}}(g)^{-1}\sigma_{\bar a_g}(g^{-1},g)^{-1},
\]
so that $S(p_a u_g)=C_a(g)p_{\bar a_g}u_{g^{-1}}$. To verify that $S$ is an algebra anti-morphism, consider two basis elements $p_au_g$ and $p_bu_h$. Both sides of
\[
S\left((p_au_g)(p_bu_h)\right)=S(p_bu_h)S(p_au_g)
\]
vanish unless $b=\conj{g^{-1}}{a}$. In the remaining case, $\bar b_h=\bar a_{gh}$, and the required equality reduces to
\[
\sigma_a(g,h)C_a(gh)
=C_b(h)C_a(g)\sigma_{\bar a_{gh}}(h^{-1},g^{-1}).
\]
This follows from the compatibility identity with $(x,y)=(a,a^{-1})$ and from the twisted cocycle identity for $\sigma$, applied successively to $(h^{-1},g^{-1},gh)$ and $(g^{-1},g,h)$. Thus $S$ is an algebra anti-morphism. The calculation does not require $G$ to be finite and does not involve the reassociator.

The condition that $\Delta$ be an algebra morphism is precisely the displayed compatibility between $\sigma$ and $\theta$. For quasi-coassociativity, compare the coefficient of $p_xu_g\otimes p_yu_g\otimes p_zu_g$, with $xyz=a$, in the two sides of~\eqref{eq:QH2}. The two coefficients are
\[
\omega(x,y,z)\,\theta_{x,y}(g)\theta_{xy,z}(g)
\quad\text{and}\quad
\omega(\conj{g^{-1}}{x},\conj{g^{-1}}{y},\conj{g^{-1}}{z})\,\theta_{y,z}(g)\theta_{x,yz}(g),
\]
which are equal by $G$-invariance of $\omega$ and the dual $2$-cocycle identity for $\theta$. The pentagon for $\Phi$ is the usual group-cohomology $3$-cocycle identity for $\omega$, and the normalization of $\omega$ gives~\eqref{eq:QH3} and~\eqref{eq:QH5}.

The identities~\eqref{eq:QH7} are checked in the neutral component. Reindexing the display above, $\alpha=\sum_{a\in F}\alpha_a p_au_e$ with $\alpha_a=\omega(a^{-1},a,a^{-1})$. The coefficient of $p_au_e$ in $\Phi^{(1)}\beta S(\Phi^{(2)})\alpha\Phi^{(3)}$ is $\omega(a,a^{-1},a)\,\omega(a^{-1},a,a^{-1})=1$ by the $3$-cocycle identity for $\omega$ applied to $(a,a^{-1},a,a^{-1})$, and that in $S(\Phi^{(-1)})\alpha\Phi^{(-2)}\beta S(\Phi^{(-3)})$ is $\omega(a^{-1},a,a^{-1})^{-1}\alpha_a=1$.

It remains to check the quasi-antipode identities~\eqref{eq:QH6}. In $S(x')\alpha x''$ for $x=p_a u_g$, the summand of $\Delta(p_a u_g)=\sum_{bc=a}\theta_{b,c}(g)p_bu_g\otimes p_cu_g$ survives only when $c=b^{-1}$; hence the expression vanishes unless $a=e$. In the surviving case its scalar part is $\theta_{b,b^{-1}}(g)\,C_b(g)\,\sigma_{\bar b_g}(g^{-1},g)=1$, and the coefficient of $\alpha$ is carried from $p_{b^{-1}}$ to $p_{\bar b_g}$, exactly its coefficient there by $G$-invariance of $\omega$. This proves the first identity in~\eqref{eq:QH6}. For the second, the surviving terms again have $a=e$, now with $c=b^{-1}$, and their scalar part is $\theta_{b,b^{-1}}(g)\,C_{b^{-1}}(g)\,\sigma_b(g,g^{-1})=1$, since $\theta_{b,b^{-1}}(g)=\theta_{b^{-1},b}(g)$ by the dual cocycle identity with $(x,y,z)=(b,b^{-1},b)$, and $\sigma_b(g,g^{-1})=\sigma_{\conj{g^{-1}}{b}}(g^{-1},g)$ by the twisted cocycle identity for $\sigma$ with $(h,k)=(g^{-1},g)$. Thus $(S,\alpha,\beta)$ is a quasi-antipode.
\end{proof}

The homogeneous components $H_g=(\kk F)^*u_g$ define a strong $G$-grading, and $\Delta(H_g)\subseteq H_g\otimes H_g$ as required by Definition~\ref{def:Grad}. When $G$ is infinite, this gives an infinite grading with finite-dimensional neutral component $H_e=(\kk F)^*$. Such an $H$ is infinite-dimensional and need not be semisimple. For $G=\mathbb{Z}$ with trivial action and $\sigma=\theta=1$, for instance,
\[
H\cong(\kk F)^*\otimes\kk[t,t^{-1}]
\cong\prod_{a\in F}\kk[t,t^{-1}],
\]
which is infinite-dimensional and not semisimple. The classification therefore reaches examples that are neither finite-dimensional nor semisimple.

The quasi-Hopf ideals of $H_e$ are the ideals $J_M$ of functions vanishing on a subgroup $M\le F$, with $H_e/J_M\cong(\kk M)^*$ and restricted associator $\omega|_{M^3}$. Such an ideal is $G$-invariant exactly when $M$ is. Consequently, Theorem~\ref{thm:MainB} identifies all quasi-Hopf ideals with triples $(N,M,\tau)$, where $N\unlhd G$, $M\le F$ is $G$-invariant, and $\tau\colon H_N/J_M^N\to(\kk M)^*$ is a retraction with $G/N$-invariant kernel. Theorem~\ref{thm:MainA} classifies all ideals by admissible pairs.

A nontrivial action of $N$ on $M$ already rules the pair out.

\begin{proposition}\label{prop:AbelExtAction}
Let $N\unlhd G$ and let $M\le F$ be $G$-invariant. If some element of $N$ acts nontrivially on $M$, then no quasi-Hopf ideal $I\subseteq H$ satisfies $N_I=N$ and $I\cap H_e=J_M$.
\end{proposition}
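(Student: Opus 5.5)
We argue by contradiction, using Corollary~\ref{cor:LiftTensor}. Suppose a quasi-Hopf ideal $I$ has $N_I=N$ and $I\cap H_e=J_M$. Corollary~\ref{cor:LiftTensor} then yields a group-like splitting $(v_n)_{n\in N}$ of the lifted quotient $H_N/J_M^N$. In particular, each $v_n$ is a unit of degree $n$ that commutes with the neutral component $H_e/J_M\cong(\kk M)^*$.

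We identify the lifted quotient concretely. Its degree-$n$ component is the image of $(\kk F)^*u_n$, namely $(\kk M)^*\bar u_n$, and the images $\bar u_n$ are homogeneous units. The multiplication rule gives $u_np_b=p_{\conj{n}{b}}u_n$, because $(p_au_n)(p_bu_e)=\delta_{a,\conj{n}{b}}p_au_n$ and summing over $a$ gives $p_{\conj{n}{b}}u_n$. The same relation then holds in the quotient for $b\in M$, which makes sense since $M$ is $G$-invariant. Hence conjugation by $\bar u_n$ acts on $(\kk M)^*$ through the automorphism $p_b\mapsto p_{\conj{n}{b}}$, that is, through the permutation action of $n$ on $M$.

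Now write $v_n=c\,\bar u_n$ with $c\in(\kk M)^*$. This is possible because the degree-$n$ component is $(\kk M)^*\bar u_n$, and $c$ is a unit because $v_n$ and $\bar u_n$ are. The algebra $(\kk M)^*$ is commutative, so conjugation by $v_n$ on $(\kk M)^*$ coincides with conjugation by $\bar u_n$, which is $p_b\mapsto p_{\conj{n}{b}}$. Pick $n\in N$ and $b\in M$ with $\conj{n}{b}\neq b$. Then $v_np_bv_n^{-1}=p_{\conj{n}{b}}\neq p_b$, which contradicts the fact that $v_n$ centralizes $H_e/J_M$.

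The only step needing care is the identification of the neutral component of the lifted quotient with $(\kk M)^*$ and of the $\bar u_n$ as units there. This follows from Corollary~\ref{cor:RelLift}(ii), together with $J_M^N=\bigoplus_n J_Mu_n$ and $u_n$ being invertible, with inverse $\sigma(n^{-1},n)^{-1}u_{n^{-1}}$ up to the factor set. Since only commutativity of the neutral component is used, no finer information about $\sigma$, $\theta$, or $\omega$ is needed.
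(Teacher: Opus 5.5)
Your proof is correct and follows essentially the same route as the paper's. Both arguments apply Corollary~\ref{cor:LiftTensor} and write a degree-$n$ unit as $c\,\bar u_n$ with $c$ in the commutative neutral component $(\kk M)^*$, so that it acts on $(\kk M)^*$ by $p_b\mapsto p_{\conj{n}{b}}$. Such a unit therefore cannot centralize the neutral component when $n$ acts nontrivially on $M$, so no group-like splitting exists.
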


\begin{proof}
The multiplication rule gives $u_np_b=p_{\conj{n}{b}}u_n$, and $M$ is stable under $N$. Hence the images $\bar u_n$ and $p_b$, for $b\in M$, satisfy the same relation in $H_N/J_M^N$. By Corollary~\ref{cor:RelLift}(ii), that lifted quotient is strongly $N$-graded with neutral component $(\kk M)^*$. Since $\bar u_n$ is a unit of degree $n$, its component of degree $n$ is $(\kk M)^*\bar u_n$. Let $v$ be a unit of that component. Then $c:=v\bar u_n^{-1}$ and $\bar u_nv^{-1}$ both have degree $e$, hence lie in $(\kk M)^*$, and their product is $1$; therefore $c$ is invertible there. For $b\in M$, the algebra $(\kk M)^*$ being commutative, one has $vp_b=c\,p_{\conj{n}{b}}\bar u_n$ and $p_bv=c\,p_b\bar u_n$; hence $v$ centralizes $(\kk M)^*$ if and only if $\conj{n}{b}=b$ for all $b\in M$. If $n$ acts nontrivially on $M$, the lifted quotient therefore admits no group-like splitting, and Corollary~\ref{cor:LiftTensor} gives the assertion.
\end{proof}

Only the action of $N$ on $M$ is at issue. The grading group may act nontrivially on $F$, its elements outside $N$ entering through the $G$-invariance of $M$ and through the equivariance of Definition~\ref{def:EquivFamily}.

Suppose that $N$ acts trivially on $M$. Then $H_N/J_M^N\cong(\kk M)^*\#_{\bar\sigma}\kk[N]$ has trivial action and its homogeneous units centralize the neutral component. The two remaining requirements of Theorem~\ref{thm:Trivialization}(iii) constrain one and the same rescaling, by an arbitrary invertible $c_n=\sum_{a\in M}c_n(a)p_a$ and not only by one in $\Gc{(\kk M)^*}$. The element $c_nu_n$ is group-like exactly when $\theta_{a,b}(n)=c_n(a)c_n(b)c_n(ab)^{-1}$ for all $a,b\in M$, which determines $c_n$ up to a character, and $(c_nu_n)_{n\in N}$ is then multiplicative exactly when $\bar\sigma(n,m)=c_{nm}c_n^{-1}c_m^{-1}$. If both hold and the resulting family is $G$-conjugation-equivariant, Proposition~\ref{prop:Realized} applies and Theorem~\ref{thm:Char} classifies the ideals over $(N,J_M)$ by the $G$-equivariant group homomorphisms $N\to\Gc{(\kk M)^*}$.

\begin{lemma}\label{lem:GcDual}
For a finite group $F$, the invertible central group-like elements of the dual $(\kk F)^*$ are the characters of $F$:
\[
\Gc{(\kk F)^*}=\Hom(F,\kk^\times)\cong\Hom(F_{\mathrm{ab}},\kk^\times),
\]
the characters of the abelianization $F_{\mathrm{ab}}=F/[F,F]$.
\end{lemma}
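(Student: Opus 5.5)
We identify the three conditions defining $\Gc{(\kk F)^*}$ directly in the dual basis $\{p_a\}_{a\in F}$. Since $(\kk F)^*$ is commutative (it is the algebra of functions $F\to\kk$ with pointwise product), every element is central. The centrality condition is therefore vacuous, and $\Gc{(\kk F)^*}=G((\kk F)^*)\cap((\kk F)^*)^\times$.

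Next we use the dual structure. The comultiplication of $(\kk F)^*$ is transposed from the multiplication of $\kk F$, so under the identification $(\kk F)^*\otimes(\kk F)^*\cong(\kk[F\times F])^*$ one has $\Delta(f)(a,b)=f(ab)$. The counit is $\varepsilon(f)=f(e)$. Hence an element $f=\sum_a f(a)p_a$ is group-like exactly when $f(ab)=f(a)f(b)$ for all $a,b\in F$ and $f(e)=1$. Such an $f$ automatically takes values in $\kk^\times$, since $f(a)f(a^{-1})=f(e)=1$. It is therefore invertible in $(\kk F)^*$, the inverse being the pointwise inverse $\sum_a f(a)^{-1}p_a$.

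Conversely, every character $F\to\kk^\times$ satisfies these conditions. This gives $\Gc{(\kk F)^*}=\Hom(F,\kk^\times)$. The group structure, multiplication in $(\kk F)^*$, is pointwise multiplication of characters, so this is an equality of groups.

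Finally, $\kk^\times$ is abelian, so every homomorphism $F\to\kk^\times$ kills $[F,F]$. It therefore factors uniquely through $F_{\mathrm{ab}}$, which gives $\Hom(F,\kk^\times)\cong\Hom(F_{\mathrm{ab}},\kk^\times)$.

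The only step requiring care is the identification $\Delta(f)(a,b)=f(ab)$ and its matching with the tensor decomposition $\Delta(p_a)=\sum_{bc=a}p_b\otimes p_c$. This is the untwisted case $\theta=1$, $g=e$ of the displayed comultiplication, since $H_e=(\kk F)^*$ carries the standard dual structure. With it, comparing coefficients of $p_b\otimes p_c$ in $\Delta(f)=f\otimes f$ gives exactly multiplicativity, so no real obstacle is expected.
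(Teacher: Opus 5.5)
Your proposal is correct and follows essentially the same route as the paper. Both identify $(\kk F)^*$ with functions on $F$ and read $\Delta(\phi)=\phi\otimes\phi$, $\varepsilon(\phi)=1$ as multiplicativity with $\phi(e)=1$; both note that centrality is automatic by commutativity and that characters factor through $F_{\mathrm{ab}}$. You also spell out the invertibility step, $f(a)f(a^{-1})=1$, which the paper leaves implicit.
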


\begin{proof}
Identifying $(\kk F)^*$ with functions on $F$, the identity $\Delta(\phi)=\phi\otimes\phi$ is equivalent to $\phi(ab)=\phi(a)\phi(b)$ and $\phi(e)=1$. Thus $\phi$ takes values in $\kk^\times$ and is a character. Centrality is automatic because $(\kk F)^*$ is commutative, and every character factors through $F_{\mathrm{ab}}$.
\end{proof}

In particular, $\Gc{(\kk M)^*}\cong\Hom(M_{\mathrm{ab}},\kk^\times)$. For $F$ and $G$ finite, this identifies the coefficient group of Proposition~\ref{prop:CohObstr} with the one carrying the categorical obstruction of~\cite[\S5.2.1]{GJ25}.

\begin{example}
Over $\kk=\mathbb{C}$, take $F=\mathbb{Z}/2\mathbb{Z}$, written additively, $G=\mathbb{Z}$, $\sigma=\theta=1$, and let $\omega$ represent the nontrivial class in $H^3(F,\mathbb{C}^\times)$, with $\omega(1,1,1)=-1$, the twisting of a finite group by a $3$-cocycle in this manner going back to~\cite{DPR91}. Writing $\{p_0,p_1\}$ for the dual basis, we obtain
\[
\Phi=1\otimes1\otimes1-2\,p_1u_e\otimes p_1u_e\otimes p_1u_e,
\qquad
\alpha=p_0u_e-p_1u_e,
\qquad
\beta=1,
\]
with $S(p_au_g)=p_au_{g^{-1}}$. Here $\Gc{(\kk F)^*}=\mu_2$, and the units $u_g$ are group-like, multiplicative and central, and therefore form an equivariant family. Accordingly, the character data over $(\mathbb{Z},J_F)$, where $J_F=0$, are the two elements of $\Hom(\mathbb{Z},\mu_2)$. Since $[\omega]\neq1$, the reassociator cannot be removed by a neutral gauge transformation.
\end{example}

\begin{example}\label{ex:SplitNotChar}
Let $F=G=N=\mathbb{Z}/2\mathbb{Z}$, with all three groups written additively and $G$ acting trivially on $F$, and let $n$ denote their nontrivial element. Suppose that $\kk$ contains a primitive fourth root of unity $i$. Take $\omega=1$, write $\{p_0,p_1\}$ for the dual basis of $(\kk F)^*$ and $q=p_0-p_1$, and set $\sigma(n,n)=q$ and
\[
\theta(n)=p_0\otimes p_0+p_0\otimes p_1+p_1\otimes p_0-p_1\otimes p_1,
\]
with the remaining values normalized. These data define a commutative and cocommutative Hopf algebra $H=(\kk F)^*\#_\sigma^\theta\kk[G]$ of dimension $4$. Thus $H$ is an $N$-crossed product with trivial action and a factor set with central group-like values, and hypotheses~\condref{cond:CP},~\condref{cond:TrivAct} and~\condref{cond:CentralCocy} all hold for the units $u_n$. Here $\Gc{H_e}=\{1,q\}$, while $u_n^2=q$ and $\Delta(u_n)=\theta(n)(u_n\otimes u_n)$; therefore $u_n$ is not group-like. Since no element of $\Gc{H_e}$ squares to $q$, $[\sigma]\neq1$, and the chosen homogeneous section admits no $\Gc{H_e}$-valued trivializing cochain.

Set $r=p_0+i\,p_1$. Then $r^2=q$, $\Delta(r)=\theta(n)(r\otimes r)$, and $r$ is central and invertible but not group-like. Hence $\tau|_{H_e}=\id$ and $\tau(bu_n)=br$ define a Hopf retraction. Consequently, $I=\langle u_n-r\rangle$ is split. For $u_n'=r^{-1}u_n$ one has $(u_n')^2=1$, $\Delta(u_n')=u_n'\otimes u_n'$, and $\tau(u_n')=1$. Here $\theta(n)$ is the coboundary of $r$ and the rescaled factor set is trivial; hence both conditions above are met by a rescaling outside $\Gc{H_e}$ and by none inside it. Thus the same ideal is $I=\langle u_n'-1\rangle$, and $\{u_n'\}$ is an equivariant family while $\{u_n\}$ is not. The original section cannot serve as reference units, whereas relative to the rescaled equivariant family the homomorphism associated with $I$ is trivial, illustrating Remark~\ref{rem:UnitDep}.
\end{example}

\section*{Acknowledgements}
F.C. was partially supported by the Vicerrector\'{\i}a de Investigaci\'{o}n y Extensi\'{o}n of the Universidad Industrial de Santander under project code 4463. C.G. was partially supported by Grant INV-2025-213-3452 from the Facultad de Ciencias of Universidad de los Andes. During the preparation of the latest version of this paper, we used GPT-5.6 and Claude Opus 5.
\bibliographystyle{alphaurl}
\bibliography{biblio}

@article {BC03,
    AUTHOR = {Bulacu, Daniel and Caenepeel, Stefaan},
     TITLE = {Integrals for (dual) quasi-{H}opf algebras. {A}pplications},
   JOURNAL = {J. Algebra},
  FJOURNAL = {Journal of Algebra},
    VOLUME = {266},
      YEAR = {2003},
    NUMBER = {2},
     PAGES = {552--583},
      ISSN = {0021-8693,1090-266X},
   MRCLASS = {16W30},
  MRNUMBER = {1995128},
MRREVIEWER = {Shuan\ Hong\ Wang},
       DOI = {10.1016/S0021-8693(03)00175-3},
}

@article {BCM86,
    AUTHOR = {Blattner, Robert J. and Cohen, Miriam and Montgomery, Susan},
     TITLE = {Crossed products and inner actions of {H}opf algebras},
   JOURNAL = {Trans. Amer. Math. Soc.},
  FJOURNAL = {Transactions of the American Mathematical Society},
    VOLUME = {298},
      YEAR = {1986},
    NUMBER = {2},
     PAGES = {671--711},
      ISSN = {0002-9947,1088-6850},
   MRCLASS = {16A24 (16A03 16A72 46L40)},
  MRNUMBER = {860387},
MRREVIEWER = {Donald\ S.\ Passman},
       DOI = {10.1090/S0002-9947-1986-0860387-X},
}

@book {BCPV19,
    AUTHOR = {Bulacu, Daniel and Caenepeel, Stefaan and Panaite, Florin and
              Van Oystaeyen, Freddy},
     TITLE = {Quasi-{H}opf algebras: A categorical approach},
    SERIES = {Encyclopedia of Mathematics and its Applications},
    VOLUME = {171},
 PUBLISHER = {Cambridge University Press, Cambridge},
      YEAR = {2019},
     PAGES = {xvi+528},
      ISBN = {978-1-108-42701-2},
   MRCLASS = {16T05 (18D10)},
  MRNUMBER = {3929714},
MRREVIEWER = {Sonia\ Natale},
       DOI = {10.1017/9781108582780},
}

@misc {Bic25,
    AUTHOR = {Bichon, Julien},
     TITLE = {Faithful flatness of {H}opf algebras over coideal subalgebras with a bimodule conditional expectation},
      YEAR = {2025},
      NOTE = {Preprint, arXiv:2301.05480v2},
       DOI = {10.48550/arXiv.2301.05480},
}

@article {BN03,
    AUTHOR = {Bulacu, Daniel and Nauwelaerts, Erna},
     TITLE = {Quasitriangular and ribbon quasi-{H}opf algebras},
   JOURNAL = {Comm. Algebra},
  FJOURNAL = {Communications in Algebra},
    VOLUME = {31},
      YEAR = {2003},
    NUMBER = {2},
     PAGES = {657--672},
      ISSN = {0092-7872,1532-4125},
   MRCLASS = {16W30},
  MRNUMBER = {1968919},
MRREVIEWER = {Shuan\ Hong\ Wang},
       DOI = {10.1081/AGB-120017337},
}

@article {BNY16,
    AUTHOR = {Bichon, Julien and Neshveyev, Sergey and Yamashita, Makoto},
     TITLE = {Graded twisting of categories and quantum groups by group
              actions},
   JOURNAL = {Ann. Inst. Fourier (Grenoble)},
  FJOURNAL = {Universit\'e{} de Grenoble. Annales de l'Institut Fourier},
    VOLUME = {66},
      YEAR = {2016},
    NUMBER = {6},
     PAGES = {2299--2338},
      ISSN = {0373-0956,1777-5310},
   MRCLASS = {16T05 (18D10 20G42 46L65)},
  MRNUMBER = {3580173},
MRREVIEWER = {Eliezer\ Batista},
       DOI = {10.5802/aif.3064},
}

@article {Ce02,
    AUTHOR = {Cegarra, Antonio M.},
     TITLE = {On the classification of strongly graded {H}opf algebras},
   JOURNAL = {J. Algebra},
  FJOURNAL = {Journal of Algebra},
    VOLUME = {251},
      YEAR = {2002},
    NUMBER = {1},
     PAGES = {358--370},
      ISSN = {0021-8693,1090-266X},
   MRCLASS = {16W30},
  MRNUMBER = {1900289},
MRREVIEWER = {Akira\ Masuoka},
       DOI = {10.1006/jabr.2001.9125},
}

@article {D89,
    AUTHOR = {Drinfel'd, Vladimir G.},
     TITLE = {Quasi-{H}opf algebras},
   JOURNAL = {Algebra i Analiz},
  FJOURNAL = {Algebra i Analiz},
    VOLUME = {1},
      YEAR = {1989},
    NUMBER = {6},
     PAGES = {114--148},
      ISSN = {0234-0852},
   MRCLASS = {17B37 (16W30 57M25 81T40)},
  MRNUMBER = {1047964},
MRREVIEWER = {Ya.\ S.\ So\u{\i}bel\cprime man},
      NOTE = {Translation in \emph{Leningrad Math. J.}, 1(6):1419--1457, 1990.},
       URL = {https://www.mathnet.ru/eng/aa53},
}

@incollection {DPR91,
    AUTHOR = {Dijkgraaf, Robbert and Pasquier, Vincent and Roche, Pierre},
     TITLE = {Quasi-quantum groups related to orbifold models},
 BOOKTITLE = {Modern quantum field theory ({B}ombay, 1990)},
     PAGES = {375--383},
 PUBLISHER = {World Sci. Publ., River Edge, NJ},
      YEAR = {1991},
      ISBN = {981-02-0199-0; 981-02-0200-8},
   MRCLASS = {81R50 (81T40)},
  MRNUMBER = {1237434},
}

@article {GJ25,
    AUTHOR = {Galindo, C\'esar and Jones, Corey},
     TITLE = {Equivariant fusion subcategories},
   JOURNAL = {Transform. Groups},
  FJOURNAL = {Transformation Groups},
    VOLUME = {30},
      YEAR = {2025},
    NUMBER = {1},
     PAGES = {267--282},
      ISSN = {1083-4362,1531-586X},
   MRCLASS = {18M20 (16T05)},
  MRNUMBER = {4863963},
MRREVIEWER = {Thibault\ D.\ D\'ecoppet},
       DOI = {10.1007/s00031-023-09838-9},
}

@article {GM12,
    AUTHOR = {Galindo, C\'{e}sar and Mombelli, Mart\'{\i}n},
     TITLE = {Module categories over finite pointed tensor categories},
   JOURNAL = {Selecta Math. (N.S.)},
  FJOURNAL = {Selecta Mathematica. New Series},
    VOLUME = {18},
      YEAR = {2012},
    NUMBER = {2},
     PAGES = {357--389},
      ISSN = {1022-1824,1420-9020},
   MRCLASS = {16D90 (16T05 18D10)},
  MRNUMBER = {2927237},
MRREVIEWER = {Alessandro\ Ardizzoni},
       DOI = {10.1007/s00029-011-0067-x},
}

@article {HN99,
    AUTHOR = {Hausser, Frank and Nill, Florian},
     TITLE = {Diagonal crossed products by duals of quasi-quantum groups},
   JOURNAL = {Rev. Math. Phys.},
  FJOURNAL = {Reviews in Mathematical Physics. A Journal for Both Review and
              Original Research Papers in the Field of Mathematical Physics},
    VOLUME = {11},
      YEAR = {1999},
    NUMBER = {5},
     PAGES = {553--629},
      ISSN = {0129-055X,1793-6659},
   MRCLASS = {81R50 (16S40 16W35 17B37)},
  MRNUMBER = {1696105},
MRREVIEWER = {Istv\'an\ Heckenberger},
       DOI = {10.1142/S0129055X99000210},
}

@incollection {KKV89,
    AUTHOR = {Knop, Friedrich and Kraft, Hanspeter and Vust, Thierry},
     TITLE = {The {P}icard group of a {$G$}-variety},
 BOOKTITLE = {Algebraische {T}ransformationsgruppen und
              {I}nvariantentheorie},
    SERIES = {DMV Sem.},
    VOLUME = {13},
     PAGES = {77--87},
 PUBLISHER = {Birkh\"auser, Basel},
      YEAR = {1989},
      ISBN = {3-7643-2284-5},
   MRCLASS = {14C22 (14D25 14L30)},
  MRNUMBER = {1044586},
       DOI = {10.1007/978-3-0348-7662-9_5},
}

@incollection {Mas02,
    AUTHOR = {Masuoka, Akira},
     TITLE = {Hopf algebra extensions and cohomology},
 BOOKTITLE = {New directions in {H}opf algebras},
    SERIES = {Math. Sci. Res. Inst. Publ.},
    VOLUME = {43},
     PAGES = {167--209},
 PUBLISHER = {Cambridge Univ. Press, Cambridge},
      YEAR = {2002},
      ISBN = {0-521-81512-6},
   MRCLASS = {16W30 (16E30)},
  MRNUMBER = {1913439},
MRREVIEWER = {Sonia\ Natale},
       DOI = {10.1017/9781009701396.005},
}

@incollection {MN14,
    AUTHOR = {Mason, Geoffrey and Ng, Siu-Hung},
     TITLE = {Cleft extensions and quotients of twisted quantum doubles},
 BOOKTITLE = {Developments and retrospectives in {L}ie theory},
    SERIES = {Dev. Math.},
    VOLUME = {38},
     PAGES = {229--246},
 PUBLISHER = {Springer, Cham},
      YEAR = {2014},
      ISBN = {978-3-319-09803-6; 978-3-319-09804-3},
   MRCLASS = {16T10 (18D10)},
  MRNUMBER = {3308786},
MRREVIEWER = {Daniel\ Ion\ Bulacu},
       DOI = {10.1007/978-3-319-09804-3\_11},
}

@book {Mon93,
    AUTHOR = {Montgomery, Susan},
     TITLE = {Hopf algebras and their actions on rings},
    SERIES = {CBMS Regional Conference Series in Mathematics},
    VOLUME = {82},
 PUBLISHER = {Conference Board of the Mathematical Sciences, Washington, DC;
              by the American Mathematical Society, Providence, RI},
      YEAR = {1993},
     PAGES = {xiv+238},
      ISBN = {0-8218-0738-2},
   MRCLASS = {16W30},
  MRNUMBER = {1243637},
MRREVIEWER = {E.\ J.\ Taft},
       DOI = {10.1090/cbms/082},
}

@incollection {Nik19,
    AUTHOR = {Nikshych, Dmitri},
     TITLE = {Classifying braidings on fusion categories},
 BOOKTITLE = {Tensor categories and {H}opf algebras},
    SERIES = {Contemp. Math.},
    VOLUME = {728},
     PAGES = {155--167},
 PUBLISHER = {Amer. Math. Soc., [Providence], RI},
      YEAR = {2019},
      ISBN = {978-1-4704-4321-4},
   MRCLASS = {18M15},
  MRNUMBER = {3943750},
MRREVIEWER = {Costel\ Gabriel\ Bontea},
       DOI = {10.1090/conm/728/14660},
}

@book {NVO04,
    AUTHOR = {N\u{a}st\u{a}sescu, Constantin and Van Oystaeyen, Freddy},
     TITLE = {Methods of graded rings},
    SERIES = {Lecture Notes in Mathematics},
    VOLUME = {1836},
 PUBLISHER = {Springer-Verlag, Berlin},
      YEAR = {2004},
     PAGES = {xiv+304},
      ISBN = {3-540-20746-5},
   MRCLASS = {16W50 (16-02)},
  MRNUMBER = {2046303},
MRREVIEWER = {Blas\ Torrecillas Jover},
       DOI = {10.1007/b94904},
}

@book {Pas79,
    AUTHOR = {Passi, Inder Bir S.},
     TITLE = {Group rings and their augmentation ideals},
    SERIES = {Lecture Notes in Mathematics},
    VOLUME = {715},
 PUBLISHER = {Springer, Berlin},
      YEAR = {1979},
     PAGES = {vi+137},
      ISBN = {3-540-09254-4},
   MRCLASS = {20C07},
  MRNUMBER = {537126},
MRREVIEWER = {E.\ Formanek},
       DOI = {10.1007/BFb0067186},
}

@article {Ros61,
    AUTHOR = {Rosenlicht, Maxwell},
     TITLE = {Toroidal algebraic groups},
   JOURNAL = {Proc. Amer. Math. Soc.},
  FJOURNAL = {Proceedings of the American Mathematical Society},
    VOLUME = {12},
      YEAR = {1961},
     PAGES = {984--988},
      ISSN = {0002-9939,1088-6826},
   MRCLASS = {14.50},
  MRNUMBER = {133328},
MRREVIEWER = {E.\ R.\ Kolchin},
       DOI = {10.1090/S0002-9939-1961-0133328-9},
}

@article {Sch04,
    AUTHOR = {Schauenburg, Peter},
     TITLE = {A quasi-{H}opf algebra freeness theorem},
   JOURNAL = {Proc. Amer. Math. Soc.},
  FJOURNAL = {Proceedings of the American Mathematical Society},
    VOLUME = {132},
      YEAR = {2004},
    NUMBER = {4},
     PAGES = {965--972},
      ISSN = {0002-9939,1088-6826},
   MRCLASS = {16W30},
  MRNUMBER = {2045410},
MRREVIEWER = {Salih\ \c Celik},
       DOI = {10.1090/S0002-9939-03-07181-8},
}

@incollection {Sch05,
    AUTHOR = {Schauenburg, Peter},
     TITLE = {Quotients of finite quasi-{H}opf algebras},
 BOOKTITLE = {Hopf algebras in noncommutative geometry and physics},
    SERIES = {Lecture Notes in Pure and Appl. Math.},
    VOLUME = {239},
     PAGES = {281--290},
 PUBLISHER = {Dekker, New York},
      YEAR = {2005},
      ISBN = {0-8247-5759-9},
   MRCLASS = {16W30},
  MRNUMBER = {2106936},
MRREVIEWER = {Fang\ Li},
}

@article {Sch90,
    AUTHOR = {Schneider, Hans-J\"urgen},
     TITLE = {Principal homogeneous spaces for arbitrary {H}opf algebras},
   JOURNAL = {Israel J. Math.},
  FJOURNAL = {Israel Journal of Mathematics},
    VOLUME = {72},
      YEAR = {1990},
    NUMBER = {1-2},
     PAGES = {167--195},
      ISSN = {0021-2172},
   MRCLASS = {16W30},
  MRNUMBER = {1098988},
MRREVIEWER = {Mitsuhiro\ Takeuchi},
       DOI = {10.1007/BF02764619},
}

@article {Sch92,
    AUTHOR = {Schneider, Hans-J\"urgen},
     TITLE = {Normal basis and transitivity of crossed products for {H}opf
              algebras},
   JOURNAL = {J. Algebra},
  FJOURNAL = {Journal of Algebra},
    VOLUME = {152},
      YEAR = {1992},
    NUMBER = {2},
     PAGES = {289--312},
      ISSN = {0021-8693,1090-266X},
   MRCLASS = {16W30},
  MRNUMBER = {1194305},
MRREVIEWER = {E.\ J.\ Taft},
       DOI = {10.1016/0021-8693(92)90034-J},
}

@misc {Skr25,
    AUTHOR = {Skryabin, Serge},
     TITLE = {On {T}akeuchi's correspondence},
      YEAR = {2025},
      NOTE = {Preprint, arXiv:2501.06045},
       DOI = {10.48550/arXiv.2501.06045},
}

@misc {Skr25b,
    AUTHOR = {Skryabin, Serge},
     TITLE = {Failure of flatness over finite-dimensional {H}opf subalgebras},
      YEAR = {2025},
      NOTE = {Preprint, arXiv:2506.16292},
       DOI = {10.48550/arXiv.2506.16292},
}
\end{document}